\newtheorem{thm}{Theorem}
\newtheorem{prop}{Proposition}
\newtheorem{lem}{Lemma}
\newtheorem{cor}{Corollary}
\theoremstyle{remark}
\newtheorem{rem}{Remark}
\theoremstyle{definition}
\theoremstyle{axiom}
\newtheorem{ex}{Example}
\newcommand{\C}{\mathbb{C}}
\newcommand{\R}{\mathbb{ R}}
\newcommand{\rk}{\operatorname{rk}}
\newcommand{\Z}{\mathbb{ Z}}
\title{Smooth manifolds in  $G_{n,2}$ and $\C P^{N}$ defined by symplectic reductions of  $T^n$-action}
\author{Victor M.~Buchstaber and Svjetlana Terzi\'c}
\begin{document}

\maketitle

\begin{abstract}
Pl\"ucker coordinates define the $T^n$-equivariant  embedding $p : G_{n,2}\to \C P^{N}$ of  a complex Grassmann manifold $G_{n,2}$  into the complex projective space $\C P^{N}$, $N=\binom{n}{2}-1$  for the canonical $T^n$-action on $G_{n,2}$ and the  $T^n$-action on $\C P^{N}$   given by the second exterior power representation  $T^n\to T^{N}$ and the standard $T^{N}$-action. Let $\mu : G_{n,2}\to \Delta_{n,2}\subset \R ^{n}$ and   $\tilde{\mu}: \C P^{N}\to \Delta _{n,2}\subset \R^n$ be the moment maps  for  the  $T^n$-actions on $G_{n,2}$ and $\C P^{N}$ respectively, such that $\tilde{\mu} \circ p=\mu$. The preimages    $\mu^{-1}({\bf x})$ and $\tilde{\mu} ^{-1}({\bf y})$ are smooth submanifolds  in $G_{n, 2}$ and $\C P^{N}$,  for any   regular values ${\bf x}, {\bf y} \in \Delta _{n,2}$ for these maps, respectively. The orbit spaces $\mu^{-1}({\bf x})/T^n$ and $\tilde{\mu}^{-1}({\bf y})/T^n$ are symplectic manifolds, which are known as symplectic reduction. 

 The regular values for $\mu$ and $\tilde{\mu}$ coincide  for $n=4$  and  we prove that  $\mu^{-1}({\bf x})$ and  $\tilde{\mu}^{-1}({\bf x}) $ do  not depend on a regular value ${\bf x}\in \Delta_{4,2}$. We provide  their  explicit topological description, that is  we prove  $\mu^{-1}({\bf x})\cong S^3\times T^2$ and $\tilde{\mu} ^{-1}({\bf x})\cong S^5\times T^2$.

The Deligne - Mumford compactification $\overline{\mathcal{M}}_{0, n}$ is proved to be a symplectic reduction of $G_{n,2}$ by the canonical $T^n$-action  if and only if $n=4,5$, while the Losev - Manin compactification is a  such symplectic reduction if and only if $n=5$.

\end{abstract}

\section{Introduction}
The complex  Grassmann manifolds $G_{n,2}$ are of  specific mathematical interest as the  canonical actions of an algebraic torus $(\C ^{\ast})^n$ as well as the   compact torus $T^n$ on these manifolds are  closely related to many important problems.  One of them is a problem on   moduli compactifications of the moduli space of $n$ ordered   distinct points on $\C P^1$,  in particular those compactifications given by the weighted pointed  stable genus zero curves,~\cite{H},~\cite{BTW}. In addition,  in~\cite{HK1},~\cite{HK2},  the moduli spaces $\text{Pol}(m)$  of $m$-gons in $\R^3$ are proved to be homeomorphic to $G_{n,2}/T^n$, while the moduli spaces of $\text{Pol}(a_1, \ldots , a_m)$ of $m$-gons with prescribed $i$-th side length $a_i$, are proved to be homeomorphic to a symplectic quotient of $G_{n,2}$ by the canonical $T^n$-action.

Many structures on $G_{n,2}$   can be assigned to these  torus actions. There is the standard moment map $\mu : G_{n,2}\to \Delta _{n,2}$ for the hypersimplex $\Delta _{n,2}\subset \R^n$. A regular value, in the classical sense,  of the map $\mu$ is known~\cite{BT3} to be a point $x\in \Delta _{n,2}$ such that the stabilizer of any  $y\in \mu ^{-1}(x)$, for the  $T^n$-action on $G_{n, 2}$, is the diagonal circle $S^1$. The preimage $\mu ^{-1}(x)\subset G_{n,2}$ is a smooth submanifold and  the orbit space $\mu ^{-1}(x)/T^n$ is a symplectic manifold, known as a symplectic reduction for the given $T^n$-action.  In addition, the polytopes assigned to the strata on $G_{n,2}$ defined by the $(\C^{\ast})^n$-action, give the chamber decomposition of $\Delta _{n,2}$. It is proved that  for any chamber $C_{\omega}$ of maximal dimension $(n-1)$ all  preimages $\mu ^{-1}(x)$, $x\in C_{\omega}$ are diffeomorphic leading to the smooth manifold  $F_{\omega} = \mu ^{-1}(x)/T^n \subset G_{n,2}/T^n$, ~\cite{GMP},~\cite{BT3}. In addition for any chamber $C_{\omega}$ all  preimages $\mu ^{-1}(x)$, $x\in C_{\omega}$ are homeomorphic implying that $G_{n,2}/T^n =\cup _{\omega}C_{\omega}\times F_{\omega}$. Starting from this decomposition  we constructed in~\cite{BT3} the model $(U_n, p_n)$ for the orbit space $G_{n,2}/T^n$, where $U_n = \Delta_{n,2}\times \mathcal{F}_{n}$ for the  smooth compact manifold $\mathcal{F}_{n}$ and $p_n :U_n\to G_{n,2}$ is the projection, which is diffeomorphism between dense open sets in $U_n$ and $G_{n,2}$. The  smooth compact manifold $\mathcal{F}_{n}$ we call universal space of parameter and we proved in~\cite{BTCH} that it coincides with that Chow quotient $G_{n,2}\!/\!/(\C ^{\ast})^{n}$ defined by Kapranov in~\cite{K}, that is with the Deligne-Mumford compactification $\overline{\mathcal{M}}_{0, n}$ of the moduli space of $n$ ordered distinct points in $\C P^1$, ~\cite{K}.

In this paper we study  the problem of explicit topological description of smooth manifolds $\mu ^{-1}({\bf x})$   of  a regular values ${\bf x}\in \Delta _{n,2}$ for the Grassmannians $G_{n,2}$.  The Pl\"ucker coordinates give the embedding of $G_{n,2}$ into the  complex projective space $\C P^{N}$, $N =\binom{n}{2}-1$, which is equivariant for the canonical $T^n$-action on $G_{n,2}$ and $T^n$-action on $\C P^{N}$ given by the composition of the second exterior power representation $T^n\to T^{N}$ and the standard $T^N$-action on $\C P^{N}$.   Preceding the announced problem, we study $\C P^{N}$ with the prescribed $T^n$-action and the corresponding moment map $\tilde{\mu} : \C P^{N}\to \Delta _{n,2}$. The regular values  of  $\tilde{\mu}$ are    the  regular values of  the moment map $\mu$ as well, but we show  that the vise versa does not hold in general.

Let $X_n\subset \Delta _{n,2}$ be the set of points, which consists of regular values for both $\mu$ and $\tilde{\mu}$. For any point ${\bf x} \in X_{n}$ we have a pair of smooth manifolds $M_{1}\subset G_{n,2}$ and $M_{2}\subset \C P^{N}$ with the free actions  of the torus $T^{n-1} = T^{n}/T^1$, where $T^1=\text{diag}(T^n)$, 
and  $M_{1} = \mu ^{-1}({\bf x})$, $\dim M_1 = 3n-7$  and $M_{2} = \tilde{\mu}^{-1}({\bf x})$, $\dim M_{2} =n^{2}-2n-1$. The Pl\"ucker embedding induces equivariant embedding $M_{1}\subset M_{2}$   as well.

The smooth manifolds $M_{1}/T^n$ and $M_{2}/T^n$ are symplectic manifolds and they are known as symplectic reduction for $T^n$-actions on $G_{n,2}$ and $\C P^{N}$ respectively.

 The first problem we address  is: describe the structure of $T^n$ -  pair of symplectic manifolds $M_{1}\subset M_{2}$.

 For many years, it has been  widely known the problem to  describe  a structure of the set of modular compactification for the moduli space $\mathcal{M}_{0, n}$ 
of ordered distinct  $n$ points on $\C P^1$.  Among such compactifications a special attention is given to the Deligne -  Mumford compactification $\overline{\mathcal{M}}_{0, n}$ (1969) and the Losev - Manin compactification $\bar{L}_{0, n}$ (2000).  The Losev-Manin spaces are   smooth toric manifolds as well~\cite{BTW},~\cite{M}.   Based on~\cite{H}, it   has been  introduced in our paper~\cite{BTW}   a category, which we call  Hassett category, whose objects are moduli compactifications of $\mathcal{M}_{0, n}$. These objects are given by  moduli spaces of stable pointed genus zero curves $\overline{\mathcal {M}}_{0, \mathcal{A}}$,  where   weight vectors $\mathcal{A}\in \R^n$ are close enough to vectors in a chamber of maximal dimension in $\Delta _{n,2}$.  The initial object in  this  category is the Deligne-Mumford compactification.  Related to this,   it manifests  a specific of the Grassmann manifolds $G_{n,2}$, that the space of parameters of a chamber of maximal dimension in $\Delta _{n,2}$  is isomorphic to a moduli space 
$\overline{\mathcal{M}}_{0, \mathcal{A}}$, ~\cite{BTW},~\cite{H}. Using this,  we show that a  symplectic reduction $M_{1}/T^n$ belongs to the Hassett category as well.

The second problem we address  is : describe symplectic reductions $M_{1}/T^n$ which can be realized as Deligne-Mumford compactifications  or Losev-Manin compactifications. 

In this paper the first problem is solved for $n=4$. In this case the regular values of the corresponding  moment maps coincide and, in both cases, we show that the corresponding smooth manifolds  do not depend on a regular value of $\mu$, that is $\tilde{\mu}$.   We prove that $\tilde{\mu} ^{-1}({\bf x}) \cong S^5\times T^2$ for any regular value  ${\bf x}\in \Delta _{4,2}$. Starting from  this we prove that $\mu ^{-1}({\bf x}) \cong S^3\times T^2$ for any regular value  ${\bf x}\in \Delta _{4,2}$.

In the framework of the second problem,  the symplectic reductions $M_1/T^n$ are described as objects of Hassett category, moreover, it is described  the largest one. We prove that the Deligne-Mumford compactification  $\overline{\mathcal{M}}_{0,n}$ and Losev-Manin compactification $\bar{L}_{0,n}$ can be realized as a symplectic reduction $M_{1}/T^n$ if and only if $n=4$ and $5$. In particular,   for $n=5$ we obtain two such  non-isomorphic    symplectic reductions given by the spaces  $\mathcal{M}_{0,5}$ and $\bar{L}_{0,5}$, which shows that in general, a symplectic reduction $M_1/T^n$, $n\geq 5$ does depend on a regular value of the moment map.

\section{Acknowledgment} The authors are  grateful to Taras Panov for stimulating  us to work on this problem, as well as for many useful discussions and references.

\section{Short background}

We recall some necessary  results  and related constructions from ~\cite{BT1},~\cite{BT2},~\cite{BT3} \cite{GMP}.
The Grassmann manifolds $G_{n,2}$ consists of $2$-dimensional complex subspaces in the complex vector space $\C ^{n}$. The coordinate wise $(\C ^{\ast})^n$-action and the induced $T^n$-action on $\C ^n$ produce the actions of these tori on $G_{n,2}$. The moment map for this $T^n$ -  action is defined in the standard way~\cite{K}.
Consider  the Pl\"ucker embedding  $p: G_{n,2}\to \C P^{N}$, $L\to (P^{I}(L))$, $I \in \binom{n}{2}$ and the second exterior power representation $T^n\to T^N$. The weight vectors for this representation are $\Lambda _{I} \in \Z^{n}$, $I\in \binom{n}{2}$ such that $\Lambda _{I}(j) =1$ for $j\in I$ and $\Lambda _{I}(j)=0$ for $j\not\in I$. The moment map $\mu : G_{n,2}\to \R^n $ is  given by
\[
\mu (L) = \frac{1}{\sum\limits_{I \in \binom{n}{2}}|P^{I}(L)|^2}\sum\limits_{I \in \binom{n}{2}}|P^{I}(L)|^{2}\Lambda _{I}.
\]
Its image is a hypersimplex $\Delta _{n,2}$, which is the convex hull of  the vertices $\Lambda _{I}$, $I \in  \binom{n}{2}$. In more detail, $\Delta _{n,2} = \{{\bf x}\in \R^n |  0\leq x_i\leq 1, \; i=1,\ldots, n,  \;\; x_1+\ldots +x_n=2\}$.

The moment map $\tilde{\mu}  : \C P^{N} \to \Delta_{n,2}$ defined by the prescribed $T^n$-action is, in the standard way~\cite{K}, given   by
\begin{equation}\label{momentmap}
\tilde{\mu}({\bf z}) = \frac{1}{\sum\limits_{i=0}^{N}|z_i|^2}\sum\limits_{i=0}^{N}|z_i|^2|\Lambda_{I},
\end{equation}
where ${\bf z} = (z_0:\ldots :z_{N})$, and $\{0, \ldots , N\}$ and   $\{I, I\in \binom{n}{2}\}$   are related by: to the number $i$,  it corresponds the pair $kl$, which is at the $i$-place in lexicographical order for $\{I, I\in \binom{n}{2}\}$, that is   $(0,1,\ldots , N) = (12,13,\ldots , n-1n)$.

Note that it holds:

\begin{equation}\label{mutilde}
\mu = \tilde{\mu}\circ p.
\end{equation}
Moreover,  the moment map $\mu$ decomposes as follows: consider the linear map $A : \Delta ^{N}\to \Delta _{n,2}$ defined  using lexicographical order on the indices $I$ of the vertices $\Lambda _{I}$, that is
\begin{equation}\label{A}
A(0) =\Lambda _{12}, \; A(1) =\Lambda _{13}, \ldots , A(N) = \Lambda_{n-1n},
\end{equation}
and the standard moment map $\hat{\mu} : \C P^{N}\to \Delta ^{N}$. 
Then 
\begin{equation}\label{composition}
\mu = A \circ  \hat{\mu} \circ p.
\end{equation}

\begin{rem}
It follows from~\eqref{mutilde} that a regular value of the  moment map $\tilde{\mu}$ is a regular value of the moment map $\mu$. The vise versa  does not hold as we will demonstrate below.
\end{rem} 

\begin{rem}\label{regularvalues}
It  is proved in~\cite{BT2} that a point ${\bf x}\in \Delta _{n,2}$  is a regular value of the moment map $\tilde{\mu}  : \C P^{N}\to \Delta _{n,2}$   if and only if
the stabilizer of any point from $\tilde{\mu}^{-1}({\bf x})\subset \C P^{N}$ for the given $T^n$-action is $S ^{1} = \text{diag}(T^n)$. It is proved  that the same holds for a regular value of the moment map $\mu : G_{n,2}\to \Delta _{n,2}$.
\end{rem}

The $(\C ^{\ast})^{n}$-action on $\C P^{N}$ defines its stratification, that is the decomposition of $\C P^{N}$ into the strata. 

One of the equivalent definitions of a stratum would be:  for any  $\sigma \subset \{0, \ldots ,N\}$,    the   set 
$W_{\sigma} = \{{\bf z}\in \C P^{N} | z_i\neq 0 \; \text{iff}\; i\in \sigma\}$ is a stratum.   All points of  a stratum $W_{\sigma}$ have the same stabilizer $T_{\sigma}\subset T^n$, that is, the torus  $T^{\sigma} = T^{n}/T_{\sigma}$ acts freely on $W_{\sigma}$. The image of a stratum $W_{\sigma}$ by the moment map~\eqref{momentmap} is an interior of the  polytope $P_{\sigma}$,  which is the convex hull of the vertices $\{\Lambda _{kl}\}$ standing by $z_i$ in~\eqref{momentmap} for $i\in \sigma$. Such polytopes we call admissible polytopes. In this case an admissible polytope is any convex  polytope whose  set of vertices is a  subset of the set of vertices for $\Delta _{n,2}$.

 In particular, it is proved in~\cite{BT2} that $\dim T^{\sigma}=\dim P_{\sigma}$. 

The admissible polytopes $\{P_{\sigma}\}$  define the chamber decomposition $\{C _{\omega}\}$ of $\Delta _{n,2}$, by all possible non-empty intersections of $P_{\sigma}$. We deduce from Remark~\ref{regularvalues} that the regular values   of the moment map~\eqref{momentmap} are given by the points from the chambers  $C_{\omega}$ of maximal dimension $(n-1)$. In other words,  $M^{2N-n+1}_{{\bf x }} = \tilde{\mu} ^{-1}({\bf x }) \subset \C P^{N}$ is a smooth $(2N-n+1)$-dimensional manifold for any point ${\bf x}\in C_{\omega}$, $\dim C_{\omega} =n-1$ and only for such points.

Analogously, for a Grassmann manifold $G_{n,2}$ a stratum  is defined as an non-empty set $W_{\sigma} = \{L \in G_{n,2} | P^{ij}(L)\neq 0 \; \text{iff}\; \{i, j\}\in \sigma\}$, where $\sigma \subset \{\binom{n}{2}\}$. It holds as well that  all  points of  a stratum $W_{\sigma}$ have the same stabilizer $T_{\sigma}\subset T^n$, that is,  the torus  $T^{\sigma} = T^{n}/T_{\sigma}$ acts freely on $W_{\sigma}$. The image of a strata $W_{\sigma}$  is a an interior of the polytope $P_{\sigma}$, which is the  convex hull of vertices $\Lambda _{ij}$, $\{i, j\}\in \sigma$. The polytopes which can be obtained in this way are called admissible polytopes. All possible non-empty  intersections of admissible polytopes produce  the chamber decomposition for $\Delta _{n,2}$. 
Then  $M^{3n-7}_{\bf x} = \mu ^{-1}({\bf x})$ is a smooth manifold for any ${\bf x}\in \stackrel{\circ}{\Delta} _{n,2}$ which belongs to a chamber of maximal dimension $(n-1)$. It holds 
\[
M_{\bf x}^{3n-7} = M_{\bf x}^{2N-n-1}\cap p(G_{n,2}),\] for the Pl\"ucker embedding $p : G_{n,2}\to \C P^{N}$.

\begin{rem}\label{arrangement}
It is proved in~\cite{BT1}, that  the   chamber decomposition of   $\Delta_{n,2}$ defined by the admissible polytopes for    the canonical $T^n$-action on $G_{n,2}$ coincides with the  lattice of the arrangement $\mathcal{A} = \{x_{i_1}+ \ldots +x_{i_s}=1,  1\leq i_1<\ldots <i_s\leq n, \; 2\leq s\leq [\frac{n}{2}]\}$ intersected with the hypersimplex $\Delta _{n,2}$.
\end{rem}

\begin{rem}
The two families of admissible polytopes and the two  chamber decompositions  for $\Delta _{n,2}$ given by the considered $T^n$-actions on $G_{n,2}$ and $\C P^{N}$ do not coincide for $n\geq 5$. 
\end{rem}

\begin{ex} 
Let $n=5$, that is $N=9$ and consider a point ${\bf z} = (z_0:z_1:0:0:z_4:0:0:0:0:z_{9})$, where $z_0, z_1, z_4, z_9\neq 0$, The admissible polytope $P_{\bf z}$ of the stratum in $\C P^{9}$  which contains ${\bf z}$  is a convex hull of the vertices $\Lambda _{12}, \Lambda _{13}, \Lambda _{23}, \Lambda_{45}$ and its dimension is $3$. On the other hand,  it follows from Remark~\ref{arrangement} that any $3$-dimensional admissible polytope for $G_{5,2}$ belongs to a hyperplane $x_i+x_j=1$ for some $1\leq i<j\leq 5$. Obviously none of these hyperplanes contains $P_{\bf z}$,  so $P_{\bf z}$ is not an admissible polytope for $G_{5,2}$.  Thus,  the polytope $P_{\bf z}$ intersect a  chamber in $\Delta _{5,2}$  coming from   $G_{5,2}$  of maximal dimension  $4$. In particular, the chamber decompositions of $\Delta _{5,2}$ coming from $G_{5,2}$ and $\C P^{9}$ are different. According to Remark~\ref{regularvalues}, this also shows that a  point from $P_{\bf z}$ is a regular value for the moment map $\mu$, but it is not a regular value for the moment map $\tilde{\mu}$.
\end{ex}   

\section{$T^4$-action on $\C P^5$}

Consider the representation of $T^4$ in $T^6$ given by the second exterior power, that is  $(t_1, t_2, t_3, t_4) \to (t_1t_2, t_1t_3, t_1t_4, t_2t_3, t_2t_4, t_3t_4)$. This representation composed with the canonical $T^6$-action on $\C P^5$ defines $T^4$-action on $\C P^5$.  Let  ${\bf z} = (z_0 : z_1 : z_2 : z_3 : z_4 :z_5)$ be the  homogeneous coordinates on $\C P^5$. The moment map $\tilde{\mu }: \C P^5 \to \R^4 $, for this $T^4$-action   is given   by
\begin{equation}\label{momentmap4}
\tilde{\mu}  ({\bf z}) =\frac{1}{\|{\bf z}\|}( |z_0|^2 \Lambda _{12} + |z_1|^2\Lambda _{13} + |z_2|^2 \Lambda _{14} + |z_3|^2 \Lambda _{23}+ |z_4|^2\Lambda _{24} + |z_5|^2 \Lambda _{34}).
\end{equation}
  The image of $\tilde{\mu}$ is the  hypersimplex  $\Delta _{4,2}$ , that is an octahedron. Its dimension is  $3$.

 For a chamber $C_{\omega}\subset \Delta _{4,2}$ of maximal dimension $3$, any point ${\bf x}\in C_{\omega}$ is a regular value of the moment map~\eqref{momentmap4}, that is $M^{7}_{{\bf x }} = \tilde{\mu} ^{-1}({\bf x }) \subset \C P^{5}$ is a smooth $7$-dimensional manifold.

This chamber decomposition is given as well by the lattice of the arrangement $\mathcal{A} = \{x_1+x_i =1, i=2,3,4\}$ intersected with the octahedron $\Delta _{4,2}$. There are $8$ chambers of dimension $3$ and they are given by the intersection of the halfspaces  $x_1+x_2 <(>)1, x_1+x_3< (>)1, x_1+x_4<(>)1$.  The standard action of the symmetric group $S_4$ on $\R^4$  induces the $S_4$-action on  $\Delta _{4,2}$.

\begin{lem}
The $S^4$-action on the set of  $3$-dimensional chambers in $\Delta _{4,2}$ consists of two orbits whose representatives are the chamber  $C^{-}$ given by the intersection of the halfspaces  $x_1+x_i<1$, $ i=2,3,4$ and the chamber $C^{+}$ given by the intersection of the halfspaces $x_1+x_i>1$, $i=2,3,4$.
\end{lem}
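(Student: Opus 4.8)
The plan is to reduce the statement to an elementary combinatorial problem about edge‑subsets of the complete graph $K_4$ on the vertex set $\{1,2,3,4\}$, and then read off the $S_4$‑orbits directly. To begin, recall from the discussion immediately preceding the statement that inside $\Delta_{4,2}$ the arrangement is cut out by the three hyperplanes $x_1+x_i=1$, $i=2,3,4$, and that there are exactly $8$ chambers of dimension $3$, one for each choice of a side ($x_1+x_i<1$ or $x_1+x_i>1$) of each of these hyperplanes. Since every $\sigma\in S_4$ acts on $\R^4$ by permuting coordinates, it preserves $\Delta_{4,2}$ and permutes the set of hyperplanes $\{x_i+x_j=1\}$, hence permutes the $8$ chambers; so the task is to describe the orbits of this finite action.

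Next I would encode a chamber as a set of three edges of $K_4$. The key observation is that on $\Delta_{4,2}$, where $x_1+x_2+x_3+x_4=2$, one has $x_a+x_b<1$ if and only if $x_c+x_d>1$ for the complementary pair $\{c,d\}$. Therefore a $3$‑dimensional chamber $C$ is determined by choosing, for each of the three perfect matchings $\{12,34\}$, $\{13,24\}$, $\{14,23\}$ of $K_4$, the unique edge $ab$ of that matching on which $x_a+x_b>1$ throughout $C$; write $E(C)$ for the resulting set of three edges. This gives a bijection between the $8$ chambers and the $8$ subsets of the edge set of $K_4$ containing exactly one edge from each perfect matching, and, letting $S_4$ act on edges by $\sigma\cdot\{a,b\}=\{\sigma(a),\sigma(b)\}$, the conventions give $E(\sigma\cdot C)=\sigma\cdot E(C)$.

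Then I would classify these edge‑subsets. A set of three edges of $K_4$ meeting each perfect matching once cannot contain two disjoint edges, since two disjoint edges of $K_4$ already form a perfect matching; hence any two of the three edges share a vertex, which forces the set to be either a \emph{star} $K_{1,3}$ centered at a vertex or a \emph{triangle} $K_3$ on a $3$‑element subset. Conversely each of the four stars and each of the four triangles meets every perfect matching in exactly one edge, so these $4+4=8$ configurations are exactly the chambers. Since $S_4$ is transitive on the vertices of $K_4$ and on the four $3$‑element subsets of $\{1,2,3,4\}$, it is transitive on the stars and transitive on the triangles, and clearly sends stars to stars and triangles to triangles; therefore the action on the $8$ chambers has exactly two orbits, each of size $4$. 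Finally $E(C^+)=\{12,13,14\}$ is the star centered at $1$, and $E(C^-)=\{34,24,23\}$ is the triangle on $\{2,3,4\}$, so $C^+$ and $C^-$ are representatives of the two orbits.

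There is no analytic content here; the only step needing genuine care—the main obstacle—is fixing the dictionary between a chamber's defining inequalities and its edge set so that "one edge from each perfect matching" is literally correct (this is precisely where passing to complementary pairs matters), after which the graph‑theoretic classification and the transitivity statements are immediate. The enumeration of the $8$ chambers and their non‑emptiness are taken from the sentence preceding the lemma.
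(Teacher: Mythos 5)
Your argument is correct, and it reaches the same conclusion by what is in spirit the same elementary observation --- that on $\Delta_{4,2}$ the identity $x_1+x_2+x_3+x_4=2$ turns each inequality $x_a+x_b<1$ into $x_c+x_d>1$ for the complementary pair, so a chamber is a combinatorial object on which $S_4$ acts transparently. The difference is in packaging and completeness. The paper's proof is a two-line direct computation: a permutation sends the system $x_1+x_i<1$, $i=2,3,4$, to a system of the same shape $x_j+x_k<1$ for all $k\neq j$, so the orbit of $C^{-}$ consists of the four chambers of that shape, none of which is of the shape of $C^{+}$; the same for $C^{+}$, and $4+4=8$ finishes the count. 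It leaves implicit exactly the points you make explicit: your bijection between the $8$ chambers and the edge-subsets of $K_4$ meeting each perfect matching once, together with the dichotomy star versus triangle, shows in one stroke that the two orbits are disjoint, that each has size exactly $4$, and that together they exhaust all chambers. Your identification $E(C^{+})=\{12,13,14\}$ (star at $1$) and $E(C^{-})=\{23,24,34\}$ (triangle on $\{2,3,4\}$) matches the paper's representatives, and the equivariance $E(\sigma\cdot C)=\sigma\cdot E(C)$ is checked correctly. So your proof is a more structured and self-contained version of the paper's argument rather than a genuinely different method; what it buys is a cleaner reason why there cannot be a third orbit, which the paper only gets by counting.
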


\begin{proof}
A permutation from $S_4$ maps  the chamber $\cap \{x_1+x_i<1, i=2,3, 4\}$ to a chamber $\cap \{x_j+x_k<1,  1\leq k\leq 4, k\neq j \}$  for some $1\leq j\leq 4$.  Therefore,  the orbit of this chamber consists of four chambers which are different  from the chambers $x_1+x_i>1$, $i=2, 3,4$. For the second one chamber  the same argument holds.
\end{proof}

\begin{lem}
The manifolds  $M^{7}_{{\bf x }}$ and $M^{7}_{{\bf y }}$ are diffeomorphic for any ${\bf x}, {\bf y}$ which belong to chambers of maximal dimension of the same $S_4$-orbit.
\end{lem}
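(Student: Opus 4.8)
The plan is to exploit the $S_4$-symmetry of the whole setup. The $T^4$-action on $\C P^5$ comes from the second symmetric power representation $T^4 \to T^6$, and the symmetric group $S_4$ acts on $\C^4$ by permuting coordinates; this induces a permutation action of $S_4$ on the six Plücker indices $\{12,13,14,23,24,34\}$ (namely, $\tau \in S_4$ sends the index $\{i,j\}$ to $\{\tau(i),\tau(j)\}$), hence a linear action on $\C^6$ permuting the homogeneous coordinates $z_0,\dots,z_5$, and therefore a biholomorphism $\Phi_\tau : \C P^5 \to \C P^5$. First I would check the equivariance identity $\tilde\mu \circ \Phi_\tau = \tau \circ \tilde\mu$: this follows directly from the formula for $\tilde\mu$ in~\eqref{momentmap}, since permuting the $z_i$ permutes the weight vectors $\Lambda_I$ in exactly the way the $S_4$-action on $\R^4$ permutes them (because $\Lambda_{\tau(I)} = \tau \cdot \Lambda_I$ under the coordinate permutation). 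Likewise $\Phi_\tau$ intertwines the $T^4$-action twisted by the automorphism of $T^4$ induced by $\tau$, i.e. $\Phi_\tau(t \cdot {\bf z}) = (\tau \cdot t)\cdot \Phi_\tau({\bf z})$.

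Given this, the argument is immediate. Let ${\bf x}, {\bf y}$ lie in $3$-dimensional chambers of the same $S_4$-orbit, so there is $\tau \in S_4$ with $\tau({\bf x}) = {\bf y}$ (after possibly also moving within a single open chamber, which is harmless since, as stated in the excerpt, all preimages over points of a fixed $(n-1)$-dimensional chamber are diffeomorphic). By the equivariance identity, $\Phi_\tau$ restricts to a diffeomorphism
\[
\Phi_\tau : \tilde\mu^{-1}({\bf x}) \xrightarrow{\ \cong\ } \tilde\mu^{-1}(\tau({\bf x})) = \tilde\mu^{-1}({\bf y}),
\]
that is, $M^7_{\bf x} \cong M^7_{\bf y}$. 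Since $\Phi_\tau$ is a biholomorphism of $\C P^5$ and carries the regular-value locus to itself, it restricts to a diffeomorphism of the smooth submanifolds; one can additionally record that it is equivariant with respect to the $\tau$-twisted torus actions, which will be useful later.

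I expect no serious obstacle here: the only point requiring a small verification is the compatibility of the coordinate-permutation action on $\C P^5$ with the $S_4$-action on the octahedron $\Delta_{4,2}$ under $\tilde\mu$, and this is a direct consequence of the defining formula for $\tilde\mu$ together with the fact that the vertices $\Lambda_{ij}$ of $\Delta_{4,2}$ are themselves permuted by $S_4$ according to the rule $\tau \cdot \Lambda_{ij} = \Lambda_{\tau(i)\tau(j)}$. The slight care needed is to combine this chamber-to-chamber diffeomorphism with the already-cited fact that the preimage does not change (up to diffeomorphism) as ${\bf x}$ moves within a single open $3$-dimensional chamber, so that "same $S_4$-orbit of chambers" genuinely yields diffeomorphic preimages for arbitrary representative points.
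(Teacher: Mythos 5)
Your proposal is correct and follows essentially the same route as the paper: both lift the $S_4$-action on $\Delta_{4,2}$ to a coordinate permutation of $\C P^5$ via the assignment $z_i\leftrightarrow \Lambda_{kl}$ and conclude that this permutation carries one preimage to the other. Your version is somewhat more explicit in verifying the equivariance $\tilde\mu\circ\Phi_\tau=\tau\circ\tilde\mu$ directly, where the paper instead phrases the compatibility in terms of the induced bijection on strata and admissible polytopes, but the underlying argument is the same.
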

\begin{proof}
An element for $S_4$ permutes the vertices of $\Delta _{4,2}$ and, 
 following the assignment $z_i\leftrightarrow \Lambda _{kl}$,  it   can be assigned  to any such permutation  the permutation in  $\C ^6$. In this way,  we see that if two chambers   of maximal dimension belong to the same $S_4$-orbit, then  there exists permutation from $S_6$ which maps the set of strata whose admissible polytopes form one chamber into the set of strata whose admissible polytopes form the other chamber. In particular, $M^{7}_{{\bf x }}$ and $M^{7}_{{\bf y }}$ are diffeomorphic for any ${\bf x}, {\bf y}$ which belong to chambers of maximal dimension of the same $S_4$-orbit.
\end{proof}

\subsection{Preimage of regular values of the moment map}
We now consider the chamber $x_1+x_i<1$, $i=2,3,4$. The point $Q= (\frac{1}{3}, \frac{5}{9}, \frac{5}{9}, \frac{5}{9})$  belongs to this chamber.  Denote by $M_{Q}^{7}$ the preimage of $Q$ by the moment map $\tilde{\mu}$.  According to~\cite{BT2},  the stabilizer of any point  $L\in M_{Q}^{7}$ for the given $T^4$-action is $S^1=\text{diag}(T^4)$, since $Q$ does not belong to the hyperplanes in $\R^4$ given by $x_i+x_j=1$, so    $M_{Q}^{7}$  is  a smooth manifold.

If we put  $Z= \sum\limits _{i=0}^{5}|z_i|^2$ , we see that  $M_{Q}^{7}\subset \C P^5$ is given by the following system:
\begin{equation}\label{definingequations}
3(|z_0|^2 + |z_1|^2 + |z_2|^2) = Z, \;\;\; 9(|z_0|^2 + |z_3|^2 + |z_4|^2) =5Z,
\end{equation}
\[
9(|z_1|^2 + |z_3|^2 + |z_5|^2) =5Z, \;\; 9(|z_2|^2 + |z_4|^2 + |z_5|^2) =5Z.
\]
We deduce that
\[
|z_0|^2 + |z_4|^2 = |z_1|^2 + |z_5|^2, \;\; 	|z_0|^2 + |z_3|^2 = |z_2|^2 + |z_5|^2,\;\; |z_1|^2 + |z_3|^2 = |z_2|^2 + |z_4|^2.
\]
It follows
\[
|z_0|^2  + |z_3|^2 -|z_2|^2 = |z_0|^2 + |z_4|^2 - |z_1|^2 \Rightarrow |z_3|^2 -|z_2|^2 = |z_4|^2 -|z_1|^2,
\]
that is
\begin{equation}\label{z41}
|z_4|^2 =|z_1|^2 -|z_2|^2 + |z_3|^2
\end{equation}
 and
\begin{equation}\label{z51}
|z_5|^2 = |z_0|^2-|z_2|^2 +|z_3|^2.
\end{equation}
Now, the first equation gives
\[
2(|z_0|^2 + |z_1|^2 + |z_2|^2) = |z_3|^2+|z_1|^2 -|z_2|^2 + |z_3|^2 + |z_0|^2-|z_2|^2 +|z_3|^2,
\]
that is 
\begin{equation}\label{z3}
|z_3|^2 = \frac{1}{3}(|z_0|^2 + |z_1|^2 + 4 |z_2|^2).
\end{equation}

Using this, we deduce from~\eqref{z41} and~\eqref{z51} that
\begin{equation}\label{z4z5}
|z_4|^2 = \frac{1}{3}(|z_0|^2 + 4|z_1|^2 +  |z_2|^2), \;\;\; |z_5|^2 = \frac{1}{3}(4|z_0|^2 + |z_1|^2 +  |z_2|^2).
\end{equation}

\begin{rem}
Note that for ${\bf z}\in M_{Q}^{7}$,  it holds $z_3, z_4, z_5\neq 0$.
\end{rem}

\begin{lem}\label{triangle}
The image $\hat{\mu}(M_{Q}^{7})$ by the standard moment map $\hat{\mu} : \C P^5\to \Delta ^{5}$ is a triangle $P$  given by
\begin{equation}
 x_2  = \frac{1}{3} - x_0-x_1, \; x_3 = \frac{4}{9}-x_0-x_1, \; x_4= \frac{1}{9}+x_1,\;  x_5  = \frac{1}{9}+x_0
\end{equation}
and the condition  $ x_0+x_1 \leq \frac{1}{3}$.

\end{lem}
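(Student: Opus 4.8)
The plan is to identify $\hat\mu(M_Q^7)$ as the slice of the standard simplex cut out by the relations already derived. Recall that the standard moment map is $\hat\mu(\mathbf z)=\tfrac1Z\big(|z_0|^2,\dots,|z_5|^2\big)$ with $Z=\sum_{i=0}^5|z_i|^2$, that it maps $\C P^5$ onto the whole simplex $\Delta^5=\{x_i\ge 0,\ \sum_i x_i=1\}$, and that $\tilde\mu=A\circ\hat\mu$. Writing $x_i=|z_i|^2/Z$ and dividing the first defining equation $3(|z_0|^2+|z_1|^2+|z_2|^2)=Z$ of $M_Q^7$ by $Z$ gives $x_0+x_1+x_2=\tfrac13$, i.e. $x_2=\tfrac13-x_0-x_1$; dividing \eqref{z3} by $Z$ and using this gives $x_3=\tfrac13(x_0+x_1+x_2)+x_2=\tfrac19+x_2=\tfrac49-x_0-x_1$, and dividing the two formulas in \eqref{z4z5} by $Z$ gives, in the same way, $x_4=\tfrac19+x_1$ and $x_5=\tfrac19+x_0$. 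So $\hat\mu(M_Q^7)$ is contained in the affine plane $\Pi$ defined by these four equations, which one checks is consistent with $\sum_i x_i=1$.

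For the reverse inclusion I would observe that $\Pi\subset A^{-1}(Q)$: evaluating $A$ on a point of $\Pi$ gives $x_0+x_1+x_2=\tfrac13$ and $x_0+x_3+x_4=x_1+x_3+x_5=x_2+x_4+x_5=\tfrac29+(x_0+x_1+x_2)=\tfrac59$, i.e. the value $Q$. Hence $\hat\mu^{-1}(\Pi\cap\Delta^5)\subset\tilde\mu^{-1}(Q)=M_Q^7$, and since $\hat\mu$ is onto $\Delta^5$ we get $\hat\mu(M_Q^7)\supset\Pi\cap\Delta^5$; combined with the previous paragraph, $\hat\mu(M_Q^7)=\Pi\cap\Delta^5$. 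Equivalently, one exhibits a preimage directly: for $(x_0,x_1)$ in the prescribed range put $\mathbf z=(\sqrt{x_0}:\dots:\sqrt{x_5})$ with the remaining $x_i$ given by the four equations, and check $\tilde\mu(\mathbf z)=Q$.

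It remains to describe $\Pi\cap\Delta^5$ as a subset of $\Pi$, coordinatized by $(x_0,x_1)$. The constraints $x_0\ge 0$ and $x_1\ge 0$ are kept; $x_2=\tfrac13-x_0-x_1\ge 0$ becomes $x_0+x_1\le\tfrac13$; and since $x_3=\tfrac19+x_2$, $x_4=\tfrac19+x_1$, $x_5=\tfrac19+x_0$ with $x_0,x_1,x_2\ge 0$, the conditions $x_3,x_4,x_5\ge 0$ hold automatically (each is $\ge\tfrac19>0$) and impose nothing new. Therefore $\hat\mu(M_Q^7)$ is the triangle $P$ of the statement. The only step that needs a word of care is precisely this last one — noticing that the three inequalities on $x_3,x_4,x_5$ are redundant, so the image is a full triangle and not a smaller polygon; everything else is a routine linear computation.
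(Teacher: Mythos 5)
Your proof is correct and follows the same route as the paper's: divide the defining equations of $M_Q^7$ by $Z$ to turn \eqref{z3} and \eqref{z4z5} into the stated linear relations among the $x_i$, use $\sum_i x_i=1$ to get $x_2=\tfrac13-x_0-x_1$, and read off the triangle. The paper's own proof consists only of your first paragraph and leaves the reverse inclusion (that every point of the triangle is actually attained) implicit, so your verification that $\Pi\cap\Delta^5\subset A^{-1}(Q)$ together with the surjectivity of $\hat\mu$, and your observation that the constraints $x_3,x_4,x_5\ge 0$ are redundant, are correct and welcome, if routine, additions.
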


\begin{proof}
Let ${\bf z} \in \hat{\mu} ^{-1}(Q)  $ and $x_{i} =\frac{|z_i|^2}{\|{\bf z}\|^{2}}$. Then $\hat{\mu}({\bf z})  = (x_0, \ldots , x_5)$.   It follows from~\eqref{z3},~\eqref{z4z5}  that 
\[
x_3 = \frac{1}{3}(x_0+x_1+4x_2),\;  x_4 = \frac{1}{3}(x_0+4x_1+x_2),\; x_5 = \frac{1}{3}(4x_0+x_1+x_2).
\]
 The equation $\sum\limits _{i=0}^{5}x_i=1$ implies that  
\[
3(x_0+x_1+x_2)=1,  \; \text{that is}\; x_{2} = \frac{1}{3}-x_0-x_1.
\]
Thus, $\hat{\mu} (M_{Q})$ is given by
\[
  x_2  = \frac{1}{3} - x_0-x_1, \; x_3 = \frac{4}{9}-x_0-x_1, \; x_4= \frac{1}{9}+x_1,\;  x_5  = \frac{1}{9}+x_0, 
\]
where $x_0, x_1\geq 0$ and $x_0+x_1 \leq \frac{1}{3}$.
\end{proof}

\begin{cor}\label{edges}
The edges of $P$ are:
\[
I_{0}:   (0, x_1, \frac{1}{3} - x_1, \frac{4}{9} - x_1, \frac{1}{9} -x_1, \frac{1}{9}), \; 0\leq x_1\leq \frac{1}{3}.\,
\]
\[
I_{1} : (x_0, 0, \frac{1}{3}-x_0, \frac{4}{9}-x_0, \frac{1}{9}, \frac{1}{9}+x_0), \; 0\leq x_1\leq \frac{1}{3},
\]
\[
I_{2} : (x_0, \frac{1}{3}-x_0, 0, \frac{1}{9}, \frac{4}{9} - x_0, \frac{1}{9}+x_0), \; 0\leq x_0\leq \frac{1}{3}.
\]
The vertices for $P$ are
\[
X_{01}  = (0, 0, \frac{1}{3}, \frac{4}{9}, \frac{1}{9}. \frac{1}{9}), \; X_{02} = (0, \frac{1}{3}, 0, \frac{1}{9}, \frac{4}{9}, \frac{1}{9}), \; 
X_{12} =  (\frac{1}{3}, 0, 0, \frac{1}{9}, \frac{1}{9}, \frac{4}{9}).
\]
\end{cor}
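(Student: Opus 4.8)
The plan is to read off the combinatorial boundary of $P$ directly from the parametrization established in Lemma~\ref{triangle}. That lemma presents $P$ as the image of the plane triangle $\Delta' = \{(x_0,x_1)\in\R^2 : x_0\geq 0,\ x_1\geq 0,\ x_0+x_1\leq \tfrac{1}{3}\}$ under the affine map $\phi:(x_0,x_1)\mapsto \bigl(x_0,\,x_1,\,\tfrac{1}{3}-x_0-x_1,\,\tfrac{4}{9}-x_0-x_1,\,\tfrac{1}{9}+x_1,\,\tfrac{1}{9}+x_0\bigr)$ into $\Delta^5\subset\R^6$. Since the first two coordinates of $\phi$ return $(x_0,x_1)$, the map $\phi$ is an affine embedding, hence restricts to a linear isomorphism from the affine plane spanned by $\Delta'$ onto the affine plane spanned by $P$; consequently $\phi$ carries the face lattice of the $2$-simplex $\Delta'$ isomorphically onto the face lattice of $P$. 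In particular $P$ has exactly three edges and three vertices, namely the $\phi$-images of the edges $\{x_0=0\}$, $\{x_1=0\}$, $\{x_0+x_1=\tfrac{1}{3}\}$ and of the vertices $(0,0)$, $(\tfrac{1}{3},0)$, $(0,\tfrac{1}{3})$ of $\Delta'$.

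It then remains to substitute. Putting $x_0=0$ and letting $x_1$ range over $[0,\tfrac{1}{3}]$ yields the edge $I_0$; putting $x_1=0$ and letting $x_0$ range over $[0,\tfrac{1}{3}]$ yields $I_1$; and putting $x_1=\tfrac{1}{3}-x_0$, equivalently $x_2=0$, with $x_0\in[0,\tfrac{1}{3}]$ yields $I_2$, where along this last edge one uses $x_3=\tfrac{4}{9}-(x_0+x_1)=\tfrac{1}{9}$ and $x_4=\tfrac{1}{9}+x_1=\tfrac{4}{9}-x_0$. Evaluating $\phi$ at the three vertices of $\Delta'$ gives $X_{01}=\phi(0,0)$, $X_{02}=\phi(0,\tfrac{1}{3})$ and $X_{12}=\phi(\tfrac{1}{3},0)$, which are exactly the points listed; moreover $X_{ij}$ is the common endpoint of the edges $I_i$ and $I_j$, which justifies the indexing.

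There is essentially no obstacle here: the corollary is a bookkeeping consequence of Lemma~\ref{triangle}. The only point worth emphasizing is that the injectivity of $\phi$ is what guarantees that no edge of $\Delta'$ degenerates and that the three vertices remain distinct, so that $P$ is genuinely a non-degenerate triangle with the stated edges and vertices.
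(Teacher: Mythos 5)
Your proposal is correct and follows essentially the same route as the paper, which treats the corollary as a direct substitution into the parametrization of Lemma~\ref{triangle} (the paper's subsequent matrix computation leading to Corollary~\ref{granicaP} is only an independent re-derivation). One small point worth flagging: your substitution gives $x_4=\tfrac{1}{9}+x_1$ along $I_0$, which exposes a sign typo in the printed statement (the fifth coordinate of $I_0$ should read $\tfrac{1}{9}+x_1$, not $\tfrac{1}{9}-x_1$, as confirmed by Corollary~\ref{granicaP}).
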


We can approach this in slightly different way. Let $v_i$ be the standard basis in $\R ^6$ and consider the map  $A : \mathbb{R}^6\to \mathbb{R}^4$, compare to~\eqref{A} given by
\[
  Av_0 = \Lambda_{12}; \; Av_1 = \Lambda_{13}; \; Av_2 = \Lambda_{14}; \; Av_3 = \Lambda_{23}; \; Av_4 = \Lambda_{24}; \; Av_5 = \Lambda_{34}.
\]
In the considered  base the map $A$ is given by the matrix
\begin{equation}\label{1}
  A =
  \begin{pmatrix}
    1 & 1 & 1 & 0 & 0 & 0 \\
    1 & 0 & 0 & 1 & 1 & 0 \\
    0 & 1 & 0 & 1 & 0 & 1 \\
    0 & 0 & 1 & 0 & 1 & 1
  \end{pmatrix}.
\end{equation}
We need to solve the system
\begin{equation}\label{2}
  AX=Y,\quad X\in \mathbb{R}^6,\quad Y\in \mathbb{R}^4,
\end{equation}
where $Y$ is a column vector  $(\frac{1}{3}, \frac{5}{9}, \frac{5}{9}, \frac{5}{9})$. By the standard methods we obtain that the system~\eqref{2} is equivalent to the system 
\begin{equation}\label{3}
\begin{pmatrix}
  1 & 1 & 1 & 0 \\
  0 & -1 & -1 & 1 \\
  0 & 0 & -1 & 2 \\
  0 & 0 & 0 & 2
\end{pmatrix}
\begin{pmatrix}
  x_0 \\
  x_1 \\
  x_2 \\
  x_3
\end{pmatrix} = 
\begin{pmatrix}
  y^*_1 \\
  y^*_2 \\
  y^*_3 \\
  y^*_4
\end{pmatrix},
\end{equation}
where $y^*_1 = \frac{1}{3}, \quad y^*_2 = \frac{2}{9}-x_4, \quad y^*_3 = \frac{7}{9}-x_4-x_5, \quad y^*_4 = 2(\frac{2}{3}-x_4-x_5)$. We finally get
\begin{equation}\label{4}
x_0 = -\frac{1}{9}+x_5,\quad x_1 = -\frac{1}{9}+x_4,\quad x_2 = \frac{5}{9}-x_4-x_5,\quad x_3 = \frac{2}{3}-x_4-x_5.
\end{equation}
By its construction the map $A$ gives the map $\Delta ^{5} \to \Delta_{4,2}$. Let $L\subset \R^6$ be an affine space which is the solution of~\eqref{2}. 

  \begin{lem}
The intersection  $P = L\cap\Delta^5$  is a triangle, which is given by the equations~\eqref{4} and by the inequalities
\[
x_4\geqslant\frac{1}{9},\quad x_5\geqslant\frac{1}{9},\quad x_4+x_5\leqslant\frac{5}{9}. 
\]
\end{lem}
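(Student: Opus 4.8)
The plan is to read the triangle off directly from the explicit solution~\eqref{4} of the system~\eqref{2}, with no further elimination needed. Equations~\eqref{4} express $x_0,x_1,x_2,x_3$ as affine functions of the two free variables $x_4,x_5$, so they realize the affine solution space $L$ of~\eqref{2} as the image of the affine injection
\[
\R^2\to\R^6,\qquad (x_4,x_5)\longmapsto\Bigl(-\tfrac{1}{9}+x_5,\ -\tfrac{1}{9}+x_4,\ \tfrac{5}{9}-x_4-x_5,\ \tfrac{2}{3}-x_4-x_5,\ x_4,\ x_5\Bigr).
\]
Thus $L$ is a $2$-plane on which $(x_4,x_5)$ serve as global affine coordinates, and $P=L\cap\Delta^5$ is precisely the preimage, under this parametrization, of the region of the $(x_4,x_5)$-plane on which all six coordinates $x_i$ are nonnegative.

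First I would note that membership in $L$ already forces $\sum_{i=0}^{5}x_i=1$: each column of the matrix $A$ in~\eqref{1} is one of the weight vectors $\Lambda_{kl}$, hence has exactly two nonzero entries, both equal to $1$, so adding the four scalar equations of $AX=Y$ gives $2\sum_i x_i=\tfrac{1}{3}+3\cdot\tfrac{5}{9}=2$. Therefore $L$ is contained in the affine hyperplane $\{\sum x_i=1\}$ carrying $\Delta^5$, and inside $L$ the only constraints defining $P$ are the six inequalities $x_i\geq 0$.

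Next I would translate those inequalities through~\eqref{4}. The conditions $x_1\geq 0$, $x_0\geq 0$, $x_2\geq 0$ become respectively $x_4\geq\tfrac{1}{9}$, $x_5\geq\tfrac{1}{9}$, $x_4+x_5\leq\tfrac{5}{9}$; the other three are then automatic, since $x_3\geq 0$ reads $x_4+x_5\leq\tfrac{2}{3}$, which is weaker than $x_4+x_5\leq\tfrac{5}{9}$, while $x_4\geq 0$ and $x_5\geq 0$ are weaker than $x_4\geq\tfrac{1}{9}$ and $x_5\geq\tfrac{1}{9}$. Hence in the $(x_4,x_5)$-coordinates $P$ is exactly the region $x_4\geq\tfrac{1}{9}$, $x_5\geq\tfrac{1}{9}$, $x_4+x_5\leq\tfrac{5}{9}$, a nondegenerate triangle with vertices $(\tfrac{1}{9},\tfrac{1}{9})$, $(\tfrac{4}{9},\tfrac{1}{9})$, $(\tfrac{1}{9},\tfrac{4}{9})$.

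Finally, applying the affine isomorphism $(x_4,x_5)\mapsto X$ onto $L$ to these three vertices yields $(0,0,\tfrac{1}{3},\tfrac{4}{9},\tfrac{1}{9},\tfrac{1}{9})$, $(0,\tfrac{1}{3},0,\tfrac{1}{9},\tfrac{4}{9},\tfrac{1}{9})$ and $(\tfrac{1}{3},0,0,\tfrac{1}{9},\tfrac{1}{9},\tfrac{4}{9})$, i.e. the vertices $X_{01},X_{02},X_{12}$ of Corollary~\ref{edges}; so $P$ is indeed a triangle, and it coincides with the one found in Lemma~\ref{triangle}. There is no genuine obstacle here beyond bookkeeping the six sign conditions; the only point requiring a moment's care is verifying that the inequalities coming from $x_3\geq 0$, $x_4\geq 0$, $x_5\geq 0$ are redundant, so that exactly three facets survive and $P$ is a triangle rather than a quadrilateral or a pentagon.
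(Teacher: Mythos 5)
Your proof is correct and takes essentially the same route the paper does (the paper states this lemma without a separate proof, relying on the preceding derivation of~\eqref{4}): parametrize $L$ by the free variables $(x_4,x_5)$, check that $\sum_i x_i=1$ holds automatically on $L$, and translate the six conditions $x_i\geq 0$ into the three stated inequalities, noting that the ones from $x_3,x_4,x_5$ are redundant. Your explicit verification of the redundancy and of the identification of the vertices with $X_{01},X_{02},X_{12}$ is a careful and complete writing-out of exactly this argument.
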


\begin{cor}\label{granicaP}
1) $\partial P = I_0\cup I_1\cup I_2$, where:
\[
I_0 : x_5=\frac{1}{9},\quad \frac{1}{9}\leqslant x_4\leqslant\frac{4}{9},\quad x_0=0,\quad x_1=-\frac{1}{9}+x_4,\quad x_2=\frac{4}{9}-x_4,\quad x_3=\frac{5}{9}-x_4. 
\]
\[
I_1 : x_4=\frac{1}{9},\quad \frac{1}{9}\leqslant x_5\leqslant\frac{4}{9},\quad x_0=-\frac{1}{9}+x_5,\quad x_1=0,\quad x_2=\frac{4}{9}-x_5,\quad x_3=\frac{5}{9}-x_5. 
\]
\[
I_2 : x_4+x_5=\frac{5}{9},\quad \frac{1}{9}\leqslant x_k\leqslant\frac{4}{9}, k=4,5,\quad x_0=-\frac{1}{9}+x_5,\quad x_1=-\frac{1}{9}+x_4,\quad x_2=0,\quad x_3=\frac{1}{9}.
\]

2) The vertices  $X_{i,j},\; 0\leqslant i<j\leqslant2$, of the triangle  $P$ are:
\[
X_{0,1} = I_0\cap I_1 : x_4=x_5=\frac{1}{9},\quad x_0=x_1=0,\quad x_2=\frac{1}{3},\quad x_3=\frac{4}{9}.
\]
\[
X_{0,2} = I_0\cap I_2 : x_4=\frac{4}{9},\quad x_5=\frac{1}{9},\quad x_0=x_2=0,\quad x_1=\frac{1}{3},\quad x_3=\frac{1}{9}.
\]
\[
X_{1,2} = I_1\cap I_2 : x_4=\frac{1}{9},\quad x_5=\frac{4}{9},\quad x_0=\frac{1}{3},\quad x_1=x_2=0,\quad x_3=\frac{1}{9}.
\]
\end{cor}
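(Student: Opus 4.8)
The plan is to read off everything directly from the preceding Lemma, which already identifies $P=L\cap\Delta^5$ with the planar triangle cut out, in the affine chart of $L$ parametrized by $(x_4,x_5)$, by the three inequalities $x_4\geqslant\frac19$, $x_5\geqslant\frac19$, $x_4+x_5\leqslant\frac59$. The first step is to observe that each of these three inequalities is precisely the non-negativity of one of the coordinates $x_0,x_1,x_2$: substituting~\eqref{4} one gets $x_0\geqslant0\iff x_5\geqslant\frac19$, $x_1\geqslant0\iff x_4\geqslant\frac19$, $x_2\geqslant0\iff x_4+x_5\leqslant\frac59$, while the remaining defining inequalities of $\Delta^5$, namely $x_3\geqslant0$, $x_4\geqslant0$, $x_5\geqslant0$, are redundant on $P$ (indeed $x_4+x_5\leqslant\frac59$ forces $x_3=\frac23-x_4-x_5\geqslant\frac19>0$, and $\frac19>0$). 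Hence $P$ is a genuine triangle whose three edges are exactly the segments on which precisely one of $x_0,x_1,x_2$ vanishes.

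For part 1) I would set $I_0=P\cap\{x_0=0\}$, $I_1=P\cap\{x_1=0\}$, $I_2=P\cap\{x_2=0\}$. On $I_0$ one has $x_5=\frac19$, and the two surviving inequalities $x_4\geqslant\frac19$, $x_4+\frac19\leqslant\frac59$ give the range $\frac19\leqslant x_4\leqslant\frac49$; substituting $x_5=\frac19$ into~\eqref{4} yields $x_0=0$, $x_1=-\frac19+x_4$, $x_2=\frac49-x_4$, $x_3=\frac59-x_4$. The edges $I_1$ and $I_2$ are obtained the same way, using $x_4=\frac19$ (resp. $x_4+x_5=\frac59$) together with the two remaining inequalities to fix the parameter range, and then substituting into~\eqref{4}. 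Since the three supporting lines are pairwise non-parallel and each meets $P$ in a segment of positive length, $\partial P=I_0\cup I_1\cup I_2$.

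For part 2) each vertex is the intersection of two of these edges, i.e. the point of $P$ at which two of $x_0,x_1,x_2$ vanish simultaneously. Solving the corresponding pair among $x_5=\frac19$, $x_4=\frac19$, $x_4+x_5=\frac59$ and substituting into~\eqref{4} gives $X_{0,1}=I_0\cap I_1$ ($x_4=x_5=\frac19$), $X_{0,2}=I_0\cap I_2$ ($x_4=\frac49$, $x_5=\frac19$), $X_{1,2}=I_1\cap I_2$ ($x_4=\frac19$, $x_5=\frac49$), with the stated coordinates; these three points are manifestly distinct, reconfirming non-degeneracy.

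There is no serious obstacle here: the statement is a direct unpacking of the triangle description in the previous Lemma together with the linear relations~\eqref{4}. The only points requiring a line of care are checking that the extra constraints $x_3,x_4,x_5\geqslant0$ are redundant on $P$ (so that $P$ really has three edges rather than being a quadrilateral or pentagon) and that each candidate edge hyperplane meets $P$ in a $1$-dimensional face rather than in a single point; both follow at once from the explicit inequalities $\frac19\leqslant x_4,x_5$ and $x_4+x_5\leqslant\frac59$.
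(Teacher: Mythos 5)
Your proposal is correct and follows exactly the route the paper intends: the corollary is stated without proof as an immediate consequence of the preceding lemma, and your argument simply unpacks the lemma's inequalities via the relations~\eqref{4}, identifying each bounding inequality with the vanishing locus of one of $x_0,x_1,x_2$ and checking the redundancy of $x_3,x_4,x_5\geqslant 0$. All the coordinate computations for the edges and vertices check out against the stated formulas.
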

\subsection{Explicit description of $M_{Q}^{7}$}

For the clearness we first  note:

\begin{itemize}

\item $M^{7}_{Q}$ is invariant for the canonical $T^6$-action on $\C P^{5}$.
\item The effective action of $T^5 = T^{6}/T^1$, where $T^1=\text{diag}(T^6)$ on $M_{Q}^{7}$ is not free.  The points ${\bf z}\in M_{Q}^{7}$ for which one of the coordinates $z_0, z_1, z_2$ is zero  have  the non-trivial stabilizer. These points maps by the moment map $\tilde{\mu}$ to the edges and vertices of the triangle $P$ from Corollary~\ref{edges}, that is Corollary~\ref{granicaP}.
\item  Since for ${\bf z}\in M_{Q}^{7}$ we have $z_3, z_4, z_5\neq 0$,  it follwos that $M_{Q}^{7}$ is an affine manifold. In, addition,  the considered $T^5$-action on 
$M_{Q}^{7} $ can be realized by one of the representations $(t_1, \ldots, t_5) \to (t_1, \ldots, t_5, 1)$, $(t_1, \ldots , t_5) \to (t_1, \ldots, t_4, 1, t_5)$, 
$(t_1, \ldots , t_5) \to (t_1, t_2, t_3, 1, t_4, t_5)$.
\end{itemize}

Let $S^5 = \{ (z_0, z_1, z_2)\in \C ^3 :  |z_0|^2 + |z_1|^2 + |z_2|^2 = \frac{1}{3}\}$.  

Consider the map $f : S^{5} \to \C P^{5}$ defined by
\begin{equation}\label{S5embed}
f(z_0, z_1, z_2) = (z_0 : z_1 : z_2 : |z_3| : |z_4| : |z_5|),
\end{equation}
where
\[ 
|z_4|^2 = |z_1|^2 +\frac{1}{9}, \;\; |z_5|^2 = |z_0|^2  + \frac{1}{9},
\]
\[
|z_3|^2  = \frac{4}{9} - |z_0|^2 - |z_1|^2.
\]

\begin{lem}
The image of $f$ is homeomorphic to $S^5$ and it  belongs to $M_{Q}^{7}$.
\end{lem}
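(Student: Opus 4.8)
The plan is to prove the two assertions separately: that $f$ is a well-defined continuous map with image inside $M_Q^7$, and that $f$ is a homeomorphism onto its image.

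For the inclusion $\mathrm{im}(f)\subset M_Q^7$, I would first observe that on the sphere $S^5$ the prescribed quantities are automatically nonnegative: since $|z_0|^2+|z_1|^2+|z_2|^2=\tfrac13$ we get $|z_3|^2=\tfrac49-|z_0|^2-|z_1|^2\geq\tfrac49-\tfrac13=\tfrac19>0$, while $|z_4|^2=|z_1|^2+\tfrac19\geq\tfrac19>0$ and $|z_5|^2=|z_0|^2+\tfrac19\geq\tfrac19>0$. Hence $f$ is a well-defined continuous map, and moreover $z_3,z_4,z_5\neq0$ at every point of $\mathrm{im}(f)$, so the image lies in the affine chart used to describe $M_Q^7$. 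Then I would substitute these values into the four equations cutting out $M_Q^7$ in $\C P^5$. Using $|z_2|^2=\tfrac13-|z_0|^2-|z_1|^2$ one checks directly that $f(z_0,z_1,z_2)$ satisfies \eqref{z3} and \eqref{z4z5}; computing $Z=\sum_{i=0}^5|z_i|^2$ for these representatives gives $Z=1$, after which each of the relations $3(|z_0|^2+|z_1|^2+|z_2|^2)=Z$, $9(|z_0|^2+|z_3|^2+|z_4|^2)=5Z$, $9(|z_1|^2+|z_3|^2+|z_5|^2)=5Z$, $9(|z_2|^2+|z_4|^2+|z_5|^2)=5Z$ reduces to an identity such as $\tfrac13\cdot3=1$ or $\tfrac59\cdot9=5$. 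This is a short computation, essentially the reverse of the derivation of \eqref{z3}--\eqref{z4z5}.

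For the homeomorphism statement, I would prove $f$ is injective and then invoke compactness. Suppose $f(z_0,z_1,z_2)=f(w_0,w_1,w_2)$ in $\C P^5$; then there is $\lambda\in\C^\ast$ with $z_j=\lambda w_j$ for $j=0,1,2$ and $|z_j|=\lambda|w_j|$ for $j=3,4,5$, where the moduli $|z_j|,|w_j|$ are the positive reals determined by the respective triples. Since $|z_5|$ and $|w_5|$ are positive reals, $\lambda>0$; and comparing $|z_0|^2+|z_1|^2+|z_2|^2=\lambda^2(|w_0|^2+|w_1|^2+|w_2|^2)$ forces $\tfrac13=\lambda^2\cdot\tfrac13$, hence $\lambda=1$ and $(z_0,z_1,z_2)=(w_0,w_1,w_2)$. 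Thus $f$ is a continuous bijection from the compact space $S^5$ onto $\mathrm{im}(f)$ inside the Hausdorff space $\C P^5$, hence a homeomorphism onto its image. Together with the first part this yields $\mathrm{im}(f)\cong S^5$ and $\mathrm{im}(f)\subset M_Q^7$.

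I do not expect a serious obstacle: the only point requiring (minor) care is the nonnegativity of the prescribed squared moduli, which is what makes $f$ a genuine map into $\C P^5$ and simultaneously certifies that its image avoids the lower strata where $z_3,z_4$ or $z_5$ vanishes. Since the role of this lemma in the sequel is to exhibit a copy of $S^5$ sitting inside $M_Q^7$ and transverse to the residual torus directions, it is worth recording explicitly that $z_3z_4z_5\neq0$ on $\mathrm{im}(f)$.
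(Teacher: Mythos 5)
Your proposal is correct; the paper states this lemma without proof, treating it as a direct consequence of the computations \eqref{z3}--\eqref{z4z5} that precede it, and your verification (checking the four defining equations of $M_{Q}^{7}$ with $Z=1$, and using positivity of $|z_3|,|z_4|,|z_5|$ to get injectivity, hence a homeomorphism by compactness) is exactly the argument the paper implicitly relies on. No gaps.
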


Consider the representation  $\rho _1 : T^2 \to T^6$ given by
\begin{equation}\label{rho2}
\rho _1( t_4, t_5)  = (1, 1, 1, 1, t _4, t_5),
\end{equation}
and the representation $\rho _2 : T^5\to T^6$ given by $\rho _2(t_1,t_2, t_3, t_4, t_5) = (t_1, t_2, t_3, 1, t_4, t_5)$

\begin{prop}
  The map $h : S^{5}\times T^2 \to \C P^5$  defined by
\[
h(z_0, z_1, z_2,  t _4, t _5) =  \rho _1( t_4, t _5)f(z_0: z_1: z_2)=
\]
\[ = (z_0 :  z_1 :  z_2 :  |z_3| : |z_4|t_4 : |z_5|t_5)
\]
is equivariant for the  $T^5$ -action on $S^5\times T^2$ given by 
\[
(\tau _0,\tau _1, \tau _2, \tau _4, \tau _5)((z_0, z_1, z_2), ( t_4, t_5)) =  (\tau _0 z_0, \tau _1 z_1, \tau _2 z_2 , \tau _4 t_4, \tau _5t_5) 
\]
 and the $T^5$-action on $\C P^{5}$ given  by the composition of the representation $\rho _2$ and  the canonical $T^6$-action.

 The image of $h$ is $M^{7}_{Q}$.  
\end{prop}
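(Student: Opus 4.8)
The plan is to prove the proposition in three stages: (i) check that $h$ is a well-defined smooth map into $\C P^5$; (ii) verify the asserted equivariance by a direct computation; and (iii) prove $\im h = M_{Q}^{7}$ by establishing the two inclusions, of which $\im h \subseteq M_{Q}^{7}$ will follow formally from the lemma on $f$ together with the $T^6$-invariance of $M_{Q}^{7}$, while the reverse inclusion amounts to choosing a good homogeneous representative of a given point of $M_{Q}^{7}$. Only stage (iii), and within it this choice of representative, is substantive; (i) and (ii) are bookkeeping.

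For (i): on $S^5$ one has $|z_0|^2+|z_1|^2 \le |z_0|^2+|z_1|^2+|z_2|^2 = \frac13$, so $|z_3|^2 = \frac49 - |z_0|^2 - |z_1|^2 \ge \frac19 > 0$, and similarly $|z_4|^2 = |z_1|^2 + \frac19 > 0$ and $|z_5|^2 = |z_0|^2 + \frac19 > 0$; hence $(z_0, z_1, z_2, |z_3|, |z_4| t_4, |z_5| t_5)$ is never the zero vector of $\C^6$, it depends smoothly on $(z_0,z_1,z_2,t_4,t_5)$, and by construction it represents $\rho_1(t_4,t_5) f(z_0:z_1:z_2)$ for the canonical $T^6$-action. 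For (ii): identifying the two copies of $T^5$ by the slot indices, the $T^5$-action on $\C P^5$ coming from $\rho_2$ and the canonical $T^6$-action multiplies the homogeneous coordinate in slot $k$ by $\tau_k$ for $k \in \{0,1,2,4,5\}$ and fixes slot $3$; since $|\tau_i z_i| = |z_i|$ for $\tau_i \in T^1$, replacing $(z_0,z_1,z_2)$ by $(\tau_0 z_0, \tau_1 z_1, \tau_2 z_2)$ leaves $|z_3|, |z_4|, |z_5|$ unchanged, and both sides of the equivariance identity reduce to $(\tau_0 z_0 : \tau_1 z_1 : \tau_2 z_2 : |z_3| : \tau_4 |z_4| t_4 : \tau_5 |z_5| t_5)$.

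For (iii): the inclusion $\im h \subseteq M_{Q}^{7}$ holds because $f(S^5) \subseteq M_{Q}^{7}$ by the preceding lemma and $M_{Q}^{7}$ is invariant under the canonical $T^6$-action, so every point $\rho_1(t_4,t_5) f(z_0:z_1:z_2)$ lies in $M_{Q}^{7}$. For the reverse inclusion, take ${\bf z} = (z_0 : \ldots : z_5) \in M_{Q}^{7}$. The first defining equation $3(|z_0|^2 + |z_1|^2 + |z_2|^2) = Z$ together with $Z > 0$ forces $|z_0|^2 + |z_1|^2 + |z_2|^2 > 0$, so after rescaling the homogeneous representative we may assume $(z_0, z_1, z_2) \in S^5$, and then $Z = 1$. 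Substituting $|z_2|^2 = \frac13 - |z_0|^2 - |z_1|^2$ into \eqref{z3} and \eqref{z4z5} recovers exactly the moduli $|z_3|^2 = \frac49 - |z_0|^2 - |z_1|^2$, $|z_4|^2 = |z_1|^2 + \frac19$, $|z_5|^2 = |z_0|^2 + \frac19$ prescribed by $f$; in particular $z_3, z_4, z_5 \ne 0$, as already noted. Writing $z_3 = |z_3| t_3$ with $t_3 \in T^1$ and multiplying the representative by $t_3^{-1}$, which leaves the normalization $(z_0,z_1,z_2) \in S^5$ intact, we may further assume $z_3 = |z_3|$; then $z_4 = |z_4| t_4$ and $z_5 = |z_5| t_5$ for unique $t_4, t_5 \in T^1$, so ${\bf z} = h(z_0, z_1, z_2, t_4, t_5) \in \im h$. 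The main subtlety is precisely this double normalization --- putting $(z_0,z_1,z_2)$ on $S^5$ and then rotating $z_3$ to be real and positive --- together with checking that the relations among $|z_3|^2, |z_4|^2, |z_5|^2$ forced by membership in $M_{Q}^{7}$ coincide with those in the definition of $f$; once the normalization is fixed, \eqref{z3} and \eqref{z4z5} do the rest. The same computation shows in addition that $h$ is injective, hence a homeomorphism of $S^5 \times T^2$ onto $M_{Q}^{7}$.
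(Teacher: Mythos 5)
Your proof is correct and follows essentially the same route as the paper: equivariance and the inclusion $\im h\subseteq M_{Q}^{7}$ are routine, and surjectivity is obtained by the same double normalization (rescaling a representative so that $(z_0,z_1,z_2)\in S^5$ and then rotating so that $z_3=|z_3|>0$, after which $t_4,t_5$ are read off from $z_4,z_5\neq 0$). Your closing remark on injectivity is also correct and is precisely the content the paper defers to the subsequent theorem identifying $M_{Q}^{7}$ with $S^{5}\times T^2$.
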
 

\begin{proof}
The map is obviously equivariant and its image belongs to $M_{Q}^{7}$. We prove that it is onto $M_{Q}^{7}$. Let ${\bf z} = (z_0:z_1:z_2 :z_3:z_4: z_5)\in M_{Q}^{7}$, then, since $z_3\neq 0$ and $\frac{|z_0|^2}{\|{\bf z}\|^{2}}+\frac{|z_1|^2}{\|{\bf z}\|^{2}} + \frac{|z_1|^2}{\|{\bf z}\|^{2}} =\frac{1}{3}$ it follows that ${\bf z}$ has the representative $(z_0, z_1, z_2, |z_3|, z_4, z_5)$, where $|z_0|^2 +|z_1|^2 +|z_2|^2=\frac{1}{3}$. Since $z_4, z_5\neq 0$ there exists unique $t_4, t_5\in S^1$ such that $z_k=|z_k|t_k$, $k=4,5$.  It follows that $h(z_0, z_1, z_2, t_4, t_5) = {\bf z}$.
\end{proof}
 Altogether we obtain:
\begin{thm}\label{equiv7}
The space $M^{7}_{Q}$ is  equivariantly homeomorphic  to $S^{5}\times T^2$,  for the given free  $T^5$-actions on $M_{Q}^{7}$ and $S^{5}\times T^3$.
\end{thm}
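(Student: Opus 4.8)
The plan is to package the three ingredients built in the preceding lemmas and proposition into one equivariant homeomorphism and verify that the $T^5$-action on the source is free. Concretely, I would take the map $h : S^5 \times T^2 \to \C P^5$ from the Proposition as the candidate homeomorphism onto $M_Q^7$, where $S^5 = \{(z_0,z_1,z_2)\in\C^3 : |z_0|^2+|z_1|^2+|z_2|^2 = \tfrac13\}$. The Proposition already gives that $h$ is continuous, equivariant for the stated $T^5$-actions, and surjective onto $M_Q^7$; so the substantive content that remains is (i) injectivity of $h$, (ii) that $h$ is a homeomorphism onto its image (which follows from injectivity plus compactness of $S^5\times T^2$ and Hausdorffness of $\C P^5$), and (iii) that the $T^5$-action on $S^5\times T^2$ defined in the Proposition is free, so that the word "equivariant" upgrades to an equivalence of free $T^5$-spaces.

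First I would prove injectivity. Suppose $h(z_0,z_1,z_2,t_4,t_5) = h(z_0',z_1',z_2',t_4',t_5')$ in $\C P^5$. Since the last three homogeneous coordinates $|z_3|, |z_4|t_4, |z_5|t_5$ of $h$ never vanish (by the Remark that $z_3,z_4,z_5\neq 0$ on $M_Q^7$, together with $|z_3|^2 = \tfrac49 - |z_0|^2-|z_1|^2 > 0$ on $S^5$), the two representatives differ by a common scalar $\lambda\in\C^*$; comparing the third coordinate $|z_3| = \lambda |z_3'|$ forces $\lambda$ real positive, and then comparing $z_3$-,$z_4$-,$z_5$-coordinates, whose moduli are pinned down as explicit functions of $|z_0|^2,|z_1|^2$ by the formulas $|z_4|^2 = |z_1|^2 + \tfrac19$, $|z_5|^2 = |z_0|^2 + \tfrac19$, $|z_3|^2 = \tfrac49 - |z_0|^2 - |z_1|^2$, forces $|z_0| = |z_0'|$, $|z_1| = |z_1'|$, hence $\lambda = 1$; then the first three coordinates give $z_i = z_i'$ directly and the $S^1$-components give $t_4 = t_4'$, $t_5 = t_5'$. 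Thus $h$ is injective, and being a continuous injection from a compact space to a Hausdorff space it is a homeomorphism onto $M_Q^7$.

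Next I would verify freeness of the $T^5$-action $(\tau_0,\tau_1,\tau_2,\tau_4,\tau_5)\cdot((z_0,z_1,z_2),(t_4,t_5)) = ((\tau_0 z_0,\tau_1 z_1,\tau_2 z_2),(\tau_4 t_4,\tau_5 t_5))$ on $S^5\times T^2$. The $\tau_4,\tau_5$ act by translation on the $T^2$-factor, so a stabilizing element must have $\tau_4 = \tau_5 = 1$; for the remaining $T^3$-action on the $S^5$-factor by coordinatewise rotation, a point of $S^5$ with all of $z_0,z_1,z_2$ nonzero has trivial stabilizer, but $S^5$ does contain points where one $z_i$ vanishes. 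Here I would invoke that $h$ intertwines this action with the $T^5$-action on $\C P^5$ given by $\rho_2$ composed with the canonical $T^6$-action, and that on $M_Q^7$ the latter action is free: by Remark~\ref{regularvalues}, since $Q = (\tfrac13,\tfrac59,\tfrac59,\tfrac59)$ is a regular value of $\tilde\mu$ lying in a chamber of maximal dimension, the stabilizer of every point of $\tilde\mu^{-1}(Q) = M_Q^7$ for the $T^4$-action is exactly $\mathrm{diag}(T^4)$, equivalently the induced $T^3$-action is free — and one checks the $\rho_2$-induced $T^5 = T^6/T^1$-action restricts on $M_Q^7$ compatibly so that the effective action is free. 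Since $h$ is an equivariant homeomorphism, freeness transports back to $S^5\times T^2$. (Alternatively, and perhaps more cleanly, I would argue freeness directly on $M_Q^7$ from the preimage-of-a-regular-value characterization and then use the homeomorphism $h$ to conclude; either way the key input is Remark~\ref{regularvalues}.)

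The main obstacle is the bookkeeping around which torus acts how: the paper juggles the canonical $T^6$-action, the $T^5 = T^6/T^1$-action realized via several different sections $\rho_2$-type representations, and the original $T^4$-action, and one must be careful that the "free $T^5$-action on $S^5\times T^2$" in the statement matches the effective quotient action appearing in the Proposition and that $h$ really is equivariant for exactly these. Once the identification of actions is pinned down, injectivity is a short modulus-chase using the explicit formulas and freeness is immediate from Remark~\ref{regularvalues}; so the essential difficulty is conceptual organization rather than any hard computation. I would also remark in passing that the "$T^3$" appearing at the very end of the theorem statement ("free $T^5$-actions on $M_Q^7$ and $S^5\times T^3$") is a typo for $T^2$, consistent with everything above.
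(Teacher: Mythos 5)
Your construction is the paper's own: the proof of this theorem in the paper consists precisely of citing the preceding Proposition for the fact that $h$ is a $T^5$-equivariant surjection onto $M_Q^7$, and then deducing injectivity from the observation that two preimages of the same point of $\C P^5$ differ by a scalar $\lambda$ with $|z_3'|=\lambda|z_3|$, which together with $|z_3|,|z_3'|\neq 0$ forces $\lambda=1$ --- the same modulus chase you carry out in more detail. Your remark that the $T^3$ at the end of the statement is a typo for $T^2$ is also correct.

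There is, however, a genuine error in your freeness verification. You correctly notice that a point of $S^5$ with, say, $z_0=0$ has a potentially nontrivial stabilizer for the coordinatewise rotation, but you then dismiss this by transporting freeness from $M_Q^7$ via Remark~\ref{regularvalues}. That remark concerns the $T^4$-action given by the second symmetric power representation, whose effective quotient is $T^3=T^4/\mathrm{diag}(T^4)$; it says nothing about the $T^5$-action induced by $\rho_2$, which is an action of a strictly larger torus. In fact that $T^5$-action is \emph{not} free: at a point $(0:z_1:z_2:z_3:z_4:z_5)\in M_Q^7$ the whole circle $(t_1,1,1,1,1)$ acts trivially, and the paper states exactly this in the bulleted remarks preceding the construction (the effective $T^5=T^6/T^1$-action on $M_Q^7$ is not free, the points with one of $z_0,z_1,z_2$ equal to zero having nontrivial stabilizer). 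So the word ``free'' in the theorem statement is an inaccuracy of the paper itself, inconsistent with its own earlier remark; it is not something your argument can, or needs to, establish. The actual content of the theorem --- the $T^5$-equivariant homeomorphism $M_Q^7\cong S^5\times T^2$ --- is what your injectivity-plus-compactness argument proves, and there your proposal coincides with the paper.
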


\begin{proof}
The map $h : S^{5}\times T^2 \to M_{Q}^{7}$ is $T^5$-equivariant surjection. The condition 
 $h(z_0, z_1, z_2,t_4, t_5) =   h(z_{0}^{'}, z_{1}^{'}, z_{2}^{'},   t_{4,}^{'}, t _{5}^{'})$ implies that 
$|z_{3}^{'}| = \lambda |z_3|$ and, since $|z_3|, |z_{3}^{'}|\neq 0$, this gives $\lambda =1$.  
\end{proof}


Consider now the representation  $\rho _3 :T^3\to T^5\to T^6$ given by 
\begin{equation}\label{rho3}
\rho _3: (t_1, t_2, t_3) \to (t_1, t_1, t_1, t_2, t_3)\stackrel{\rho _2}{\to} (t_1, t_1, t_1, 1, t_2, t_3).
\end{equation}
 This representation  defines free $T^3$-action on $M_{Q}^{7}$.  Let $\tilde{M}_{Q}^{4} = M_{Q}^{7}/T^3$. We deduce from Theorem~\ref{equiv7}:

\begin{thm}\label{MQ4}
The space $\tilde{M}_{Q}^{4}$ is homeomorphic to $\C P^{2}$. 

\end{thm}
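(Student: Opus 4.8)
The plan is to push the equivariant homeomorphism $h : S^{5}\times T^2 \to M_{Q}^{7}$ of Theorem~\ref{equiv7} down to the $T^3$-quotient. The first observation is that the representation $\rho_3$ of~\eqref{rho3} factors as $\rho_3 = \rho_2\circ j$, where $j : T^3\hookrightarrow T^5$ is the monomorphism $j(t_1,t_2,t_3) = (t_1,t_1,t_1,t_2,t_3)$; this is exactly what the two arrows in~\eqref{rho3} record. Hence the free $T^3$-action on $M_Q^7$ defined by $\rho_3$ is the restriction along $j$ of the $T^5$-action from the Proposition, i.e.\ of the action with respect to which $h$ is equivariant.

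Transporting this action along $h$, I would use that the $T^5$-action on $S^5\times T^2$ is $(\tau_0,\tau_1,\tau_2,\tau_4,\tau_5)\cdot\big((z_0,z_1,z_2),(t_4,t_5)\big) = \big((\tau_0 z_0,\tau_1 z_1,\tau_2 z_2),(\tau_4 t_4,\tau_5 t_5)\big)$, so its restriction along $j$ is
\[
(t_1,t_2,t_3)\cdot\big((z_0,z_1,z_2),(t_4,t_5)\big) = \big((t_1 z_0, t_1 z_1, t_1 z_2),(t_2 t_4, t_3 t_5)\big).
\]
Writing $T^3 = S^1\times S^1\times S^1$ in the coordinates $(t_1,t_2,t_3)$, this is the product of the diagonal $S^1$-action on $S^5\subset\C^3$ and the translation action of $T^2$ on itself, the three circle factors acting independently on the three factors $S^5$, $T^1$, $T^1$. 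Therefore the quotient splits: $(S^5\times T^2)/T^3\cong (S^5/S^1)\times(T^1/S^1)\times(T^1/S^1)$. The diagonal circle action on $S^5$ is free, with quotient $\C P^2$ (it is the Hopf fibration $S^1\to S^5\to\C P^2$, here with $S^5$ of radius $1/\sqrt3$), and each $T^1/S^1$ is a point, so $(S^5\times T^2)/T^3\cong \C P^2$.

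Finally, since $h$ is a homeomorphism intertwining the two free $T^3$-actions, it descends to a homeomorphism $M_Q^7/T^3 \to (S^5\times T^2)/T^3$, which combined with the computation above yields $\tilde M_Q^4 = M_Q^7/T^3\cong\C P^2$. I do not anticipate a real obstacle here: the argument is bookkeeping around the factorization $\rho_3 = \rho_2\circ j$ together with the remark that the resulting $T^3$-action is a product; the only points deserving a line of care are that the diagonal $S^1$-action on $S^5$ is free (every point of $S^5$ has a nonzero coordinate) and that passing to the quotient commutes with products when the acting torus splits compatibly with the factors.
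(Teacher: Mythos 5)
Your argument is correct and is exactly the deduction the paper intends: the paper gives no written proof beyond ``we deduce from Theorem~\ref{equiv7}'', and your factorization $\rho_3=\rho_2\circ j$ together with the observation that the restricted $T^3$-action on $S^5\times T^2$ splits as the Hopf circle on $S^5$ times translations on the two circle factors is precisely the bookkeeping that deduction requires. No gaps.
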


\begin{rem}\label{T4MQ7}
Note that the exterior power representation $T^4\to T^6$ gives free $T^3=T^4/\text{diag}(T^4)$-action on $M_{Q}^{7}$. 
For this action  in~\cite{BT1} we proved that $\C P^{5}/T^4 \cong S^2\ast \C P^2$, which implies that $M_{Q}^{7}/T^4 \cong \C P^2$.
\end{rem}

\begin{rem}\label{eq}
In the previous construction we could equivalently take the representations $T^2\to T^6$ given by $(t_3, t_4) \to (1,1,1, t_3,  t_4, 1)$ or $(t_3, t_5)\to (1,1,1, t_3, 1, t_5)$, and consequently the representations $T^5\to T^6$ given by $(t_1, t_2, t_3, t_4, t_5)\to (t_1,t_2, t_3, t_4, t_5,1)$ or  $(t_1, t_2, t_3, t_4, t_5)\to (t_1, t_2, t_3, t_4, 1, t_5)$.
\end{rem} 

\subsection{Dependence  of the construction on a point $Q$ from the chamber orbit}

We noted that the manifold $M_{Q}^{7}$ does not depend, up to diffeomorphism, on a point $Q$  from the orbit of the chamber $x_1+x_i<1$, $i = 2,3,4$.  We point here that still the embedding 
$S^5\to \C P^5$ and the appropriate representation $T^2\to T^6$ do depend on a choice of a chamber  from the given orbit. 

The representative points of the chambers from this orbit are: $(\frac{1}{3}, \frac{5}{9}, \frac{5}{9}, \frac{5}{9})$,  $(\frac{5}{9}, \frac{1}{3}, \frac{5}{9}, \frac{5}{9})$,  $(\frac{5}{9}, \frac{5}{9}, \frac{1}{3}, \frac{5}{9})$,  $(\frac{5}{9}, \frac{5}{9}, \frac{5}{9}, \frac{1}{3})$. 
For example, for $Q= (\frac{5}{9}, \frac{1}{3}, \frac{5}{9}, \frac{5}{9})$, following the above construction,  we obtain  the embedding $f : S^5\to \C P^5$ given by
\[
f(z_0, z_3, z_4) = (z_0:|z_1|:|z_2|:z_3 : z_4 :|z_5|),
\]
and the representation $\rho _{1}: T^2\to T^6$ given by
\[
\rho_1 (t_2, t_5) = (1,1, t_2, 1, 1, t_5),\]
that is the representation $\rho _2 : T^5\to T^6$ given by
\[
\rho _2(t_1, t_2, t_3, t_4, t_5) = (t_1, 1, t_2, t_3, t_4, t_5).
\]
According to Remark~\ref{eq}, we can consider two other equivalent representations $(t_1, t_2, t_3, t_4, t_5) \to (t_1, t_2, 1, t_3, t_4, t_5)$ or $(t_1, t_2, t_3, t_4, t_5) \to (t_1, t_2, t_3, t_4, t_5,1).$  

\begin{rem}
All these representations of $T^5\to T^6$ are different from those pointed in Remark~\ref{eq}.
\end{rem}

\subsection{Construction for the second orbit}

We consider now the chamber $x_1+x_i>1$, $i=2,3,4$ and the point $Q= (\frac{2}{3}, \frac{4}{9}, \frac{4}{9}, \frac{4}{9})$ from this chamber. Then,   $M_{Q}^{7}\subset \C P^5$ is given by the following system:
\[
3(|z_0|^2 + |z_1|^2 + |z_2|^2) = 2Z, \;\;\; 9(|z_0|^2 + |z_3|^2 + |z_4|^2) =4Z,
\]
\[
9(|z_1|^2 + |z_3|^2 + |z_5|^2) =4Z, \;\; 9(|z_2|^2 + |z_4|^2 + |z_5|^2) =4Z,
\]
 for   $Z= \sum\limits _{i=0}^{5}|z_i|^2$ . This implies that 
\[
|z_0|^2 + |z_4|^2 = |z_1|^2 + |z_5|^2, \;\; 	|z_0|^2 + |z_3|^2 = |z_2|^2 + |z_5|^2,\;\; |z_1|^2 + |z_3|^2 = |z_2|^2 + |z_4|^2,
\]
that is 
\begin{equation}\label{z4second}
|z_4|^2 =|z_1|^2 -|z_2|^2 + |z_3|^2
\end{equation}
 and
\begin{equation}\label{z5second}
|z_5|^2 = |z_0|^2-|z_2|^2 +|z_3|^2.
\end{equation}
Together with the first equation we obtain
\[
|z_2|^2 = \frac{1}{5}(|z_0|^2 +|z_1|^2 +6|z_3|^2).
\]
This gives that
\[
|z_0|^2 =\frac{1}{3}(|z_3|^2 + |z_4|^2 +4 |z_5|^2), \; |z_1|^2= \frac{1}{3}(|z_3|^2+4|z_4|^2+|z_5|^2),
\]
which implies 
\[
|z_2|^2 = \frac{1}{3}(4|z_3|^2 + |z_4|^2 +|z_5|^2).
\]
Comparing with~\eqref{z3},~\eqref{z4z5} we see that $M_{Q}^7$ can be obtained from the case previously considered  by the permutation $z_0\to z_3, z_1\to z_4, z_2\to z_5$.   Therefore, all previously said holds for the points of the chambers from this orbit.

Altogether we proved:

\begin{thm}\label{mainCP5}
The manifold $M_{Q}^{7}$ is homeomorphic to $S^5\times T^2$ for any regular value $Q\in \Delta _{4,2}$ of the standard moment map $\tilde{\mu} : \C P^{5}\to \Delta _{4,2}$.
\end{thm}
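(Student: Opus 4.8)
The plan is to reduce the general case to the two explicit computations already carried out in the previous subsections by exploiting the $S_4$-symmetry of the hypersimplex $\Delta_{4,2}$. By the lemma stating that the $S_4$-action on the set of $3$-dimensional chambers has exactly two orbits, with representatives $C^{-}$ (defined by $x_1+x_i<1$, $i=2,3,4$) and $C^{+}$ (defined by $x_1+x_i>1$, $i=2,3,4$), every regular value $Q\in\Delta_{4,2}$ of $\tilde{\mu}$ lies in a $3$-dimensional chamber belonging to one of these two orbits. The lemma asserting that $M^{7}_{\mathbf{x}}$ and $M^{7}_{\mathbf{y}}$ are diffeomorphic whenever $\mathbf{x},\mathbf{y}$ lie in chambers of the same $S_4$-orbit then shows that, up to diffeomorphism, it suffices to treat one representative point in each of the two orbits.

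First I would invoke Theorem~\ref{equiv7} (together with the subsection ``Dependence of the construction on a point $Q$ from the chamber orbit''), which establishes that for the representative point $Q=(\tfrac13,\tfrac59,\tfrac59,\tfrac59)\in C^{-}$ --- and, by the diffeomorphism lemma, for any $Q$ in the $S_4$-orbit of $C^{-}$ --- the manifold $M^{7}_{Q}$ is (equivariantly) homeomorphic to $S^{5}\times T^{2}$. Next I would treat the chamber $C^{+}$: for the representative point $Q=(\tfrac23,\tfrac49,\tfrac49,\tfrac49)$ the defining system for $M^{7}_{Q}$ was solved explicitly in the subsection ``Construction for the second orbit,'' and the resulting relations among $|z_0|^2,\dots,|z_5|^2$ were shown to be obtained from those of the $C^{-}$ case by the coordinate permutation $z_0\leftrightarrow z_3$, $z_1\leftrightarrow z_4$, $z_2\leftrightarrow z_5$. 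Since this permutation is induced by an element of $S_6$ arising from the $z_i\leftrightarrow\Lambda_{kl}$ correspondence, it transports the entire $S^{5}\times T^{2}$ description verbatim; hence $M^{7}_{Q}\cong S^{5}\times T^{2}$ for every $Q$ in the $S_4$-orbit of $C^{+}$ as well.

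Finally, combining the two cases: since the two $S_4$-orbits exhaust all $3$-dimensional chambers, and since the regular values of $\tilde{\mu}$ are precisely the points lying in a chamber of maximal dimension $3$ (by Remark~\ref{regularvalues} and the accompanying discussion), every regular value $Q\in\Delta_{4,2}$ satisfies $M^{7}_{Q}\cong S^{5}\times T^{2}$. This is the assertion of the theorem.

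The step I expect to require the most care is the passage from ``the relations among the moduli $|z_i|^2$ agree after a permutation'' to ``the manifolds are homeomorphic, with the $S^5\times T^2$ structure preserved.'' One must check that the permutation of homogeneous coordinates on $\C P^5$ is compatible with the choice of which three coordinates are used to coordinatize $S^5$ and which two $S^1$-factors give the $T^2$; in the $C^{+}$ case the roles of $(z_0,z_1,z_2)$ and $(z_3,z_4,z_5)$ are interchanged, so the embedding $f:S^5\to\C P^5$ and the torus representation $\rho_1:T^2\to T^6$ must be adapted accordingly (exactly as indicated in Remark~\ref{eq} and the dependence-on-$Q$ subsection). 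Once this bookkeeping is in place, the homeomorphism $M^{7}_{Q}\cong S^{5}\times T^{2}$ follows from Theorem~\ref{equiv7} applied in the permuted coordinates.
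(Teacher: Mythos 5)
Your proposal follows essentially the same route as the paper: reduce to the two $S_4$-orbits of maximal chambers, apply the explicit $S^5\times T^2$ description (Theorem~\ref{equiv7}) for the representative of $C^-$, and transfer it to $C^+$ via the coordinate permutation $z_0\to z_3$, $z_1\to z_4$, $z_2\to z_5$. Your closing remark about adapting the embedding $f$ and the representation $\rho_1$ under the permutation is a point the paper glosses over with ``all previously said holds,'' so your version is, if anything, slightly more careful on that bookkeeping.
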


\section{The canonical $T^4$-action on $G_{4,2} =G_{4,2}(\C)$}

For the  Grassmann manifold $G_{4,2}$ , the image of the  Pl\"ucker embedding   $p : G_{4,2}\to \C P^5$ is   the hypersurface $z_0z_5 +z_2z_3= z_1z_4$.  The torus $T^4$ acts canonically on $G_{4,2}$, where $S^1=\text{diag}( T^4)$ fixes every point, so the torus $T^{3} = T^4/S^1$ acts effectively.
The moment map $\mu : G_{4,2}\to \Delta _{4,2}$ is given by
\[
\mu (L)  = \frac{1}{\sum\limits_{1\leq i<j\leq 4} |P^{I}(L)|^2}\sum\limits_{1\leq i<j\leq 4}|P^{ij}(L)|^2\Lambda _{ij}.
\]

Then  $M^{5}_{\bf x} = \mu ^{-1}({\bf x})$ is a smooth manifold for any ${\bf x}\in \stackrel{\circ}{\Delta} _{4,2}$ which belongs to a chamber of maximal dimension $3$. It holds $M_{\bf x}^{5} = M_{\bf x}^{7}\cap \{z_0z_5 +z_2z_3= z_1z_4\}$. Therefore, from Theorem~\ref{mainCP5} it follows:

\begin{cor}
 The manifolds   $M^{5}_{\bf x}$ and $M^{5}_{\bf y}$ are diffeomorphic for all ${\bf x}, {\bf y}$ which belong to  the chambers of maximal dimension.
\end{cor}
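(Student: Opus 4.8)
The plan is to reduce the question about $M^5_{\mathbf x} = \mu^{-1}(\mathbf x)\subset G_{4,2}$ to the already-established structure of $M^7_{\mathbf x} = \tilde\mu^{-1}(\mathbf x)\subset \C P^5$, using the identity $M^5_{\mathbf x} = M^7_{\mathbf x}\cap p(G_{4,2})$ and the explicit parametrization $h : S^5\times T^2 \to M^7_Q$ from the Proposition preceding Theorem~\ref{equiv7}. Concretely, I would fix the representative point $Q = (\frac13,\frac59,\frac59,\frac59)$ of the first chamber orbit (the other orbit being handled by the permutation $z_0\to z_3,\ z_1\to z_4,\ z_2\to z_5$ noted in the text, which also carries the Pl\"ucker relation $z_0z_5+z_2z_3 = z_1z_4$ to an equivalent one, so it suffices to treat one orbit), and then substitute the parametrization of $M^7_Q$ into the Pl\"ucker quadric to see which points of $S^5\times T^2$ land in $G_{4,2}$.

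The key steps, in order. First, recall from Theorem~\ref{equiv7} that every point of $M^7_Q$ has a representative of the form $(z_0 : z_1 : z_2 : |z_3| : |z_4|t_4 : |z_5|t_5)$ with $(z_0,z_1,z_2)\in S^5$ (i.e.\ $|z_0|^2+|z_1|^2+|z_2|^2 = \frac13$), where $|z_3|,|z_4|,|z_5|$ are the positive square roots of the expressions in \eqref{z3} and \eqref{z4z5}, and $t_4,t_5\in S^1$. Second, impose the Pl\"ucker relation $z_0z_5 + z_2z_3 = z_1z_4$ on this representative: it becomes $z_0|z_5|t_5 + z_2|z_3| = z_1|z_4|t_4$. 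This is one complex equation on $S^5\times T^2$; writing $z_j = |z_j|e^{i\alpha_j}$ one gets a coupled pair of real equations relating the moduli (which are already constrained to lie over the triangle $P$ of Lemma~\ref{triangle}) and the phases $\alpha_0,\alpha_1,\alpha_2$ together with $t_4,t_5$. Third, analyze this subvariety. The cleanest route is to use the residual torus symmetry: $M^5_{\mathbf x}$ carries a free $T^2 = T^3/T^1$-action (indeed $\dim M^5_{\mathbf x} = 5$ and the generic stabilizer is $\mathrm{diag}(T^4)$ by Remark~\ref{regularvalues}), and $M^5_{\mathbf x}/T^2$ is the symplectic reduction $\cong \overline{\mathcal M}_{0,4}\cong \C P^1$ (this is the $n=4$ case of the Hassett/Deligne–Mumford identification the introduction promises). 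So one expects $M^5_{\mathbf x}$ to be a $T^2$-bundle over $S^2$. Since over $\C P^1 = \overline{\mathcal M}_{0,4}$ the relevant circle bundles are classified by $H^2(S^2;\Z) = \Z$ and one can compute the Euler class (or just exhibit a section), the answer should come out to $S^3\times T^1\times T^1$ in the worst case, or more directly $S^3\times T^2$ once one identifies the Hopf-type $S^3$ sitting inside $S^5$. The independence of the chamber follows from the Corollary just stated (all $M^5_{\mathbf x}$ for $\mathbf x$ in a maximal chamber are diffeomorphic) together with the $S_4$-orbit count of Lemma~1.

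A more hands-on alternative for the third step, which avoids invoking the moduli interpretation: after fixing the $T^2$-action one can normalize $t_4 = t_5 = 1$ and $\alpha_2 = 0$, reducing to the locus in $S^5\subset\C^3$ (with coordinates $(z_0,z_1,z_2)$, $|z_0|^2+|z_1|^2+|z_2|^2=\frac13$) cut out by $z_0|z_5| + z_2|z_3| = z_1|z_4|$ with $z_2\geq 0$; the real and imaginary parts give $\mathrm{Im}(z_0)|z_5| = \mathrm{Im}(z_1)|z_4|$ and $\mathrm{Re}(z_0)|z_5| + z_2|z_3| = \mathrm{Re}(z_1)|z_4|$, and since $|z_3|,|z_4|,|z_5|$ are smooth positive functions of $|z_0|^2,|z_1|^2$ this describes a smooth $3$-manifold, which one then checks is $S^3$ (e.g.\ it is a graph over a disk doubled along its boundary $\{z_2=0\}$, hence a $3$-sphere). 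Then reassembling the $T^2$ gives $M^5_{\mathbf x}\cong S^3\times T^2$; the product structure is guaranteed because, as in the proof of Theorem~\ref{equiv7}, the $T^2$-action is free and admits an explicit global section. I expect the main obstacle to be this third step: showing that the subvariety of $S^5$ cut out by the Pl\"ucker relation is genuinely $S^3$ (and that the bundle splits), rather than merely a homology $3$-sphere or a nontrivial $T^2$-bundle over $S^2$ — i.e.\ controlling both the diffeomorphism type of the fibre slice and the triviality of the torus bundle simultaneously. Establishing the explicit section is the decisive simplification, mirroring exactly the role played by $z_3\neq 0$ (hence $|z_3|$ well-defined and positive) in the $\C P^5$ case.
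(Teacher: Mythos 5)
Your core argument for this corollary --- that $M^{5}_{\mathbf x} = M^{7}_{\mathbf x}\cap\{z_0z_5+z_2z_3=z_1z_4\}$ and that the coordinate permutations identifying the various $M^{7}_{\mathbf x}$ (the $S_6$-permutations induced by $S_4$ within a chamber orbit, and the swap $z_0\to z_3$, $z_1\to z_4$, $z_2\to z_5$ between the two orbits) carry the Pl\"ucker quadric to itself, hence restrict to diffeomorphisms of the $M^{5}$'s --- is exactly the paper's proof, which is the one-line observation that the statement follows from Theorem~\ref{mainCP5} via this intersection. The lengthy analysis of the explicit homeomorphism type $S^3\times T^2$ that occupies most of your proposal is the subject of the paper's later sections and is neither needed for this corollary nor, as you yourself note, complete as written.
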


We determine homeomorphic type of $M_{\bf x}^{5}$. It is enough to consider the point 
$Q = (\frac{1}{3}, \frac{5}{9}, \frac{5}{9}, \frac{5}{9})$ and we  denote by $M_{Q}^{5}$ the preimage of $Q$ by the moment map $\mu$.

Since $\mu  = A\circ \tilde{\mu}$ we first  describe the projection $P^{'} = \tilde{\mu}(M_{Q}^{5})$.

\begin{lem}\label{P'}
The image $P^{'} \subset P\subset \Delta ^{5}$ is  a curve
\begin{equation}\label{curve}
\sqrt{x_0(x_0+\frac{1}{9})} + \sqrt{(\frac{1}{3} - x_0 -x_1)(\frac{4}{9} - x_0 - x_1)} = \sqrt{x_{1}(x_1 +\frac{1}{9})},
\end{equation}
whose intersection points  $X_i$ with the edges  $I_i$ of the triangle $P$ are:\\
$X_0 = (0, 1/6, 1/6, 5/18, 5/18, 1/9)$, $ X_1 = (1/6, 0,  1/6, 5/18,  1/9, 5/18),$ \\ $X_2 = (1/6, 1/6, 0, 1/9, 5/18 , 5/18)$.
\end{lem}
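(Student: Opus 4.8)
The plan is to describe $P' = \tilde\mu(M_Q^5)$ as the image, under $\hat\mu$, of the intersection of the Grassmannian hypersurface with $M_Q^7$, and then read off the defining equation from the explicit parametrization of $M_Q^7$ established above. Recall that $M_Q^5 = M_Q^7 \cap \{z_0z_5 + z_2z_3 = z_1z_4\}$, and that on $M_Q^7$ we have the relations \eqref{z3}, \eqref{z4z5} expressing $|z_3|^2, |z_4|^2, |z_5|^2$ in terms of $|z_0|^2, |z_1|^2, |z_2|^2$, with $|z_0|^2 + |z_1|^2 + |z_2|^2 = \tfrac13$ after normalization. Thus $\hat\mu$ restricted to $M_Q^7$ has image the triangle $P$ parametrized by $(x_0, x_1)$ with $x_0, x_1 \ge 0$, $x_0 + x_1 \le \tfrac13$ (Lemma~\ref{triangle}), and I must determine which $(x_0,x_1)$ actually occur as images of points of $M_Q^5$.

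First I would observe that a point of $M_Q^7$ lies over $(x_0, x_1) \in P$ precisely when there is a choice of phases making $z_0z_5 + z_2z_3 = z_1z_4$ hold, with the moduli fixed by $x_0,x_1$ via \eqref{z3}--\eqref{z4z5}. Writing $r_i = |z_i|$, the question of whether phases can be chosen so that the three complex numbers $z_0z_5$, $z_2z_3$, $z_1z_4$ form a (possibly degenerate) triangle in $\C$ is exactly the triangle inequality on their moduli: such phases exist iff $|r_0 r_5 - r_2 r_3| \le r_1 r_4 \le r_0 r_5 + r_2 r_3$, i.e. iff $r_1 r_4$ lies between $|r_0 r_5 - r_2 r_3|$ and $r_0 r_5 + r_2 r_3$. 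The boundary curve $P'$ of (the projection of) $M_Q^5$ — more precisely the image itself, since I expect $\hat\mu(M_Q^5)$ to be one-dimensional here — corresponds to the extreme case where the triangle degenerates, and I would argue that in fact equality $r_0 r_5 + r_2 r_3 = r_1 r_4$ is forced. The key point is to check, using the explicit expressions, that on all of $P$ one has $r_1 r_4 \ge |r_0 r_5 - r_2 r_3|$ automatically but $r_1 r_4 \le r_0 r_5 + r_2 r_3$ only on the curve \eqref{curve}, with $r_1 r_4 > r_0 r_5 + r_2 r_3$ in the interior; hence the only points of $P$ realized by $M_Q^5$ are those on the curve, where the degenerate equality holds. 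Substituting $r_0^2 = x_0$, $r_1^2 = x_1$, $r_5^2 = x_0 + \tfrac19$, $r_4^2 = x_1 + \tfrac19$, $r_2^2 = \tfrac13 - x_0 - x_1$, $r_3^2 = \tfrac49 - x_0 - x_1$ into $r_0 r_5 + r_2 r_3 = r_1 r_4$ gives exactly \eqref{curve}.

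Having the equation, the intersection points $X_i$ with the edges $I_i$ are found by setting, in turn, $x_0 = 0$ (edge $I_0$), $x_1 = 0$ (edge $I_1$), and $x_0 + x_1 = \tfrac13$, i.e. $x_2 = 0$ (edge $I_2$), and solving \eqref{curve}. On $I_0$ ($x_0 = 0$) the equation becomes $\sqrt{(\tfrac13 - x_1)(\tfrac49 - x_1)} = \sqrt{x_1(x_1 + \tfrac19)}$, i.e. $(\tfrac13 - x_1)(\tfrac49 - x_1) = x_1(x_1 + \tfrac19)$; expanding, the quadratic terms cancel and one gets a linear equation with solution $x_1 = \tfrac16$, yielding $X_0 = (0, \tfrac16, \tfrac16, \tfrac{5}{18}, \tfrac{5}{18}, \tfrac19)$ after substituting into the formulas of Lemma~\ref{triangle}. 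The case $x_1 = 0$ is symmetric and gives $X_1 = (\tfrac16, 0, \tfrac16, \tfrac{5}{18}, \tfrac19, \tfrac{5}{18})$; the case $x_2 = 0$, i.e. $x_1 = \tfrac13 - x_0$, reduces \eqref{curve} to $\sqrt{x_0(x_0+\tfrac19)} = \sqrt{(\tfrac13 - x_0)(\tfrac49 - x_0)}$, again linear after cancellation, with solution $x_0 = \tfrac16$ and hence $X_2 = (\tfrac16, \tfrac16, 0, \tfrac19, \tfrac{5}{18}, \tfrac{5}{18})$.

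The main obstacle is the step justifying that the image is exactly the curve and not a two-dimensional region: I need the inequality analysis on the triangle $P$ showing $r_1 r_4 > r_0 r_5 + r_2 r_3$ in the interior (so the Plücker relation has no solutions there for any phases) while the opposite, along with the lower triangle inequality, holds precisely on \eqref{curve}. This is a finite computation with the explicit radicals — one can, for instance, square and compare $r_1^2 r_4^2$ with $r_0^2 r_5^2 + r_2^2 r_3^2 \pm 2 r_0 r_2 r_3 r_5$ — but it is where the real content lies; everything after that is routine algebra. I would also remark that $P' \subset P$ is immediate since $M_Q^5 \subset M_Q^7$, which is why the statement only claims containment in $P$ and in $\Delta^5$.
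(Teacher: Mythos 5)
Your reduction of the problem to a triangle inequality on the three moduli $a=r_0r_5$, $b=r_2r_3$, $c=r_1r_4$ is the correct way to compute $\hat\mu(M_Q^5)$, and it is in fact more careful than the paper's own argument, which simply declares (from $\hat\mu({\bf z})=\hat\mu(|z_0|:\dots:|z_5|)$) that $P'$ is the locus where the Pl\"ucker relation holds among the absolute values, and then finds the edge points from $x_2x_3=x_1x_4$ on $I_0$ ``and similarly for the others.'' The problem is that the step you defer as ``where the real content lies'' --- namely that $c\ge a+b$ throughout $P$ with equality exactly on \eqref{curve} --- is false, so your argument cannot be completed. At the barycenter $x_0=x_1=x_2=\tfrac19$ of $P$ one gets $a=b=c=\tfrac{\sqrt2}{9}$, so the strict triangle inequality holds; concretely, the point
\[
\Bigl(\tfrac13 e^{i\pi/3}:\tfrac13:\tfrac13 e^{-i\pi/3}:\tfrac{\sqrt2}{3}:\tfrac{\sqrt2}{3}:\tfrac{\sqrt2}{3}\Bigr)
\]
satisfies \eqref{z3}, \eqref{z4z5} and $z_0z_5+z_2z_3=z_1z_4$, hence lies in $M_Q^5$, and its image $(\tfrac19,\tfrac19,\tfrac19,\tfrac29,\tfrac29,\tfrac29)$ is an interior point of $P$ not on \eqref{curve}. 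By your own (correct) criterion the image therefore contains a full two--dimensional neighbourhood, being the region $|a-b|\le c\le a+b$ rather than the curve $c=a+b$; no inequality analysis can shrink it to a curve.

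A second, independent defect: your assertion that ``the case $x_1=0$ is symmetric'' and yields $X_1$ does not hold. On $I_1$ the right--hand side of \eqref{curve} vanishes while the left--hand side is strictly positive, so \eqref{curve} has no solution there; the point $X_1$ instead satisfies the other degenerate condition $\sqrt{x_1x_4}=|\sqrt{x_0x_5}-\sqrt{x_2x_3}|$ (here $c=0$, $a=b$), i.e.\ it lies on the other boundary component of the triangle--inequality region. (The computations for $X_0$, where $a=0$, and $X_2$, where $b=0$, do land on \eqref{curve} and are fine.) So the proposal has a genuine gap at its central step, and the honest execution of your own strategy produces a different answer from the one you set out to prove.
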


\begin{proof}  Since $\tilde{\mu}(z_0: z_1:z_3:z_4:z_5) = \tilde{\mu}(|z_0|: |z_1|: |z_2|:|z_3|: |z_4|: |z_5|)$,  it follows that $P^{'}$ is given by those points 
$x_i=\frac{|z_i|^{2}}{\|\bf{z}\|^{2}}$ for which $|z_0||z_5| + |z_3|z_4|=|z_1||z_5|$ which together with  Lemma~\ref{triangle} give~\eqref{curve}.  The intersection of $P^{'}$ with $I_{0}$ is given by those points from $I_0$ for which $x_2x_3=x_1x_4$. By Corollary~\ref{edges},  this is the same as  $(\frac{1}{3}-x_1)(\frac{4}{9}-x_1) = x_1(\frac{1}{9}+x_1)$,  which implies $x_1=\frac{1}{6}$,  so  we get the point $X_0$, and similarly  for the others.
\end{proof}

The preimages $M_{Q, i}^{3} = \tilde{\mu}^{-1}(X_i)$, $i=0,1,2$, in $G_{4,2}$ are red{the tori} $T^3$, where $X_i$ are given by Lemma~\ref{P'}.

\begin{lem}
The $M_{Q, i}$ are
\[
M_{Q, 0} = T^{3}\cdot  (0: \frac{1}{\sqrt{6}}: \frac{1}{\sqrt{6}} : \sqrt{\frac{5}{18}}: \sqrt{\frac{5}{18}}: \frac{1}{3}), \;\; T^{3} = (1, \tau _1, \tau_2, \frac{1}{\tau _2}, \frac{1}{\tau _1}, \tau _3),\]
\[M_{Q, 1} = T^{3}\cdot   ( \frac{1}{\sqrt{6}}: 0: \frac{1}{\sqrt{6}} : \sqrt{\frac{5}{18}}: \frac{1}{3}: \sqrt{\frac{5}{18}}), \;\; T^{3} = (\tau _1, 1, \tau_2, \frac{1}{\tau _2}, \tau _3, \frac{1}{\tau _1}),\]
\[
M_{Q, 2} = T^{3}\cdot  (\frac{1}{\sqrt{6}}: \frac{1}{\sqrt{6}} :  0:  \frac{1}{3}: \sqrt{\frac{5}{18}}: \sqrt{\frac{5}{18}}), \;\; T^{3} = (\tau _1, \tau_2, 1, \tau _3, \frac{1}{\tau _2}, \frac{1}{\tau _1}).\]

\end{lem}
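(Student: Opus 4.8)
The plan is to realise each $M_{Q,i}$ as the orbit of a single point under the three-dimensional subtorus of $T^6$ written in the statement. Three ingredients are needed: that the displayed point lies on $G_{4,2}$ and has Pl\"ucker moduli equal to the coordinates of $X_i$ from \lemref{P'}; that $M_{Q,i}$ is exactly one $T^4$-orbit; and the change of parameters that presents the $T^4$-orbit in the displayed form.

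First I would assemble the ambient facts. By \lemref{P'} the point $X_i$ lies on $P'=\tilde\mu(M_Q^5)$, and since $\mu=A\circ\tilde\mu$ sends $X_i$ to $Q$ we get $M_{Q,i}\subset M_Q^5$. The point $Q=(\tfrac13,\tfrac59,\tfrac59,\tfrac59)$ avoids every hyperplane $x_1+x_j=1$, hence lies in a maximal-dimensional chamber, so by Remark~\ref{regularvalues} the stabiliser of any point of $M_Q^5$ for the $T^4$-action is $\mathrm{diag}(T^4)=S^1$; thus $T^3=T^4/S^1$ acts freely on $M_{Q,i}$. Also $M_{Q,i}=G_{4,2}\cap\hat\mu^{-1}(X_i)$ is $T^4$-invariant, and $\hat\mu^{-1}(X_i)$, being a fibre of the standard moment map over an interior point of a facet of $\Delta^5$, is a torus $T^4$ in $\C P^5$; intersecting it with the Pl\"ucker quadric $z_0z_5+z_2z_3=z_1z_4$, whose norm form already holds because $X_i\in P'$, leaves a single equation on the five remaining arguments and cuts the $T^4$ down to a $T^3$. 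A free action of the compact torus $T^3$ on the connected $3$-torus $M_{Q,i}$ is automatically transitive, so $M_{Q,i}$ is a single $T^4$-orbit as soon as we produce one point of it.

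Next I would verify the base points. For $i=0$ take $b_0=(0:\tfrac1{\sqrt6}:\tfrac1{\sqrt6}:\sqrt{5/18}:\sqrt{5/18}:\tfrac13)$; its squared moduli, already summing to $1$, are the coordinates of $X_0=(0,\tfrac16,\tfrac16,\tfrac5{18},\tfrac5{18},\tfrac19)$ from \lemref{P'}, and since $z_0=0$ the Pl\"ucker equation reduces to $z_2z_3=z_1z_4$, which holds; so $b_0\in M_{Q,0}$ and hence $M_{Q,0}=T^4\cdot b_0$. The cases $i=1,2$ follow by applying the permutation of the coordinates of $\C^6$ induced by the element of $S_4$ that fixes $Q$ and carries $X_0$ to $X_i$, the one point requiring care being the sign the Pl\"ucker relation forces when the vanishing coordinate is $z_1=P^{13}$ (then it reads $z_0z_5=-z_2z_3$, the sign being precisely the one picked up by the permutation on $\Lambda^2\C^4$), which must be built into the corresponding base point.

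Finally I would carry out the change of parameters. Because $b_0$ has vanishing $0$th coordinate, the element $(t_1t_2,t_1t_3,t_1t_4,t_2t_3,t_2t_4,t_3t_4)\in T^6$ acts on $b_0$ exactly as the element $(1,\tau_1,\tau_2,\tau_2^{-1},\tau_1^{-1},\tau_3)\in T^6$ whenever $\tau_1=t_1t_3(t_1t_2t_3t_4)^{-1/2}$, $\tau_2=t_1t_4(t_1t_2t_3t_4)^{-1/2}$ and $\tau_3=t_3t_4(t_1t_2t_3t_4)^{-1/2}$ (the square root is harmless: its two values differ by an overall sign that is absorbed projectively). Conversely every $(\tau_1,\tau_2,\tau_3)\in T^3$ arises this way, since the system $t_1t_3=\tau_1$, $t_1t_4=\tau_2$, $t_2t_3=\tau_2^{-1}$, $t_2t_4=\tau_1^{-1}$, $t_3t_4=\tau_3$ is consistent precisely because $\tau_1\tau_1^{-1}=\tau_2\tau_2^{-1}$ and hence has a solution $(t_1,\dots,t_4)$. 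Therefore $M_{Q,0}=T^4\cdot b_0$ is the orbit of $b_0$ under $\{(1,\tau_1,\tau_2,\tau_2^{-1},\tau_1^{-1},\tau_3)\}\subset T^6$, which is the first displayed formula, and the same computation after the index permutation gives the formulas for $M_{Q,1}$ and $M_{Q,2}$. The only step that goes beyond routine torus arithmetic is the claim that $M_{Q,i}$ is a single orbit rather than a positive-dimensional family of orbits, and this rests exactly on $X_i$ lying on the curve $P'$ of \lemref{P'}, which forces the Pl\"ucker quadric to meet the moduli torus $T^4$ in codimension one.
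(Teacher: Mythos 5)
Your proof is correct, and it is worth noting that the paper states this lemma with no proof at all, so your argument is filling a genuine gap rather than paralleling anything in the text. The route you take is the natural one and the computations check out: the fibre of the standard moment map over $X_i$ (an interior point of a facet of $\Delta^5$) is a $4$-torus; since exactly one of the three monomials $z_0z_5,\ z_2z_3,\ z_1z_4$ vanishes at $X_i$ and the other two have equal moduli, the Pl\"ucker relation imposes a single phase condition and cuts out a $3$-subtorus; the free $T^3=T^4/S^1$-action on this connected compact $3$-manifold forces it to be a single orbit; and your reparametrization of the $T^4$-orbit of the base point by $(\tau_1,\tau_2,\tau_3)$ reproduces exactly the subtori displayed in the statement (the square-root ambiguity is indeed absorbed because the torus coordinate sitting over the vanishing Pl\"ucker coordinate is irrelevant). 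You also correctly catch that the printed base point for $M_{Q,1}$ does not actually lie on $G_{4,2}$: with $z_1=0$ the quadric forces $z_0z_5=-z_2z_3$, so one coordinate of that representative must carry a minus sign (the sign produced by the permutation $(2\,3)$ acting on $\Lambda^2\C^4$), which is a genuine slip in the lemma as stated. One small caution: your appeal to ``$X_i$ lies on the curve $P'$'' to justify the norm identity is literally valid only for $i=0,2$; at $X_1$ the curve equation of the preceding lemma reads $|z_0z_5|+|z_2z_3|=0$ and fails, the correct degenerate relation there being $|z_0z_5|=|z_2z_3|$ with $|z_1z_4|=0$. Since you verify the required modulus identities directly from the explicit coordinates of each $X_i$, this does not affect your argument, but you should not lean on that lemma's equation for $i=1$.
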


 As we already said, the torus $T^3=T^4/S^1$ acts freely on $M_{Q}^{5}$, where $S^1$ is the diagonal in $T^4$ and  the action of $T^4$ is given by the composition of the  second exterior power representation $T^4\to T^6$ and the standard $T^6$-action.

We first emphasize the following:

\begin{prop}\label{MQ54}
There is no representation $T^3\to T^6$ which composed with the canonical $T^6$-action  realizes  free $T^3=T^4/S^1$ -action on $M_{Q}^{5}$.
\end{prop}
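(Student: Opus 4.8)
The plan is to argue by contradiction: suppose there is a representation $\phi\colon T^3\to T^6$ whose composition with the canonical $T^6$-action on $\C P^5$ restricts to the given free $T^3 = T^4/S^1$-action on $M_Q^5$. Since the $T^6$-action factors through $T^5 = T^6/\text{diag}(T^6)$ on $\C P^5$, the relevant data is really the induced map $\bar{\phi}\colon T^3 \to T^5$, equivalently a $3\times 6$ integer weight matrix $W$ (with rows summing to a fixed constant, or considered modulo the all-ones vector). The canonical $T^4\to T^6$ second symmetric power representation gives, for the original action, the weight matrix $A$ from~\eqref{1}, and the $T^3 = T^4/S^1$-action is encoded by $A$ modulo the diagonal. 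The hypothesis would then say that on $M_Q^5$ the action of $\bar{\phi}(T^3)$ coincides with the action of the image of $T^4/S^1$ under $A$, as subgroups of the torus $T^5$ acting on $\C P^5$ and then restricted to $M_Q^5$.

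The key step is to extract concrete algebraic constraints from the geometry of $M_Q^5 \subset \C P^5$. First I would use the explicit description of $M_Q^5 = M_Q^7 \cap \{z_0z_5 + z_2z_3 = z_1z_4\}$ together with the equations~\eqref{z41}--\eqref{z4z5} (which force $z_3,z_4,z_5\neq 0$ on $M_Q^5$, hence $M_Q^5$ lies in the affine chart $z_3\neq 0$, and indeed all of $z_3,z_4,z_5$ are nonvanishing coordinate-functions there). A candidate representation $T^3\to T^6$ acts on homogeneous coordinates, so the induced action on the affine chart where, say, $z_3$ is normalized to $1$ forces the weight vector $w_3$ (the third column of $W$) to behave like the reference; more importantly, the Plücker quadric $z_0z_5 + z_2z_3 = z_1z_4$ must be preserved by the $T^6$-action through $\phi$, which forces the three monomials $z_0z_5$, $z_2z_3$, $z_1z_4$ to carry the \emph{same} weight under $\phi$, i.e. $w_0+w_5 = w_2+w_3 = w_1+w_4$ in $\Z^3$ (up to the diagonal ambiguity, equivalently exactly, after normalizing). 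This is exactly the defining relation satisfied by the columns of $A$, so $W$ and $A$ are "compatible" with the quadric in the same way. Then, comparing the actual $T^3$-orbits: the orbit $T^3\cdot {\bf z}$ inside $M_Q^5$ through a point with all of $z_0,\dots,z_5\neq 0$ is a $3$-torus whose tangent directions at ${\bf z}$ are spanned by the rows of $W$ (modulo diagonal), and for the reference $T^4/S^1$-action they are spanned by the rows of $A$ (modulo diagonal). Freeness plus the requirement that these be the \emph{same} orbits forces $W$ and $A$ to span the same rank-$3$ sublattice of $\Z^6/\langle(1,\dots,1)\rangle$ — and then, because $A$ is (essentially) the $\binom42$ second-symmetric-power matrix with its own symmetry, one checks this sublattice is \emph{not} of the form (rows of a $3\times 6$ matrix that factors through the standard $T^6$ via a mere coordinate-grouping, i.e. a $0/1$ incidence pattern), producing the contradiction.

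Concretely, I would finish by a small case analysis on the possible $3\times 6$ integer matrices $W$ (normalized so that some column is zero, using the diagonal and the chart $z_3\neq 0$): the $2$-plane/quadric conditions $w_0 + w_5 = w_2 + w_3 = w_1 + w_4$ together with the freeness of the action on $M_Q^5$ (no point of $M_Q^5$ has nontrivial $\phi(T^3)$-stabilizer — this uses that $Q$ is a regular value and Remark~\ref{regularvalues}) pin $W$ down, up to $\GL_3(\Z)$ change of basis on the source $T^3$, to the reduced form of $A$; but $A$ itself, reduced modulo the diagonal, has row span equal to the kernel-type lattice cut out by the hypersimplex relations, and one verifies directly (a rank and a determinant computation on a couple of $3\times3$ minors) that this is incompatible with $W$ arising as the weight matrix of a \emph{coordinate-permutation-type} representation $T^3\to T^6$ — the only representations whose composition with the standard $T^6$-action gives a well-defined action on all of $\C P^5$ and restricts to the prescribed one. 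The main obstacle I anticipate is bookkeeping: correctly handling the diagonal $S^1$ ambiguity (everything should be done in $\Z^6/\langle \mathbf{1}\rangle$ or equivalently after fixing the affine chart $z_3 = 1$), and making the "same orbit $\Rightarrow$ same lattice" step airtight rather than merely "same tangent space", which requires invoking that both actions are free with connected orbits and that $M_Q^5$ is connected (so the global orbit equivalence upgrades the infinitesimal statement). Once the lattice identity is established, the final contradiction is a short explicit computation with the matrix~\eqref{1}.
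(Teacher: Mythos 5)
Your opening moves track the paper's: you derive the weight relation $w_0+w_5=w_2+w_3=w_1+w_4$ from invariance of the Pl\"ucker quadric (the paper obtains $\rho_0\rho_5=\rho_1\rho_4=\rho_2\rho_3$ by evaluating at points of $M_Q^5$ with $z_0=0$, $z_1=0$, $z_2=0$ respectively), and you correctly note that coincidence of two free torus actions on the connected $M_Q^5$, which contains points with all Pl\"ucker coordinates nonzero, forces the two weight matrices to agree as maps into $\Z^6/\Z\mathbf{1}$ and not merely to have the same tangent spaces. Up to that point the proposal is sound and is essentially the paper's argument.

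The final step fails. You close the argument by asserting that the only representations $T^3\to T^6$ whose composition with the standard $T^6$-action is well defined on all of $\C P^5$ and could restrict to the prescribed action are ``coordinate-permutation-type'' ones with $0/1$ incidence weight matrices. That is not true: \emph{every} homomorphism $T^3\to T^6$, i.e.\ every $6\times 3$ integer weight matrix, composes with the standard action to give a well-defined action on all of $\C P^5$ --- the paper itself uses $(t_1,t_2,t_3)\mapsto(t_1,t_2,t_3,1,t_3/t_2,t_3/t_1)$ in Remark~\ref{T3free}. Once that spurious restriction is dropped, your lattice condition yields no contradiction: since $\Z^3$ is free, the injection $\Z^4/\Z(1,1,1,1)\hookrightarrow \Z^6/\Z\mathbf{1}$ induced by the matrix $A$ of~\eqref{1} lifts through $\Z^6\to\Z^6/\Z\mathbf{1}$. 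Concretely, $\phi(s_1,s_2,s_3)=(s_1s_2,\,s_1s_3,\,s_1,\,s_2s_3,\,s_2,\,s_3)$ --- the second symmetric power evaluated on the slice $\tau_4=1$, which meets each coset of $\mathrm{diag}(T^4)$ exactly once --- satisfies $\phi_0\phi_5=\phi_1\phi_4=\phi_2\phi_3$ and reproduces the $T^4/S^1$-action on all of $\C P^5$. So the route you propose cannot be completed. You should also be aware that the paper's own proof runs into the same difficulty: its displayed normal form interchanges $\rho_4$ and $\rho_5$ relative to the relations $\Lambda_4=\Lambda_2+\Lambda_3-\Lambda_1$, $\Lambda_5=\Lambda_2+\Lambda_3-\Lambda_0$ derived immediately before, and with the consistent labeling the final system no longer forces $t_1=t_2$; the representation $\phi$ above satisfies every constraint appearing in that proof, so the statement itself appears to need re-examination rather than a repaired proof.
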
 

\begin{proof}
Let $T^3$ acts on $M_{Q}^{5}\subset G_{4,2}\subset \C P^5$ by a representation $\rho : T^3\to T^6$ given by $\rho = (\rho _0, \ldots , \rho _5)$. Then it holds $\rho _0\rho _5 = \rho _1\rho _4=\rho _2\rho _3$. Namely,  notice that   for $(z_0: \ldots :z_5) \in M_{Q}^{5}$  we have  $z_0z_5+z_2z_3=z_1z_4$, then $z_3, z_4, z_5\neq 0$ and $|z_i|^2+|z_j|^2\neq 0$ for $0\leq i<j\leq 2$.    Therefore, the equality $\rho _0\rho _5z_0z_5 + \rho _2\rho _3z_2z_3 = \rho _1\rho _4z_1z_4$ implies for $z_0=0$ that 
\begin{equation}
\rho _2\rho _3=\rho _1\rho _4,  
\end{equation}
 since in this case $z_i\neq 0$ for $i\neq 0$. Similarly, for $z_2=0$ we obtain 
\begin{equation}\label{rhos}
\rho _0\rho _5=\rho _1\rho _4.
\end{equation}
Let  $\rho _{k}({\bf x}) = e^{i\langle  \Lambda _k, {\bf x}\rangle}$, $0\leq k\leq 5$, ${\bf x}\in \R^3 = \text{Lie}(T^3)$,   Then~\eqref{rhos} gives  $e^{i\langle \Lambda _2+\Lambda _3 -\Lambda _0-\Lambda 5, {\bf x}\rangle} =1$ for all ${\bf x}\in \R^3$. Because of continuity we have $\langle \Lambda _2+\Lambda _3 -\Lambda _0 - \Lambda _5 , {\bf x}\rangle =2k_0\pi$,  for all ${\bf x}\in \R^3$ and $k_0\in \Z$. For $k_0\neq 0$ this is $2$-dimensional plane, so $k_0=0$ and   $ \Lambda _2 + \Lambda _3 -\Lambda _0 -\Lambda _5=0$.  In the same way  $\Lambda _2+\Lambda _3 - \Lambda_1-\Lambda _4=0$. Without loss of generality we can take that  $\Lambda _0$, $\Lambda _1, \Lambda _2$ are linearly independent.   Let $\Lambda _3 = a\Lambda _0 + b\Lambda _1 + c\Lambda _2$ for $a, b, c\in \Z$.  Then in this basis the representation $\rho$ writes as 
\[
\rho (t_1, t_2, t_3) = (t_1, t_2, t_3, t_{1}^{a}t_{2}^{b}t_{3}^{c}, t_{1}^{a-1}t_{2}^{b}t_{3}^{c+1}, t_{1}^{a}t_{2}^{b-1}t_{3}^{c+1}).
\] 

For a point $(z_0:\ldots :z_5)\in M_{Q}^{4}$ such that $z_0, z_1, z_2\neq 0$,  the assumption that $T^4$-action and the action given by $\rho$ coincide, implies that 
$\tau _1\tau _2 = \lambda t_1$, $\tau _1\tau_3 =\lambda t_2, \tau _1\tau _4=\lambda t_3$, $\tau _2\tau _3 = \lambda t_{1}^{a}t_{2}^{b}t_{3}^{c}$, $\tau _{2}\tau _{4} = \lambda t_{1}^{a-1}t_{2}^{b}t_{3}^{c+1}$ and $\tau _3\tau _4 = \lambda t_{1}^{a}t_{2}^{b-1}t_{3}^{c+1}$  for $\lambda \in S^1$ and for all $(\tau _1, \tau _2, \tau _3, \tau _4)\in T^4$ and all  $(t_1, t_2,t_3)\in T^3$. 
 But, we obtain  
\[
\tau _1 = t_{1}^{-a}t_{2}^{1-b}t_{3}^{-c} \tau _2 = t_{1}^{1-a}t_{2}^{-b}t_{3}^{-c}\tau _2,
\]
which gives $t_1=t_2$ 

\end{proof}

This also gives, compare to  Theorem~\ref{MQ4} and Remark~\ref{T4MQ7}:

\begin{cor}
The  free $T^3=T^{4}/\text{diag}(T^4)$- action on $M_{Q}^{7}$ can not be realized by the composition of the representation $T^3\to T^6$ and the canonical $T^6$-action on $M_{Q}^{7}$.
\end{cor}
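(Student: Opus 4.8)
The plan is to derive the impossibility as a direct consequence of Proposition~\ref{MQ54}, by transferring the statement from $M_Q^5 \subset G_{4,2}$ to $M_Q^7$ via the explicit product structure established in Theorem~\ref{equiv7}. The key observation is that $M_Q^5 = M_Q^7 \cap \{z_0z_5 + z_2z_3 = z_1z_4\}$ sits inside $M_Q^7$ as a $T^3$-invariant submanifold, where the $T^3 = T^4/\mathrm{diag}(T^4)$-action on both spaces is induced by restricting the canonical $T^6$-action on $\C P^5$ along the second symmetric power representation $T^4 \to T^6$. So any representation $\rho : T^3 \to T^6$ whose composition with the canonical $T^6$-action realizes the free $T^3$-action on $M_Q^7$ would, upon restriction to $M_Q^5$, realize the free $T^3$-action on $M_Q^5$ by the same mechanism.

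First I would argue that the $T^4$-action on $M_Q^7$ (via $T^4 \to T^6$) genuinely restricts to the $T^4$-action on $M_Q^5$: this is immediate since $M_Q^5 \subset M_Q^7$ is cut out by a Pl\"ucker relation preserved by the torus, and since $Q$ is a regular value for $\mu$ (being off all hyperplanes $x_i + x_j = 1$) the diagonal $S^1$ is exactly the stabilizer in each case, so the effective quotient torus $T^3$ is the same. Next I would suppose, for contradiction, that some $\rho : T^3 \to T^6$ composed with the canonical $T^6$-action on $\C P^5$ gives the free $T^3 = T^4/\mathrm{diag}(T^4)$-action on $M_Q^7$. Restricting to $M_Q^5$, the same $\rho$ composed with the canonical $T^6$-action on $\C P^5$ would give the free $T^3$-action on $M_Q^5 \subset G_{4,2}$. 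But this is precisely what Proposition~\ref{MQ54} forbids: there is no representation $T^3 \to T^6$ realizing the free $T^3$-action on $M_Q^5$ through the canonical $T^6$-action. This contradiction completes the proof.

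The only point requiring care — and the step I expect to be the main (mild) obstacle — is making precise that ``realizes the free $T^3$-action'' on $M_Q^7$ really does imply the corresponding statement on the invariant subspace $M_Q^5$, i.e. that restriction of actions is compatible with the two descriptions of the $T^3$-action (one intrinsic via $T^4/\mathrm{diag}(T^4)$, one via $\rho$). Concretely, if the $T^4$-action through $T^4 \to T^6$ and the action through $\rho$ agree on $M_Q^7$ up to the diagonal $S^1$ (as in the argument of Proposition~\ref{MQ54}, where the coincidence is expressed by the system $\tau_1\tau_2 = \lambda t_1$, etc.), then the same identities hold verbatim for points of $M_Q^5$ with $z_0, z_1, z_2 \neq 0$ — a dense open set of $M_Q^5$ — and the conclusion $t_1 = t_2$ derived there applies unchanged. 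Thus the corollary follows formally, with no new computation beyond recognizing that Proposition~\ref{MQ54} is stated for a submanifold on which the relevant actions restrict.
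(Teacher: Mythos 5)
Your proposal is correct and matches the paper's intent: the paper states this corollary as an immediate consequence of Proposition~4 (``This also gives\dots''), and your restriction argument --- that a representation $\rho:T^3\to T^6$ realizing the $T^4/\mathrm{diag}(T^4)$-action on $M_Q^7$ would preserve the invariant submanifold $M_Q^5=M_Q^7\cap\{z_0z_5+z_2z_3=z_1z_4\}$ and realize the same action there, contradicting that proposition --- is exactly the implicit derivation, spelled out with appropriate care about why the restriction is well defined.
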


\begin{rem}\label{T3free}
Note that there are representations $T^3\to T^6$ which composed with the canonical $T^6$-  action produce free $T^3$-action on $M_{Q}^{5}$. It is easy to verify that such representation would be $(t_1, t_2, t_3, 1, \frac{t_{3}}{t_2}, \frac{t_{3}}{t_1})$.
\end{rem} 
\section{Explicit description of $M_{Q}^{5}$}
Since $M_{Q}^{5} = M_{Q}^{7}\cap G_{4,2}$ from~\eqref{definingequations} we deduce:
\begin{lem}\label{MQ5affine}
The manifold $M_{Q}^{5}$ is an affine manifold in $\C ^5$ given by the points $(z_0, z_1, z_2, |z_3|, z_4, z_5)$ such that $z_0z_5 + z_{2}|z_3| =z_1z_4$ and  $|z_0|^2+|z_1|^2+|z_2|^2=\frac{1}{3}$.
\end{lem}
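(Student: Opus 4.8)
The plan is to combine two facts already established. First, recall that $M_{Q}^{5}=M_{Q}^{7}\cap\{z_0z_5+z_2z_3=z_1z_4\}$, the intersection of $M_{Q}^{7}$ with the Pl\"ucker quadric cutting out $p(G_{4,2})$ in $\C P^5$. Second, in the Proposition preceding \thmref{equiv7} we showed that every point of $M_{Q}^{7}$ has a representative of the form $(z_0,z_1,z_2,|z_3|,z_4,z_5)$ with $z_3$ real and nonnegative, with $|z_0|^2+|z_1|^2+|z_2|^2=\tfrac13$, and with $|z_3|^2=\tfrac49-|z_0|^2-|z_1|^2$, $|z_4|^2=|z_1|^2+\tfrac19$, $|z_5|^2=|z_0|^2+\tfrac19$ as in~\eqref{S5embed} (equivalently~\eqref{z3}--\eqref{z4z5} together with the normalization). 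So the first step is to note that this representative is canonical: since $z_3\ne0$ on $M_{Q}^{7}$, the phase of the homogeneous scalar is spent making $z_3=|z_3|$, and since~\eqref{z3} forces $|z_0|^2+|z_1|^2+|z_2|^2\ne0$, the remaining positive real scaling is spent normalizing this sum to $\tfrac13$; both choices are uniquely determined.

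Next I would substitute this representative into the Pl\"ucker relation. Because $z_0z_5+z_2z_3=z_1z_4$ is homogeneous of degree two, a point of $\C P^5$ lies on the quadric if and only if its chosen representative does, and with $z_3=|z_3|$ this becomes $z_0z_5+z_2|z_3|=z_1z_4$. Hence a point of $M_{Q}^{7}$ lies in $M_{Q}^{5}$ exactly when its canonical representative satisfies $z_0z_5+z_2|z_3|=z_1z_4$; conversely, any tuple with $|z_0|^2+|z_1|^2+|z_2|^2=\tfrac13$, with $|z_3|,|z_4|,|z_5|$ given by the formulas above, and with $z_0z_5+z_2|z_3|=z_1z_4$ represents a point of $M_{Q}^{7}$ lying on the quadric, hence a point of $M_{Q}^{5}$. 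Since $|z_3|,|z_4|,|z_5|$ are functions of $z_0,z_1,z_2$, the coordinates $z_0,z_1,z_2,z_4,z_5$ identify the ambient affine space with $\C^5$, and $M_{Q}^{5}$ is precisely the locus described in the statement.

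Finally, $M_{Q}^{5}\subset M_{Q}^{7}\subset\{z_3z_4z_5\ne0\}$, so $M_{Q}^{5}$ is contained in an affine chart of $\C P^5$, and it is a smooth submanifold because $Q$ is a regular value of $\mu$; therefore it is an affine manifold in $\C^5$. The content here is essentially bookkeeping; the only point requiring a little care is the simultaneous achievability and uniqueness of the two normalizations defining the canonical representative, which is why I carry it out first, after which intersecting with the Pl\"ucker quadric is a direct substitution.
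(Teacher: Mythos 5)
Your proposal is correct and follows essentially the same route as the paper: both arguments produce the unique representative of a point of $M_{Q}^{5}$ by using the phase of the homogeneous scalar to make $z_3=|z_3|>0$ and the positive real scaling to normalize $|z_0|^2+|z_1|^2+|z_2|^2=\tfrac13$, then observe that the (homogeneous) Pl\"ucker relation becomes $z_0z_5+z_2|z_3|=z_1z_4$ for this representative. Your write-up is somewhat more explicit about the converse inclusion and about why the locus sits in an affine chart, but the underlying argument is the same.
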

\begin{proof}
Any point  ${\bf z} = (z_0:\ldots :z_5)\in M_{Q}^{5}$ has an unique representative of the form $(z_{0}^{'}, z_{1}^{'}, z_{2}^{'}, |z_{3}^{'}|, z_{4}^{'}, z_{5}^{'})$ satisfying the given conditions. 
More precisely,  for ${\bf z}$ we have $z_3 = |z_3|e^{i\psi _{3}}\neq 0$ and $\frac{1}{\|{\bf z}\|^{2}}(|z_0|^2+|z_1|^2+|z_2|^2)=\frac{1}{3}$, so the required  representative is given by 
$\frac{e^{-i\psi _3}}{\|{\bf z}\|^2}(z_0:\ldots :z_5)$.
\end{proof}

Let us consider the sphere $S^{5}\subset M_{Q}^{7}$ given by the embedding~\eqref{S5embed}.

\begin{lem}\label{intersection}
The intersection of $M_{Q}^{5}$ with the sphere $S^5$ is  the closed  $3$-dimensional surface $M^{3} \subset \C P^5$ given by the points 
\[ 
(z_0 :  z_1:  z_2 : |z_3| :  |z_4| : |z_5|) \in \C P^5,
\]
where
\begin{equation}\label{2345}
|z_4|^2 = |z_1|^2 +\frac{1}{9}, \;\; |z_5|^2 = |z_0|^2  + \frac{1}{9}, \;\; |z_2|^2 = \frac{1}{3} - |z_0|^2 -|z_1|^2, 
\end{equation}
\[
|z_3|^2  = |z_2|^2 + \frac{1}{9}, 
\]
which satisfy
 \begin{equation}\label{surface}
z_0 \sqrt{|z_0|^2 +\frac{1}{9}} + z_2\sqrt{|z_2|^2 + \frac{1}{9} }
 = z_1 \sqrt{|z_1|^2 +\frac{1}{9}}. 
\end{equation}
\end{lem}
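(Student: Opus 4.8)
The plan is to intersect the two explicit descriptions already in hand: that of the sphere $S^5$ (meaning its image under $f$ from~\eqref{S5embed}) and that of $M_Q^5$ from Lemma~\ref{MQ5affine}. First recall that $f(S^5)\subset\C P^5$ consists of the points $(z_0:z_1:z_2:|z_3|:|z_4|:|z_5|)$ with $|z_0|^2+|z_1|^2+|z_2|^2=\tfrac13$ and $|z_3|,|z_4|,|z_5|$ given by~\eqref{S5embed}. On $S^5$ we have $|z_2|^2=\tfrac13-|z_0|^2-|z_1|^2$, so the identity $|z_3|^2=\tfrac49-|z_0|^2-|z_1|^2$ in~\eqref{S5embed} is the same as $|z_3|^2=|z_2|^2+\tfrac19$; thus $f(S^5)$ is exactly the locus described by~\eqref{2345}. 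Note also that on $S^5$ the squared moduli $|z_3|^2,|z_4|^2,|z_5|^2$ are all $\ge\tfrac19$, hence their square roots are smooth functions of $(z_0,z_1,z_2)$, and $f$ is a smooth embedding: it is injective since a rescaling $\lambda\in\C^\ast$ matching two points of $f(S^5)$ must be a positive real with $|\lambda|=1$, i.e. $\lambda=1$.

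Next I would determine which points of $f(S^5)$ lie in $M_Q^5$. By Lemma~\ref{MQ5affine} a point of $\C P^5$ lies in $M_Q^5$ if and only if it has a representative $(w_0,w_1,w_2,|w_3|,w_4,w_5)$ with $|w_0|^2+|w_1|^2+|w_2|^2=\tfrac13$ and $w_0w_5+w_2|w_3|=w_1w_4$. For a point of $f(S^5)$ that representative is the unique one $(z_0,z_1,z_2,|z_3|,|z_4|,|z_5|)$, by the same rescaling argument as above. Hence such a point belongs to $M_Q^5$ exactly when $z_0|z_5|+z_2|z_3|=z_1|z_4|$, and inserting $|z_3|=\sqrt{|z_2|^2+\tfrac19}$, $|z_4|=\sqrt{|z_1|^2+\tfrac19}$, $|z_5|=\sqrt{|z_0|^2+\tfrac19}$ turns this into precisely~\eqref{surface}. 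This establishes the set-theoretic equality $M_Q^5\cap S^5=M^3$.

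For the topological assertions, compactness is immediate: $f(S^5)$ is compact and $M_Q^5=\mu^{-1}(Q)$ is closed in $\C P^5$, so $M^3$ is compact, in particular closed. To see that $M^3$ is a smooth $3$-manifold I would transport it through $f$: writing $\phi(z)=z\sqrt{|z|^2+\tfrac19}$, we have $f^{-1}(M^3)=F^{-1}(0)$ for $F\colon S^5\to\C$, $F=\phi(z_0)+\phi(z_2)-\phi(z_1)$, and it suffices to check that $0$ is a regular value of $F$ on $S^5$. A short computation gives $\partial\phi/\partial z=\tfrac{3|z|^2+2/9}{2\sqrt{|z|^2+1/9}}>0$ and $\partial\phi/\partial\bar z(0)=0$, with $\partial\phi/\partial z(0)=\tfrac13$. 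Evaluating $dF$ on the phase-rotation vector fields tangent to $S^5$ (which contribute $i\phi(z_j)$ up to sign) together with amplitude-exchange directions, and using the relation $\phi(z_0)+\phi(z_2)=\phi(z_1)$ valid on $F^{-1}(0)$, one finds $dF$ surjective at every point of $F^{-1}(0)$; then $M^3\cong F^{-1}(0)$ is a closed submanifold of $S^5$ of dimension $5-2=3$, embedded by $f$ in $\C P^5$.

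The step I expect to be the main obstacle is precisely this last one — equivalently, showing that the hypersurface $M_Q^5$ meets the slice $S^5\subset M_Q^7\cong S^5\times T^2$ (Theorem~\ref{equiv7}) transversally, so that $M^3$ is genuinely a smooth $3$-manifold and not merely a $3$-dimensional subset. Some care is needed at the points where one of $z_0,z_1,z_2$ vanishes (where the induced torus action on $M_Q^7$ is not free) and at points where $z_0,z_1,z_2$ are mutually real-proportional, but in the first case the then-unconstrained amplitude direction together with $\partial\phi/\partial z(0)=\tfrac13\neq0$ settles surjectivity of $dF$, and the second case is a lower-dimensional locus handled by a finite check on amplitude-exchange directions; everything else in the proof is bookkeeping between the two coordinate descriptions.
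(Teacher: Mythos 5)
Your set-theoretic identification of $M_Q^5\cap S^5$ is exactly the paper's argument: the paper's proof is the single observation that a point of the slice, having the canonical representative $(z_0,z_1,z_2,|z_3|,|z_4|,|z_5|)$, lies on the Pl\"ucker quadric iff $z_0|z_5|+z_2|z_3|=z_1|z_4|$, which is \eqref{surface}; your uniqueness-of-representative remarks just make this explicit. (Your paraphrase of Lemma~\ref{MQ5affine} drops the moment-map constraints tying $|w_3|,|w_4|,|w_5|$ to $|w_0|,|w_1|,|w_2|$, but since you only apply it to points of $f(S^5)\subset M_Q^7$ this is harmless.) The one place you go beyond the paper is the regular-value argument for $F=\phi(z_0)+\phi(z_2)-\phi(z_1)$ showing $M^3$ is a smooth closed $3$-manifold: the paper does not prove this in the lemma at all, and instead obtains the topology of $M^3$ later by exhibiting a free $S^1$-action on it with quotient $\C P^1$ and comparing with the Hopf fibration, which yields $M^3\cong S^3$ directly; your transversality check is a reasonable and essentially complete alternative route to the dimension claim, though the degenerate loci (some $z_j=0$, or all $\phi(z_j)$ real-proportional) do need the case analysis you sketch rather than a one-line dismissal.
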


\begin{proof}
The point from $M_{Q}^{5}\cap S^5$ satisfy equation  $z_0|z_5| +z_2|z_3| = z_1|z_4| $, which writes as~\eqref{surface}.
\end{proof}

\subsection{$M_{Q}^{5}$ as $T^3$-orbit of $2$-dimensional surface}

Let $M^{2}\subset M^{3}$ be a $2$-dimensional  surface defined by the condition $\phi _2=0$, that is 
\begin{equation}\label{M2}
M^{2} = \{(z_0: z_1: |z_2|:| z_3| : |z_4| :|z_5|)\in M^{3}\} .
\end{equation}
Let $\hat{M}^2 \subset \C ^2$ be a closed surface given by the equation~\eqref{surface} for $\phi _2=0$, that is 
\[
z_0 \sqrt{|z_0|^2 +\frac{1}{9}} + \sqrt{\frac{1}{3}-|z_0|^2 -|z_1|^2}\sqrt{\frac{4}{9} -|z_0|^2-|z_1|^2}
 = z_1 \sqrt{|z_1|^2 +\frac{1}{9}}.
\]

 The map  $r : \hat{M}^{2} \to \C P^5$, $ (z_0, z_1) = (z_0 : z_1 :|z_2| : |z_3| : |z_4|: |z_5|)$ gives a homeomorphism between  $\hat{M}^{2}$ and  $M^{2}$.


The manifold $M_{Q}^{7}$ is invariant for any   $T^k\subset T^6 $ action, while $M_{Q}^{5}$ is invariant for $T^{4}\subset T^6$ - action for the representation  given by the second exterior power. In the context of Proposition~\ref{MQ54} and Remark~\ref{T3free} we  note the following:

\begin{lem}
The manifolds $M_{Q}^{7}\subset \C P^{5}$ and $M_{Q}^{5}\subset \C P^{5}$ are invariant for the $T^{3}$-action defined by the representation   $T^3\to  T^6$  given by
\begin{equation}\label{ro}
\rho (t_1, t_2, t_3 ) = (t_1,  t_2,  t_3, 1, \frac{t_3}{t_{2}}, \frac{t_3}{t_{1}}),
\end{equation}
 and the canonical  $T^6$-action  on $\C P^5$.
\end{lem}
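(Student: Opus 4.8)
The plan is to verify invariance directly from the two explicit descriptions of the manifolds, since both $M_Q^7$ and $M_Q^5$ have already been realized as subsets of $\C P^5$ cut out by concrete algebraic/semialgebraic conditions. Recall from Theorem~\ref{equiv7} that $M_Q^7$ is the image of $h$, hence consists of points $(z_0:z_1:z_2:|z_3|:|z_4|:|z_5|)$ with $|z_0|^2+|z_1|^2+|z_2|^2=\frac13$ and $|z_3|,|z_4|,|z_5|$ determined by~\eqref{z3},~\eqref{z4z5}; from Lemma~\ref{MQ5affine}, $M_Q^5$ consists of those points of $M_Q^7$ which additionally satisfy the Pl\"ucker relation $z_0z_5+z_2z_3=z_1z_4$.

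The first step is $M_Q^7$. For a representation $\rho:T^3\to T^6$, $\rho=(\rho_0,\dots,\rho_5)$, the induced action on $\C P^5$ sends $(z_0:\dots:z_5)$ to $(\rho_0 z_0:\dots:\rho_5 z_5)$. Since $M_Q^7$ is defined purely by conditions on the moduli $|z_i|^2$ (which are unchanged by multiplication by unit complex numbers) together with the normalization that picks out a representative, and since $|\rho_k|=1$, every $|\rho_k z_k|=|z_k|$, so $M_Q^7$ is invariant under \emph{any} subtorus $T^k\subset T^6$ acting by the canonical action — in particular under $\rho$ from~\eqref{ro}. This is the easy half and was essentially already noted in the bullet points preceding Theorem~\ref{equiv7}.

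The second step is $M_Q^5$, where the Pl\"ucker relation must be respected. Under $\rho$ the left side transforms as $z_0z_5+z_2z_3\mapsto \rho_0\rho_5 z_0z_5+\rho_2\rho_3 z_2z_3$ and the right side as $z_1z_4\mapsto \rho_1\rho_4 z_1z_4$, so the relation is preserved provided $\rho_0\rho_5=\rho_1\rho_4=\rho_2\rho_3$. For the specific $\rho$ in~\eqref{ro} we have $\rho_0\rho_5=t_1\cdot\frac{t_3}{t_1}=t_3$, $\rho_1\rho_4=t_2\cdot\frac{t_3}{t_2}=t_3$, and $\rho_2\rho_3=t_3\cdot 1=t_3$, so all three are equal to $t_3$ and the hypersurface $\{z_0z_5+z_2z_3=z_1z_4\}$, hence $G_{4,2}\subset\C P^5$, is $\rho$-invariant; intersecting with the already-invariant $M_Q^7$ gives that $M_Q^5=M_Q^7\cap G_{4,2}$ is $\rho$-invariant as well. (One should also remark why the normalization used in Lemma~\ref{MQ5affine} to choose the affine representative is compatible with the action, but this is immediate since rescaling the homogeneous coordinates commutes with the torus action.)

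There is essentially no obstacle here: the statement is a bookkeeping check, and the only thing to be careful about is confirming the weight identity $\rho_0\rho_5=\rho_1\rho_4=\rho_2\rho_3$ for the chosen $\rho$, which is exactly the condition~\eqref{rhos} that appeared already in the proof of Proposition~\ref{MQ54}. The content of the lemma is really that, although Proposition~\ref{MQ54} showed no such $\rho$ can make the $T^3=T^4/S^1$ action on $M_Q^5$ coincide with the symplectic-reduction action, one can nonetheless find a $\rho$ (namely~\eqref{ro}) giving a \emph{free} $T^3$-action on $M_Q^5$, as already flagged in Remark~\ref{T3free}; freeness itself is not asserted in this lemma, only invariance, so the proof reduces to the two steps above.
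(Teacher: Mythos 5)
Your proof is correct and is exactly the verification the paper intends (the paper in fact states this lemma without any written proof): $M_Q^7$ is cut out by conditions on the moduli $|z_i|^2$ alone, hence is invariant under the whole canonical $T^6$-action, and the Pl\"ucker quadric $z_0z_5+z_2z_3=z_1z_4$ is preserved because $\rho_0\rho_5=\rho_1\rho_4=\rho_2\rho_3=t_3$, so $M_Q^5=M_Q^7\cap G_{4,2}$ is invariant as well. Your observation that this weight identity is precisely condition~\eqref{rhos} from Proposition~\ref{MQ54} is an accurate and useful cross-reference.
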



Consider the surface $M^{2}$ defined by~\eqref{surface}. 

\begin{prop}\label{MQ5}
The map $F : M^2\times  T^3 \to \C P^5$ defined by
\[
F((z_0: z_1 : |z_2| : |z_3|:  |z_4|, |z_5|), (t_1,t_2, t_3) )= \rho (t_1, t_2, t_3) (z_0: z_1 : |z_2| : |z_3|:  |z_4| : |z_5|)=
\]
\[
(t_1z_0 : t_2z_1 : t_3  |z_2| :  |z_3| : \frac{t_3}{t_2}|z_4|: \frac{t_3}{t_1}|z_5|)  
\]
is $T^3$ -equivariant   for the $T^3$ - action on $M^{2}\times  T^3$ given by 
\[
(\tau_1, \tau _2, \tau_3) ( ((z_0: z_1:|z_2|:|z_3|:|z_4|:|z_5|), (t_1,  t_2, t_3)  = \]
\[
((z_0:z_1:|z_2|:|z_3|:|z_4|:|z_5|), (\tau _1t_1,  \tau _2 t_2, \tau _3t_3))
\]
 and the  $T^3$ - action on $\C P^{5}$ is given by the representation $\rho$

The image of  $F$ is $M_{Q}^{5}$.

\end{prop}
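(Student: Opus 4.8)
The strategy is to verify that $F$ is a well-defined, continuous, $T^3$-equivariant map whose image lies in $M_Q^5$, and then that it is onto. Equivariance is immediate from the definition: the two $T^3$-actions were set up precisely so that $F((z), \boldsymbol\tau\cdot(t)) = \boldsymbol\tau\cdot F((z),(t))$, since $\rho$ is a homomorphism and the action on $\C P^5$ is by $\rho$ composed with the coordinatewise $T^6$-action. The content is therefore in the two non-formal assertions: (i) $F(M^2\times T^3)\subseteq M_Q^5$, and (ii) every point of $M_Q^5$ is hit.

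\textbf{Step 1: image lands in $M_Q^5$.} First I would recall from Lemma~\ref{MQ5affine} that $M_Q^5$ consists of the $\C P^5$-classes represented by $(z_0,z_1,z_2,|z_3|,z_4,z_5)$ with $|z_0|^2+|z_1|^2+|z_2|^2=\tfrac13$ and the Pl\"ucker relation $z_0z_5+z_2|z_3|=z_1z_4$. A point in the image of $F$ has the form $(t_1z_0:t_2z_1:t_3|z_2|:|z_3|:\tfrac{t_3}{t_2}|z_4|:\tfrac{t_3}{t_1}|z_5|)$ with $(z_0:z_1:|z_2|:|z_3|:|z_4|:|z_5|)\in M^2\subseteq M^3$. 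The moduli of the new coordinates are unchanged from those of the $M^3$-point, so the constraints~\eqref{2345} — in particular $|z_0|^2+|z_1|^2+|z_2|^2=\tfrac13$ — still hold, and $z_3$-coordinate is still real and nonnegative; so it suffices to check the Pl\"ucker relation. Plugging in, the relation $z_0z_5+z_2z_3=z_1z_4$ becomes $(t_1z_0)(\tfrac{t_3}{t_1}|z_5|)+(t_3|z_2|)(|z_3|)=(t_2z_1)(\tfrac{t_3}{t_2}|z_4|)$, i.e. $t_3\bigl(z_0|z_5|+|z_2||z_3|\bigr)=t_3 z_1|z_4|$, which after cancelling $t_3$ is exactly equation~\eqref{surface} defining $M^3$ (here for the $M^2$-point with $z_2=|z_2|$). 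So the image lies in $M_Q^5$.

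\textbf{Step 2: surjectivity.} Take ${\bf z}=(z_0:z_1:z_2:|z_3|:z_4:z_5)\in M_Q^5$ in the normalized form of Lemma~\ref{MQ5affine}. Since ${\bf z}\in M_Q^7$ we have $z_3,z_4,z_5\neq 0$ by the earlier Remark, and $|z_3|$ is already real positive. Choose $t_1\in S^1$ with $z_5 = \tfrac{t_3}{t_1}|z_5|$... — more carefully: I want to find $(t_1,t_2,t_3)\in T^3$ and a point $(z_0',z_1':|z_2|':|z_3|':|z_4|':|z_5|')\in M^2$ mapping to ${\bf z}$. The $z_3$-coordinate of $F$ is $|z_3|$, which pins the normalization. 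The idea is: set $t_3$ so that $t_3|z_2|$ has the argument of $z_2$ (if $z_2\neq 0$), i.e. $t_3 = z_2/|z_2|$; then $t_1$ is determined by requiring $\tfrac{t_3}{t_1}|z_5| = z_5$, i.e. $t_1 = t_3|z_5|/z_5$, and similarly $t_2 = t_3|z_4|/z_4$; finally $z_0' = z_0/t_1$, $z_1'=z_1/t_2$, which have the right moduli, and one checks the $M^2$-defining equation~\eqref{surface} (with $z_2$ replaced by $|z_2|$) holds because the original Pl\"ucker relation does and the chosen phases make everything consistent. The case $z_2=0$ needs separate but easy handling (then $t_3$ is free, and one picks it to make, say, $z_0'$ or something real — here one should just note $M^2$ still contains the relevant point and the argument goes through; in fact when $z_2=0$ the point is $\tilde\mu$-image on an edge). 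I would organize this as: the phases $(\arg z_0,\arg z_1, \arg z_2)$ — reduced mod the diagonal and mod the $t_3$-ambiguity — are exactly three real parameters matched by $(t_1,t_2,t_3)$, while the moduli are forced by~\eqref{2345}, so the fiber structure matches $M^2\times T^3$ fiberwise over the moduli.

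\textbf{Main obstacle.} The routine part is the algebra of Step 1. The genuine difficulty is making Step 2 clean: showing that the assignment ${\bf z}\mapsto\bigl((z_0',z_1',|z_2|',\dots),(t_1,t_2,t_3)\bigr)$ is well-defined everywhere on $M_Q^5$, i.e. that the phase bookkeeping (three phases versus the rank-$3$ torus, with the subtlety that the representation $\rho$ of~\eqref{ro} has the $z_3$-slot trivial so $|z_3|$ is forced real and provides the normalization) actually produces a point of $M^2$ and not merely of $M^3$ — equivalently, that one can always arrange the image point to have $z_2$ real and nonnegative by a suitable choice within the $T^3$-orbit. That is precisely what the trivial $z_3$-slot buys us: it removes exactly one phase degree of freedom from the would-be $T^4$-orbit, leaving a $T^3$ acting, and on the slice $M^2$ (where $z_2$ is real) the $T^3$-orbit sweeps out all of $M^3$, hence all of $M_Q^5$ after imposing the affine normalization. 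I would close by noting that $F$ is continuous with compact domain and image in the Hausdorff space $\C P^5$, so combined with surjectivity onto $M_Q^5$ this is the desired description (injectivity, if needed for a subsequent homeomorphism statement, would follow from the uniqueness of the phase choices above, modulo the $z_2=0$ locus which has lower dimension).
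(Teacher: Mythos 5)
Your proposal is correct and follows essentially the same route as the paper: equivariance is formal, containment of the image in $M_Q^5$ follows from the Pl\"ucker relation (or from the $\rho$-invariance lemma), and surjectivity is proved by the identical phase choices $t_3=z_2/|z_2|$, $t_1=t_3|z_5|/z_5$, $t_2=t_3|z_4|/z_4$, $z_0'=z_0/t_1$, $z_1'=z_1/t_2$, with the $z_2=0$ case treated separately (the paper simply takes $t_3=1$ there). The only difference is presentational: you verify the image containment by direct computation, while the paper relies on the preceding invariance lemma and goes straight to surjectivity.
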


\begin{proof}

The map $F$ is obviously equivariant for the given $T^3$-actions on $M^2\times T^3$ and $\C P^4$. We prove that its  image is $M_{Q}^{5}$. By Lemma~\ref{MQ5affine}, let $(z_0, z_1, z_2, |z_3|, z_4, z_5)\in M_{Q}^{5}$ and assume that $z_2\neq 0$, Then    $z_k = |z_k|e^{i\psi _k}$, $0\leq k\leq 5$ and take   $t_3 = e^{i\psi _2}$. Then put $t_1 = e^{i(\psi _2-\psi _5)}$, $t_2=e^{i(\psi _2-\psi _4)}$. In addition, let  $ z_{0}^{'} = |z_0|e^{i(\psi _0-\psi _2+\psi _5)}$, $ z_{1}^{'} = |z_1|e^{i(\psi _1-\psi _2+\psi _4)}$, $|z_{k}^{'}|=|z_k|$ for $2\leq k\leq 5$. 
Then 
\[
F((z_{0}^{'},z_{1}^{'}, |z_{2}|^{'},  |z_{3}^{'}|,  |z_{4}^{'}|, |z_{5}^{'}|), (t_1, t_2, t_3)) = (z_0, z_1, z_2, |z_3|, z_4, z_5).
\]
 The   point $(z_{0}^{'}, z_{1}^{'}, |z_{2}^{'}|, |z_{3}^{'}|, |z_{4}^{'}|, |z_{5}^{'}|)$ belongs to $M^2$ since 
\[
z_{0}^{'}|z_{5}^{'}|+|z_{2}^{'}||z_{3}^{'}| = \frac{1}{t_1}z_0\frac{t_1}{t_3}z_5 + \frac{1}{t_3}z_2|z_3| = \frac{1}{t_3}z_1z_4 =\frac{1}{t_3}t_2z_{1}^{'}\frac{t_3}{t_2}|z_{4}^{'}| =
 z_{1}^{'}|z_{4}^{'}|.
\]
If $z_2=0$  we take $t_3=1$ and $t_1 = e^{-i\psi _5}$, $t_2=e^{-i\psi _4}$, then   $ z_{0}^{'} = |z_0|e^{i(\psi _0+\psi _5)}$, $ z_{1}^{'} = |z_1|e^{i(\psi _1+\psi _4)}$, $|z_{k}^{'}|=|z_k|$ for $2\leq k\leq 5$. We obtain \[
 F((z_{0}^{'},z_{1}^{'}, 0,  |z_{3}^{'}|,  |z_{4}^{'}|, |z_{5}^{'}|), (t_1, t_2, t_3)) = (z_0, z_1, 0, |z_3|, z_4, z_5)\] and 
\[
z_{0}^{'}|z_{5}^{'}| =\frac{1}{t_1}z_0\frac{t_1}{t_3}z_5 =\frac{1}{t_3}z_1z_4 =  \frac{1}{t_3}t_2z_{1}^{'}\frac{t_3}{t_2}|z_{4}^{'}| =
 z_{1}^{'}|z_{4}^{'}|,
\] 
which confirms that $(z_{0}^{'}, z_{1}^{'}, 0, |z_{3}^{'}|, |z_{4}^{'}|, |z_{5}^{'}|)$ belongs to $M^2$.
\end{proof}

\begin{rem}
The given $T^3$-actions  on $M^{2}\times T^3$ and on  $M_{Q}^{5}$ are free.
\end{rem}

 \begin{cor}\label{equiv5}
The space $M^{5}_{Q}$ is equivariantly homeomorphic  to the quotient space  $\tilde{M}_{Q}^{5}$ of $M^{2}\times T^3$ by the equivalence relation defined by the map $F$.
\end{cor}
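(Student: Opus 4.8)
The statement is a formal consequence of Proposition~\ref{MQ5}, and the plan is simply to pass to the quotient and then upgrade a continuous bijection to a homeomorphism by a compactness argument. By Proposition~\ref{MQ5} the map $F : M^2 \times T^3 \to \C P^5$ is continuous, $T^3$-equivariant, and has image exactly $M_Q^5$. Let $\sim$ be the equivalence relation on $M^2\times T^3$ given by $(p,t)\sim(p',t')$ iff $F(p,t)=F(p',t')$, and give $\tilde{M}_Q^5=(M^2\times T^3)/\!\sim$ the quotient topology.

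First I would observe that $\sim$ is $T^3$-invariant: equivariance of $F$ gives $F(g\cdot(p,t))=g\cdot F(p,t)$, so $F(p,t)=F(p',t')$ forces $F(g\cdot(p,t))=F(g\cdot(p',t'))$ for every $g\in T^3$. Hence $T^3$ acts on $\tilde{M}_Q^5$, and $F$ descends to a continuous $T^3$-equivariant map $\bar F:\tilde{M}_Q^5\to M_Q^5$. By construction $\bar F$ is injective, and it is surjective because $F$ surjects onto $M_Q^5$; thus $\bar F$ is a continuous $T^3$-equivariant bijection.

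To finish I would argue that $\bar F$ is a homeomorphism. The surface $M^3=M_Q^5\cap S^5$ of Lemma~\ref{intersection} is closed in $\C P^5$, hence compact, and $M^2\subset M^3$ is the closed subset cut out by $\phi_2=0$, so $M^2$ is compact; therefore $M^2\times T^3$ is compact and so is $\tilde{M}_Q^5$. Since $M_Q^5\subset\C P^5$ is Hausdorff, the continuous bijection $\bar F$ from a compact space onto a Hausdorff space is automatically a homeomorphism. Combined with the freeness of the two $T^3$-actions noted in the preceding remark, this yields the asserted equivariant homeomorphism $\tilde{M}_Q^5\cong M_Q^5$. The only point needing any care is the compactness of $M^2$ — i.e.\ checking that the defining relations confine $(z_0,z_1)$ to the bounded region $|z_0|^2+|z_1|^2\le\tfrac13$ — and this is immediate from the formulas in Lemma~\ref{intersection}; there is no genuine obstacle here, the corollary being essentially bookkeeping on top of Proposition~\ref{MQ5}.
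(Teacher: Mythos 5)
Your argument is correct and is essentially the route the paper takes: the corollary is the formal statement that the image of the $T^3$-equivariant surjection $F$ of Proposition~\ref{MQ5} is identified, as a topological space, with the quotient of its source by the fiber relation, and the only substantive point — upgrading the induced continuous bijection to a homeomorphism — follows exactly as you say from compactness of $M^2\times T^3$ (with $M^2$ closed in the compact set $M^3\subset\C P^5$; note only that the condition ``$\phi_2=0$'' should be read as $z_2\in\R_{\ge 0}$ in the normalized representative so that it remains closed at $z_2=0$) and Hausdorffness of $\C P^5$. No gaps.
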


\begin{lem}\label{equiv23}
The equivalence relation on $M^{2}\times T^3$ defined by the map $F : M^{2}\times T^3 \to M_{Q}^{5}$ is given as follows: $(z_{0}^{'}, z_{1}^{'}, t_{1}^{'}, t_{2}^{'}, t_{3}^{'})\approx (z_0, z_1, t_1, t_2, t_3)$ if and only if
\begin{enumerate}
\item $z_0 \neq  z_1$ and 
\[
z _{k}^{'} = z_{k}, \; k=0,1, \;\; t_k=t_{k}^{'}, \; k=1,2,3
\]

\item  $z_0 =z_1$ and  $z_{0}^{'} =z_{1}^{'}$ and
\[
z_{0}^{'} = \frac{1}{\lambda}z_0, \;\; t_{k}^{'} =\lambda t_k, \; k=1,2,3, 
\]
for $\lambda \in S^1$. 
\end{enumerate}
\end{lem}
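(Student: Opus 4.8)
The goal is to read off exactly when two points of $M^2\times T^3$ have the same image under $F$. The plan is to start from the definition
\[
F((z_0:z_1:|z_2|:|z_3|:|z_4|:|z_5|),(t_1,t_2,t_3))=(t_1z_0:t_2z_1:t_3|z_2|:|z_3|:\tfrac{t_3}{t_2}|z_4|:\tfrac{t_3}{t_1}|z_5|)
\]
and impose the equality $F(z_0',z_1',t_1',t_2',t_3')=F(z_0,z_1,t_1,t_2,t_3)$ in $\C P^5$, i.e.\ up to a common scalar $\lambda\in S^1$ (the scalar is unimodular because both representatives are normalized so that the fourth coordinate equals $|z_3|=\sqrt{|z_2|^2+\tfrac19}$, which is a fixed positive function of $|z_0|^2+|z_1|^2$; see Lemma \ref{intersection}). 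First I would compare the fourth coordinates: both equal $|z_3|$, so $\lambda|z_3'|=|z_3|$, and since $|z_3|,|z_3'|\neq 0$ on $M^3$ this forces $|z_3'|=|z_3|$ (hence $|z_2'|^2+|z_1'|^2\cdot0+\dots$, more precisely $|z_2'|=|z_2|$, $|z_4'|=|z_4|$, $|z_5'|=|z_5|$ after also comparing the last three coordinates, and $\lambda\in S^1$). This is the routine bookkeeping part.

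Next I would compare the first three coordinates: $\lambda t_1'z_0'=t_1z_0$, $\lambda t_2'z_1'=t_2z_1$, $\lambda t_3'|z_2'|=t_3|z_2|$; and the last two: $\lambda\tfrac{t_3'}{t_2'}|z_4'|=\tfrac{t_3}{t_2}|z_4|$, $\lambda\tfrac{t_3'}{t_1'}|z_5'|=\tfrac{t_3}{t_1}|z_5|$. Using $|z_2'|=|z_2|$, $|z_4'|=|z_4|$, $|z_5'|=|z_5|$ and $|z_4|,|z_5|\neq0$ (these never vanish by \eqref{2345}), the last three of these five relations become $\lambda t_3'=t_3\cdot(\text{sign})$ on the nonzero locus. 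The case split is dictated by whether $|z_2|=0$ or not. If $|z_2|\neq0$, the third relation gives $\lambda t_3'=t_3$; feeding this into the $|z_4|,|z_5|$ relations gives $t_2'=t_2$ and $t_1'=t_1$, and then the first two relations give $z_0'=z_0$, $z_1'=z_1$ (using $|z_0|,|z_1|$; here one should note that if $z_0=0$ or $z_1=0$ the corresponding equation is vacuous, but the point still lies in $M^2$ and one checks these boundary points do not create extra identifications because the defining relation \eqref{surface} then pins down the remaining data). This is case (1); the hypothesis "$z_0\neq z_1$'' in the statement is really the condition that the point is not on the diagonal locus where $|z_2|=0$ forces $z_0=z_1$ — I would make this equivalence explicit by examining \eqref{M2}/\eqref{surface}: a point of $M^2$ has $|z_2|=0$ iff $z_0\sqrt{|z_0|^2+\tfrac19}+\tfrac13 = z_1\sqrt{|z_1|^2+\tfrac19}$ with $|z_0|^2+|z_1|^2=\tfrac13$... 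I should double-check the precise algebraic characterization here and state it as the genuine meaning of "$z_0=z_1$'', since that is the one slightly delicate point.

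Finally, in the case $|z_2|=0$ (equivalently the diagonal condition), the third coordinate equation $\lambda t_3'|z_2'|=t_3|z_2|$ becomes vacuous, so $\lambda t_3'$ is no longer determined; instead only the $|z_4|,|z_5|$ relations survive, giving $\lambda t_3'/t_2'=t_3/t_2$ and $\lambda t_3'/t_1'=t_3/t_1$, i.e.\ $t_1'=\lambda^{-1}t_1\cdot(t_3'/t_3)$ type relations — after also using the first two coordinate equations $\lambda t_1'z_0'=t_1z_0$, $\lambda t_2'z_1'=t_2z_1$ one finds the one-parameter freedom: $t_k'=\mu t_k$ for a common $\mu\in S^1$ and $z_0'=\mu^{-1}z_0$, $z_1'=\mu^{-1}z_1$ with $z_0'=z_1'$ preserved. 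Renaming $\mu$ to $\lambda$ gives case (2). The main obstacle is purely the case (1) hypothesis: isolating the correct geometric/algebraic description of the "$z_0=z_1$'' locus inside $M^2$ and verifying that off this locus the $T^3$-action on $M^2\times T^3$ is actually free modulo $F$, while on it there is exactly the extra diagonal $S^1$; the rest is linear algebra in the exponents, analogous to the computation in the proof of Proposition \ref{MQ54}.
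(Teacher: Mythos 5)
Your proposal is correct and follows essentially the same route as the paper: compare the six homogeneous coordinates of $F$ (the normalization of the representatives forces the projective scalar to be $1$), then split into the cases $z_2\neq 0$ and $z_2=0$, the latter locus being exactly the circle $z_0=z_1$. The one detail you flagged does work out, but note your tentative formula has a spurious $+\tfrac{1}{3}$: when $z_2=0$ the whole term $z_2\sqrt{|z_2|^2+\tfrac19}$ vanishes, so \eqref{surface} reduces to $z_0\sqrt{|z_0|^2+\tfrac19}=z_1\sqrt{|z_1|^2+\tfrac19}$, which forces $z_0=z_1$ by strict monotonicity of $x\mapsto x\sqrt{x^2+\tfrac19}$ applied to the moduli.
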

\begin{proof}
\[
\rho (t_1,t_2, t_3)(M^{2}\setminus \{z_2=0\}) \cap M^{2} =\emptyset \; \text{for}\; (t_1, t_2, t_3)\neq (1,1,1).
\]
Precisely,  if for ${\bf z} = (z_0: z_1 : |z_2| : |z_3| : |z_4| : |z_5|)\in M^{2}\setminus \{z_2=0\}$  it holds $\rho (t_1, t_2, t_3)({\bf z}) \in M^2$ for some $(t_1, t_2, t_3)\neq (1,1,1)$ then $t_3=1$ and $\frac{t_3}{t_2}= \frac{t_3}{t_1}=1$, that is $t_1=t_2=1$.
 Since the $T^3$ - action on $M_{Q}^{5}$ given by the representation $\rho$ is free, we deduce that the map
\[
F : (M^{2}\setminus \{z_2=0\})\times T^3 \to M_{Q}^{5}\setminus \{z_2=0\}
\]
is a homeomorphism.

For $z_2=0$ we see from~\eqref{surface} that $z_0=z_1$. From~\eqref{2345} it follows that
\[
|z_0| = \frac{1}{\sqrt{6}}, \; |z_3| = \frac{1}{3}, \; |z_4| = |z_5| =  \frac{\sqrt{5}}{\sqrt{18}}, 
\]
that is,  we obtain the circle in $M^{2}$ given by the points 
\[
(\frac{e^{i\psi _{0}}}{\sqrt{6}}, \frac{e^{i\psi _{0}}}{\sqrt{6}},  0 , \frac{1}{3}, \frac{\sqrt{5}}{\sqrt{18}} , \frac{\sqrt{5}}{\sqrt{18}}).
\]

It follows that $\frac{t_{3}^{'}}{t_{1}^{'}} = \frac{t_3}{t_1}$ and  $\frac{t_{3}^{'}}{t_{1}^{'}} = \frac{t_3}{t_1}$, which gives  $\frac{t_{1}^{'}}{t_{1}} = \frac{t_{2}^{'}}{t_2} =\lambda$, that is $t_{k}^{'} = \lambda t_k$, $k=1,2, 3$.  Since $t_{1}^{'}z_{0}^{'}= t_1z_0$ it follows that $z_{0}^{'}=\frac
{1}{\lambda}z_0$.

\end{proof}

\begin{rem}
Since the map $F: M^{2}\times T^3\to M_{Q}^{5}$ is equivariant  for the representation $\rho$, the $T^3$ -action on $M^{2}\times T^3$ induces $T^3$-action on $\tilde{M}^{5}_{Q} = ( M^2\times T^3)/_{F}$.
\end{rem}

\subsection{$M_{Q}^{5}$ as $T^2$-orbit of  a $3$-dimensional manifold}
 
Consider the space $M^{3}$ given by Lemma~\ref{intersection} that is by $\{(z_0:z_1:z_2: |z_3|:|z_4|:|z_5|)\in \C P^5\}$ such that~\eqref{2345} and~\eqref{surface} are satisfied.

 Let the representation $\hat{\rho} : T^2 \to T^6$ be given by $\hat{\rho} (t_1, t_2) = (t_1, t_2, 1, 1, \frac{1}{t_2}, \frac{1}{t_1})$.

\begin{prop}\label{M3MQ5}
The map $G : M^3\times T^2 \to \C P^5$ defined by 
\[
G((z_0:z_1:z_2: |z_3|:|z_4|:|z_5), (t_1, t_2)) = (t_1z_0:t_2z_1 : z_2: |z_3|:\frac{1}{t_1}|z_4|:\frac{1}{t_2}|z_5|)
\]
it $T^2$-equivariant for $T^2$-action on $M^{3}\times T^2$ given by $(\tau _1, \tau _2)\cdot ((z_0:z_1:z_2:|z_3|:|z_4|:|z_5), (t_1, t_2)) =
((z_0:z_1:z_2:|z_3|:|z_4|:|z_5), (\tau _1 t_1, \tau _2 t_2))$ and the $T^2$ - action on $\C P^5$ defined  by the composition of the representation $\\hat{\rho}$ and the canonical  $T^6$-action on $\C P^5$.

The image of $G$ is $M_{Q}^{5}$ and $G$ is one to one.
\end{prop}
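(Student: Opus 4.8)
The plan is to verify three things in turn: that $G$ is well-defined (lands in $\C P^5$ and actually in $M_Q^5$), that it is $T^2$-equivariant (this is immediate from the formulas), and that it is a bijection onto $M_Q^5$. The equivariance is a direct check: applying $(\tau_1,\tau_2)$ first and then $G$ multiplies the $0$-th and $1$-st coordinates by $\tau_1,\tau_2$ and the $4$-th, $5$-th by $\tau_1^{-1},\tau_2^{-1}$, which is exactly the action of $\rho(\tau_1,\tau_2)$ composed with $G$. That $G$ maps into $M_Q^5$ follows because $M^3\subset M_Q^7$ by Lemma~\ref{intersection}, and $M_Q^7$ is $T^6$-invariant (hence invariant under the image of $\rho$), so $G(M^3\times T^2)\subset M_Q^7$; it remains to see the Pl\"ucker relation $z_0z_5+z_2z_3=z_1z_4$ is preserved. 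Here one computes, for $(z_0:\dots:|z_5|)\in M^3$ and its image, that $(t_1z_0)(t_1^{-1}|z_5|) + z_2|z_3| = z_0|z_5|+z_2|z_3|$ and $(t_2z_1)(t_2^{-1}|z_4|) = z_1|z_4|$, so the relation reduces to \eqref{surface}, which holds on $M^3$ by construction.

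The substantive part is surjectivity and injectivity. For surjectivity I would start, as in the proof of Proposition~\ref{MQ5}, from the affine normal form of Lemma~\ref{MQ5affine}: every point of $M_Q^5$ has a representative $(z_0,z_1,z_2,|z_3|,z_4,z_5)$ with $|z_0|^2+|z_1|^2+|z_2|^2=\tfrac13$, and by the equations \eqref{z41}--\eqref{z4z5} (equivalently \eqref{2345}) the moduli $|z_3|,|z_4|,|z_5|$ are determined by $|z_0|,|z_1|,|z_2|$; in particular $z_3,z_4,z_5$ are forced to be nonzero. Writing $z_4=|z_4|e^{i\psi_4}$, $z_5=|z_5|e^{i\psi_5}$, choose $t_1=e^{i\psi_5}$ and $t_2=e^{i\psi_4}$; then set $z_0'=e^{-i\psi_5}z_0$, $z_1'=e^{-i\psi_4}z_1$, and $z_2'=z_2$ (here I must be slightly careful: to land in $M^3$ I need the $2$-coordinate to be a nonnegative real, so in fact one should first absorb a phase so that $z_2=|z_2|$, using an overall $S^1$-scaling of the homogeneous representative, and correspondingly adjust). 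After these choices $G((z_0':z_1':z_2:|z_3|:|z_4|:|z_5|),(t_1,t_2))$ returns the original point, and the point $(z_0':z_1':|z_2|:|z_3|:|z_4|:|z_5|)$ lies in $M^3$ precisely because equation \eqref{surface} is equivalent to the Pl\"ucker relation on the normalized representative — the same computation already used in Lemma~\ref{intersection}.

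For injectivity ("$G$ is one to one"), suppose $G$ of two points agree in $\C P^5$. Because the $2$-coordinate of any point in the image $M^3$ is a fixed nonnegative real $|z_2|=\sqrt{\tfrac13-|z_0|^2-|z_1|^2}$ and is untouched by $\rho$, comparing homogeneous representatives forces the projective scaling factor to be $1$ once $z_2\neq 0$; then comparing the $0,1,4,5$ coordinates gives $t_1z_0=t_1'z_0'$, $t_2z_1=t_2'z_1'$, $t_1^{-1}|z_4|=t_1'^{-1}|z_4|$, $t_2^{-1}|z_5|=t_2'^{-1}|z_5|$, and since $|z_4|,|z_5|\neq 0$ we get $t_1=t_1'$, $t_2=t_2'$ and hence $z_0=z_0'$, $z_1=z_1'$, so the two points of $M^3\times T^2$ coincide. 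The only delicate case is $z_2=0$: on $M^3$ this forces, via \eqref{surface} and \eqref{2345}, the point to lie on a distinguished circle with $z_0=z_1$ and fixed moduli (exactly the circle appearing in Lemma~\ref{equiv23}), and one has to check that the representation $\rho(t_1,t_2)=(t_1,t_2,1,1,t_2^{-1},t_1^{-1})$ still acts freely there — i.e.\ $\rho$ has trivial stabilizer on all of $M^3$, not just on the dense part. I expect this $z_2=0$ locus to be the main obstacle: one must confirm that even there the map $G$ stays injective (no collapsing of the $T^2$-factor), which is the analogue of the subtlety that forced the two-case description in Lemma~\ref{equiv23}; the key point is that the exponents of $t_1$ and $t_2$ in $\rho$ are linearly independent, so no nontrivial $(t_1,t_2)$ fixes a point with all of $z_0,z_4,z_5$ (or $z_1,z_4,z_5$) nonzero, which is the case on the whole of $M^3$.
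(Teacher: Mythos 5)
Your overall strategy --- equivariance by inspection, surjectivity by choosing the phases $t_1,t_2$ from the arguments of $z_4,z_5$, injectivity from the non-vanishing of the real coordinates --- is exactly the paper's, and it does work. But there is one concrete error running through both halves of your write-up: you have misread the definition of $M^3$. In Lemma~\ref{intersection} the points of $M^3$ are $(z_0:z_1:z_2:|z_3|:|z_4|:|z_5|)$ with $z_0,z_1,z_2$ \emph{complex}, subject to~\eqref{surface}; only the last three coordinates are required to be the nonnegative reals determined by~\eqref{2345}. The condition that the $2$-coordinate be a nonnegative real defines the smaller surface $M^2$ of~\eqref{M2}, not $M^3$. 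Consequently your parenthetical ``fix'' in the surjectivity step --- absorbing an overall phase so that $z_2$ becomes real --- is not only unnecessary but harmful: rescaling the normal-form representative of Lemma~\ref{MQ5affine} by the inverse phase of $z_2$ destroys the reality of the third coordinate $|z_3|$, so the resulting tuple is no longer of the shape required for $M^3$, and $G$ applied to it would not reproduce the original point. The correct move is the paper's: keep $z_2'=z_2$ untouched (the representation $\rho$ acts trivially on that slot anyway) and only rotate $z_0,z_1$ by the phases of $z_5,z_4$.

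The same misreading produces the spurious ``delicate case'' in your injectivity argument. You pin down the projective scaling factor using the coordinate $z_2$, which can vanish, and are then driven into a separate stabilizer analysis on the circle $z_2=0$. The coordinate that does this job uniformly is $|z_3|$: by~\eqref{2345} one has $|z_3|^2=|z_2|^2+\tfrac{1}{9}\geq\tfrac{1}{9}>0$ on all of $M^3$, it is a positive real untouched by $\rho$, so the scaling factor $\lambda$ relating two representatives is a positive real; and since every representative in the image of $G$ has unit norm (summing~\eqref{2345} with $|z_0|^2+|z_1|^2+|z_2|^2=\tfrac{1}{3}$ gives total norm $1$), $|\lambda|=1$, hence $\lambda=1$. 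Then $|z_4|,|z_5|\neq0$ give $t_1=t_1'$, $t_2=t_2'$ and equality of the $M^3$-components, with no case distinction. This is what the paper's one-line justification ``follows from the fact that $|z_4|,|z_5|\neq0$'' is pointing at. Your final conclusion is correct, but the detour through the $z_2=0$ circle is an artifact of choosing the wrong pivot coordinate.
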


\begin{proof}
It checks directly that the map $G$ is $T^2$-equivariant for the given $T^2$-actions. It  is onto $M_{Q}^{5}$. For  $(z_0, z_1, z_2, |z_3|, z_4, z_5)\in M_{Q}^{5}$, where 
$z_k= |z_k|e^{i\psi _k}$, $0\leq k\leq 5$,   take $t_{1} =e^{-i\psi _5}$, $t_{2}=e^{-i\psi _4}$ and $|z_{3}^{'}| = |z_3|$, $z_{2}^{'} = z_2$, and $z_{0}^{'} = |z_0|e^{i(\psi _0+\psi _5)}$, $z_{1}^{'} = |z_1|e^{i(\psi _1 +\psi _4)}$. Then 
\[
G((z_{0}^{'} : z_{1}^{'}: z_{2}^{'}: |z_{3}^{'}|: |z_{4}^{'}|: |z_{5}^{'}|), (t_{1}, t_{2}) = (z_0 : z_1: z_2 : |z_3|:  z_4 : z_5).
\]
In addition, 
\[
z_{0}^{'}|z_{5}|^{'} + z_{2}^{'}|z_{3}^{'}| = t_1z_0 \frac{1}{t_1}z_5 + z_2|z_3| =z_1|z_4| = t_{2}z_{1}^{'}\frac{1}{t_2}|z_{4}^{'}| = z_{1}^{'}|z_{4}^{'}|, 
\]
which means that $(z_{0}^{'} : z_{1}^{'}: z_{2}^{'}: |z_{3}^{'}|: |z_{4}^{'}|: |z_{5}^{'}|)\in M^{3}$. The statement  that $G$ is one to one follows from the fact that $|z_4|, |z_5|\neq 0$. 
\end{proof}

Therefore, we deduce:

\begin{thm}
The manifold $M_{Q}^{5}$ is homeomorphic to $M^3\times T^2$.
\end{thm}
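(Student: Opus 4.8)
The plan is to obtain the asserted homeomorphism as an immediate consequence of Proposition~\ref{M3MQ5}. That proposition already supplies a continuous map $G : M^{3}\times T^2 \to \C P^5$ whose image is exactly $M_{Q}^{5}$ and which is one to one; hence $G$ restricts to a continuous bijection $G : M^{3}\times T^2 \to M_{Q}^{5}$, and the only thing left is to promote this bijection to a homeomorphism.

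First I would record the three ingredients needed for the standard compactness argument. The domain $M^{3}\times T^2$ is compact: by Lemma~\ref{intersection} the surface $M^{3}$ is a \emph{closed} subset of the compact manifold $\C P^5$, hence compact, and $T^2$ is compact, so the product is too. The codomain $M_{Q}^{5}$, being a subspace of $\C P^5$, is Hausdorff. And $G$ is continuous: in homogeneous coordinates it is the explicit map $((z_0:z_1:z_2:|z_3|:|z_4|:|z_5|),(t_1,t_2))\mapsto(t_1z_0:t_2z_1:z_2:|z_3|:t_1^{-1}|z_4|:t_2^{-1}|z_5|)$, which is well defined on all of $M^{3}\times T^2$ because on $M^3$ one has $|z_3|^2 = |z_2|^2+\frac{1}{9}>0$ by~\eqref{2345} together with Lemma~\ref{intersection}, so the right-hand side never degenerates to the zero vector.

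With these in hand I would invoke the elementary topological fact that a continuous bijection from a compact space onto a Hausdorff space is a homeomorphism. Applied to $G : M^{3}\times T^2\to M_{Q}^{5}$ this gives $M_{Q}^{5}\cong M^{3}\times T^2$, which is the statement.

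I do not expect any real obstacle here: the substantive content — that $G$ is surjective onto $M_{Q}^{5}$ and injective, and that $M^{3}$ is closed in $\C P^5$ — is precisely Proposition~\ref{M3MQ5} and Lemma~\ref{intersection}, both already proved, so the only care needed is the bookkeeping of the hypotheses of the compact-to-Hausdorff lemma. If desired, one may further note that the homeomorphism is $T^2$-equivariant for the actions described in Proposition~\ref{M3MQ5}, since $G$ intertwines them by construction, but this refinement is not required for the stated result.
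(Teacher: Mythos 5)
Your proposal is correct and follows essentially the same route as the paper, which states this theorem as an immediate consequence of Proposition~\ref{M3MQ5} ("Therefore, we deduce") without further argument. You simply make explicit the standard step the paper leaves implicit — that the continuous bijection $G$ from the compact space $M^3\times T^2$ onto the Hausdorff space $M_Q^5$ is automatically a homeomorphism — which is a worthwhile piece of bookkeeping but not a different method.
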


\section{The structure of   the principal fiber bundle $M_{Q}^{5}\to \C P^{1}$}

The torus $T^3 = T^{4}/\text{diag}(T^4)$ acts freely on $M_{Q}^{5}$ and it induces the principal fiber bundle $M^{5}_{Q} \stackrel{\pi}{\to} M_{Q}^{5}/T^3$  with a fiber $T^3$. In~\cite{BT1}, it is proved that $G_{4,2}/T^3\cong S^3\ast \C P^1\cong S^5$ and, in particular, that  $\mu ^{-1}({\bf x})/T^3 \cong \C P^1$ for any ${\bf x}\in \stackrel{\circ}{\Delta} _{4,2}$.


 The Pl\"ucker charts on $G_{4,2}$ are given by $M_{ij} =\{ L\in G_{4,2} | P^{ij}(L)\neq 0\}$, $1\leq i <j\leq 4$. From the definition of $M_{Q}^5$ we obtain that
\[
M_{Q}^{5}\not\subset M_{1i}  , \; i=2,3,4\; \text{and}\; M_{Q}^{5}\subset M_{ij}, \; 2\leq i<j\leq 4.
\]
In particular, we deduce in another way:
\begin{cor}
$M_{Q}^{5}$ is  a smooth  affine manifold.
\end{cor}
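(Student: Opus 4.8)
The goal is to show $M_Q^5$ is a smooth affine manifold, which we have already essentially established through the explicit charts. The plan is to recall that by Lemma~\ref{MQ5affine} the set $M_Q^5$ has an explicit description as the set of points of the form $(z_0,z_1,z_2,|z_3|,z_4,z_5)$ in $\C^5$ subject to the Pl\"ucker relation $z_0z_5+z_2|z_3|=z_1z_4$ together with $|z_0|^2+|z_1|^2+|z_2|^2=\tfrac13$, and that this representative is unique. Hence $M_Q^5$ lies inside the affine chart $M_{34}\cap M_{24}\cap M_{23}$ of $G_{4,2}$, i.e. inside $\{z_3\neq0,z_4\neq0,z_5\neq0\}$, where one of the Pl\"ucker coordinates (say $P^{34}=z_5$) never vanishes.

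First I would spell out the inclusions of $M_Q^5$ relative to the six Pl\"ucker charts $M_{ij}$. From the constraint $|z_0|^2+|z_1|^2+|z_2|^2=\tfrac13$ one of $z_0,z_1,z_2$ can vanish, so $M_Q^5$ genuinely meets the complements of $M_{1i}$ for $i=2,3,4$; but from formulas~\eqref{2345} (and the earlier Remark that $z_3,z_4,z_5\neq0$ on $M_Q^7$, hence on $M_Q^5\subset M_Q^7$) none of $z_3,z_4,z_5$ vanishes. Therefore $M_Q^5\subset M_{23}\cap M_{24}\cap M_{34}$. Fixing, say, the chart $M_{34}=\{P^{34}\neq0\}\cong\C^4$, the manifold $M_Q^5$ becomes a closed subset of $\C^4$ cut out by the real equations above; being a closed submanifold of an affine space $\C^4$, it is an affine manifold.

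The main point to be careful about — though it is routine given the earlier results — is that the set-theoretic description really is as a submanifold of a single affine chart and that we are not secretly using projective coordinates: this is exactly what Lemma~\ref{MQ5affine} provides, since the scaling ambiguity of homogeneous coordinates is killed by normalizing $z_3$ to be real positive and $\|{\bf z}\|^2$ to the prescribed value. Smoothness of $M_Q^5$ was already noted from the regular-value property of $\mu$ at $Q$ (Remark~\ref{regularvalues}), so the only new content here is the affineness, which follows from the chart inclusions. I expect no real obstacle; the statement is a direct corollary of Lemma~\ref{MQ5affine} and the vanishing pattern of the Pl\"ucker coordinates on $M_Q^5$, and the proof is a two-line bookkeeping of which $M_{ij}$ contain $M_Q^5$.
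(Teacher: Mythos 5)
Your proposal is correct and follows essentially the same route as the paper: the paper also deduces affineness from the observation that $M_{Q}^{5}\subset M_{ij}$ for all $2\leq i<j\leq 4$ (equivalently $z_3,z_4,z_5\neq 0$), so that $M_{Q}^{5}$ sits inside a single Pl\"ucker chart $\cong\C^4$, with smoothness coming from the regular-value property of $\mu$ at $Q$. Your additional appeal to Lemma~\ref{MQ5affine} matches the paper's remark that this gives the affine structure ``in another way.''
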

The strata in $G_{4,2}$  with the free $T^3$ -action are the main stratum $W_4$ and  $W_{ij} = \{L\in G_{4,2} | P^{ij}(L) = 0\}$, $1\leq i<j\leq 4$. It follows that 
\[
M_{Q}^{5} \cap  W \neq \emptyset, \;  W= W_4,  W_{12}, W_{13}, W_{14}, \;\; M_{Q}^{5}\cap W_{ij} =\emptyset, \;  2\leq i<j\leq 4.
\]

Let $A =\{(0:1), (1:0), (1:1)\}$ and $\C P^{1}_{A} = \C P^{1}\setminus A$. For any $(c:c^{'})\in \C P^{1}_{A}$ the set $\pi^{-1}((c:c^{'}))$ belongs, according to~\cite{BT1},  to the main stratum $W_4$, that is
\[
\pi ^{-1}(\C P^{1}_{A})\cong T^{3}\times \C P^{1}_{A}.
\]
Let $B_{0} = (0:1), B_1=(1:1)$, $B_{\infty} =(1:0)$ and $\C P ^{1}_{B_i} = \C P^{1}\setminus \{B_i\}$, $i=0,1,\infty$.

\begin{prop}\label{bezstrata}
 The following holds
\[ 
M_{Q}^{5}\setminus( M_{Q}^{5}\cap W_{12})  \cong T^3\times \C P^{1}_{B_{\infty}},\]
\[ M_{Q}^{5}\setminus( M_{Q}^{5}\cap W_{13})  \cong T^3\times \C P^{1}_{B_{0}}, \] 
\[M_{Q}^{5}\setminus( M_{Q}^{5}\cap W_{14})  \cong T^3\times \C P^{1}_{B_{1}}, 
\]
\end{prop}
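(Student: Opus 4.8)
The plan is to recognise $M_{Q}^{5}\setminus(M_{Q}^{5}\cap W_{1j})$ as the total space $M_{Q}^{5}$ with a single fibre of the orbit map $\pi\colon M_{Q}^{5}\to M_{Q}^{5}/T^3\cong\C P^1$ deleted, and then to exploit that $\C P^1$ with one point removed is contractible, so that the restriction of the principal $T^3$-bundle $\pi$ to such a punctured $\C P^1$ is automatically trivial.

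\emph{Step 1: locating the non-main fibres.} First I would record that the only strata meeting $M_{Q}^{5}$ are the main one $W_4$ and $W_{12},W_{13},W_{14}$, and that $M_{Q}^{5}\cap W_{12}=M_{Q,0}$, $M_{Q}^{5}\cap W_{13}=M_{Q,1}$, $M_{Q}^{5}\cap W_{14}=M_{Q,2}$, each a single free $T^3$-orbit. (When one of $z_0,z_1,z_2$ vanishes, the moduli relations~\eqref{z3},~\eqref{z4z5} together with the Pl\"ucker relation $z_0z_5+z_2z_3=z_1z_4$ force all the $|z_i|$ to take the values listed for $M_{Q,i}$, and the remaining phases then form one $T^3$-orbit.) Hence $\pi$ collapses each $M_{Q,i}$ to a point; since $\pi^{-1}(\C P^1_A)$ lies in $W_4$, these three points lie in $\{B_0,B_1,B_\infty\}$, and because distinct orbits cannot share a $\pi$-image, $\pi$ restricts to a bijection $\{M_{Q,0},M_{Q,1},M_{Q,2}\}\to\{B_0,B_1,B_\infty\}$. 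In particular $M_{Q}^{5}$ is the disjoint union of $\pi^{-1}(\C P^1_A)\subset W_4$ and the three orbits $M_{Q,i}$.

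\emph{Step 2: matching the punctures.} To say which orbit maps to which puncture I would use that the homeomorphism $M_{Q}^{5}/T^3\cong\C P^1$ of~\cite{BT1} is induced by the $T^3$-invariant $[z]\mapsto[\,z_1z_4:z_0z_5\,]=[\,P^{13}P^{24}:P^{12}P^{34}\,]$; this is well defined since the monomials $z_0z_5$, $z_1z_4$, $z_2z_3$ all have the same weight for the representation $\rho$, and they are linked by $z_1z_4=z_0z_5+z_2z_3$. Substituting the base points of the orbits then gives $M_{Q,0}$ (on which $z_0=0$) $\mapsto(1:0)=B_\infty$, $M_{Q,1}$ (on which $z_1=0$) $\mapsto(0:1)=B_0$, and $M_{Q,2}$ (on which $z_2=0$, so $z_1z_4=z_0z_5$) $\mapsto(1:1)=B_1$.

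\emph{Step 3: trivialising over the contractible base.} With this in hand,
\[
M_{Q}^{5}\setminus(M_{Q}^{5}\cap W_{12})=M_{Q}^{5}\setminus\pi^{-1}(B_\infty)=\pi^{-1}(\C P^1\setminus\{B_\infty\})=\pi^{-1}(\C P^1_{B_\infty}),
\]
which is the restriction of the principal $T^3$-bundle $\pi$ to the open subset $\C P^1_{B_\infty}\subset\C P^1$, hence itself a principal $T^3$-bundle, now over $\C P^1_{B_\infty}\cong\C$. A principal bundle over a contractible paracompact base is trivial, so $\pi^{-1}(\C P^1_{B_\infty})\cong T^3\times\C P^1_{B_\infty}$. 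The cases of $W_{13}$ and $W_{14}$ are word-for-word the same, with $B_\infty$ replaced by $B_0$, resp.\ $B_1$. The one genuinely non-formal ingredient is Step~2: it needs the explicit normalisation of the homeomorphism $M_{Q}^{5}/T^3\cong\C P^1$ from~\cite{BT1} in order to say which of $B_0,B_1,B_\infty$ is hit by each coordinate slice $\{z_i=0\}$; everything else is bundle-theoretic, the essential point being the contractibility of $\C P^1$ minus a point (which is also why the statement deletes only one of the three strata $W_{1j}$ at a time).
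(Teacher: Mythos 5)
Your argument is correct, but it takes a genuinely different route from the paper's. The paper works in the affine chart $M_{23}$ and writes down an explicit map $f(a_1,a_2,a_3,a_4)=((a_1a_4:a_2a_3),(t_2,t_3,t_4))$, where $a_k=|a_k|t_k$, and then verifies by hand, using the moment-map equations~\eqref{jednacine}, that $f$ is a bijection onto $\C P^{1}_{B}\times T^3$; in other words it constructs the trivialization concretely. You instead observe that $M_{Q}^{5}\setminus(M_{Q}^{5}\cap W_{1j})$ is exactly $\pi^{-1}(\C P^{1}_{B})$ for the appropriate puncture $B$ --- and your orbit/puncture bookkeeping is right: $z_0=0$ gives $(z_1z_4:z_0z_5)=(1:0)=B_\infty$, $z_1=0$ gives $(0:1)=B_0$, and $z_2=0$ gives, via the Pl\"ucker relation, $(1:1)=B_1$, matching the three cases of the statement --- and then invoke that a principal $T^3$-bundle (free smooth action of a compact torus on a compact manifold) restricted to a contractible paracompact base is trivial. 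Both proofs rest on the same external input, namely that $[z]\mapsto(z_1z_4:z_0z_5)$ induces the homeomorphism $M_{Q}^{5}/T^3\cong\C P^1$ from~\cite{BT1}, which you correctly flag as the one non-formal ingredient. The trade-off: your soft argument is shorter and cleaner as a proof of the proposition itself, but it is non-constructive, whereas the paper's explicit trivializations over the two charts are precisely what is used immediately afterwards to compute the transition function~\eqref{transition} of the bundle; with your proof one would still need to extract those explicit formulas before proving the following theorem.
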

\begin{proof}
Consider the chart  $M_{23}$ on $G_{4,2}$ with the local coordinates $a_1, a_2, a_3, a_4$. Then any $L\in G_{4,2}$  represents in $M_{23}$   by
\[
A_{L} =\left (
\begin{array}{cc}
a_1 & a_3\\
1 & 0\\
0 & 1\\
a_2 & a_4
\end{array}\right ).
\]
For the points from $M_{Q}^{5}$ we have that $|a_i|^2+|a_{j}|^{2}\neq 0$, $1\leq i<j\leq 4$ and $a_2, a_4\neq 0$. In addition, 
\begin{equation}\label{jednacine}
1= \frac{1}{3} (|a_3|^2 + |a_1|^2 +4 |a_1a_4-a_2a_3|^2), \; |a_4|^2 = \frac{1}{3} (|a_3|^2 + 4 |a_1|^2 + |a_1a_4-a_2a_3|^2),
\end{equation}
\[
|a_2|^2 = \frac{1}{3} (4|a_3|^2 + |a_1|^2 + |a_1a_4-a_2a_3|^2)
\]
The  points of the space $M_{Q}^{5}\setminus( M_{Q}^{5}\cap W_{12})$  satisfy additional condition $a_3\neq 0$.
 Consider the map $f : M_{Q}^{5}\setminus( M_{Q}^{5}\cap W_{14}) \to \C P^{1}_{B_{1}}\times T^3$ defined 
\[
f(a_1, a_2, a_3, a_4) = ((a_1a_4:a_2a_3), (t_2, t_3, t_4)), 
\]
where $a_k = |a_k|t_k$, $k=1,2,3,4$. The map $f$ is homeomorphism since $f(a_1, a_2, a_3, a_4)= f(a_{1}^{'}, a_{2}^{'}, a_{3}^{'}, a_{4}^{'})$ implies
$t_i^{'} = t_i$ for $i=2,3, 4$ and $a_{1}^{'}a_{4}^{'} = c a_1a_4$,  $a_{2}^{'}a_{3}^{'} = c a_2a_3$ for $c = |c|\lambda \in \C$, $c\neq 0, \lambda \in S^1$.  It   implies
$t_{2}^{'}t_{3}^{'} = \lambda t_{2}t_{3}$,  that is $\lambda =1$. Since  $a_1\neq 0$ iff $a_{1}^{'}\neq 0$, we have that  $t_{1}^{'} = t_1$ for $a_1\neq 0$.  Now,  from~\eqref{jednacine} we deduce that $a_i=a_{i}^{'}$, $i=1,2,3,4$. 
\end{proof}

Consider now the standard charts on $\C P^{1}$ given by $M_{0} = \{(1:c) | c\in \C\}$ and $M_{1} = \{(c:1) | c\in \C \}$ and the projection $\pi : M_{Q}^{5}\to \C P^{1}$.

\begin{lem}\label{CP1}
It holds 
\begin{equation}
\pi ^{-1}(M_0) = M_{Q}^{5}\setminus( M_{Q}^{5}\cap W_{13}), \;\;  \pi ^{-1}(M_1) =M_{Q}^{5}\setminus( M_{Q}^{5}\cap W_{12}) .
\end{equation}
\end{lem}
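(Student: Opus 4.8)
The plan is to unravel both sides of the claimed equality by going back to the definitions of the Pl\"ucker charts $M_0,M_1$ on $\C P^1$, the map $\pi$, and the strata $W_{12},W_{13}$, and to match them via the explicit local coordinates $a_1,a_2,a_3,a_4$ on the chart $M_{23}$ of $G_{4,2}$ used in the proof of Proposition~\ref{bezstrata}. Recall that $M_Q^5\subset M_{23}$ (this was observed just before Proposition~\ref{bezstrata}), so every point of $M_Q^5$ is $A_L$ with $a_2,a_4\neq 0$ and $|a_i|^2+|a_j|^2\neq 0$ for all $i<j$. First I would make explicit how $\pi$ acts in these coordinates: since $\pi:M_Q^5\to M_Q^5/T^3\cong\C P^1$ and, from the description in the proof of Proposition~\ref{bezstrata}, the map to $\C P^1$ is given by $(a_1,a_2,a_3,a_4)\mapsto(a_1a_4:a_2a_3)$, the fibre $\pi^{-1}(M_0)$ consists of those $L$ with $(a_1a_4:a_2a_3)\in M_0=\{(1:c)\}$, i.e.\ with first homogeneous coordinate nonzero, which (since $a_4\neq 0$ always on $M_Q^5$) is exactly the locus $a_1\neq 0$; similarly $\pi^{-1}(M_1)=\{(c:1)\}$ pulls back to the locus $a_2a_3\neq 0$, i.e.\ (since $a_2\neq 0$ always) the locus $a_3\neq 0$.

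Next I would translate the strata conditions into the same coordinates. In the chart $M_{23}$ one has, up to the standard normalization, $P^{12}(L)=a_1$ (the minor on rows $1,2$ in the chart where rows $2,3$ carry the identity block) and $P^{13}(L)=a_3$ — more precisely, reading $A_L$ off, the $2\times 2$ minors are $P^{23}=1$, $P^{12}=a_1$, $P^{13}=a_3$, $P^{24}=-a_2$, $P^{34}=-a_4$, $P^{14}=a_1a_4-a_2a_3$, which is consistent with the Pl\"ucker quadric $z_0z_5+z_2z_3=z_1z_4$ quoted at the start of \S6 (with the identification $(z_0,\dots,z_5)=(P^{12},P^{13},P^{14},P^{23},P^{24},P^{34})$). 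Hence $W_{12}=\{P^{12}=0\}$ meets $M_{23}$ in $\{a_1=0\}$ and $W_{13}=\{P^{13}=0\}$ meets $M_{23}$ in $\{a_3=0\}$. Therefore on $M_Q^5$ we get $M_Q^5\cap W_{12}=\{a_1=0\}$ and $M_Q^5\cap W_{13}=\{a_3=0\}$, and so $M_Q^5\setminus(M_Q^5\cap W_{12})=\{a_1\neq 0\}$, $M_Q^5\setminus(M_Q^5\cap W_{13})=\{a_3\neq 0\}$.

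Combining the two computations finishes the argument: $\pi^{-1}(M_0)=\{a_1\neq 0\}=M_Q^5\setminus(M_Q^5\cap W_{12})$? — here I need to be careful which of $M_0,M_1$ corresponds to which stratum, matching the convention $(a_1a_4:a_2a_3)$ against $M_0=\{(1:c)\}$ versus $M_1=\{(c:1)\}$, and then cross-checking against the three cases of Proposition~\ref{bezstrata} and the identification $B_0,B_1,B_\infty$ there, so that the statement $\pi^{-1}(M_0)=M_Q^5\setminus(M_Q^5\cap W_{13})$ comes out with the correct indices. The main obstacle is thus bookkeeping rather than substance: one must pin down, once and for all, the precise formula for $\pi$ in the chosen local coordinates (equivalently, which homogeneous coordinate on $\C P^1\cong M_Q^5/T^3$ is being used) and keep the stratum labels consistent with the earlier results; once that is fixed, the equalities of sets are immediate from $a_2,a_4\neq 0$ on $M_Q^5$. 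I would close by remarking that the case $a_1a_4-a_2a_3=0$, i.e.\ the locus $M_Q^5\cap W_{14}$ corresponding to $B_1=(1:1)$, lies in the intersection of both charts $M_0\cap M_1$, consistently with $\C P^1=M_0\cup M_1$ and with the three-chart cover of Proposition~\ref{bezstrata}.
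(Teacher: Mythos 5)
Your overall strategy is the right one and is essentially the paper's (the paper's proof is literally ``follows from the proof of Proposition~\ref{bezstrata}'', i.e.\ from the same chart computation you set up): work in the chart $M_{23}$, note that $\pi$ is $(a_1,a_2,a_3,a_4)\mapsto(a_1a_4:a_2a_3)$ with $a_2,a_4\neq0$ on $M_Q^5$, so $\pi^{-1}(M_0)=\{a_1\neq0\}$ and $\pi^{-1}(M_1)=\{a_3\neq0\}$, and then identify these loci with the complements of the strata. Those two computations of the preimages are correct.

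The genuine gap is in the step you yourself flag with a question mark and then wave off as ``bookkeeping rather than substance'': your identification of the Pl\"ucker coordinates in the chart $M_{23}$ is wrong, and the bookkeeping \emph{is} the entire substance of this lemma. Reading the minors off $A_L$ with rows $(a_1,a_3),(1,0),(0,1),(a_2,a_4)$ gives $P^{12}=-a_3$, $P^{13}=a_1$, $P^{24}=a_4$, $P^{34}=-a_2$ (equivalently, as the paper records later, $a_1=P^{13}/P^{23}$ and $a_3=-P^{12}/P^{23}$), not $P^{12}=a_1$, $P^{13}=a_3$ as you wrote. Hence $M_Q^5\cap W_{12}=\{a_3=0\}$ and $M_Q^5\cap W_{13}=\{a_1=0\}$ --- exactly the opposite of your assignment, and consistent with the sentence in the proof of Proposition~\ref{bezstrata} that points of $M_Q^5\setminus(M_Q^5\cap W_{12})$ satisfy $a_3\neq0$. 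With your minors the argument would conclude $\pi^{-1}(M_0)=M_Q^5\setminus(M_Q^5\cap W_{12})$, which is the negation of the statement. Note also that your ``consistency check'' against the quadric $z_0z_5+z_2z_3=z_1z_4$ cannot detect this error, since the quadric is symmetric under the swap $(z_0,z_4)\leftrightarrow(z_1,z_5)$ up to sign and is satisfied by both labelings. Once the minors are corrected, your two preimage computations combine to give the lemma exactly as stated, so the fix is local but necessary.
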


\begin{proof}
It follows immediately from  the proof of Proposition~\ref{bezstrata}.
\end{proof}

\begin{thm}
The structure of the principal fiber bundle can be described by two  charts $M_{0}$, $M_{1}$  for  $\C P^{1}$   with  the transition function 
$h_{01} : \pi ^{-1}(M_0 \cap M_1) \to \pi ^{-1}(M_0\cap M_1)$ given by
\[
h_{01}((c:c^{'}), (t_{1}^{0}, t_{2}^{0}, t_{3}^{0})) = ((c:c^{'}), (t_{1}^{1}, t_{2}^{1}, t_{3}^{1})), 
\]
where 
\begin{equation}\label{transition}
t_{1}^{1}= \frac{t_{1}^{0}t_{2}^{0}}{t_{3}^{0}},\; t_{2}^{1} =t_{1}^{0}, \; t_{3}^{1} = t_{3}^{0}.
\end{equation}
\end{thm}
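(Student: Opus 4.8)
The plan is to compose the two explicit local trivializations of $M_Q^5$ produced in Proposition~\ref{bezstrata} and read the transition function off the result. Throughout I would work in the Pl\"ucker chart $M_{23}$ of $G_{4,2}$, with affine coordinates $a_1,a_2,a_3,a_4$ as in the proof of Proposition~\ref{bezstrata}: since $P^{23}\equiv 1$ there, all of $M_Q^5$ lies in $M_{23}$; in these coordinates $z_0=-a_3$, $z_1=a_1$, $z_2=a_1a_4-a_2a_3$, $z_3=1$, $z_4=a_4$, $z_5=-a_2$, one has $a_2,a_4\neq 0$ on $M_Q^5$, the projection is $\pi(L)=(a_1a_4:a_2a_3)$, and the free $T^3=T^4/\text{diag}(T^4)$-action coming from the representation~\eqref{ro} reads $(a_1,a_2,a_3,a_4)\mapsto\bigl(t_2a_1,\,(t_3/t_1)a_2,\,t_1a_3,\,(t_3/t_2)a_4\bigr)$.

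First I would make the two charts explicit. By Lemma~\ref{CP1}, $\pi^{-1}(M_0)=M_Q^5\setminus W_{13}=M_Q^5\cap\{a_1\neq 0\}$ and $\pi^{-1}(M_1)=M_Q^5\setminus W_{12}=M_Q^5\cap\{a_3\neq 0\}$. Over $\pi^{-1}(M_0)$ precisely the coordinates $a_1,a_2,a_4$ are nowhere zero ($a_1\neq 0$ by definition, $a_2,a_4\neq 0$ everywhere on $M_Q^5$, while $a_3=0$ on the nonempty set $M_Q^5\cap W_{12}\subset\pi^{-1}(M_0)$); symmetrically, over $\pi^{-1}(M_1)$ precisely $a_2,a_3,a_4$ are nowhere zero. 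As in the proof of Proposition~\ref{bezstrata}, I would record $\pi(L)$ together with the arguments of these three coordinates, recombined into monomials so that the resulting triple transforms by the standard $T^3$-action; the moment-map equations~\eqref{jednacine}, together with the prescribed arguments and the value of $\pi$, pin down the point uniquely, so I would obtain $T^3$-equivariant homeomorphisms
\[
\phi_0:\pi^{-1}(M_0)\ \xrightarrow{\ \sim\ }\ M_0\x T^3,\qquad \phi_1:\pi^{-1}(M_1)\ \xrightarrow{\ \sim\ }\ M_1\x T^3 .
\]

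Next I would compute $h_{01}=\phi_1\circ\phi_0^{-1}$ over $\pi^{-1}(M_0\cap M_1)=M_Q^5\setminus(W_{12}\cup W_{13})$, where all four coordinates are non-zero. Given $\phi_0(x)=((c:c'),(t_1^0,t_2^0,t_3^0))$ I would invert $\phi_0$, so $x$ is the unique point with $\pi(x)=(c:c')$ and the prescribed arguments of $a_1,a_2,a_4$, and then apply $\phi_1$. Two of the monomials recorded by $\phi_1$ involve only $a_2,a_4$ and $\pi(x)$ and are therefore already determined by the $\phi_0$-data, giving $t_2^1=t_1^0$ and $t_3^1=t_3^0$; the remaining one involves $\arg a_3$, which is recovered from the identity $(a_1a_4:a_2a_3)=\pi(x)=(c:c')$ by passing to arguments, and a short simplification -- in which the base-coordinate factor cancels against the occurrences of $\pi(x)$ in the two monomial bases -- yields $t_1^1=t_1^0t_2^0/t_3^0$. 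This is exactly~\eqref{transition}; since $\C P^{1}=M_0\cup M_1$ it describes the principal bundle completely, and it is the automorphism of $T^3$ with matrix $\left(\begin{smallmatrix}1&1&-1\\1&0&0\\0&0&1\end{smallmatrix}\right)$ on the character lattice, consistent with $M_Q^5\cong S^3\x T^2$.

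The step I expect to be the main obstacle is precisely this bookkeeping: one has to fix mutually compatible conventions for the trivializations of Proposition~\ref{bezstrata} (the same copy of $T^3$ as structure group, a fixed ordering of its factors, one orientation of the $\C P^{1}$-coordinate) and then carry out the elimination carefully enough to confirm that the dependence on $(c:c')$ really does drop out of $h_{01}$, leaving only the automorphism~\eqref{transition}.
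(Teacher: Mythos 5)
Your strategy coincides with the paper's: work in the affine chart $M_{23}$, take the two trivializations of Proposition~\ref{bezstrata} given by recording $\pi(x)=(a_1a_4:a_2a_3)$ together with the phases of the three affine coordinates that never vanish on the respective chart, observe that the torus acts on $(a_1,a_2,a_3,a_4)$ through the characters $(\tau_1/\tau_2,\,\tau_4/\tau_2,\,\tau_1/\tau_3,\,\tau_4/\tau_3)$ of $T^4/\mathrm{diag}(T^4)$, and read off the transition as the change of basis between $\{\tau_1/\tau_2,\tau_4/\tau_2,\tau_4/\tau_3\}$ and $\{\tau_4/\tau_2,\tau_1/\tau_3,\tau_4/\tau_3\}$ in the character lattice. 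That is exactly how the paper arrives at~\eqref{transition}, and the $\GL_3(\Z)$ matrix you exhibit is the correct automorphism part of the transition.

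However, the step you single out as the main obstacle --- checking that ``the dependence on $(c:c')$ really does drop out of $h_{01}$'' --- is precisely the step that fails. On the overlap the only relation determining $\arg a_3$ from the chart-$0$ data is $a_1a_4=(c/c')\,a_2a_3$; writing $t_i=a_i/|a_i|$ it gives $t_3=\frac{c'|c|}{c|c'|}\cdot\frac{t_1t_4}{t_2}$, and the unimodular factor $\frac{c'|c|}{c|c'|}$ does not cancel. It cannot be absorbed by re-choosing the monomials recorded in either trivialization, since that would require using $a_3$ on the chart where $a_3$ may vanish, or $a_1$ on the chart where $a_1$ may vanish. Nor should it cancel: a transition function that is a constant automorphism of $T^3$ would glue to a global homeomorphism $M_{Q}^{5}\cong \C P^1\times T^3$, contradicting the paper's own Corollary that $M_{Q}^{5}$ is not homeomorphic to $\C P^1\times T^3$ (and the identification $M_{Q}^{5}\cong S^3\times T^2$, which has $\pi_2=0$). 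The actual transition is~\eqref{transition} composed with the base-dependent translation by $\bigl(1,\;c'|c|/(c|c'|),\;1\bigr)$, whose winding vector $(0,-1,0)\in\pi_1(T^3)$ is exactly the Chern class producing the Hopf factor $S^3\to\C P^1$. The paper's published proof is equally terse and its formula likewise records only the automorphism part, so you have faithfully reproduced that argument; but your explicit assertion that the base dependence cancels is a genuine error, and any complete proof must instead compute this residual unit and its winding number.
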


 \begin{proof}
Since  $\pi ^{-1}(M_0 \cap M_1) = M_{Q}^{5}\setminus ( M_{Q}^{5}\cap W_{13} \cup  M_{Q}^{5}\cap W_{12})$ it follows from Proposition~\ref{bezstrata} that $\pi ^{-1}(M_0\cap M_1)$ is  homeomorphic to  $\C P^1\setminus \{B_{0}, B_{\infty}\} \times T^3$. The canonical action of  $T^4$ on $G_{4,2}$ produces the action of $T^3$ on the coordinates  of the chart $M_{23}\cong \C ^4$ defined by the coordinate wise action of  $(\tau _1, \tau _2, \tau _3, \frac{\tau _2\tau _{3}}{\tau _1})$.
In the chart $\pi ^{-1}(M_0)$ we have that $a_3\neq 0$ so the torus $T_{0}^{3} = \{( \tau_2, \tau_3, \frac{\tau_2\tau_3}{\tau_1} )\}$  acts freely on $\pi ^{-1}(M_0)$  and 
$\pi ^{-1}(M_0) \cong \C P^{1}_{B_1}\times T^{3}_{0}$. In the chart $\pi ^{-1}(M_1)$ we have that $a_1\neq 0$ and the torus $T^{3}_{1} = \{( \tau_1, \tau_2, \frac{\tau _2\tau _3}{\tau _1})\}$ acts freely on $\pi ^{-1}(M_1)$  and  $\pi ^{-1}(M_1) \cong \C P^{1}_{B_{\infty}}\times T^{3}_{1}$. Thus,  the transition map is given by~\eqref{transition}.
\end{proof}

\section{Relations between $M^2$, $M^3$ and $\C P^1$ towards $M_{Q}^5$}

We determine the relations among the spaces $M^2$, $M^3$ and $\C P^1$ which lead to the more    explicit description of $M_{Q}^{5}$.

\subsection{$M^2$ towards  $\C P^1$ and $M_{Q}^{5}$}

Recall that the space $M^{2}$ belongs to the  intersection   of $M_{Q}^{5}$ and $S^{5}$  in $M_{Q}^{7}$ and  it is given  by the points $(z_0 : z_1 : |z_2| : |z_3| : |z_4| : |z_5|)\in \C P^5$ such that
\[
z_0|z_5| + |z_2||z_3| = z_1|z_4|,
\]
where $|z_k|$ is given by~\eqref{2345}  for $k=2,3,4,5$.

Since $M_{Q}^{5}\subset M_{23}$ it follows that $M^{2}\subset M_{23}$ and its coordinates $(a_1, a_2, a_3, a_4)$ in this chart are:
\[
a_1=\frac{P^{13}}{P^{23}}, \; a_2 = -\frac{P^{34}}{P^{23}}, \; a_3=-\frac{P^{12}}{P^{23}}, \; a_4 = \frac{P^{24}}{P^{23}}.
\]
We  obtain  that
\[
a_1 =\frac{z_1}{|z_3|}, \; a_2 = -\frac{|z_5|}{|z_3|}, \; a_3 = -\frac{z_0}{|z_3|},\; a_4 = \frac{|z_4|}{|z_3|}.
\]
The quotient $M_{Q}^{5}/T^4$ for the canonical action of $T^4$ on $G_{4,2}$ is $\C P^1$ and, according to~\cite{BT1},  it can be described  by the projection $M_{Q}^{5}\to \C P^1$ defined  by $(a_1,a_2, a_3, a_4)\to (a_1a_4 : a_2a_3)$. In this way we obtain the projection $p : M^{2} \to \C P^1$  defined by
\begin{equation}\label{M2CP1}
p(z_0:z_1: |z_2|: |z_3|: |z_4|: |z_5|) = (z_1|z_4| : z_0|z_5|).
\end{equation}

Recall that the circle $S^1\subset M^2$ defined by $z_0=z_1$ is given by $(\frac{e^{i\psi _{0}}}{\sqrt{6}}, \frac{e^{i\psi _{0}}}{\sqrt{6}},  0 , \frac{1}{3}, \frac{\sqrt{5}}{\sqrt{18}} , \frac{\sqrt{5}}{\sqrt{18}})$.
\begin{thm}\label{M2ramified}
The projection $p : M^{2}\setminus S^{1}\to \C P^1\setminus\{(1:1)\}$ is a  homeomorphism.
\end{thm}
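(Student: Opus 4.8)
The plan is to analyze the explicit map $p$ in \eqref{M2CP1} and show it is a continuous bijection from the (compact) space $M^2\setminus S^1$ — actually from $M^2$ with the circle collapsed appropriately — onto its image, then invoke compactness to upgrade to a homeomorphism. First I would unwind the defining equation \eqref{surface} of $M^2$. Using \eqref{2345}, every point of $M^2$ is determined by the two complex numbers $(z_0,z_1)$ subject to $|z_0|^2+|z_1|^2\le \tfrac13$ and the single complex constraint
\[
z_0\sqrt{|z_0|^2+\tfrac19}+\sqrt{\tfrac13-|z_0|^2-|z_1|^2}\,\sqrt{\tfrac49-|z_0|^2-|z_1|^2}=z_1\sqrt{|z_1|^2+\tfrac19}.
\]
Writing $z_0=r_0e^{i\psi_0}$, $z_1=r_1e^{i\psi_1}$, this constraint says that the vector $z_1\sqrt{|z_1|^2+\tfrac19}-z_0\sqrt{|z_0|^2+\tfrac19}$ is a nonnegative real number equal to $\sqrt{(\tfrac13-r_0^2-r_1^2)(\tfrac49-r_0^2-r_1^2)}$. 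I would record that this real number is $0$ exactly when $r_0^2+r_1^2=\tfrac13$, i.e. when $|z_2|=0$, which (by \eqref{surface} with $z_2=0$) forces $z_0=z_1$, i.e. we are on the circle $S^1$.

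Next I would write down a candidate inverse. Given $(c:c')\in\C P^1\setminus\{(1:1)\}$, the preimage point must satisfy $z_1|z_4|:z_0|z_5|=c:c'$, i.e. $c'z_1|z_4|=c z_0|z_5|$, together with the $M^2$-relation and the radius relations \eqref{2345}. The key computational step is to solve these simultaneously: substituting $|z_4|^2=r_1^2+\tfrac19$, $|z_5|^2=r_0^2+\tfrac19$ into the proportionality $c'z_1|z_4|=cz_0|z_5|$, and using that $z_1\sqrt{|z_1|^2+\tfrac19}-z_0\sqrt{|z_0|^2+\tfrac19}$ is real and nonnegative, one gets a pair of equations that pin down $r_0,r_1$ and the phase difference $\psi_1-\psi_0$ uniquely (the overall phase being the $\C P^5$-scaling freedom). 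Concretely, setting $u=z_0\sqrt{r_0^2+\tfrac19}$ and $v=z_1\sqrt{r_1^2+\tfrac19}$, the relation reads $v-u\in\R_{\ge0}$ and the fibre condition reads $c'v|z_4|/\sqrt{r_1^2+1/9}=cu|z_5|/\sqrt{r_0^2+1/9}$; since $|z_4|=\sqrt{r_1^2+1/9}$ and $|z_5|=\sqrt{r_0^2+1/9}$ these simplify to $c'v=cu$, so $u$ and $v$ are proportional with ratio $c':c$, and $v-u\ge0$ forces, after normalizing, a unique ray — hence a unique point of $M^2$ once we also use $|u|^2=r_0^2(r_0^2+\tfrac19)$, $|v|^2=r_1^2(r_1^2+\tfrac19)$ and $|v-u|^2=(\tfrac13-r_0^2-r_1^2)(\tfrac49-r_0^2-r_1^2)$ to recover $r_0,r_1$. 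I would check that this system has exactly one solution with $r_0,r_1\ge0$ for each $(c:c')\ne(1:1)$, and that the solution has $r_0^2+r_1^2<\tfrac13$ (so we land off $S^1$), with the excluded value $(1:1)$ corresponding precisely to $u=v$, i.e. $r_0=r_1$ and $|z_2|=0$, i.e. $S^1$.

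Then the proof assembles as follows: (i) $p$ is continuous, being given by polynomial/continuous formulas in homogeneous coordinates; (ii) by the previous paragraph $p$ restricted to $M^2\setminus S^1$ is a bijection onto $\C P^1\setminus\{(1:1)\}$, with $p(S^1)=\{(1:1)\}$; (iii) $M^2$ is compact (closed subset of $\C P^5$) and $\C P^1$ is Hausdorff. One cannot directly apply the closed-map argument to the noncompact pair, so instead I would argue that $p$ extends to a continuous surjection $\bar p: M^2\to\C P^1$ which is a quotient map collapsing $S^1$ to $(1:1)$ and is injective elsewhere; a continuous bijection from a compact space to a Hausdorff space is a homeomorphism, so $\bar p$ descends to a homeomorphism $M^2/S^1\xrightarrow{\ \sim\ }\C P^1$, and restricting to the complements of $S^1$ and $(1:1)$ respectively gives the claimed homeomorphism $p: M^2\setminus S^1\to\C P^1\setminus\{(1:1)\}$.

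The main obstacle is step (ii): verifying that the simultaneous system genuinely has a unique solution for every $(c:c')\ne(1:1)$, i.e. that $p|_{M^2\setminus S^1}$ is both injective and surjective. The surjectivity requires checking that the resulting $(r_0,r_1)$ indeed satisfy $|z_0|^2+|z_1|^2\le\tfrac13$ (with equality only on $S^1$) and all the square roots in \eqref{2345} are of nonnegative quantities; the injectivity requires that $p(z)=p(z')$ together with the $M^2$-relations forces $z=z'$ in $\C P^5$. I expect this to reduce, after the substitutions above, to elementary algebra in $r_0^2,r_1^2$ — essentially solving a linear-in-$r_0^2,r_1^2$ system coming from $c'v=cu$ and $v-u\ge0$ — but the bookkeeping with the reality/positivity constraints and the branch of the square roots is where care is needed. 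A secondary, more conceptual route that avoids some of this: identify $p$ with the restriction to $M^2$ of the Grassmannian quotient map $M_Q^5\to M_Q^5/T^4\cong\C P^1$ of \cite{BT1} (as already observed in the paragraph preceding the theorem), use that $M^2$ is a global slice for the $T^3$-action on $M_Q^5$ away from $S^1$ by Lemma~\ref{equiv23}, and deduce that $M^2\setminus S^1\to M_Q^5\setminus(M_Q^5\cap W)\to \C P^1_{B}$ composes to the asserted homeomorphism using Proposition~\ref{bezstrata}; this may streamline the argument considerably, but I would still verify the single point $(1:1)$ is the unique exceptional value by the $S^1$ computation above.
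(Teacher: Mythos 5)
Your proposal is correct, and its core argument is genuinely different from the paper's. For injectivity the paper does not solve the system directly: it invokes the identification (from \cite{BT1}) of $p$ with the restriction of the orbit map $M_Q^5\to M_Q^5/T^4\cong\C P^1$, so that $p({\bf z})=p({\bf z}')$ immediately puts ${\bf z},{\bf z}'$ in the same $T^4$-orbit and yields $|z_k|=|z_k'|$ for all $k$ for free; the only computation left is to subtract the two instances of $z_0|z_5|+|z_2||z_3|=z_1|z_4|$ and use $|z_2||z_3|\neq 0$ off $S^1$ to force the remaining phase $re^{i\psi}$ to equal $1$. This is essentially your ``secondary, more conceptual route.'' Your primary route instead reconstructs the point from $(c:c')$ by elementary algebra in $u=z_0\sqrt{|z_0|^2+\tfrac19}$, $v=z_1\sqrt{|z_1|^2+\tfrac19}$: the relations $c'v=cu$ and $v-u\in\R_{\ge0}$ pin down the argument of $v$, and the monotonicity of $t\mapsto t\sqrt{t^2+\tfrac19}$ against the decreasing function $\sqrt{(\tfrac13-r_0^2-r_1^2)(\tfrac49-r_0^2-r_1^2)}$ gives a unique $(r_0,r_1)$. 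This buys you surjectivity and an explicit inverse, neither of which the paper's proof addresses (the paper proves only injectivity in this theorem and leaves surjectivity and the topology of the inverse to the surrounding results, in particular Lemma~\ref{circle} and Corollary~\ref{cor1}); the cost is the square-root bookkeeping you correctly flag. Your closing compactness argument (extend to $\bar p:M^2\to\C P^1$, note $M^2$ is compact and $\C P^1$ Hausdorff, descend to a homeomorphism $M^2/S^1\to\C P^1$, restrict to complements) is sound and in fact supplies the upgrade from continuous bijection to homeomorphism that the paper's proof leaves implicit. No gap.
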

\begin{proof}
The condition $p({\bf z}) = p({\bf z^{'}})$ implies that $z_0|z_5| = re^{i\psi} z_{0}^{'}|z_{5}^{'}|$ and $z_1|z_4| = re^{i\psi}z_{1}^{'}|z_{4}^{'}|$. 
It gives  $\phi _k = \phi _{k}^{'} + \psi$ for  $k = 0, 1$, where $z_k= |z_k|e^{i\phi _k}$ and  $z_k^{'}= |z_k^{'}|e^{i\phi _k^{'}}$. In addition,  the points ${\bf z}$ and ${\bf z^{'}}$ belong to the same $T^{4}$-orbit in $M_{Q}^{5}$ which gives that $|z_k|=|z_{k}^{'}|$, $k=0,1,2,3,4,5$.

Assume that $z_0\neq  z_1$, $z_0, z_1\neq 0$.  Since 
\[
z_0|z_5| + |z_2||z_3|  = z_1|z_4|\]
\[
 re^{i\psi}z_{0}^{'}|z_{5}^{'}| + re^{i\psi}|z_{2}^{'}||z_{3}^{'}|  = re^{i\psi}z_{1}^{'}|z_{4}^{'}|,
\]
we obtain that 
\[
 re^{i\psi}|z_{2}^{'}||z_{3}^{'}|  - |z_{2}||z_{3}| = 0.
\]
Thus,
\[
re^{i\psi}  =1.
\]

\end{proof}

In addition we have
\begin{lem}\label{circle}
The preimage $p^{-1}(1:1)$ of the projection $p: M^{2}\to \C P^1$ is the circle $S^1$.
\end{lem}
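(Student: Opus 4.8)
The plan is to substitute the condition $p({\bf z})=(1:1)$ into the defining equations of $M^2$ and solve. First I would record that $p$ is everywhere defined on $M^2$: since $|z_4|^2\ge \tfrac19$ and $|z_5|^2\ge\tfrac19$ by~\eqref{2345}, the pair $(z_1|z_4|,z_0|z_5|)$ vanishes only if $z_0=z_1=0$, which by~\eqref{2345} forces $|z_2|^2=\tfrac13$ and then the relation~\eqref{surface} gives $|z_2||z_3|=0$, impossible as $|z_3|^2=|z_2|^2+\tfrac19>0$. Hence $p^{-1}(1:1)=\{{\bf z}\in M^2:\ z_1|z_4|=z_0|z_5|\}$.

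Next, for such a point the relation~\eqref{surface}, namely $z_0|z_5|+|z_2||z_3|=z_1|z_4|$, immediately yields $|z_2||z_3|=0$; since $|z_3|^2=|z_2|^2+\tfrac19>0$ this forces $z_2=0$, and then the third equation in~\eqref{2345} gives $|z_0|^2+|z_1|^2=\tfrac13$. Taking absolute values in $z_1|z_4|=z_0|z_5|$ and using $|z_4|^2=|z_1|^2+\tfrac19$, $|z_5|^2=|z_0|^2+\tfrac19$ from~\eqref{2345} gives $|z_1|^2(|z_1|^2+\tfrac19)=|z_0|^2(|z_0|^2+\tfrac19)$, i.e. $(|z_0|^2-|z_1|^2)(|z_0|^2+|z_1|^2+\tfrac19)=0$; the second factor is positive, so $|z_0|=|z_1|$. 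Combined with $|z_0|^2+|z_1|^2=\tfrac13$ this gives $|z_0|=|z_1|=\tfrac{1}{\sqrt6}$, whence $|z_3|=\tfrac13$ and $|z_4|=|z_5|=\sqrt{5/18}$. Finally, $z_1|z_4|=z_0|z_5|$ together with $|z_4|=|z_5|\ne0$ gives $z_0=z_1$. Thus every point of $p^{-1}(1:1)$ has the form $\bigl(\tfrac{e^{i\psi_0}}{\sqrt6}:\tfrac{e^{i\psi_0}}{\sqrt6}:0:\tfrac13:\tfrac{\sqrt5}{\sqrt{18}}:\tfrac{\sqrt5}{\sqrt{18}}\bigr)$, i.e. lies on the circle $S^1\subset M^2$.

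For the reverse inclusion one checks directly that on $S^1$ we have $z_0=z_1$ and $|z_4|=|z_5|=\sqrt{5/18}$, so $z_1|z_4|=z_0|z_5|$ and hence $p({\bf z})=(1:1)$. Therefore $p^{-1}(1:1)=S^1$. There is no real obstacle here beyond the well-definedness check of $p$ and the elementary factoring step that pins down $|z_0|=|z_1|$; the computation is entirely in terms of the already-established equations~\eqref{2345} and~\eqref{surface} of $M^2$ from Lemma~\ref{intersection}.
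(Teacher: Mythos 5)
Your proof is correct and follows essentially the same route as the paper: both reduce $p(\mathbf z)=(1:1)$ to $z_0|z_5|=z_1|z_4|$, use the factorization $(|z_0|^2-|z_1|^2)(\tfrac19+|z_0|^2+|z_1|^2)=0$ to get $|z_0|=|z_1|$, and use the relation~\eqref{surface} to force $z_2=0$, differing only in the order of these two steps and in your added (welcome) checks of well-definedness of $p$ and of the reverse inclusion. Note that your value $|z_3|=\tfrac13$ is the correct one (consistent with~\eqref{2345} and with the description of the circle elsewhere in the paper); the $|z_3|=\tfrac{1}{\sqrt3}$ appearing in the paper's proof is a typo.
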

\begin{proof}
If $p({\bf z}) = (1:1)$ then $z_0|z_5| = z_1|z_4|$.  Thus,
\[
|z_0|^2|z_5|^2 = |z_1|^2|z_4|^2 \; \stackrel{\eqref{2345}}{\rightarrow} \; \frac{1}{9}(|z_0|^2 -|z_1|^2) + |z_0|^4 - |z_1|^4 =0.
\]
It implies
\[
(|z_0|^2 -|z_1|^2)(\frac{1}{9} + |z_0|^2 + |z_1|^2) =0 \; \rightarrow \; |z_0|=|z_1|.
\]
Since $z_0|z_5| +|z_2||z_3|= z_1|z_4|$, we have that $|z_2|=0$, which is equivalent to $|z_0|^2 + |z_1|^2 =\frac{1}{3}$. It follows
\[
|z_0| = |z_1|=\frac{1}{\sqrt{6}}, \; |z_3| = \frac{1}{\sqrt{3}},\;  |z_4| = |z_5| =\frac{\sqrt{5}}{\sqrt{18}}.
\]
Therefore, $\phi _{0} =\phi _{1}=\psi$  and 
\[
p^{-1}(1:1) = (\frac{1}{\sqrt{6}}e^{i\psi}:\frac{1}{\sqrt{6}}e^{i\psi} : 0 : \frac{1}{\sqrt{3}} : \frac{\sqrt{5}}{\sqrt{18}} : \frac{\sqrt{5}}{\sqrt{18}}).
\]
\end{proof}

\begin{cor} \label{cor1}
The sphere $\C P^1$ is homeomorphic to the quotient space $\tilde{M}^{2}$  of $M^{2}$ by  the equivalence relations defined by the projection $p$, that is
$(z_0^{'}, z_{1}^{'})\approx (z_0, z_1)$ if and only if 
\begin{itemize}
\item $z_k = z_{k}^{'}$, $k=0,1$  for  $z_0\neq z_1$ 
\item  $z_0=z_1$ and $z_{0}^{'} = z_{1}^{'}$.
\end{itemize}
\end{cor}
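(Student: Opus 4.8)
The sphere $\C P^1$ is homeomorphic to the quotient space $\tilde{M}^2$ of $M^2$ by the equivalence relation defined by the projection $p$: $(z_0', z_1')\approx(z_0,z_1)$ iff $z_k=z_k'$, $k=0,1$, when $z_0\neq z_1$, and $z_0=z_1$, $z_0'=z_1'$ otherwise.

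\textbf{Proof proposal.} The plan is to show that the projection $p$ itself exhibits $\C P^1$ as the quotient $\tilde M^2$, by upgrading the partial homeomorphism of Theorem~\ref{M2ramified} to the whole space via a compactness argument. First I would record the elementary facts that $p:M^2\to\C P^1$ is continuous and surjective: continuity is immediate from the formula~\eqref{M2CP1}, and surjectivity follows because by Theorem~\ref{M2ramified} the image of $p$ already contains $\C P^1\setminus\{(1:1)\}$, while Lemma~\ref{circle} shows that $(1:1)$ lies in the image as well.

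Next I would identify the fibers of $p$ precisely. By Theorem~\ref{M2ramified}, for every point of $\C P^1\setminus\{(1:1)\}$ the fiber $p^{-1}$ of that point is a single point, and the union of these fibers is exactly the set of points of $M^2$ with $z_0\neq z_1$; by Lemma~\ref{circle}, $p^{-1}(1:1)$ is the circle $S^1\subset M^2$ cut out by $z_0=z_1$. Hence the equivalence relation ``having the same image under $p$'' on $M^2$ coincides with the relation stated in the corollary. Consequently $p$ factors through the quotient map $q:M^2\to\tilde M^2$, inducing a continuous bijection $\bar p:\tilde M^2\to\C P^1$ with $\bar p\circ q=p$.

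Finally I would conclude that $\bar p$ is a homeomorphism using the standard principle that a continuous bijection from a compact space onto a Hausdorff space is a homeomorphism. Here $\C P^1$ is Hausdorff, and $M^2$ is compact, being a closed subset of the compact space $\C P^5$ (equivalently, via the homeomorphism $r$ it corresponds to $\hat M^2\subset\C^2$, which is closed and bounded since $|z_0|^2+|z_1|^2\leq\frac13$ there); hence $\tilde M^2=q(M^2)$ is compact. Therefore $\bar p$ is a homeomorphism and $\tilde M^2\cong\C P^1$. There is essentially no real obstacle remaining at this stage: the ramified description of $p$ in Theorem~\ref{M2ramified} and the computation of the exceptional fiber in Lemma~\ref{circle} carry all the content, and only the routine compact-to-Hausdorff step is left. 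If one preferred to avoid that principle, one could instead build an explicit continuous inverse of $\bar p$: on $\C P^1\setminus\{(1:1)\}$ it is the inverse homeomorphism furnished by Theorem~\ref{M2ramified}, and one checks directly from~\eqref{2345} and~\eqref{surface} that as $(c:c')\to(1:1)$ the unique preimage in $M^2\setminus S^1$ converges to the circle $S^1$, i.e.\ to the collapsed point of $\tilde M^2$, giving continuity at $(1:1)$ as well.
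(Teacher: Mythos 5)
Your proposal is correct and follows essentially the route the paper intends: the paper states this as an immediate consequence of Theorem~\ref{M2ramified} and Lemma~\ref{circle}, and you supply exactly the standard glue — the fibers of $p$ coincide with the stated equivalence classes, so $p$ induces a continuous bijection $\tilde{M}^2\to\C P^1$, which is a homeomorphism since $M^2$ (hence $\tilde{M}^2$) is compact and $\C P^1$ is Hausdorff. No gaps.
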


\begin{cor}\label{disc}
The space $M^{2} = \overline{M^{2}\setminus S^1}$ is homeomorphic to the closed disc $D^2$.
\end{cor}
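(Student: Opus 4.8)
The plan is to prove that $M^{2}$ is a compact $2$-manifold with boundary, with $\partial M^{2}=S^{1}$, and then to appeal to the classification of surfaces. By Theorem~\ref{M2ramified}, $p$ restricts to a homeomorphism $M^{2}\setminus S^{1}\to\C P^{1}\setminus\{(1:1)\}\cong\R^{2}$, so $M^{2}\setminus S^{1}$ is an open $2$-disc; it is a simply connected $2$-manifold without boundary, open and dense in the compact connected space $M^{2}=\overline{M^{2}\setminus S^{1}}$. Granting the manifold-with-boundary claim, the interior of $M^{2}$ is $M^{2}\setminus S^{1}\cong\R^{2}$, hence $\pi_{1}(M^{2})\cong\pi_{1}(M^{2}\setminus S^{1})=1$ (the interior of a manifold with boundary is a deformation retract of it) and $\partial M^{2}$ is a single circle; since a compact connected surface with nonempty boundary and trivial fundamental group has $\chi=1$ and is therefore $D^{2}$, this gives $M^{2}\cong D^{2}$. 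I want to stress that the soft data gathered so far is genuinely insufficient: an open disc compactified by a circle whose collapse is $S^{2}$ (the content of Corollary~\ref{cor1}) could just as well be $\RP^{2}=\R^{2}\cup\RP^{1}$; it is precisely that $S^{1}$ appears as a \emph{boundary} of $M^{2}$ which rules this out, and that has to be extracted from the defining equations.

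The technical core is thus the local structure of $M^{2}$ along $S^{1}$, which I would analyse via the homeomorphism $r$ onto $\hat{M}^{2}\subset\C^{2}$. With $\rho=|z_{0}|^{2}+|z_{1}|^{2}$, the surface $\hat{M}^{2}$ consists of the $(z_{0},z_{1})$ in the closed $4$-ball $\Omega=\{\rho\le\tfrac13\}$ satisfying
\[
z_{0}\sqrt{|z_{0}|^{2}+\tfrac19}+\sqrt{(\tfrac13-\rho)(\tfrac49-\rho)}=z_{1}\sqrt{|z_{1}|^{2}+\tfrac19},
\]
and, as in the proof of Lemma~\ref{equiv23} together with~\eqref{2345}, $S^{1}=\hat{M}^{2}\cap\{\rho=\tfrac13\}$ is exactly the locus $z_{2}=0$, while $\hat{M}^{2}\setminus S^{1}\subset\{\rho<\tfrac13\}$. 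The middle term carries a square-root singularity at $\{\rho=\tfrac13\}$, which I would desingularize by introducing $\sigma=\sqrt{\tfrac13-\rho}\ge0$: then $\sqrt{(\tfrac13-\rho)(\tfrac49-\rho)}=\sigma\sqrt{\sigma^{2}+\tfrac19}$ becomes smooth, and $(z_{0},z_{1})\mapsto(z_{0},z_{1},\sigma)$ is a homeomorphism of $\hat{M}^{2}$ onto $\{E=0,\ \sigma\ge0\}$ inside the round $4$-sphere $\Sigma=\{|z_{0}|^{2}+|z_{1}|^{2}+\sigma^{2}=\tfrac13\}\subset\C^{2}\times\R$, where
\[
E(z_{0},z_{1},\sigma)=z_{0}\sqrt{|z_{0}|^{2}+\tfrac19}-z_{1}\sqrt{|z_{1}|^{2}+\tfrac19}+\sigma\sqrt{\sigma^{2}+\tfrac19}
\]
is now a smooth $\C$-valued function on $\Sigma$, with $S^{1}=E^{-1}(0)\cap\{\sigma=0\}$.

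It remains to differentiate $E$ at a point $P_{\psi}=\big(\tfrac{e^{i\psi}}{\sqrt6},\tfrac{e^{i\psi}}{\sqrt6},0\big)\in S^{1}$. There $T_{P_{\psi}}\Sigma$ is cut out by $\mathrm{Re}\big(e^{-i\psi}(\dot z_{0}+\dot z_{1})\big)=0$, so $\dot\eta:=\dot z_{0}-\dot z_{1}\in\C$ and $\dot\sigma\in\R$ are unconstrained, and
\[
dE_{P_{\psi}}(\dot z_{0},\dot z_{1},\dot\sigma)=\sqrt{\tfrac{5}{18}}\,\dot\eta+\frac{e^{i\psi}}{6\sqrt{5/18}}\,\mathrm{Re}\big(e^{-i\psi}\dot\eta\big)+\tfrac13\dot\sigma .
\]
Already the map $\dot\eta\mapsto\sqrt{\tfrac{5}{18}}\,\dot\eta+\frac{e^{i\psi}}{6\sqrt{5/18}}\mathrm{Re}(e^{-i\psi}\dot\eta)$ is a real-linear automorphism of $\C$ (writing $e^{-i\psi}\dot\eta=x+iy$ it multiplies $x$ by $\sqrt{5/18}+\tfrac{1}{6\sqrt{5/18}}$ and $y$ by $\sqrt{5/18}$), so $dE_{P_{\psi}}$ is surjective on $T_{P_{\psi}}\Sigma$; hence $0$ is a regular value of $E$ along $S^{1}$, and $E^{-1}(0)$ is a smooth surface near $S^{1}$. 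Solving $dE_{P_{\psi}}=0$ for $\dot\eta$ with a prescribed $\dot\sigma\ne0$ produces a vector tangent to $E^{-1}(0)$ at $P_{\psi}$ on which $d\sigma\ne0$, so $\sigma$ restricts to a submersion $E^{-1}(0)\to\R$ near $S^{1}$. Consequently $\hat{M}^{2}=(\sigma|_{E^{-1}(0)})^{-1}\big([0,\infty)\big)$ is, near $S^{1}$, a smooth $2$-manifold with boundary, with boundary $(\sigma|_{E^{-1}(0)})^{-1}(0)=S^{1}$; combined with $\hat{M}^{2}\setminus S^{1}\cong\R^{2}$ being an open $2$-manifold without boundary in $\hat{M}^{2}$, this shows $M^{2}\cong\hat{M}^{2}$ is a compact $2$-manifold with boundary $S^{1}$, and the first paragraph concludes $M^{2}\cong D^{2}$. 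The one place requiring real computation — and the main obstacle — is the verification just sketched that $0$ is a regular value of $E$ and that $\sigma$ is a submersion on $E^{-1}(0)$ along $S^{1}$; everything else is formal.
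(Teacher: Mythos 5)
Your proof is correct, and it is considerably more careful than what the paper offers: the paper states Corollary~\ref{disc} with no proof at all, treating it as an immediate consequence of Theorem~\ref{M2ramified}, Lemma~\ref{circle} and Corollary~\ref{cor1} (an open dense subset homeomorphic to $\R^2$, closed up by the circle $S^1$ whose collapse yields $\C P^1$). You are right to flag that this soft data alone does not pin down the homeomorphism type --- $\RP^2=\R^2\cup\RP^1$ satisfies all of it --- so the substance has to come from the local structure of $M^2$ along $S^1$, which the paper never examines. Your desingularization $\sigma=\sqrt{\tfrac13-\rho}$, turning $\hat{M}^2$ into $\{E=0,\ \sigma\ge 0\}$ on the round sphere $\Sigma$ with $E$ smooth, is a clean way to do this; I checked the differential $dE_{P_\psi}(\dot z_0,\dot z_1,\dot\sigma)=\sqrt{5/18}\,\dot\eta+\tfrac{e^{i\psi}}{6\sqrt{5/18}}\mathrm{Re}(e^{-i\psi}\dot\eta)+\tfrac13\dot\sigma$ and the invertibility of the $\dot\eta$-part (eigenvalues $\sqrt{5/18}+\tfrac1{6\sqrt{5/18}}$ and $\sqrt{5/18}$ in the rotated frame), and both are right, as is the identification $\{\sigma=0\}\cap E^{-1}(0)=S^1$ via injectivity of $z\mapsto z\sqrt{|z|^2+\tfrac19}$. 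This gives $M^2$ the structure of a compact surface with boundary $S^1$ and interior $\R^2$, and the classification step is standard. What your approach buys is an actual proof of the boundary behaviour (and, as a byproduct, smoothness of $M^2$ as a manifold with corners-free boundary); what the paper's implicit route buys is brevity, at the cost of leaving exactly the step you isolate --- that $S^1$ sits in $M^2$ as a boundary circle rather than as a one-sided or otherwise singular locus --- unjustified.
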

From Corollary~\ref{equiv5}  and Lemma~\ref{equiv23} we deduce:

\begin{cor}\label{cor2}
The orbit space $\tilde{M}^{5}_{Q}/T^3$ by the action induced from the $T^3$-action on $M^{2}\times T^3$ is homeomorphic to $\C P ^1$.
\end{cor}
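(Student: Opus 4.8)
The plan is to exploit that the $T^3$-action on $M^{2}\times T^3$ affects only the second factor and commutes with the equivalence relation $\approx$ of Lemma~\ref{equiv23}, so the two quotients may be taken in either order. First, note that $\approx$ is $T^3$-invariant: translating $(t_1,t_2,t_3)\mapsto(\tau_1 t_1,\tau_2 t_2,\tau_3 t_3)$ preserves both alternatives (1) and (2) in Lemma~\ref{equiv23}. Hence the $T^3$-action descends to $\tilde{M}^{5}_{Q}=(M^{2}\times T^3)/\!\approx$, and
\[
\tilde{M}^{5}_{Q}/T^3=\bigl((M^{2}\times T^3)/\!\approx\bigr)/T^3\cong\bigl((M^{2}\times T^3)/T^3\bigr)/\!\approx'=M^{2}/\!\approx',
\]
where $(M^{2}\times T^3)/T^3\cong M^{2}$ by freeness, with quotient map $(z_0,z_1,t_1,t_2,t_3)\mapsto(z_0,z_1)$, and $\approx'$ denotes the relation that $\approx$ induces on $M^{2}$.

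Next, identify $\approx'$ using Lemma~\ref{equiv23}. Off the circle $S^{1}=\{(z_0:z_1:\cdots)\in M^{2}\mid z_0=z_1\}$, alternative (1) forces $\approx$-equivalent representatives to already share the same pair $(z_0,z_1)$, so $\approx'$ is trivial there. On $S^{1}$, alternative (2) together with the component-wise $T^3$-translations collapses the entire preimage $S^{1}\times T^3$ to a single class: from $(z_0,z_0,t_1,t_2,t_3)$ one reaches any $(z_0,z_0,s_1,s_2,s_3)$ by a translation, and any $(z_0/\lambda,z_0/\lambda,\lambda t_1,\lambda t_2,\lambda t_3)$ with $\lambda\in S^{1}$ by alternative (2), hence any point of $S^{1}\times T^3$. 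Thus $\approx'$ is exactly the relation collapsing $S^{1}$ to a point and being the identity elsewhere, which is precisely the relation on $M^{2}$ given by the fibres of the projection $p:M^{2}\to\C P^1$ of~\eqref{M2CP1} (a homeomorphism off $S^{1}$ by Theorem~\ref{M2ramified}, with $p^{-1}(1:1)=S^{1}$ by Lemma~\ref{circle}).

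Finally, apply Corollary~\ref{cor1}, which says $M^{2}/\!\approx'\cong\C P^1$, to conclude $\tilde{M}^{5}_{Q}/T^3\cong\C P^1$. The only point requiring care is the interchange of the two quotient operations — that the orbit space of a quotient by a $T^3$-invariant relation equals the quotient of the orbit space by the induced relation — together with the verification that the induced relation on $M^{2}$ is exactly the one of Corollary~\ref{cor1}; both are routine set-theoretic and point-set checks once Lemma~\ref{equiv23} is in hand, and there is no analytic difficulty. This also recovers, via the equivariant homeomorphism $M^{5}_{Q}\cong\tilde{M}^{5}_{Q}$ of Corollary~\ref{equiv5}, the known identification $M^{5}_{Q}/T^3\cong\C P^1$ from~\cite{BT1}.
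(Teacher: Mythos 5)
Your proposal is correct and follows essentially the same route as the paper: the paper deduces Corollary~\ref{cor2} in one line from Lemma~\ref{equiv23} (the explicit description of $\approx$) together with Corollary~\ref{cor1}, exactly the combination you spell out. Your only additions are the routine verifications that $\approx$ is $T^3$-invariant (already noted in the remark following Lemma~\ref{equiv23}) and that the two quotient operations commute, so the induced relation on $M^{2}$ is precisely the one of Corollary~\ref{cor1}.
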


Let  $M_{2} = D^{2}\setminus S^1$, where $ S^1  = \{(\frac{e^{i\phi _0}}{\sqrt{6}}, \frac{e^{i\phi _0}}{\sqrt{6}})\in M^{2}\}$.   Lemma ~\ref{equiv23} gives:

\begin{cor}\label{tildeMproduct1}
\begin{itemize}
\item
The space $(M_{2}\times T^3) /\approx$ is homeomorphic to $M_{2}\times T^{3}$.
\item 
The space $(S^1\times T^3)/\approx$ is homeomorphic to $T^4/S^1$, where $S^1$ acts on $T^4$ by 
$\lambda \cdot  (t_{0}, t_{1}, t_{2}, t_3) = (\frac{1}{\lambda}t_0, \lambda t_1, \lambda t_2, \lambda t_3)$.
\end{itemize}
\end{cor}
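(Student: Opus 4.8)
The plan is to read off the equivalence relation $\approx$ of Lemma~\ref{equiv23} on each of the two subspaces $M_{2}\times T^3$ and $S^1\times T^3$ of $M^{2}\times T^3$ separately. Recall that $S^1\subset M^{2}$ is the circle $z_0=z_1$ (Lemma~\ref{circle}), so that $M_{2}=M^{2}\setminus S^1$ consists exactly of the points with $z_0\neq z_1$. The first point I would record is that case~(2) of Lemma~\ref{equiv23} only relates points with $z_0=z_1$, so both $M_{2}\times T^3$ and $S^1\times T^3$ are unions of $\approx$-classes; hence the two quotients in the statement are well defined.

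For the first assertion: on $M_{2}\times T^3$ only case~(1) of Lemma~\ref{equiv23} can occur, and that case identifies each point with itself alone. Thus the restriction of $\approx$ to $M_{2}\times T^3$ is the diagonal relation, and $(M_{2}\times T^3)/\approx$ is literally $M_{2}\times T^3$.

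For the second assertion: I would parametrize the circle $S^1\subset M^{2}$ by its phase $t_0=e^{i\phi_0}\in S^1$, so that a point $((z_0,z_1),(t_1,t_2,t_3))$ of $S^1\times T^3$ corresponds to the point $(t_0,t_1,t_2,t_3)$ of $T^4$; this is a homeomorphism $\Phi : S^1\times T^3\to T^4$. On $S^1\times T^3$ only case~(2) of Lemma~\ref{equiv23} applies, and under $\Phi$ it reads: $(t_0,t_1,t_2,t_3)\approx(t_0',t_1',t_2',t_3')$ if and only if $t_0'=\lambda^{-1}t_0$ and $t_k'=\lambda t_k$ for $k=1,2,3$ and some $\lambda\in S^1$. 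This is exactly the orbit equivalence relation of the $S^1$-action $\lambda\cdot(t_0,t_1,t_2,t_3)=(\lambda^{-1}t_0,\lambda t_1,\lambda t_2,\lambda t_3)$ on $T^4$. Therefore $\Phi$ descends to a continuous bijection $(S^1\times T^3)/\approx\to T^4/S^1$, and since $S^1\times T^3$ is compact and $T^4/S^1$ is Hausdorff, this bijection is a homeomorphism.

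I do not expect a real obstacle: the argument is pure bookkeeping. The only things needing attention are checking that $M_{2}\times T^3$ and $S^1\times T^3$ are $\approx$-saturated (so that the two quotients are meaningful), and tracking the powers of $\lambda$ so that the $z_0$-coordinate picks up $\lambda^{-1}$ while the three torus coordinates pick up $\lambda$, which is what makes the induced action coincide with the one displayed in the statement.
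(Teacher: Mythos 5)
Your argument is correct and takes essentially the same route as the paper, which derives this corollary directly from Lemma~\ref{equiv23} without writing out further details. Your case analysis --- the relation is diagonal on $M_{2}\times T^3$ since only case~(1) of Lemma~\ref{equiv23} applies there, and on $S^1\times T^3$ the phase parametrization $t_0=e^{i\phi_0}$ transports case~(2) to exactly the orbit relation of the stated $S^1$-action on $T^4$ --- is precisely the bookkeeping the paper leaves implicit, and your saturation check and compact-to-Hausdorff argument are both sound.
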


Altogether, we proved:

\begin{thm}
The manifold $M_{Q}^{5}$ is homeomorphic to the quotient space 
\[
(D^2\times T^3)/\approx ,\;\;  \text{where}\;\;
  ({\bf z}, {\bf t})\approx ({\bf z}^{'}, {\bf t}^{'})\;\;  \text{iff}\;\; \]

\begin{enumerate}
 \item ${\bf z}={\bf z}^{'}\not\in S^1\; \text{and} \; {\bf t}^{'} =  {\bf t}$
\item ${\bf z}, {\bf z}^{'} \in S^1$  and  there exists $\lambda \in S^1$ such that  $ ({\bf {z}}^{'}, {\bf t}^{'}) = \rho _{14} (\lambda)\cdot ({\bf z}, {\bf t})$ for the representation $\rho _{14} : S^1\to T^4$ given by $\rho (\lambda) = ( \frac{1}{\lambda}, \lambda , \lambda, \lambda)$.
\end{enumerate} 
\end{thm}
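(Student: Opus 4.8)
The plan is to deduce the statement directly from the structural results already in hand, namely Corollary~\ref{equiv5}, Lemma~\ref{equiv23} and Corollary~\ref{disc}; no new geometry is needed, only a careful bookkeeping of the identifications. By Corollary~\ref{equiv5}, $M_{Q}^{5}$ is homeomorphic to the quotient $\tilde{M}_{Q}^{5}=(M^{2}\times T^{3})/\approx$, where $\approx$ is the equivalence relation induced by the $T^{3}$-equivariant surjection $F$ of Proposition~\ref{MQ5}. Lemma~\ref{equiv23} identifies $\approx$ explicitly: it is trivial on $(M^{2}\setminus S^{1})\times T^{3}$, while for two points of $S^{1}\times T^{3}$ one has $(z_{0},t_{1},t_{2},t_{3})\approx(z_{0}',t_{1}',t_{2}',t_{3}')$ exactly when $z_{0}'=\lambda^{-1}z_{0}$ and $t_{k}'=\lambda t_{k}$ for $k=1,2,3$ and some $\lambda\in S^{1}$. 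So it remains to replace $M^{2}$ by $D^{2}$ and to rewrite this relation in the announced intrinsic form.

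First I would fix a homeomorphism $M^{2}\cong D^{2}$ carrying $S^{1}$ onto $\partial D^{2}$. By Corollary~\ref{disc}, $M^{2}\cong D^{2}$; moreover $M^{2}\setminus S^{1}$ is an open $2$-disc (Theorem~\ref{M2ramified}, since $\C P^{1}\setminus\{(1:1)\}\cong\R^{2}$), and the projection $p\colon M^{2}\to\C P^{1}$ collapses $S^{1}$ to the point $(1:1)$ and is a homeomorphism elsewhere (Lemma~\ref{circle}); hence $S^{1}$ is precisely the set of points of $M^{2}$ having no neighbourhood homeomorphic to $\R^{2}$, a set that any homeomorphism to $D^{2}$ must send onto $\partial D^{2}$. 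With such a homeomorphism fixed, clause (1) of the theorem is simply the statement that $\approx$ is trivial on the interior $D^{2}\setminus\partial D^{2}\cong M^{2}\setminus S^{1}$.

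For clause (2), I would parametrise the boundary circle by $z_{0}=z_{1}=e^{i\phi_{0}}/\sqrt{6}$, as in Lemma~\ref{circle}. Discarding the constant modulus, the assignment $(z_{0},t_{1},t_{2},t_{3})\mapsto(\sqrt{6}\,z_{0},t_{1},t_{2},t_{3})$ identifies $S^{1}\times T^{3}$ with $T^{4}$ equipped with its coordinatewise action. Under this identification the second clause of Lemma~\ref{equiv23} reads: $(z_{0}',{\bf t}')$ and $(z_{0},{\bf t})$ are equivalent iff $(z_{0}',{\bf t}')=\rho_{14}(\lambda)\cdot(z_{0},{\bf t})$ with $\rho_{14}(\lambda)=(\lambda^{-1},\lambda,\lambda,\lambda)\in T^{4}$, which is exactly clause (2). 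This is consistent with Corollary~\ref{tildeMproduct1}, since the quotient of the boundary $T^{4}$ by $\rho_{14}(S^{1})$ is $T^{3}$. Transporting $\approx$ of Lemma~\ref{equiv23} along $M^{2}\cong D^{2}$ then yields the homeomorphism $M_{Q}^{5}\cong(D^{2}\times T^{3})/\approx$ in the stated form.

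The argument is essentially assembly, and I expect the only genuinely delicate point to be the claim that $S^{1}$ is carried onto $\partial D^{2}$ — equivalently, that no point of $S^{1}$ admits a neighbourhood in $M^{2}$ homeomorphic to $\R^{2}$, so that $M^{2}$ is a topological disc with boundary precisely $S^{1}$. This follows from Corollary~\ref{disc} combined with invariance of domain applied to the open dense subset $M^{2}\setminus S^{1}\cong\R^{2}$, or alternatively from the observation that collapsing a compact subset of $D^{2}$ to a point produces $S^{2}$ only when that subset is $\partial D^{2}$, together with the description of $p$ in Lemma~\ref{circle}.
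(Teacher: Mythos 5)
Your proof is correct and follows essentially the same route as the paper, which obtains the theorem by assembling Corollary~\ref{equiv5}, Lemma~\ref{equiv23}, Corollary~\ref{disc} and Corollary~\ref{tildeMproduct1} (the paper states the result with ``Altogether, we proved'' and no further argument). Your additional care in checking that any homeomorphism $M^{2}\cong D^{2}$ must carry the circle $S^{1}$ onto $\partial D^{2}$ fills in a detail the paper leaves implicit, and is sound.
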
 

In addition we derive:

\begin{cor}
The space $M_{Q}^{5}$ is not homeomorphic to $\C P^1 \times T^3$.
\end{cor}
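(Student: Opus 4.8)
The plan is to use the explicit quotient description of $M_Q^5$ just established, namely $M_Q^5 \cong (D^2\times T^3)/\!\approx$, together with the identification of the gluing circle $S^1\subset \partial D^2$ and the way $S^1$ acts on the $T^3$-fibers over it. The strategy is to assume for contradiction that $M_Q^5$ is homeomorphic to $\C P^1\times T^3$ and then extract an algebraic-topological invariant that distinguishes the two spaces. The most natural invariant to try first is the fundamental group together with the induced action of the free $T^3$-action in the principal bundle $M_Q^5\to M_Q^5/T^3\cong \C P^1$: since $\pi_1(\C P^1\times T^3)\cong \Z^3$ and this product bundle is trivial, the classifying map $\C P^1\to BT^3 = (\C P^\infty)^3$ is null-homotopic, whereas for $M_Q^5$ the transition function~\eqref{transition} shows the bundle $M_Q^5\to \C P^1$ has nontrivial clutching, so its first Chern class (equivalently, the Euler class in $H^2(\C P^1;\Z^3)\cong\Z^3$) should be computed and shown to be nonzero.

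Concretely, I would first compute $H^*(M_Q^5;\Z)$ (or at least $H_1$, $H_2$) directly from the quotient presentation $(D^2\times T^3)/\!\approx$. Using a Mayer--Vietoris or a CW/handle decomposition adapted to the decomposition of $D^2$ into $D^2\setminus S^1 = M_2$ and a collar of $S^1$, where the gluing over $S^1$ twists the $T^3$ by the representation $\rho_{14}(\lambda) = (\lambda^{-1},\lambda,\lambda,\lambda)$ descended to $T^3 = T^4/\mathrm{diag}$, one sees that the $T^3$-bundle over $\C P^1$ is obtained by clutching the trivial $T^3$-bundles over the two discs of $\C P^1$ via~\eqref{transition}. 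Reading off the induced map on $\pi_1(T^3)\cong\Z^3$ from the matrix
\[
\begin{pmatrix} 1 & 1 & -1 \\ 1 & 0 & 0 \\ 0 & 0 & 1 \end{pmatrix},
\]
the Euler class of the bundle is the "difference'' class measuring how far this clutching automorphism is from the identity, and it is nonzero since the matrix is not the identity. Hence $H^2(M_Q^5;\Z)$ — or the ring structure, or the way $H^1$ maps under the bundle projection — differs from that of $\C P^1\times T^3$, which has $H^*(\C P^1\times T^3)\cong H^*(\C P^1)\otimes \Lambda[\alpha_1,\alpha_2,\alpha_3]$.

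Alternatively, and perhaps more cleanly, I would argue via the fundamental group: from the presentation $(D^2\times T^3)/\!\approx$, the space is simply-connected along the $D^2$-direction except that $\pi_1$ is generated by the three $T^3$-circles, with a relation coming from the fact that contracting the boundary circle $S^1\subset\partial D^2$ drags the $T^3$-fiber around by $\rho_{14}$; this produces a relation killing (a combination of) the generators, so $\pi_1(M_Q^5)$ is a quotient of $\Z^3$ strictly smaller than $\Z^3$, whereas $\pi_1(\C P^1\times T^3) = \Z^3$. Either invariant finishes the proof. The main obstacle I anticipate is bookkeeping the descent from $T^4$ to $T^3 = T^4/\mathrm{diag}(T^4)$ correctly: the representation $\rho_{14}$ is given on $T^4$, and one must verify it induces a well-defined, nontrivial automorphism of $T^3$ (equivalently, that $\rho_{14}(\lambda)$ is not in the diagonal for $\lambda\neq 1$, which is clear since its first coordinate is $\lambda^{-1}$ and the rest are $\lambda$), and then track this twist consistently through the Mayer--Vietoris or van Kampen computation so that the resulting invariant is unambiguously different from that of the trivial product.
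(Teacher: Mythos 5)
Your second route is correct and is genuinely different from --- and more conclusive than --- the paper's own argument. The paper disposes of the corollary in one line by appealing to Corollary~\ref{tildeMproduct1}: the obvious homeomorphism over $M^{2}\setminus S^1$ does not extend across the exceptional circle. As written this only rules out one particular map, not an arbitrary homeomorphism; you instead extract an honest invariant. Concretely, from $M_Q^5\cong (D^2\times T^3)/\!\approx$ take $U$ to be the image of the interior (homotopy equivalent to $T^3$) and $V$ a neighbourhood of the image of $S^1\times T^3$, which by Corollary~\ref{tildeMproduct1} is $T^4$ modulo the free circle $\lambda\cdot(z,t_1,t_2,t_3)=(\lambda^{-1}z,\lambda t_1,\lambda t_2,\lambda t_3)$ and hence also homotopy equivalent to $T^3$; van Kampen on $U\cap V\simeq T^4$ gives $\pi_1(M_Q^5)\cong\Z^4/\langle(1,0,0,0),(-1,1,1,1)\rangle\cong\Z^3/\langle(1,1,1)\rangle\cong\Z^2$, while $\pi_1(\C P^1\times T^3)\cong\Z^3$. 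This proves the corollary and is consistent with the paper's later identification $M_Q^5\cong S^3\times T^2$.

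Your first route, however, contains a gap you should not rely on. The formula~\eqref{transition} is a \emph{constant} automorphism of the fibre torus relating the two coordinate systems $T^3_0$ and $T^3_1$; it is not a clutching loop $S^1\to T^3$. The Euler class of a principal $T^3$-bundle over $S^2$ is the homotopy class in $\pi_1(T^3)\cong\Z^3$ of the transition function restricted to an equatorial circle, and a transition independent of the base point contributes nothing: ``the matrix is not the identity'' does not imply a nonzero class, since two trivializations of a trivial bundle may differ by a constant automorphism of the fibre. The bundle is in fact nontrivial --- its Euler class is a primitive vector, which is precisely why $\pi_1$ drops from $\Z^3$ to $\Z^2$ --- but to see this one must track how the trivializations of Proposition~\ref{bezstrata} depend on the phase of the base coordinate $(a_1a_4:a_2a_3)$, not merely read the constant matrix. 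So the van Kampen computation is the argument to keep.
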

\begin{proof}
By  Corollary~\ref{tildeMproduct1} the homeomorphism between  $((M^2\setminus S^1)\times T^3)/\approx$  and  $(M^2\setminus S^1)\times T^3$ does not extend to the homeomorphism between $\tilde{M}_{Q}^{5}$ and $\tilde{M}^{2}\times T^3$.
\end{proof}

\subsection{$M^3$ towards  $\C P^1$ and $M_{Q}^{5}$}
Recall that $M^{3}$ from Lemma~\ref{intersection} is a subspace in $\C P^{5}$ consisting of the points $(z_0 : z_1 : z_2 : |z_3| : |z_4| : |z_5|)$, where $z_k = |z_k|e^{i\psi _k}$, $k=0,1,2$, then  $|z_k|$ are given by formulas~\eqref{2345} for $k=2,3,4,5$ , and  $z_0, z_1, z_2$ satisfy equation~\eqref{surface}.   
 The conditions $z_i=0$, $i=0,1,2$ give the three circles:
\[
S^{1}_{0} = (0: \frac{e^{i\psi }}{\sqrt{6}}:  \frac{e^{i\psi }}{\sqrt{6}}: \frac{\sqrt{5}}{\sqrt{18}}:  \frac{\sqrt{5}}{\sqrt{18}}: \frac{1}{3}), \; 
S^{1}_{1}= (\frac{e^{i\psi }}{\sqrt{6}} : 0 :   \frac{e^{i\psi }}{\sqrt{6}}:  \frac{\sqrt{5}}{\sqrt{18}}: \frac{1}{3}: \frac{\sqrt{5}}{\sqrt{18}}), 
\]
\[
S^{1}_{2} = (\frac{e^{i\psi }}{\sqrt{6}}:  \frac{e^{i\psi }}{\sqrt{6}}: 0: \frac{1}{3}: \frac{\sqrt{5}}{\sqrt{18}}:  \frac{\sqrt{5}}{\sqrt{18}}).
\]

Now, the circle $S^1$ acts freely  on $M^3$ by the representation $S^1\to T^6$ given by $t\to (t,t, t, 1, 1,1)$ that is by
\begin{equation}\label{S1action}
t\cdot (z_0:z_1:z_2: |z_3|:|z_4|: |z_5|) = (tz_0 : tz_1 : tz_2 : |z_3| : |z_4|: |z_5|).
\end{equation}

\begin{prop}\label{M3S1}
The orbit space of $M^3$ by the free $S^1$-action  is $\C P^1$. 
\end{prop}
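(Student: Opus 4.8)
The plan is to use that $M^{2}$ sits inside $M^{3}$ as the slice $\psi_{2}=0$ (that is, the third homogeneous coordinate is $z_{2}=|z_{2}|\geq 0$) and to reduce the statement to the description of $\tilde{M}^{2}$ obtained in Corollary~\ref{cor1}. First I would note that the $S^{1}$-action~\eqref{S1action} is free, since on $M^{3}$ one has $|z_{0}|^{2}+|z_{1}|^{2}+|z_{2}|^{2}=\frac{1}{3}$, so the first three coordinates never all vanish; hence $M^{3}/S^{1}$ is compact Hausdorff. Next I would check that every $S^{1}$-orbit of $M^{3}$ meets $M^{2}$: writing $z_{2}=|z_{2}|e^{i\psi_{2}}$ and acting by $e^{-i\psi_{2}}$ turns a point of $M^{3}$ into one whose third coordinate is $|z_{2}|\geq 0$, hence into a point of $M^{2}$ (and if $z_{2}=0$ the point already has this form). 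Therefore the composite $\iota\colon M^{2}\hookrightarrow M^{3}\stackrel{\pi}{\to}M^{3}/S^{1}$ is a continuous surjection from a compact space to a Hausdorff space, so it is a quotient map, and $M^{3}/S^{1}$ is $M^{2}$ modulo the relation identifying $x,y\in M^{2}$ exactly when they lie on a common $S^{1}$-orbit of $M^{3}$.

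The next step is to identify that relation explicitly. If $y=t\cdot x$ with $t\in S^{1}$ and $x,y\in M^{2}$, the third coordinates satisfy $z_{2}'=t\,z_{2}$ with both $\geq 0$, which forces $t=1$ as soon as $z_{2}\neq 0$; hence two distinct points of $M^{2}$ can be orbit-equivalent only if both have $z_{2}=0$. For $z_{2}=0$, equation~\eqref{surface} reads $z_{0}\sqrt{|z_{0}|^{2}+\frac{1}{9}}=z_{1}\sqrt{|z_{1}|^{2}+\frac{1}{9}}$; taking moduli and using that $t\mapsto t\sqrt{t^{2}+\frac{1}{9}}$ is strictly increasing on $[0,\infty)$ gives $|z_{0}|=|z_{1}|$, then $z_{0}=z_{1}$, and then $|z_{0}|=|z_{1}|=\frac{1}{\sqrt{6}}$ from~\eqref{2345}; conversely $z_{0}=z_{1}$ in $M^{2}$ forces $|z_{2}|=0$ via~\eqref{surface}. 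Thus $\{z_{2}=0\}\cap M^{2}$ is exactly the circle $\{z_{0}=z_{1}\}$ of Lemma~\ref{circle}, and any two of its points lie on one $S^{1}$-orbit. So the fibre relation of $\iota$ is precisely the relation $\approx$ of Corollary~\ref{cor1}; consequently $\iota$ descends to a continuous bijection $\tilde{M}^{2}\to M^{3}/S^{1}$, which is a homeomorphism since $\tilde{M}^{2}$ is compact and $M^{3}/S^{1}$ is Hausdorff. As $\tilde{M}^{2}\cong\C P^{1}$ by that corollary, the claim follows.

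The only step needing genuine care is the topological bookkeeping of the first paragraph — verifying that $\iota$ is onto and that its fibres are exactly the singletons together with the single collapsed circle, so that the compact-to-Hausdorff arguments apply; the algebra of the second paragraph is routine. If one prefers not to invoke Corollary~\ref{cor1}, the same analysis shows directly that $M^{3}\setminus\{z_{2}=0\}$ is $S^{1}$-equivariantly homeomorphic to $(M^{2}\setminus S^{1})\times S^{1}$ with $S^{1}$ acting by translation on the second factor, whence $M^{3}/S^{1}$ is the open disc $M^{2}\setminus S^{1}$ (open by Corollary~\ref{disc}) with the single orbit $\{z_{2}=0\}$ glued on as the common limit of all boundary directions, i.e.\ $D^{2}/\partial D^{2}\cong S^{2}\cong\C P^{1}$.
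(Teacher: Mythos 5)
Your argument is correct, but it takes a genuinely different route from the paper. The paper proves Proposition~\ref{M3S1} directly: it exhibits the explicit map $p(z_0:z_1:z_2:|z_3|:|z_4|:|z_5|)=(z_1|z_4|:z_0|z_5|)$ on all of $M^3$ and checks, via the monotonicity of $t\mapsto t(t^2+\tfrac19)$ and equation~\eqref{surface}, that its fibres are exactly the $S^1$-orbits; this has the advantage of being self-contained and of producing the concrete quotient map that is then reused in Proposition~\ref{M3S5} to compare with the Hopf fibration $S^5\to\C P^2$. You instead use $M^{2}$ (the slice $\phi_2=0$) as a cross-section for the action~\eqref{S1action}: every orbit meets $M^{2}$, orbits meet it in a single point except over the circle $\{z_0=z_1\}=\{z_2=0\}$ of Lemma~\ref{circle}, which is one whole orbit, so $M^3/S^1$ is $M^{2}$ modulo exactly the relation of Corollary~\ref{cor1}, hence $\C P^1$. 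This recycles the work already done for $M^{2}$, and your closing remark makes the topology transparent ($D^2/\partial D^2\cong S^2$ via Corollary~\ref{disc}); note, though, that it is not independent of the paper's map, since Corollary~\ref{cor1} is itself proved using the restriction of the very same $p$ to $M^{2}$. Two small points of hygiene: Hausdorffness of $M^3/S^1$ follows from compactness of $S^1$ (the orbit map is closed), not from freeness; and in the freeness/slice computations you should say explicitly that you are comparing the canonical representatives (the ones with $|z_3|,|z_4|,|z_5|>0$ real), since it is the positivity of $|z_3|$ that pins down the projective scalar to $1$. Neither affects the validity of the proof.
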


\begin{proof}
Let us consider the map $p : M^3 \to \C P^1$ given by $p(z_0:z_1:z_2:|z_3|:|z_4|:|z_5|) = (z_1|z_4|: z_0|z_5|) $. It is continuous surjection and $p^{-1}(z_1|z_4| : z_0|z_5|) = \{(z_{0}^{'}:z_{1}^{'}:z_{2}^{'}:|z_{3}^{'}|:|z_{4}^{'}|:|z_{5}^{'}|)\}$ such that $z_{0}^{'}|z_{5}^{'}| = \lambda z_0|z_5|$ and $z_{1}^{'}|z_{4}^{'}|=\lambda z_1|z_4|$ for $\lambda \in S^1$. It implies  that $\frac{1}{9}(|z_i|^{2}-|z_{i}^{'}|^2) + (|z_i|^4-|z_{i}^{'}|^4)=0$ for $i=0,1$, which gives $|z_i|=|z_{i}^{'}|$ for $i=0,1$. Therefore, $|z_i|=|z_{i}^{'}|$, $i=2,3,4,5$ as well, and $z_{i}^{'} =\lambda z_i$ for $i=0,1$.  Since $(z_0, z_1, z_2), (z_{0}^{'}, z_{1}^{'}, z_{2}^{'})$  satisfy~\eqref{surface},  it follows $z_{2}^{'} = \lambda  z_{2}$.  Therefore, $p$ induces homeomorphism $M^3/S^1 \to \C P^1$. 
\end{proof}

We push up this further.  
\begin{prop}\label{M3S5}
The inclusion   $M^{3}\subset S^5$ induces the embedding $\C P^1\to \C P^2$ of their  orbit spaces by the  free $S^1$-action, which is given by
\[
(c : c^{'})\stackrel{\varphi}{\to} (c :c^{'}: c-c^{'}).
\]
\end{prop}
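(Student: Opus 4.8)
The plan is to produce explicit parametrizations of the orbit spaces $M^3/S^1$ and $S^5/S^1$ that are compatible with the inclusion $M^3 \subset S^5$, and then to read off the map on quotients. Recall $S^5 = \{(z_0,z_1,z_2) : |z_0|^2+|z_1|^2+|z_2|^2 = \tfrac13\}$ with the free $S^1$-action $t\cdot(z_0,z_1,z_2) = (tz_0,tz_1,tz_2)$, so $S^5/S^1 \cong \C P^2$ via $(z_0,z_1,z_2)\mapsto (z_0:z_1:z_2)$. On the subspace $M^3$, the moduli $|z_2|,|z_3|,|z_4|,|z_5|$ are determined by $|z_0|,|z_1|$ through~\eqref{2345}, and the only remaining constraint is the single equation~\eqref{surface} relating $z_0,z_1,z_2$. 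By Proposition~\ref{M3S1}, $M^3/S^1 \cong \C P^1$ via $p(z_0:z_1:z_2:|z_3|:|z_4|:|z_5|) = (z_1|z_4| : z_0|z_5|)$.

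First I would pin down the $S^1$-equivariant comparison map at the level of total spaces: the inclusion $\iota : M^3 \hookrightarrow S^5$ sends $(z_0:z_1:z_2:|z_3|:|z_4|:|z_5|)$ to $(z_0,z_1,z_2)$ (after the standard normalization making $|z_0|^2+|z_1|^2+|z_2|^2 = \tfrac13$, which holds automatically on $M^3$ by~\eqref{2345}), and this is $S^1$-equivariant for the action $t\cdot(\cdots) = (tz_0:tz_1:tz_2:\cdots)$ on both sides, which on $S^5$ is exactly the defining free action. Hence $\iota$ descends to an injection $\bar\iota : M^3/S^1 \to S^5/S^1 = \C P^2$ sending the class of $(z_0:z_1:z_2:\cdots)$ to $(z_0:z_1:z_2)$.

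Next I would match the two coordinate systems on the quotients. On the $\C P^1$ side the class is recorded by $(c:c') = (z_1|z_4| : z_0|z_5|)$; I want to express $\bar\iota$ of this class, namely $(z_0:z_1:z_2)$, in terms of $(c:c')$. Using $|z_4| = \sqrt{|z_1|^2+\tfrac19}$, $|z_5| = \sqrt{|z_0|^2+\tfrac19}$ from~\eqref{2345} and equation~\eqref{surface}, which reads $z_0|z_5| + z_2|z_3| = z_1|z_4|$, i.e. $z_2|z_3| = z_1|z_4| - z_0|z_5|$, the combination $z_1|z_4| - z_0|z_5|$ is proportional to $z_2$ (up to the positive real factor $|z_3|$). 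Thus, writing a representative where the relevant scalings are absorbed, $(z_0 : z_1 : z_2)$ is proportional to $(z_0|z_5| : z_1|z_4| : z_1|z_4| - z_0|z_5|) = (c' : c : c - c')$. This gives $\varphi(c:c') = (c : c' : c - c')$ after relabeling, i.e. the embedding is $(c:c')\mapsto(c:c':c-c')$ as claimed.

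The main obstacle I anticipate is the bookkeeping of homogeneous representatives: one must check that the substitutions of~\eqref{2345} into~\eqref{surface} legitimately express $z_2$ (not merely $|z_2|$, but the actual complex number, up to a positive real scalar) as $z_1|z_4| - z_0|z_5|$ divided by $|z_3| > 0$, and that no sign or phase ambiguity is introduced — in particular that $|z_3|$ is strictly positive on all of $M^3$ (which follows from $|z_3|^2 = |z_2|^2+\tfrac19 \geq \tfrac19 > 0$), so the division is harmless. One should also verify that the image of $\varphi$ is a genuinely embedded $\C P^1$ inside $\C P^2$ — but this is immediate, since $\{(c:c':c-c')\}$ is the projectivization of the $2$-dimensional linear subspace $\{w_0 - w_1 + w_2 = 0\}\subset \C^3$, hence a linearly embedded $\C P^1$. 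Finally, continuity and bijectivity onto its image follow formally from Proposition~\ref{M3S1} and the fact that $\bar\iota$ is the restriction of the quotient of an equivariant embedding, so $\varphi$ is a homeomorphism onto a projective line.
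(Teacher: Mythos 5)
There is a genuine gap at the last step. You identify $S^5/S^1$ with $\C P^2$ via the standard Hopf projection $(z_0,z_1,z_2)\mapsto(z_0:z_1:z_2)$ and then assert that, on $M^3$, the point $(z_0:z_1:z_2)$ is proportional to $(z_0|z_5|:z_1|z_4|:z_1|z_4|-z_0|z_5|)$ ``after absorbing the scalings.'' That proportionality is false: it would require $z_0|z_5|=\mu z_0$, $z_1|z_4|=\mu z_1$, $z_2|z_3|=\mu z_2$ for a \emph{single} scalar $\mu$, i.e.\ $|z_3|=|z_4|=|z_5|$, whereas by~\eqref{2345} these are $\sqrt{|z_2|^2+\frac{1}{9}}$, $\sqrt{|z_1|^2+\frac{1}{9}}$, $\sqrt{|z_0|^2+\frac{1}{9}}$, which coincide only at the special points with $|z_0|=|z_1|=|z_2|$. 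Three distinct positive factors cannot be absorbed into one projective rescaling. Consequently, with your choice of identification the induced map on quotients is \emph{not} $(c:c')\mapsto(c:c':c-c')$ (its image is not even a projective line in these coordinates), so the stated formula does not follow. The additional silent swap of the first two coordinates --- your ``relabeling'' from $(c':c:c-c')$ to $(c:c':c-c')$ --- changes the identification yet again, which only underlines that the formula depends on a fixed choice you have not pinned down.

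The paper's proof avoids this precisely by \emph{not} using the standard Hopf projection: it takes as the model of the quotient $S^5\to S^5/S^1\cong\C P^2$ the map $q(z_0:z_1:z_2:|z_3|:|z_4|:|z_5|)=(z_1|z_4|:z_0|z_5|:z_2|z_3|)$, and verifies that its fibers are exactly the $S^1$-orbits (this uses that $|z_3|,|z_4|,|z_5|>0$ depend only on the $S^1$-invariant moduli, together with the monotonicity argument from Proposition~\ref{M3S1} recovering $|z_i|$ from $|z_i|^2|z_i|^2+\frac{1}{9}|z_i|^2$). With this identification, the commutativity $q\circ i=\varphi\circ p$ and equation~\eqref{surface}, i.e.\ $z_2|z_3|=z_1|z_4|-z_0|z_5|=c-c'$, give $\varphi(c:c')=(c:c':c-c')$ on the nose. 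Your argument becomes correct if you replace the standard Hopf identification by this twisted quotient map and check that it is indeed a quotient for the $S^1$-action; as written, the key proportionality claim fails.
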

\begin{proof}
The projection $q : S^5\subset \C P^5\to \C P^2$ of the Hopf fibration $S^5 \stackrel{S^1}{\to}\C P^2$  is   given by
\[
q(z_0:z_1:z_2: |z_3|: |z_4| : |z_5|) = (z_1|z_4|: z_0|z_5|: z_2|z_3|).
\]
Namely,  $q^{-1} (z_1|z_4|: z_0|z_5|: z_2|z_3|) =\{(z_{0}^{'}:z_{1}^{'}:z_{2}^{'}:|z_{3}^{'}|:|z_{4}^{'}|:|z_{5}^{'}|)\}$ such that  $z_{0}^{'}|z_{5}^{'}|= \lambda z_0|z_5|$,  $z_{1}^{'}|z_{4}^{'}|=\lambda z_1|z_4|$ and  $z_{2}^{'}|z_{3}^{'}| = \lambda z_2|z_3|$ for $\lambda \in S^1$.  In an analogous way as in the previous proof we deduce that $z_{i}^{'}=\lambda z_i$, $i=0,1,2$. 

Let $i : M^3\to S^5\subset \C P^5$ be the inclusion. We obtain a commutative diagram
\[ \begin{tikzcd}
M^3 \arrow{r}{i} \arrow[swap]{d}{p} & S^5 \arrow{d}{q} \\%
\C P^1 \arrow{r}{\varphi}& \C P^2
\end{tikzcd}
\]
where the embedding $\varphi : \C P^1\to \C P^2$ is given as stated, because   $z_0|z_5|+z_2|z_3|=z_1|z_4|$.
\end{proof}

Since the  inclusion  $S^3\subset S^5$  induces the embedding $\C P^1\to \C P^2$ of their orbit spaces by the free $S^1$-action, which is given by
$(c:c^{'})\to (c: c^{'}:0)$,  we deduce:

\begin{thm}
The space $M^3$ is homeomorphic to $S^3$.
\end{thm}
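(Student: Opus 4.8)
The plan is to identify $M^3$ with a sphere $S^3$ by comparing the two circle actions: the one in Proposition~\ref{M3S1}, under which $M^3/S^1 \cong \C P^1$, and the standard Hopf action on $S^3 \subset \C^2$, under which $S^3/S^1 \cong \C P^1$. First I would use Proposition~\ref{M3S5}, which realizes the inclusion $M^3 \subset S^5$ as covering, on the level of $S^1$-quotients, the linear embedding $\varphi : \C P^1 \to \C P^2$, $(c:c')\mapsto(c:c':c-c')$. Since $\varphi$ is the projectivization of an injective linear map $\C^2 \to \C^3$, its image is a projective line, hence equivalent under a linear automorphism of $\C P^2$ to the standard line $\{(c:c':0)\}$, which is exactly the image of the Hopf quotient $S^3/S^1 \to S^5/S^1 = \C P^2$ for the standard $S^3 \subset \C^2 \subset \C^3$.

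The key step is then to lift this statement from quotients to total spaces. Both $M^3 \to \C P^1$ and $S^3 \to \C P^1$ are principal $S^1$-bundles, so they are classified by their Euler class in $H^2(\C P^1;\Z) \cong \Z$. The Hopf bundle $S^3 \to \C P^1$ has Euler class $\pm 1$, i.e.\ it is a generator. For $M^3 \to \C P^1$, I would argue that the commutative diagram in Proposition~\ref{M3S5} exhibits $M^3 \to \C P^1$ as the pullback along $\varphi$ of the Hopf bundle $S^5 \to \C P^2$: indeed the map $i : M^3 \to S^5$ is $S^1$-equivariant (both actions are restrictions of the diagonal $S^1 \to T^6$, $t\mapsto(t,t,t,1,1,1)$) and covers $\varphi$, and over each point of $\C P^1$ it is a homeomorphism of $S^1$-orbits, which is precisely the universal property of the pullback. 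Hence the Euler class of $M^3 \to \C P^1$ is $\varphi^*$ of the generator of $H^2(\C P^2;\Z)$, and since $\varphi$ is a linear embedding of a projective line it induces an isomorphism on $H^2$; therefore $M^3 \to \C P^1$ also has Euler class a generator. Two principal $S^1$-bundles over $\C P^1$ with the same Euler class are isomorphic as bundles, hence their total spaces are homeomorphic, so $M^3 \cong S^3$.

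The main obstacle I anticipate is making the pullback identification rigorous, i.e.\ verifying carefully that $i : M^3 \to S^5$ together with $p : M^3 \to \C P^1$ really do satisfy the pullback universal property over each fiber — equivalently, that $i$ restricts to a bijection between the $S^1$-orbit over $x \in \C P^1$ in $M^3$ and the $S^1$-orbit over $\varphi(x)$ in $S^5$. This amounts to the injectivity computation already carried out in the proofs of Propositions~\ref{M3S1} and~\ref{M3S5} (the argument with $\tfrac19(|z_i|^2-|z_i'|^2)+(|z_i|^4-|z_i'|^4)=0 \Rightarrow |z_i|=|z_i'|$), so in fact the hard analytic work is done; what remains is the bookkeeping to phrase it as a bundle pullback. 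An alternative, more hands-on route, avoiding the word "pullback," is to write down an explicit $S^1$-equivariant homeomorphism: parametrize $M^3$ by $(z_0,z_1) \in \C^2$ via equation~\eqref{surface} together with~\eqref{2345} and the constraint $|z_0|^2+|z_1|^2 \le \tfrac13$, observe this exhibits $M^3$ as the graph of a continuous $S^1$-equivariant section over a region homeomorphic to the unit ball, and glue along the boundary circle $S^1_2$ (where $z_2=0$, $z_0=z_1$) to recognize the standard $S^3$; but the cohomological argument is cleaner and I would present that.
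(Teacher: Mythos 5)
Your argument is correct, but it takes a genuinely different route from the paper. The paper proves the theorem by writing down an explicit $S^1$-equivariant map $g : M^3 \to S^3$, essentially $(z_0,z_1,z_2) \mapsto (z_1 a(z_1),\, z_0 a(z_0))$ suitably normalized with $a(z_i)=\sqrt{\tfrac19+|z_i|^2}$, checking via the diagram of Proposition~\ref{M3S5} that it covers the homeomorphism $(c:c':c-c')\mapsto(c:c':0)$ of the $S^1$-quotients, and concluding that an equivariant map of free $S^1$-spaces inducing a homeomorphism of orbit spaces is itself a homeomorphism --- which is exactly the ``hands-on'' alternative you sketch in your last paragraph, except that the paper's map is even more direct than a graph-plus-gluing description. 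Your main argument instead classifies $M^3\to\C P^1$ as a principal $S^1$-bundle by its Euler class, identifying it as the pullback of $q:S^5\to\C P^2$ along the linear embedding $\varphi$. This is sound: an equivariant bundle map covering $\varphi$ does realize $M^3$ as $\varphi^*S^5$, and $\varphi^*$ is an isomorphism on $H^2$. Two small points deserve attention if you write this up. First, you should justify that the Euler class of $q:S^5\to\C P^2$ is a generator; this is not completely immediate because $q$ is not literally the standard Hopf projection (it is twisted by the moduli $|z_3|,|z_4|,|z_5|$), but it follows at once from the Gysin sequence since the total space $S^5$ is $2$-connected. Second, ``same Euler class'' should be ``Euler class $\pm1$'': the two generators give non-isomorphic oriented bundles, but both have total space $S^3$, so the conclusion stands. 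The trade-off is the usual one: the paper's proof is elementary and fully explicit, while yours invokes characteristic-class machinery but is more conceptual and would apply verbatim to any situation where a circle bundle is exhibited as the restriction of a Hopf bundle to a degree-one curve.
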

\begin{proof}
For the  space $M^3\subset S^5$ we consider   the map $ g: M^3\to S^3$ defined  by 
\[
g(z_0, z_1, z_2) = (\frac{z_1a(z_1)}{|z_0|^2a(z_0)^2 +|z_1|^2a(z_1)^2},  \frac{z_0a(z_0)}{|z_0|^2a(z_0)^2 +|z_1|^2a(z_1)^2}, 0),
\]
where $a(z_i) = \sqrt{\frac{1}{9}+|z_i|^2}$, $i=0,1,2$.
This map is  $S^1$- equivariant and  by  Proposition~\ref{M3S5}  induces commutative diagram 
\[ \begin{tikzcd}
M^3 \arrow{r}{g} \arrow[swap]{d}{p} & S^3 \arrow{d}{q} \\%
\C P^1 \arrow{r}{\tilde{g}}& \C P^1
\end{tikzcd},
\]
where $\tilde{g}(c:c^{'}:c-c^{'}) = (c:c^{'}:0)$ is a homeomorphism. Since the $S^1$-action is free it follows that $g$ is a homeomorphism.

\end{proof}

From Proposition~\ref{M3MQ5}  we see that the representation  $T^2\to T^6$ defined by $(t_1, t_2) \to (t_1, t_2, 1, 1, \frac{1}{t_2}, \frac{1}{t_1})$  produces free $T^2$ action on $M_{Q}^{5}$.  By~\eqref{S1action}  we have defined  free circle action on $M_{Q}^{5}$. Altogether we obtain  $T^3$-action on $M_{Q}^{5}$.  It is straightforward to see:

\begin{lem}
The   $T^3$-action on $M_{Q}^{5}$ given  by the representation $T^3\to T^6$, which is defined by $(t_1, t_2, t_3) \to (t_1t_3, t_2t_3, t_3, 1, \frac{1}{t_2}, \frac{1}{t_1})$ is free.
\end{lem}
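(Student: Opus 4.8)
The plan is to verify freeness by a direct fixed-point computation, after observing that the displayed representation is simply the product of two actions already shown to be free: the $T^2$-action of Proposition~\ref{M3MQ5}, given by $(t_1,t_2)\mapsto(t_1,t_2,1,1,\frac{1}{t_2},\frac{1}{t_1})$, and the circle action of~\eqref{S1action}, given by $t_3\mapsto(t_3,t_3,t_3,1,1,1)$. Indeed, the componentwise product of these two homomorphisms $T^2\to T^6$ and $S^1\to T^6$ is exactly $(t_1,t_2,t_3)\mapsto(t_1t_3,t_2t_3,t_3,1,\frac{1}{t_2},\frac{1}{t_1})$, so it only remains to check that combining the two free actions does not create a non-trivial stabilizer.

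Suppose $(t_1,t_2,t_3)\in T^3$ stabilizes a point ${\bf z}=(z_0:z_1:z_2:z_3:z_4:z_5)\in M_{Q}^{5}$. By definition of the projective $T^3$-action this means there is $\lambda\in S^1$ with
\[
\bigl(t_1t_3z_0,\ t_2t_3z_1,\ t_3z_2,\ z_3,\ \tfrac{1}{t_2}z_4,\ \tfrac{1}{t_1}z_5\bigr)=\lambda\,(z_0,z_1,z_2,z_3,z_4,z_5).
\]
Recall that for every ${\bf z}\in M_{Q}^{5}\subset M_{Q}^{7}$ one has $z_3,z_4,z_5\neq0$. Comparing fourth coordinates gives $z_3=\lambda z_3$, hence $\lambda=1$; then comparing fifth and sixth coordinates gives $\frac{1}{t_2}z_4=z_4$ and $\frac{1}{t_1}z_5=z_5$, hence $t_1=t_2=1$.

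It remains to deduce $t_3=1$. With $\lambda=t_1=t_2=1$ the first three equations reduce to $t_3z_0=z_0$, $t_3z_1=z_1$, $t_3z_2=z_2$, so $t_3$ fixes each of $z_0,z_1,z_2$. By Lemma~\ref{MQ5affine} the representative of ${\bf z}$ in $\C^5$ satisfies $|z_0|^2+|z_1|^2+|z_2|^2=\frac{1}{3}$, so $z_0,z_1,z_2$ cannot all vanish, and at least one of the three equations forces $t_3=1$. Hence $(t_1,t_2,t_3)=(1,1,1)$ and the $T^3$-action is free. The computation is short; the only thing one must not overlook is the non-vanishing of $(z_0,z_1,z_2)$ on $M_{Q}^{5}$, which is precisely what keeps the circle $t_3\mapsto(t_3,t_3,t_3,1,1,1)$ from acting trivially — everything else is immediate from $z_3,z_4,z_5\neq0$.
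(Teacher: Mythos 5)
Your proof is correct and follows the same route the paper intends: the paper merely asserts the lemma is ``straightforward'' after noting that the representation combines the free $T^2$-action of Proposition~\ref{M3MQ5} with the free circle action~\eqref{S1action}, and your fixed-point computation (using $z_3,z_4,z_5\neq 0$ to kill $\lambda$, $t_1$, $t_2$, and the non-vanishing of $(z_0,z_1,z_2)$ to kill $t_3$) supplies exactly the omitted verification. No gaps.
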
 

\begin{cor}
The orbit space of $M_{Q}^{5}$ by the given $T^3$- action is homeomorphic to $\C P^1$.
\end{cor}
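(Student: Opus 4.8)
The plan is to reduce this quotient to the quotient $M^3/S^1 \cong \C P^1$ already established in Proposition~\ref{M3S1}, using the product description of $M_Q^5$ from Proposition~\ref{M3MQ5}.

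First I would record that the representation $T^3 \to T^6$ in the statement, $(t_1,t_2,t_3)\mapsto (t_1t_3, t_2t_3, t_3, 1, t_2^{-1}, t_1^{-1})$, is the product of the representation $\rho(t_1,t_2) = (t_1,t_2,1,1,t_2^{-1},t_1^{-1})$ of Proposition~\ref{M3MQ5} and the diagonal representation $t_3 \mapsto (t_3,t_3,t_3,1,1,1)$ defining the $S^1$-action~\eqref{S1action}. These two subtori of $T^6$ commute, so the $T^3$-action on $M_Q^5$ is the commuting composite of the $T^2$-action of Proposition~\ref{M3MQ5} and the free $S^1$-action~\eqref{S1action}; in particular $M_Q^5/T^3$ is the quotient of $M_Q^5/T^2$ by the induced $S^1$-action.

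Next I would transport the picture along the $T^2$-equivariant homeomorphism $G : M^3\times T^2 \to M_Q^5$ of Proposition~\ref{M3MQ5} (a continuous bijection between compact Hausdorff spaces, hence a homeomorphism), under which $M_Q^5/T^2 \cong M^3$. The point requiring care is to identify the induced $S^1$-action: I would check that $G((tz_0:tz_1:tz_2:|z_3|:|z_4|:|z_5|),(t_1,t_2))$ and the result of applying~\eqref{S1action} to $G((z_0:\cdots:|z_5|),(t_1,t_2))$ both equal $(tt_1z_0 : tt_2z_1 : tz_2 : |z_3| : t_1^{-1}|z_4| : t_2^{-1}|z_5|)$ in $\C P^5$; hence under $G$ the action~\eqref{S1action} corresponds to the $S^1$-action $t\cdot((z_0:\cdots:|z_5|),(t_1,t_2)) = ((tz_0:tz_1:tz_2:|z_3|:|z_4|:|z_5|),(t_1,t_2))$, i.e.\ the action of Proposition~\ref{M3S1} on the $M^3$-factor together with the trivial action on $T^2$. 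Along the way one notes that~\eqref{S1action} indeed preserves $M^3$, since it leaves $|z_0|,\dots,|z_5|$ unchanged and preserves equation~\eqref{surface}.

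Finally, since the $S^1$- and $T^2$-factors of the $T^3$-action act on separate factors of $M^3\times T^2$ (with $T^2$ acting freely by translation), the orbit space splits: $M_Q^5/T^3 \cong (M^3\times T^2)/(S^1\times T^2) \cong (M^3/S^1)\times(T^2/T^2) \cong M^3/S^1$, which is homeomorphic to $\C P^1$ by Proposition~\ref{M3S1}. The main obstacle is the third paragraph --- pinning down that the residual $S^1$-action on $M^3\times T^2$ is exactly the one analysed in Proposition~\ref{M3S1} and does not couple to the torus coordinates; everything else is either cited or formal.
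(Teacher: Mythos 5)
Your proposal is correct and follows essentially the same route as the paper: the paper's one-line proof likewise identifies $M_Q^5/T^3$ with the quotient of $M^3\times T^2$ by the split $S^1\times T^2$-action and concludes via $M^3/S^1\cong \C P^1$ from Proposition~\ref{M3S1}. Your version merely makes explicit the verification (which the paper leaves implicit) that the diagonal circle transported through $G$ acts only on the $M^3$-factor and does not couple to the torus coordinates.
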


\begin{proof}
This orbit space is homeomorphic to the orbit  space  $M^{3}\times T^2$  by the diagonal action of $S^1\times T^2$ which is $M^{3}/S^1\cong \C P^1$
\end{proof} 

Altogether we proved:

\begin{thm}
The manifold $M_{Q}^{5}$ is homeomorphic to  $S^3\times T^2$.
\end{thm}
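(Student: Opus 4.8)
The plan is to assemble the final homeomorphism from the two structural descriptions of $M_Q^5$ already established, namely its presentation as a product involving the auxiliary space $M^3$ and the identification of $M^3$ with a sphere; the final statement is then an immediate corollary.

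First I would invoke Proposition~\ref{M3MQ5}: the map $G : M^3 \times T^2 \to \C P^5$ defined there is a continuous, $T^2$-equivariant injection with image precisely $M_Q^5$. Since $M^3$ is a closed subset of the compact space $\C P^5$ and $T^2$ is compact, $M^3 \times T^2$ is compact, while $M_Q^5 \subset \C P^5$ is Hausdorff; hence the continuous bijection $G$ onto $M_Q^5$ is automatically a homeomorphism. This gives $M_Q^5 \cong M^3 \times T^2$, which is the content of the theorem stated just above.

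Second I would use the theorem $M^3 \cong S^3$. Its proof produces the $S^1$-equivariant map $g : M^3 \to S^3$ built from $a(z_i) = \sqrt{\tfrac19 + |z_i|^2}$; this is well defined and continuous because the denominator $|z_0|^2 a(z_0)^2 + |z_1|^2 a(z_1)^2$ never vanishes on $M^3$, since by~\eqref{surface} the coordinates $z_0$ and $z_1$ cannot both be zero and each $a(z_i) \geq \tfrac13$. The square commuting with the Hopf projections $M^3 \to \C P^1$ and $S^3 \to \C P^1$, whose bottom arrow $\tilde g(c:c':c-c') = (c:c':0)$ is visibly a homeomorphism, together with freeness of both $S^1$-actions, upgrades $g$ to a homeomorphism.

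Combining the two, $M_Q^5 \cong M^3 \times T^2 \cong S^3 \times T^2$, and since every map involved is $T^2$-equivariant the product structure is respected. The deduction itself is one line; the genuine content sits in Proposition~\ref{M3MQ5} and in the sphere identification, and within the latter the point deserving care is the nonvanishing of the denominator defining $g$ together with checking that the induced map on orbit spaces is a bijection. Once those are in place, nothing further is required.
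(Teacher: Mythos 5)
Your proposal is correct and follows essentially the same route as the paper: it combines Proposition~\ref{M3MQ5} (giving $M_Q^5 \cong M^3 \times T^2$ via the $T^2$-equivariant bijection $G$) with the identification $M^3 \cong S^3$ obtained from the $S^1$-equivariant map $g$ over the Hopf fibrations. Your added justifications (the compact-to-Hausdorff argument upgrading $G$ to a homeomorphism, and the nonvanishing of the denominator in $g$) are sound fillings-in of details the paper leaves implicit.
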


\section{Structure of $M_{Q}^{5}$ as a  complete intersection}

The manifold $M_{Q}^{5}\subset \C ^{4} = \R^{8}$ is an affine manifold  and we denote by  $a_k=u_k+iv_k$, $1\leq k\leq 4$ the coordinates in the chart $M_{23}$.

Then  $a_1a_4-a_2a_3 = A + iB$, where $A= u_1u_4-v_1v_4-u_2u_3+v_2v_3$ and $B=u_1v_4+v_1u_4-u_2v_3-u_3v_2$.

By~\eqref{jednacine}  the manifold  $M_{Q}^{5}$ is given  by the system of equations
\[
3= u_3^2 + v_3^2 + u_1^2 + u_2^2+4 (A^2+B^2), \]
\[
 3(u_4^2 +v_4^2)= u_3^2 + v_3^2 + 4 (u_1^2 +v_1^2) + A^2+B^2,
\]
\[
3(u_2^2 +v_2^2) = 4(u_3^2 +v_3^2)+ u_1^2 +v_1^2+ A^2+B^2.
\]
 Therefore, $M_{Q}^{5}$ is an equipotential surface for the functions
\[
f_{1}(u, v) = u_1^2+v_1^2 + u_2^2+v_2^2 -u_3^3-v_3^2-u_4^2-v_4^2.
\]
\[
f_{2}(u, v) = 5(u_1^2+v_1^2) +u_3^2+v_3^2 -4(u_4^2+v_4^2),
\]
\[
f_{3}(u, v) = 4(u_1^2+v_1^2)+u_3^2+v_3^2-3(u_4^2+v_4^3)+A^2+B^2.
\]
at the point $(0, -1, 0)$, where $u=(u_1, u_2, u_3, u_4)$, $v=(v_1, v_2, v_3, v_4)$.

 Note that $u_2^2+v_2^2 \neq 0$, $u_4^2+v_4^2\neq 0$ and $A^2+B^2+u_i^2+v_i^2\neq 0, i=1,3$,  $u_1^2+v_1^2+u_3^2+v_3^2\neq 0$.

Let $J=J(f_1, f_2, f_3)$ be a Jacobian  matrix for  the functions $f_1, f_2, f_3$. By Proposition~\ref{M3MQ5} the manifold  $M_{Q}^{5}$ can be obtained by the action of the torus $T^2$ on the surface $M^{3}$ given by~\eqref{2345}.

\begin{lem}
The rank  of $J$ at a point $t\cdot {\bf z}\in M_{Q}^{5}$, where $t\in T^2$ and ${\bf z}\in M^{3}$,  is equal to its  rank at the point ${\bf z}$.
\end{lem}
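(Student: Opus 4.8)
The plan is to exploit the $T^2$-equivariance directly. The functions $f_1, f_2, f_3$ are built from the quantities $|a_1|^2 = u_1^2+v_1^2$, $|a_3|^2 = u_3^2+v_3^2$, $|a_4|^2 = u_4^2+v_4^2$ and $|a_1a_4 - a_2a_3|^2 = A^2+B^2$, together with the combination $|a_2|^2 = u_2^2+v_2^2$ which is implicitly pinned down by the defining equations. The key observation is that each of these four building blocks is invariant under the relevant $T^2$-action on the chart $M_{2,3}$: recall from the proof of the last theorem of the section on the fiber bundle that $T^2\subset T^3$ acts on $(a_1,a_2,a_3,a_4)$ coordinate-wise by characters, and in particular $|a_i|$ and hence $|a_1|^2, |a_3|^2, |a_4|^2$ are fixed, while the Plücker-type coordinate $a_1a_4 - a_2a_3$ is multiplied by a unit character, so $|a_1a_4-a_2a_3|^2$ is fixed as well. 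Consequently $f_1\circ(t\cdot) = f_1$, $f_2\circ(t\cdot) = f_2$, and $f_3\circ(t\cdot) = f_3$ as functions on the chart, for every $t\in T^2$.

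First I would make the $T^2$-action on the chart explicit as a linear (in fact unitary) map $L_t : \C^4 \to \C^4$, $L_t(a_1,a_2,a_3,a_4) = (t^{\alpha_1}a_1, t^{\alpha_2}a_2, t^{\alpha_3}a_3, t^{\alpha_4}a_4)$ for the appropriate integer weight vectors $\alpha_i\in\Z^2$ coming from the representation $\rho$ of Proposition~\ref{M3MQ5} (written through $\rho(t_1,t_2)=(t_1,t_2,1,1,\tfrac{1}{t_2},\tfrac{1}{t_1})$ on the Plücker coordinates and translated to the chart $M_{2,3}$). Viewed over $\R$, each $L_t$ is an orthogonal transformation of $\R^8$, and the invariance established above reads $f_j = f_j\circ L_t$ for $j=1,2,3$. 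Differentiating this identity by the chain rule gives $J(f_1,f_2,f_3)|_{L_t({\bf z})}\cdot (dL_t)_{\bf z} = J(f_1,f_2,f_3)|_{\bf z}$, and since $L_t$ is linear, $(dL_t)_{\bf z} = L_t$ is an invertible $8\times 8$ matrix. Hence the two Jacobian matrices at ${\bf z}$ and at $t\cdot{\bf z} = L_t({\bf z})$ differ by right multiplication by an invertible matrix, so they have the same rank.

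The only genuine point to verify is the invariance claim, i.e.\ that all the terms appearing in $f_1, f_2, f_3$ are honestly $T^2$-invariant; this is where one must be careful, because $f_3$ involves $A^2+B^2 = |a_1a_4-a_2a_3|^2$, and one needs the weight bookkeeping $\alpha_1+\alpha_4 = \alpha_2+\alpha_3$ to hold so that $a_1a_4-a_2a_3$ transforms by a single character. This identity is exactly the relation~\eqref{rhos} for the Plücker relation, so it is forced by the fact that $M_{Q}^5\subset G_{4,2}$ and the chosen $T^2$ preserves the Plücker quadric. I expect this weight computation — confirming $|a_1|^2,|a_3|^2,|a_4|^2,|a_1a_4-a_2a_3|^2$ are all invariant under the specific two-torus — to be the main (though entirely routine) obstacle; once it is in place, the rank equality is immediate from the chain rule and invertibility of $L_t$. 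One should also note that the implicitly-determined quantity $|a_2|^2$ enters $f_1, f_2$ only through the combination that the defining equations force, and it too is $T^2$-invariant, so no extra term spoils the argument.
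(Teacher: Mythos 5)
Your argument is correct and is exactly the equivariance argument the paper tacitly relies on (the lemma is stated there without proof). The weight bookkeeping you flag as the only real point does check out: in the chart $M_{2,3}$ the representation $\rho(t_1,t_2)=(t_1,t_2,1,1,\tfrac{1}{t_2},\tfrac{1}{t_1})$ acts by $a_1\mapsto t_2a_1$, $a_2\mapsto t_1^{-1}a_2$, $a_3\mapsto t_1a_3$, $a_4\mapsto t_2^{-1}a_4$, so $|a_1|^2,\dots,|a_4|^2$ and $|a_1a_4-a_2a_3|^2$ are all invariant, each $f_j$ satisfies $f_j\circ L_t=f_j$, and the chain rule gives $J|_{t\cdot {\bf z}}\,L_t=J|_{{\bf z}}$ with $L_t$ an invertible real $8\times 8$ matrix, whence equal ranks.
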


The circle$S^1$ acts freely on   $M^3$ and in the local coordinates  $a_,\ldots a_4$ on $M^3$ this action is given by
\[
\lambda \cdot {\bf z} =
   (-\lambda a_3: \lambda a_1: \lambda (a_1a_2-a_3a_4) :  1 : a_4: - a_2).\]

\begin{cor}
The rank of $J$ at a point $\lambda \cdot {\bf z}\in M^{3}$ where  $\lambda \in S^1$ and ${\bf z} \in M^{3}$, is equal to its rank at the point ${\bf z}$.
\end{cor}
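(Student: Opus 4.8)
The plan is to run, for the $S^1$-action \eqref{S1action} on $M^{3}$, exactly the argument that proves the preceding Lemma; what makes it go through --- and what makes the statement a corollary --- is that each of $f_1,f_2,f_3$ is invariant under this $S^1$-action, for the trivial reason that it is assembled only from the moduli $|a_1|^2,\dots,|a_4|^2$ and from $A^2+B^2=|a_1a_4-a_2a_3|^2$, quantities that are manifestly unchanged by any coordinatewise compact-torus action on the Pl\"ucker chart $M_{23}\cong\C^4$.

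First I would read off from the displayed formula the expression of $\lambda\in S^1$ in the affine coordinates $(a_1,a_2,a_3,a_4)$ of $M_{23}$; a one-line computation with the $2\times2$ Pl\"ucker minors gives the unitary linear map $L_\lambda(a_1,a_2,a_3,a_4)=(\lambda a_1,a_2,\lambda a_3,a_4)$, which (being linear) has constant invertible differential. Next I would check $f_i\circ L_\lambda=f_i$ on $\C^4$ for $i=1,2,3$: this is immediate since $|\lambda a_j|=|a_j|$ and $(\lambda a_1)a_4-a_2(\lambda a_3)=\lambda(a_1a_4-a_2a_3)$, so every building block of the $f_i$ is fixed; in particular the common level set cutting out $M_{Q}^{5}$, and hence $M^{3}$ inside it, is $L_\lambda$-invariant, so $\lambda\cdot{\bf z}\in M^{3}$ whenever ${\bf z}\in M^{3}$ and the statement makes sense. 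Finally I would differentiate $f_i(L_\lambda{\bf z})=f_i({\bf z})$ at ${\bf z}\in M^{3}$ to get $J(L_\lambda{\bf z})\cdot DL_\lambda=J({\bf z})$ as real $3\times8$ matrices; since $DL_\lambda$ is invertible, right multiplication by it preserves rank, so $\rk J(\lambda\cdot{\bf z})=\rk J({\bf z})$.

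I do not anticipate a real obstacle: this is the same chain-rule argument as in the Lemma, now with the $S^1$-action in place of the $T^2$-action of Proposition~\ref{M3MQ5}, and the only points needing care are the bookkeeping of the chart formula for $L_\lambda$ and the remark that $L_\lambda$ preserves $M^{3}$. It is perhaps cleanest to phrase the Lemma and this corollary uniformly: all of $f_1,f_2,f_3$ are invariant under the free $T^3$-action on $M_{Q}^{5}$ obtained by combining the $T^2$ of Proposition~\ref{M3MQ5} with the $S^1$ of \eqref{S1action}, so $\rk J$ is constant on $T^3$-orbits, and the present statement is just its restriction to the $S^1$-orbits contained in $M^{3}$.
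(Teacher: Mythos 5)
Your argument is correct and is exactly what the paper intends: the corollary (like the lemma before it) is stated without proof, immediately after the $S^1$-action is written out in the chart $M_{23}$, where it is precisely the unitary linear map $(a_1,a_2,a_3,a_4)\mapsto(\lambda a_1,a_2,\lambda a_3,a_4)$ you compute, under which $|a_i|^2$ and $|a_1a_4-a_2a_3|^2=A^2+B^2$, hence $f_1,f_2,f_3$, are manifestly invariant. The chain-rule identity $J(L_\lambda\mathbf{z})\,DL_\lambda=J(\mathbf{z})$ with $DL_\lambda$ invertible then gives $\rk J(\lambda\cdot\mathbf{z})=\rk J(\mathbf{z})$, as you say.
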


 The coordinates  of the points of  the surface $M^{3}\subset M_{Q}^{5}\subset \R^8$ satisfy $v_2=v_4 = 0$ while $u_4, u_2\neq 0$.  This implies $v_1u_4=u_2v_3$,   $A=u_1u_4-u_2u_3$, $B=v_1u_4-u_2v_3$ and $u_1^2+v_1^2+u_3^2+v_3^2 +u_2^2 =\frac{1}{3}$.  In addition, 
\[
 u_1^2+v_1^2 + u_2^2 -u_3^3-v_3^2-u_4^2=0, \; \; 5(u_1^2+v_1^2) +u_3^2+v_3^2 -4u_4^2=-1,\]
\[ 4(u_1^2+v_1^2)+u_3^2+v_3^2-3u_4^2+A^2 +B^2=0.
\]

\begin{prop}
The rank of the Jacobian matrix  $J$ at any point ${\bf z}\in M^{2}$ is equal  $3$.
\end{prop}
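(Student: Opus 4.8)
The plan is to compute the three rows of the Jacobian $J$ at a point ${\bf z}\in M^2$ and, case by case, to exhibit a nonvanishing $3\times 3$ minor. At a point of $M^2$ one has $v_2=v_4=0$, and also $B=0$: indeed the Plücker relation $z_0z_5+z_2z_3=z_1z_4$ gives $a_1a_4-a_2a_3=z_2/z_3=z_2/|z_3|$, which is real since $z_2=|z_2|$, so $A+iB=z_2/|z_3|$. Hence, writing $A=u_1u_4-u_2u_3$, the rows of $\tfrac12 J$, with columns ordered $u_1,v_1,u_2,v_2,u_3,v_3,u_4,v_4$, are
\begin{align*}
R_1 &= (u_1,\, v_1,\, u_2,\, 0,\, -u_3,\, -v_3,\, -u_4,\, 0),\\
R_2 &= (5u_1,\, 5v_1,\, 0,\, 0,\, u_3,\, v_3,\, -4u_4,\, 0),\\
R_3 &= (4u_1+Au_4,\, 4v_1,\, -Au_3,\, Av_3,\, u_3-Au_2,\, v_3,\, -3u_4+Au_1,\, -Av_1).
\end{align*}
First I would record that $u_2=-|z_5|/|z_3|\ne 0$ and $u_4=|z_4|/|z_3|\ne 0$ at every point of $M^2$ (since $|z_4|^2=|z_1|^2+\tfrac19>0$ and $|z_5|^2=|z_0|^2+\tfrac19>0$), so the $2\times2$ minor of $(R_1,R_2)$ on the columns $u_2,u_4$ equals $-4u_2u_4\ne 0$; thus $R_1,R_2$ are always independent and the only question is whether $R_3\in\operatorname{span}(R_1,R_2)$.

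The structural point is that $R_1,R_2$ vanish in the $v_2$- and $v_4$-columns, while $R_3$ has entries $Av_3$ and $-Av_1$ there: the $3\times3$ minors of $J$ on the column triples $\{u_2,u_4,v_4\}$ and $\{u_2,u_4,v_2\}$ equal $4Au_2u_4v_1$ and $-4Au_2u_4v_3$, so independence follows whenever $A\ne0$ and $(v_1,v_3)\ne(0,0)$. The locus $A=0$ is exactly $z_2=0$, which by~\eqref{surface} forces $z_0=z_1$, i.e.\ the circle $S^1\subset M^2$; there $u_3=-u_1$, $v_3=-v_1$, $u_2=-u_4$, and eliminating in turn by the $u_2$-column, then by the $u_1$- or $v_1$-column (using $(u_1,v_1)\ne0$ on that circle), then by the $u_4$-column shows directly that $R_1,R_2,R_3$ are independent. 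This leaves the case $v_1=v_3=0$, $A\ne0$, i.e.\ $z_0,z_1,z_2$ all real, in which the columns $v_1,v_2,v_3,v_4$ of $J$ vanish identically and it suffices to show the $3\times3$ submatrix of $(R_1,R_2,R_3)$ on the columns $u_1,u_2,u_3,u_4$ has rank $3$.

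The \emph{main obstacle} is this last, totally real case. My plan for it is to compute the minors $D_1$ on columns $u_2,u_3,u_4$ and $D_2$ on columns $u_1,u_3,u_4$, namely
\[
D_1=u_2u_3u_4+A(u_1u_2u_3-4u_2^2u_4-5u_3^2u_4),\qquad
D_2=u_1u_3u_4+A(6u_1^2u_3+u_1u_2u_4+5u_3u_4^2),
\]
and to verify the identity $u_1D_1-u_2D_2=-5A\,(u_1u_3+u_2u_4)(u_1u_2+u_3u_4)$. If both bilinear factors are nonzero the right side is nonzero (as $A\ne0$), so $D_1,D_2$ cannot both vanish. If $u_1u_2+u_3u_4=0$, then $u_1\ne0$ (otherwise $u_3u_4=0$, forcing $z_0=z_1=0$, which is excluded), and substituting $u_2=-u_3u_4/u_1$ and $A=u_4(u_1^2+u_3^2)/u_1$ gives $u_1D_1=-u_4^2\big(u_3^2+2(u_1^2+u_3^2)(3u_3^2+2u_2^2)\big)<0$; if instead $u_1u_3+u_2u_4=0$, an analogous substitution gives $D_1=-u_1u_3^2\big(1+5(u_3^2+u_4^2)(u_1^2+u_4^2)/u_4^2\big)\ne0$. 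In every case $J$ has a nonvanishing $3\times3$ minor, so $\rk J=3$ on $M^2$; combined with the two preceding invariance statements (under the $T^2$- and $S^1$-actions) and the description $M_Q^5=T^3\!\cdot\! M^2$, this will exhibit $M_Q^5$ as a smooth complete intersection in $\R^8$.
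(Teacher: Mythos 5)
Your proof is correct, and I checked the computations: the rows $R_1,R_2,R_3$ are indeed the halved gradients of $f_1,f_2,f_3$ at a point of $M^2$ once one uses $v_2=v_4=0$ and $B=0$; the two minors through the $v_2$- and $v_4$-columns equal $-4Au_2u_4v_3$ and $4Au_2u_4v_1$; the determinants $D_1,D_2$ and the identity $u_1D_1-u_2D_2=-5A\,(u_1u_2+u_3u_4)(u_1u_3+u_2u_4)$ all verify, as do the two substitutions in the degenerate subcases. Your route is genuinely different from the paper's in its organization, and cleaner. Both arguments work in the chart $M_{23}$ and hunt case by case for a nonvanishing $3\times3$ minor, but you exploit from the outset that $B=\operatorname{Im}(a_1a_4-a_2a_3)=\operatorname{Im}(z_2/|z_3|)$ vanishes identically on $M^2$ (the paper records the equivalent relation $v_1u_4=u_2v_3$ but then carries $B$ through its case analysis as though it could be nonzero), and you split on whether $A\neq0$ and whether $(v_1,v_3)=(0,0)$; the hardest, totally real stratum is then dispatched by a closed-form algebraic identity, whereas the paper splits on $v_3A-u_3B$ and, in its degenerate subcases, has to invoke the $S^1$-invariance of the rank to move to a rotated point $\lambda\cdot{\bf z}$ where a chosen minor becomes nonzero --- a d\'etour your argument never needs. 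A further point in your favour: your row $R_3$ is the correct gradient of $f_3=4(u_1^2+v_1^2)+u_3^2+v_3^2-3(u_4^2+v_4^2)+A^2+B^2$ (in particular the diagonal contributions $u_3$ and $v_3$ in the $u_3$- and $v_3$-columns and the absence of a $u_2$ term in the $u_2$-column), while the matrix displayed in the paper differs from this in exactly those entries, so its minors $M_{356}$, $M_{357}$, etc.\ do not literally compute minors of $J$; your version repairs this. Only two small polish items: in the last case the relevant submatrix on the columns $u_1,u_2,u_3,u_4$ is $3\times4$ (you of course only need one of its $3\times3$ minors to be nonzero), and in the subcase $u_1u_2+u_3u_4=0$ it is worth saying explicitly that $u_3\neq0$ (since $u_1\neq0$ and $u_2\neq0$), which is what makes the displayed quantity strictly negative rather than merely nonpositive.
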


\begin{proof}
The Jacobian matrix $J$ at a point from $M^{2}$ is
\begin{equation}
\left(\begin{array}{cccccccc}
u_1 & v_1 & u_2 & 0 & -u_3 & -v_3 & -u_4 & 0\\
5u_1 & 5v_1 & 0 & 0 & u_3 & v_3 & -4u_4 & 0\\
4u_1 +u_4A& 4v_1 +u_4B& u_2-u_3A-v_3B & v_3A & -u_2A & -u_2B & -3u_4+u_1A+v_1B & -v_1A
\end{array}\right) \ .
\end{equation}
The minor $M_{356}$ is
\[
M_{356} =
\left | \begin{array}{ccc}
u_2 & -u_3 & -v_3\\
0 & u_3  & v_3\\
u_2-u_3A-v_3B & -u_2A & -u_2B
\end{array}\right | = u_2^{2}(v_3A -u_3B).
\]
Therefore, at  a point for which $v_3A -u_3B\neq 0$ we deduce that $\rk J =3$.

For $v_3A=u_3B\neq 0$  we consider the minor $M_{457}$ which is
\[
M_{357} =
\left | \begin{array}{ccc}
0 &  -u_3 & -u_4\\
0 & u_3 & -4u_4\\
v_3A & -u_2A & -3u_4+u_1A+v_1B
\end{array}\right | =  5u_3u_4v_3A = 5u_{3}^{2}u_4B\neq 0,
\]

Let $v_3A=u_3B = 0$. If $A=B=0$ then $u_3^2+v_{3}^2\neq 0$, say $v_3\neq 0$ we consider the minor

 \[
M_{367} =
\left | \begin{array}{ccc}
u_2 & -v_3 & -u_4\\
0 & v_3 & -4u_4\\
u_2 & 0 & -3u_4
\end{array}\right | =
\left | \begin{array}{ccc}
1 & -1 & 1\\
0 & 1 & 4\\
1 & 0 & 3
\end{array}\right |  = -2.
\]

 For $A\neq 0$ and $v_3=0$ we have that $v_1=0$ as well, which implies $u_1^2+u_3^2\neq 0$.  By the action of $\lambda =e^{i\phi}\in S^1$ we obtain the point $(u_1^{'}, v_1^{'}, u_{2}, u_{3}^{'},v_{3}^{'},  u_{4})$, where  $u_{k}^{'} = u_k\cos \phi$, $v_{k}^{'} = u_k \sin \phi$ and $A^{'} = u_1^{'}u_4-u_{2}u_{3}^{'}= A\cos \phi $. The minor $M_{357}$ at this point is equal to $5u_{3}^{'}v_{3}^{'}u_{4}A^{'} =  5 u_{3}^{2}u_{3}A \sin \phi \cos^{2} \phi$.
If $\lambda = \pm 1, \pm  i$   and $u_3\neq 0$ we deduce that $M_{357}\neq 0$ so the  rank of $J$ at a point ${\bf z}\in M^{3}$ for which $A\neq 0$, $u_3\neq 0$ and $v_3=0$ is equal to $3$.   If $u_3=0$ then $u_1\neq 0$ and we consider the minor $M_{135}$ which  is 
 \[
M_{135} =
\left | \begin{array}{ccc}
u_1 & u_2 &  0\\
5u_1 & 0 & 0 \\
4u_1 & u_2 & -u_2A
\end{array}\right | = \left | \begin{array}{ccc}
1 & 1 &  0\\
5 & 0 & 0 \\
4 & 1 & -1
\end{array}\right |  =5.
\]
For $A=0$ and $u_3=0$ we have $u_1=0$.   By the action of $\lambda =e^{i\phi}\in S^1$ we obtain the point $(u_1^{'}, v_1^{'}, u_{2}, u_{3}^{'},v_{3}^{'},  u_{4})$, where  $u_{k}^{'} = -v_k\sin \phi$, $v_{k}^{'} = v_k \cos \phi$ and $B^{'} = v_1^{'}u_4-u_{2}v_{3}^{'}= B\cos \phi $.   The minor $M_{357}$ at this points is $5(u_{3}^{'})^2u_4B^{'} = 5v_{3}^2 u_4 B \sin ^{2}\phi \cos \phi$. For  $\lambda = \pm 1, \pm  i$   and $v_3\neq 0$ we deduce that $M_{357}\neq 0$. For $v_3=0$ we have $v_1\neq 0$ and 

 \[
M_{236} =
\left | \begin{array}{ccc}
v_1 & u_2 &  0\\
5v_1 & 0 & 0 \\
4v_1 & u_2 & -u_2B
\end{array}\right | = \left | \begin{array}{ccc}
1 & 1 &  0\\
5 & 0 & 0 \\
4 & 1 & -1
\end{array}\right |  =5.
\]
\end{proof}

\begin{cor}
The manifold $M_{Q}^{5}$ is a complete intersection in $\R ^{8}$.
\end{cor}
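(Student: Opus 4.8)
\emph{Proof proposal.} By~\eqref{jednacine} and the discussion just above, $M_Q^5\subset\C^4 = \R^8$ is exactly the common level set
\[
M_Q^5 \;=\; \{(u,v)\in\R^8 \,:\, f_1(u,v) = 0,\; f_2(u,v) = -1,\; f_3(u,v) = 0\},
\]
that is the fibre $F^{-1}(0,-1,0)$ of $F = (f_1,f_2,f_3)\colon\R^8\to\R^3$. To call $M_Q^5$ a complete intersection in $\R^8$ is precisely to say that $F$ is a submersion along this fibre, i.e. that the Jacobian $J = J(f_1,f_2,f_3)$ has rank $3$ at every point of $M_Q^5$; since $\dim M_Q^5 = 5 = 8-3$, this is the maximal possible rank, and it presents $M_Q^5$ as the transverse intersection of the three real hypersurfaces $\{f_i = c_i\}$. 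So the plan is to take the rank computation already available on $M^2$ and propagate it over all of $M_Q^5$ using the two torus actions in play.

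First I would record the covering relations. By Proposition~\ref{M3MQ5} the $T^2$-action~\eqref{ro} (composed with the canonical $T^6$-action) carries $M^3$ onto $M_Q^5$, so every point of $M_Q^5$ is of the form $t\cdot{\bf w}$ with $t\in T^2$, ${\bf w}\in M^3$. Next, the free $S^1$-action~\eqref{S1action} carries $M^2$ onto $M^3$: for ${\bf w} = (z_0:z_1:z_2:|z_3|:|z_4|:|z_5|)\in M^3$ with $z_2 = |z_2|e^{i\psi_2}\neq 0$ the element $e^{-i\psi_2}$ moves ${\bf w}$ into the slice $\{\phi_2 = 0\}$, hence into $M^2$; and the locus $z_2 = 0$ of $M^3$ is, by the computation in Lemma~\ref{circle} together with~\eqref{2345}, exactly the circle $\{(\tfrac{e^{i\psi}}{\sqrt6}:\tfrac{e^{i\psi}}{\sqrt6}:0:\tfrac13:\tfrac{\sqrt5}{\sqrt{18}}:\tfrac{\sqrt5}{\sqrt{18}})\}$, which already lies in $M^2$. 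Hence $M^3 = S^1\cdot M^2$ and so $M_Q^5 = T^2\cdot S^1\cdot M^2$: every point of $M_Q^5$ lies on the orbit, through some point of $M^2$, of the (commuting) $T^2$- and $S^1$-actions.

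Then I would invoke the two rank-invariance statements preceding this corollary: the rank of $J$ is unchanged along the $T^2$-orbits and along the $S^1$-orbits, hence constant along the orbits of the combined $T^3$-action described above. By the preceding Proposition that rank equals $3$ at the orbit's base point in $M^2$. Therefore $\rk J\equiv 3$ on $M_Q^5$, which is what had to be shown.

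The main obstacle is not in this assembly but in what it stands on: the rank-$3$ verification on $M^2$ carried out in the preceding Proposition, where over each degenerate stratum ($v_3 A = u_3 B$, then $A=B=0$, then $A\neq 0$ with $v_3 = 0$, then $A = 0$ with $u_3 = 0$, \dots) one must exhibit a nonvanishing $3\times 3$ minor of $J$, sometimes only after rotating by a suitable $\lambda\in S^1$. The genuinely new points here are just the orbit-surjectivity $M_Q^5 = T^3\cdot M^2$ and the compatibility of the two separate rank-invariance statements with the combined action; as a fallback one can instead use lower semicontinuity of $\rk J$ together with $\rk J\le 3$ and check equality on a subset meeting every orbit, but the orbit description above makes the direct argument cleaner.
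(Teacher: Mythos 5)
Your proposal is correct and is exactly the argument the paper intends: the corollary is stated without proof precisely because it follows by combining the rank-invariance of $J$ along the $T^2$- and $S^1$-orbits with the rank-$3$ computation on $M^2$, so that $\rk J\equiv 3$ on $M_Q^5=T^2\cdot S^1\cdot M^2$ and the level set $F^{-1}(0,-1,0)$ is a transverse intersection. Your explicit verification of the orbit-surjectivity $M^3=S^1\cdot M^2$ (including the $z_2=0$ circle) is a detail the paper leaves implicit, but it is the same route.
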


\section{Symplectic reduction on a complex Grassmann manifold $G_{n,2}$}
\subsection{Basic facts}
Let $(M, \omega)$ be a symplectic manifold and the compact torus $T$ acts on $M$ preserving the symplectic form $\omega$. For any $v\in \bf{t}$, the Lie algebra for $T$, by $X_{v}$ is denoted the corresponding $T$-invariant vector field on $M$. The torus action is said to be Hamiltonian if the one form $\omega (X_{v}, \cdot)$ is exact, that is, there exists a function $H_{v}$, a Hamiltonian, such that $\omega (X_{v}, Y) = dH_{v}(Y)$ for any vector field $Y$ on $M$.

For a fixed basis $\{e_i\}$ for $\bf{t}$ and the corresponding Hamiltonians $H_{e_i}$,  it is defined  the moment map
\begin{equation}\label{sympl}
\mu : M \to {\bf t}^{\ast}, \;\; \mu (x) =\mu ^{\ast}_{x}, \; \; \mu ^{\ast}_{x}(e_i) =  H_{e_i}(x).
\end{equation}
By the  theorem of Aliyah~\cite{A} and Guillemin-Sternberg~\cite{GS} the image $\mu ( M)$ is a convex polytope in ${\bf t}^{\ast}$ for a compact $M$.

Assume that the moment map $\mu$ is proper, let ${\bf x}$ be a regular value of $\mu$ and assume that $T$ acts freely on $\mu ^{-1}({\bf x})$. Then the  quotient manifold $\mu ^{-1}({\bf x})/T$ is endowed with a unique symplectic form $\hat{\omega}$~\cite{MW},~\cite{M},  such that 
\[
p^{\ast} \hat{\omega} = i^{\ast}\omega ,
\]
where $i : \mu ^{-1}({\bf x}) \to M$ is the inclusion and $p : \mu ^{-1}({\bf x}) \to \mu ^{-1}({\bf x})/T$ is the projection.

This new symplectic manifold  $(\mu ^{-1}({\bf x})/T, \hat{\omega})$ is referred to as the symplectic reduction or as the symplectic quotient of $M$ by $T$-action.  Note that this symplectic quotient in general depends on a regular value ${\bf x}$.

\begin{rem} We pointed in Remark~\ref{regularvalues} that on the Grassmann manifolds $G_{n,2}$ the condition that ${\bf x}\in \Delta _{n,2}$ is a regular value for $\mu$ and that the torus $T^{n-1}$ acts freely on $\mu ^{-1}({\bf x})$ are equivalent.
\end{rem}

\subsection{Grassmann manifolds $G_{n,2}$}
  
A Grassmann manifold $G_{n,2} = U(n)/U(2)\times U(n-2)$, among the others,   admits K\" ahler metric invariant for the canonical $U(n)$-action and, consequently, the  $U(n)$-invariant   symplectic form. The Pl\"ucker embedding $G_{n,2}\to \C P^{N}$, $N=\binom{n}{2}-1$ is isometric with respect to such metric on $G_{n,2}$ and Fubini-Study metric on $\C P^{N}$.   We consider the effective  action of the torus $T^{n-1}=T^n/\text{diag}(T^n)$ on $G_{n,2}$, which is induced from  the coordinate wise action of $T^n\subset U(n)$ on $\C ^{n}$.   The corresponding moment map defined by~\eqref{sympl} is  $\mu : G_{n,2}\to \Delta _{n,2}$ given by
\[
\mu (L) =\frac{1}{\sum\limits_{I \in \binom{n}{2}}|P^{I}(L)|^2}\sum _{I \in \binom{n}{2}}|P^{I}(L)|^2\Lambda _I,
\]
where $\Lambda _{I}\in \Z ^n$, $\Lambda _{I}(j) =1$ for $j\in I$ and $\Lambda _{I}(j)=0$ for $j\notin I$.

We already recalled that the moment map defines the  stratification of $G_{n,2}$ and the family of admissible polytopes. This family also gives the chamber decomposition of $\Delta _{n,2}$. The following holds~\cite{GMP},~\cite{BT2}:

\begin{thm}
Let   $C_{\omega}\subset \Delta _{n,2}$  be a chamber such that $\dim C_{\omega}=n-1$.
\begin{itemize}
\item  For any  ${\bf x}\in C_{\omega}$, the preimage $\mu ^{-1}({\bf x})\subset G_{n, 2}$ is a smooth manifold and  the manifolds $\mu ^{-1}({\bf x})$ and $\mu ^{-1}({\bf y})$ are diffeomorphic for any    ${\bf x}, {\bf y}\in C_{\omega}$.
\item For any  ${\bf x}\in C_{\omega}$ the torus $T^{n-1}$ acts freely on $\mu ^{-1}({\bf x})$ and the orbit spaces  $\mu ^{-1}({\bf x})/T^{n-1}$, $\mu ^{-1}({\bf y})/T^{n-1}$ are diffeomorphic for any    ${\bf x}, {\bf y}\in C_{\omega}$.
\end{itemize}
\end{thm}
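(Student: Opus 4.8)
The plan is to derive both assertions from the regular value theorem together with a $T^{n-1}$-equivariant refinement of Ehresmann's fibration theorem, taking as the only nontrivial external input the description of the regular values of $\mu$ recalled in Remark~\ref{regularvalues}.

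First I would note that, since $\dim C_{\omega}=n-1$ equals the dimension of the affine hyperplane $\{x_1+\cdots+x_n=2\}$ and $C_{\omega}$ is an open convex subset of it, every ${\bf x}\in C_{\omega}$ avoids all the hyperplanes of the arrangement $\mathcal{A}$ and is therefore a regular value of $\mu$ regarded as a map into $\R^{n-1}\cong\{x_1+\cdots+x_n=2\}$. By the preimage theorem $\mu^{-1}({\bf x})\subset G_{n,2}$ is a smooth closed submanifold of dimension $\dim G_{n,2}-(n-1)=3n-7$, and it is compact, being closed in the compact manifold $G_{n,2}$. Moreover $\mu$ is proper, so $\mu\colon\mu^{-1}(C_{\omega})\to C_{\omega}$ is a proper surjective submersion, hence a locally trivial smooth fibre bundle by Ehresmann's theorem. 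As $C_{\omega}$ is connected (indeed contractible), all fibres $\mu^{-1}({\bf x})$, ${\bf x}\in C_{\omega}$, are diffeomorphic, which gives the first assertion.

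For the freeness statement I would invoke Remark~\ref{regularvalues}, which (via \cite{BT2}) identifies a regular value of $\mu$ with a point all of whose $\mu$-preimages have $T^n$-stabilizer exactly $\text{diag}(T^n)=S^1$. Thus for ${\bf x}\in C_{\omega}$ the quotient torus $T^{n-1}=T^n/S^1$ acts on the compact manifold $\mu^{-1}({\bf x})$ with trivial stabilizers, i.e.\ freely; consequently $\mu^{-1}({\bf x})/T^{n-1}$ is a smooth manifold and $\mu^{-1}({\bf x})\to\mu^{-1}({\bf x})/T^{n-1}$ is a principal $T^{n-1}$-bundle. To see that these orbit spaces are mutually diffeomorphic I would make the trivialization above equivariant: fix a $T^{n-1}$-invariant Riemannian metric on $G_{n,2}$ (average an arbitrary one over $T^{n-1}$), and for a smooth path $\gamma$ in $C_{\omega}$ joining ${\bf x}$ to ${\bf y}$ let $V$ be the (time-dependent) horizontal lift of $\dot{\gamma}$, i.e.\ at a point $p$ with $\mu(p)=\gamma(t)$ the unique vector orthogonal to $\ker d\mu_p$ with $d\mu_p(V_p)=\dot{\gamma}(t)$. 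Since both the metric and $\mu$ are $T^{n-1}$-invariant, $V$ is $T^{n-1}$-invariant, so its flow is through $T^{n-1}$-equivariant diffeomorphisms; its time-one map is a $T^{n-1}$-equivariant diffeomorphism $\mu^{-1}({\bf x})\to\mu^{-1}({\bf y})$ and therefore descends to a diffeomorphism $\mu^{-1}({\bf x})/T^{n-1}\to\mu^{-1}({\bf y})/T^{n-1}$. (Equivalently, one obtains a $T^{n-1}$-equivariant trivialization $\mu^{-1}(C_{\omega})\cong C_{\omega}\times\mu^{-1}({\bf x})$ and passes to $T^{n-1}$-quotients.)

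The routine points — properness of $\mu$, connectedness of a chamber, smoothness of the averaged metric — are immediate. The one step needing genuine care is verifying that the horizontal lift $V$ is $T^{n-1}$-invariant, so that its flow is equivariant and the induced map on quotients is well defined; and at the level of substance the theorem really does rest on the nontrivial results of \cite{BT2} and \cite{GMP} identifying the maximal-dimensional chambers of $\Delta_{n,2}$ with the set of regular values of $\mu$ and with the locus where the $T^{n-1}$-action is free.
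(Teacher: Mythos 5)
Your argument is correct. Note, however, that the paper does not prove this theorem at all: it states it as a recollection of known results, with the attribution ``The following holds~\cite{GMP},~\cite{BT2}'', so there is no internal proof to compare against. Your route --- regular value theorem for the first sentence, Ehresmann's fibration theorem over the connected chamber for the diffeomorphism of fibres, Remark~\ref{regularvalues} for freeness of the $T^{n-1}=T^n/\mathrm{diag}(T^n)$-action, and an equivariant horizontal-lift flow with respect to an averaged invariant metric for the diffeomorphism of quotients --- is the standard way to establish the statement, and each step checks out: $\mu$ is $T^n$-invariant, so the fibres and $\ker d\mu$ are invariant, the invariant metric makes the horizontal lift of $\dot\gamma$ an invariant vector field, and compactness of $\mu^{-1}(\gamma([0,1]))$ guarantees the flow exists up to time one. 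You correctly isolate the only genuinely nontrivial input, namely the identification (from \cite{BT2}, \cite{GMP}, recorded in Remark~\ref{regularvalues} and Remark~\ref{arrangement}) of the maximal-dimensional chambers with the set of regular values and with the locus where the stabilizer is exactly $\mathrm{diag}(T^n)$; this is precisely what the paper itself imports by citation, so your proof is an honest filling-in of the omitted argument rather than a circular appeal to the conclusion.
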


In this way, to any $C_{\omega}$, $\dim C_{\omega}=n-1$ it can be assigned the smooth manifold $F_{\omega}$, which we refer to in~\cite{BT3} as the  space of parameters of a chamber. Following~\cite{H}, it is   pointed in~\cite{BTW}:

\begin{thm}
For any chamber $C_{\omega}$, $\dim C_{\omega}=n-1$, the manifold $F_{\omega}$ is diffeomorphic to:
\begin{itemize}
\item Geometric invariant theory quotient $\mathcal{G}(\mathcal{L})$ for the linearized diagonal $PGL(2,\C)$-action on $(\C P^{1})^{n}$ where the linearization $\mathcal {L}$ is given by an arbitrary point ${\bf x}\in C_{\omega}$.
\item Moduli space of weighted pointed stable genus zero curves $\overline{\mathcal{M}}_{(\mathcal{A}, 0)}$ for a weight vector $\mathcal{A}\in U(\mathcal{L})$ for a neighborhood $U$ of  ${\bf x}\in C_{\omega}$, which determines $\mathcal{L}$.
\end{itemize}
\end{thm}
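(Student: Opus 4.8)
The plan is to recognize $F_\omega$ --- which by the previous theorem is the common diffeomorphism type of the symplectic reductions $\mu^{-1}({\bf x})/T^{n-1}$, ${\bf x}\in C_\omega$ --- first as a GIT quotient of $(\C P^1)^n$ by $PGL(2,\C)$ through the Gelfand--MacPherson correspondence, and then to identify that quotient with a moduli space of weighted pointed stable genus-zero curves by quoting Hassett's comparison theorem~\cite{H}.

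To establish the first bullet I would pass from the symplectic to the algebraic picture. By Kempf--Ness, the symplectic reduction $\mu^{-1}({\bf x})/T^{n-1}$ of $G_{n,2}$ at level ${\bf x}=(x_1,\dots,x_n)$, $\sum x_i=2$, $0\le x_i\le 1$, is diffeomorphic to the GIT quotient of $G_{n,2}$ by $(\C^{\ast})^n$ with the fractional linearization prescribed by ${\bf x}$; since ${\bf x}$ is a regular value, Remark~\ref{regularvalues} tells us the torus acts freely on $\mu^{-1}({\bf x})$, equivalently semistable and stable points coincide, so this quotient is a genuine smooth manifold. I would then run the Gelfand--MacPherson correspondence on the common resolution given by the variety of $2\times n$ complex matrices of rank $2$, on which $GL(2,\C)$ acts on the left and $(\C^{\ast})^n$ on the columns: dividing by $GL(2,\C)$ returns $G_{n,2}$ with its Pl\"ucker $(\C^{\ast})^n$-action, while dividing the columns by $\C^{\ast}$ returns the open locus of $(\C P^1)^n$ where the $n$ points span $\C^2$, with a residual $PGL(2,\C)$-action, and the two GIT quotients coincide; under this correspondence ${\bf x}$ is carried to the linearization $\mathcal{L}=\mathcal{O}(x_1,\dots,x_n)$, giving
\[
F_\omega\;\cong\;\mu^{-1}({\bf x})/T^{n-1}\;\cong\;(\C P^1)^n/\!/_{\mathcal{L}}PGL(2,\C),
\]
which is also the polygon-space identification underlying~\cite{HK1},~\cite{HK2}.

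For the second bullet I would invoke Hassett's theorem: for a weight vector $\mathcal{A}=(a_1,\dots,a_n)$ with $0<a_i\le 1$ and $\sum a_i>2$ lying in the interior of a chamber of admissible weights, $\overline{\mathcal{M}}_{(\mathcal{A},0)}$ is isomorphic to $(\C P^1)^n/\!/_{\mathcal{A}}PGL(2,\C)$. Our point ${\bf x}$ lies on the boundary hyperplane $\sum x_i=2$, so one perturbs inside a small neighborhood $U$ of ${\bf x}$ in $(0,1]^n$ to a weight vector $\mathcal{A}$ with $\sum a_i>2$ lying in the chamber of ${\bf x}$; every such $\mathcal{A}$ determines the same linearization class $\mathcal{L}$, hence the same GIT quotient, hence the same $\overline{\mathcal{M}}_{(\mathcal{A},0)}$, and combining with the first bullet yields $F_\omega\cong\overline{\mathcal{M}}_{(\mathcal{A},0)}$.

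The step I expect to require the most care is the bookkeeping of the three chamber structures and the verification that the identifications above are genuine diffeomorphisms rather than merely homeomorphisms of orbifolds. What makes this tractable is that the hyperplane arrangement of Remark~\ref{arrangement}, namely $\{x_{i_1}+\dots+x_{i_s}=1\}$ intersected with $\Delta_{n,2}$, is precisely the variation-of-GIT wall arrangement for the $PGL(2,\C)$-action on $(\C P^1)^n$ at total weight $2$ --- a point repeated with total weight $1$ is exactly the semistable/stable wall --- and it is also the arrangement governing Hassett's chamber decomposition of weight space. One must check that the correspondences ${\bf x}\leftrightarrow\mathcal{L}\leftrightarrow\mathcal{A}$ match these walls, so that a single chamber $C_\omega$ of maximal dimension $n-1$ lands in a single GIT chamber (the quotient does not jump) and a single Hassett chamber (the moduli space does not jump), and that maximal dimension forces the generic, stable-equals-semistable situation in which every quotient in sight is smooth; this is precisely where the hypotheses $\dim C_\omega=n-1$ and regularity of ${\bf x}$ are used. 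The remaining ingredients --- Marsden--Weinstein reduction~\cite{MW}, Kempf--Ness, the Gelfand--MacPherson reduction (cf.~\cite{K}), and Hassett's construction~\cite{H} --- are then quotable from the literature.
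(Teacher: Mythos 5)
Your argument is correct and is, as far as one can tell, exactly the intended justification: the paper itself gives no proof of this theorem, merely prefacing it with ``It pointed in~\cite{BTW}, following~\cite{H}'', so your chain Kempf--Ness $\to$ Gelfand--MacPherson $\to$ Hassett's GIT comparison is the standard argument the citation stands in for. The only loose phrase is that the perturbed weight vectors $\mathcal{A}\in U$ do not ``determine the same linearization class'' --- they determine different linearizations lying in the same GIT cone over $C_\omega$ (the walls $\sum_{i\in S}a_i=\tfrac12\sum_j a_j$ are homogeneous and cut the slice $\sum a_j=2$ in the arrangement of Remark~\ref{arrangement}) --- but your ensuing conclusion that the GIT quotient, and hence $\overline{\mathcal{M}}_{(\mathcal{A},0)}$, is constant on $U$ is exactly right.
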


From the recalled  results on symplectic reduction we deduce: 

\begin{thm}
For any chamber $C_{\omega}\subset \Delta _{n,2}$ such that $\dim C_{\omega}=n-1$ the manifold $F_{\omega}$ is a  symplectic  reduction of $G_{n,2}$ by the canonical $T^n$-action.
\end{thm}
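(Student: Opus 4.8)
The plan is to verify that the canonical torus action on $G_{n,2}$ satisfies, at every point of a maximal chamber, all the hypotheses of the reduction theorem recalled above, and then to identify the resulting reduced space with $F_{\omega}$. All the substantive input is the structural material on chambers and free torus actions already quoted; what remains is bookkeeping.

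First I would fix the Hamiltonian set-up. The Grassmannian $G_{n,2}=U(n)/(U(2)\times U(n-2))$ carries a $U(n)$-invariant Kähler form $\omega$, namely the pullback under the Plücker embedding $p:G_{n,2}\hookrightarrow\C P^{N}$ of the Fubini--Study form. Restricting invariance to the maximal torus $T^n\subset U(n)$ makes the canonical $T^n$-action Hamiltonian, and since $\text{diag}(T^n)$ acts trivially this descends to an effective Hamiltonian action of $T^{n-1}=T^n/\text{diag}(T^n)$; in the normalization~\eqref{sympl} its moment map is exactly the map $\mu:G_{n,2}\to\Delta_{n,2}$ recalled above, with values recorded on the affine hyperplane $x_1+\cdots+x_n=2$. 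Because $G_{n,2}$ is compact, $\mu$ is automatically proper.

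Next, I would fix a chamber $C_{\omega}$ with $\dim C_{\omega}=n-1$ and an arbitrary point ${\bf x}\in C_{\omega}$. By Remark~\ref{regularvalues} and the theorem recalled above, ${\bf x}$ is a regular value of $\mu$ and $T^{n-1}$ acts freely on $\mu^{-1}({\bf x})$. Hence the reduction theorem applies verbatim: $\mu^{-1}({\bf x})/T^{n-1}$ is a smooth manifold carrying the unique symplectic form $\hat{\omega}$ with $p^{\ast}\hat{\omega}=i^{\ast}\omega$, and by definition $(\mu^{-1}({\bf x})/T^{n-1},\hat{\omega})$ is the symplectic reduction of $G_{n,2}$ at ${\bf x}$. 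Since $\text{diag}(T^n)$ acts trivially and ${\bf x}$ lies on $\{x_1+\cdots+x_n=2\}$, this coincides with the reduction $\mu^{-1}({\bf x})/T^n$ for the non-effective $T^n$-action, so speaking of ``reduction by the canonical $T^n$-action'' is unambiguous.

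Finally, by the theorem recalled above the orbit spaces $\mu^{-1}({\bf x})/T^{n-1}$, ${\bf x}\in C_{\omega}$, are mutually diffeomorphic, and their common diffeomorphism type is by definition $F_{\omega}$. Transporting $\hat{\omega}$ along the diffeomorphism $\mu^{-1}({\bf x})/T^{n-1}\cong F_{\omega}$ exhibits $F_{\omega}$ as a symplectic reduction of $G_{n,2}$ by the canonical $T^n$-action, which is the assertion. There is no genuine obstacle here; the one point to flag is that $F_{\omega}$ is a priori only a diffeomorphism type, so the statement must be read as saying that this diffeomorphism type underlies a symplectic reduction --- and I would not expect the symplectic form itself to be independent of ${\bf x}\in C_{\omega}$, since by the Duistermaat--Heckman theorem the cohomology class of $\hat{\omega}$ varies affinely as ${\bf x}$ moves inside $C_{\omega}$.
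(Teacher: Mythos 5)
Your proposal is correct and follows essentially the same route as the paper, which simply deduces the statement from the recalled Marsden--Weinstein reduction theorem together with the preceding theorem on regular values, free $T^{n-1}$-actions, and the chamber-independence of the diffeomorphism type of $\mu^{-1}({\bf x})/T^{n-1}$. Your explicit verification of the hypotheses (properness, freeness, regularity) and the closing caveat about the Duistermaat--Heckman variation of $[\hat{\omega}]$ are just a more careful write-up of the same argument.
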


Thus, a lot moduli spaces  $\overline{\mathcal{M}}_{(\mathcal{A}, 0)}$ for which the weight vectors $\mathcal{A}$ are very close to a maximal chamber in $\Delta _{n,2}$ are symplectic quotients of the Grassmannians $G_{n,2}$ by the canonical $T^n$-action.

\subsection{Universal space of parameters and symplectic reduction}

A manifold $F_{\omega}$ is, following~\cite{BT3}, the union of the spaces of parameters $F_{\sigma} = W_{\sigma}/(\C ^{\ast})^{n}$ of the strata $W_{\sigma}$ such that the admissible polytopes $P_{\sigma} = \overline{\mu (W_{\sigma})}$ form the chamber $C_{\omega}$, that is $C_{\omega}\subset \stackrel{\circ}{P}_{\sigma}$ for $\sigma \in \omega$ and $C_{\omega}\cap P_{\sigma}=\emptyset$ for $\sigma \not\in \omega$.

 According to~\cite{BT2} any $F_{\sigma}$  can be embedded in some   $(\C P^{1}_{B})^{l}$, where
$B=\{(1:0), (0:1)\}$, $\C P^{1}_{B} = \C P^1\setminus B$, $1\leq l\leq n-3$.    The space  of parameters of the main stratum if $F_n\cong  (\C P^{1}_{A})^{n-3}$, where $A=B\cup \{(1:1)\}$,   while the largest  spaces of parameters of the strata, which are different from the main   are  $F_{\sigma} \cong  (\C P^{1}_{A})^{n-4}$  and  their admissible   polytopes are given by the  halfspaces  $x_i+x_j<1$, $1\leq i<j\leq n$.

In seeking for the largest space of parameters of the chambers, in a sense that  there exist projections from the largest one to the spaces of parameters of other chambers,   we point:

\begin{lem}\label{odd}
A point ${\bf x} = (x,\ldots x)\in \Delta _{n,2}$ belongs to a chamber of maximal dimension $n-1$ if and only if $n$ is odd  and $x=\frac{2}{n}$ . This  chamber is  given by  the intersection of half-spaces $x_{i_1}+\ldots 
+x_{i_l}<1$ for $2\leq l\leq [\frac{n}{2}]$. 
\end{lem}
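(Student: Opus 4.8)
The plan is to translate the statement entirely into the language of the hyperplane arrangement $\mathcal{A}$ of Remark~\ref{arrangement} and then to evaluate the defining equations of $\mathcal{A}$ at the barycentric point, reading off parity conditions.

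\emph{Step 1 (normalization).} Since every point of $\Delta_{n,2}$ satisfies $x_1+\ldots+x_n=2$, a point ${\bf x}=(x,\ldots,x)$ forces $nx=2$, hence $x=\frac{2}{n}$; for $n\geq 3$ this satisfies $0<\frac{2}{n}<1$, so ${\bf x}\in\stackrel{\circ}{\Delta}_{n,2}$. The small cases $n\leq 3$ are degenerate and I would dispose of them separately: for $n=3$ the arrangement $\mathcal{A}$ is empty, so the whole open hypersimplex is the unique maximal chamber and $x=\frac{2}{3}$ works, while for $n=2$ the hypersimplex is a point and both sides of the equivalence fail.

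\emph{Step 2 (regularity criterion).} Recall (from Remark~\ref{arrangement} together with Remark~\ref{regularvalues} and the surrounding discussion) that a point of $\stackrel{\circ}{\Delta}_{n,2}$ lies in a chamber of maximal dimension $n-1$ if and only if it lies on no hyperplane $x_{i_1}+\ldots+x_{i_s}=1$ of $\mathcal{A}$, where $1\leq i_1<\ldots<i_s\leq n$ and $2\leq s\leq[\frac{n}{2}]$. Evaluating such a functional at ${\bf x}=(\frac{2}{n},\ldots,\frac{2}{n})$ gives the value $\frac{2s}{n}$, independent of the indices, and this equals $1$ precisely when $n=2s$. Hence, if $n$ is even and $n\geq 4$, the choice $s=\frac{n}{2}$ is admissible (as $2\leq\frac{n}{2}\leq[\frac{n}{2}]$) and ${\bf x}$ lies on a hyperplane of $\mathcal{A}$, so it is not contained in a maximal chamber; if $n$ is odd, $n=2s$ is impossible, so ${\bf x}$ lies off every hyperplane of $\mathcal{A}$ and therefore belongs to a chamber of maximal dimension $n-1$. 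This is the asserted equivalence.

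\emph{Step 3 (identifying the chamber).} For $n$ odd I would then read off the sign of each functional of $\mathcal{A}$ at ${\bf x}$: for $2\leq l\leq[\frac{n}{2}]=\frac{n-1}{2}$ one has $x_{i_1}+\ldots+x_{i_l}=\frac{2l}{n}\leq\frac{n-1}{n}<1$ for every $l$-element subset. The chamber containing ${\bf x}$ is the connected component of $\stackrel{\circ}{\Delta}_{n,2}\setminus\mathcal{A}$ through ${\bf x}$, which is the locus of points keeping the same strict sign as ${\bf x}$ on every hyperplane of $\mathcal{A}$; therefore it equals the intersection with $\Delta_{n,2}$ of the half-spaces $x_{i_1}+\ldots+x_{i_l}<1$ taken over all $2\leq l\leq[\frac{n}{2}]$. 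This set is convex, hence connected, and contains ${\bf x}$, so it is indeed a single $(n-1)$-dimensional chamber, as claimed. I do not anticipate a real obstacle here: the heart of the argument is the one-line evaluation $x_{i_1}+\ldots+x_{i_s}=\frac{2s}{n}$ at the barycentre together with the parity of $n$, and the only points needing a little care are respecting the admissible range $2\leq s\leq[\frac{n}{2}]$ of subset sizes in $\mathcal{A}$ (so that $s=\frac{n}{2}$ really indexes a hyperplane when $n$ is even and $n\geq 4$) and handling $n=2,3$.
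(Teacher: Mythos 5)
Your proposal is correct and follows essentially the same route as the paper's own proof: normalize to $x=\frac{2}{n}$ via $nx=2$, evaluate the arrangement functionals $x_{i_1}+\ldots+x_{i_s}$ at the barycentre to get $\frac{2s}{n}$, and use parity of $n$ to decide whether some hyperplane of $\mathcal{A}$ is hit, then read off the signs to identify the chamber. Your treatment is somewhat more careful than the paper's (explicitly checking the admissible range of $s$, the connectedness of the sign locus, and the degenerate cases $n=2,3$), but the underlying argument is identical.
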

\begin{proof}
Since $nx=2$ it follows ${\bf x} = (\frac{2}{n}, \ldots, \frac{2}{n})$. For any $2\leq k\leq n-2$ the sum $kx = \frac{2k}{n}$ might be $1$ only when $n$ is even. In addition, for even $n=2k$ we have $kx=1$, so ${\bf x}$ does not belong to a chamber of maximal dimension. For odd $n$, we consider the chamber $C_{\omega}$ given by  the intersection of half-spaces $x_{i_1}+\ldots +x_{i_l}<1$ for $2\leq l\leq [\frac{n}{2}]$, following   Remark~\ref{arrangement}.  Then, ${\bf x}\in C_{\omega}$.
\end{proof}

\begin{cor}\label{chamberodd}
For odd $n$, the space of parameters of the chamber $C_{\omega} :  x_{i_1}+\ldots +x_{i_l}<1$ for $2\leq l\leq [\frac{n}{2}]$ of dimension $n-1$ is the largest space of parameters of a chamber.
\end{cor}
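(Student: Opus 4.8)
The plan is to deduce the corollary from \lemref{odd} together with the two descriptions of the space of parameters $F_{\omega}$ of a maximal chamber recalled above, once the word ``largest'' has been made precise: $F_{\omega}$ is the \emph{largest} space of parameters of a chamber if for every maximal chamber $C_{\omega'}\subset\Delta_{n,2}$ there is a projection $F_{\omega}\to F_{\omega'}$, in the sense of the Hassett category of~\cite{BTW}. By \lemref{odd}, for odd $n$ the chamber $C_{\omega}$ cut out by the half-spaces $x_{i_1}+\dots+x_{i_l}<1$, $2\le l\le[\frac{n}{2}]$, is the \emph{unique} maximal chamber containing the barycenter ${\bf x}_{0}=(\frac{2}{n},\dots,\frac{2}{n})$ of $\Delta_{n,2}$, and ${\bf x}_{0}$ lies on none of the arrangement hyperplanes $x_{i_1}+\dots+x_{i_s}=1$. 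First I would take ${\bf x}_{0}$ itself as the linearization datum; since it is wall--generic, the associated $PGL(2,\C)$-linearization on $(\C P^{1})^{n}$ has no strictly semistable points, so $F_{\omega}$ is a smooth projective manifold, and --- again by \lemref{odd} --- it is the unique $F_{\omega'}$ invariant under the full symmetric group $S_{n}$ permuting the $n$ factors.

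Next I would exhibit the projections. Writing $F_{\omega}=\bigcup_{\sigma\in\omega}F_{\sigma}$ as in~\cite{BT3}, with $\sigma\in\omega$ exactly when $C_{\omega}\subset\stackrel{\circ}{P}_{\sigma}$, the statement reduces to showing that the family $\omega$ attached to the barycentric chamber dominates every other maximal chamber's family: for each $\sigma'\in\omega'$ the space $F_{\sigma'}$ should be obtained from some $F_{\sigma}$, $\sigma\in\omega$, by a projection. The heuristic is that ${\bf x}_{0}$, being the average of all the vertices $\Lambda_{I}$ of $\Delta_{n,2}$, is its ``deepest'' interior point, so the collection of admissible polytopes whose interior contains ${\bf x}_{0}$ is as rich as possible; I would make this precise by checking that $\omega$ contains the main stratum and every stratum whose admissible polytope is $\Delta_{n,2}$ with a single vertex $\Lambda_{ij}$ truncated, and then propagate by $S_{n}$-equivariance, which already accounts for the top-dimensional boundary pieces of any competing $F_{\omega'}$.

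The cleanest realization of the projections, and the step I expect to be the main obstacle, is through the reduction morphisms of the Hassett category: $F_{\omega}\cong\overline{\mathcal{M}}_{(\mathcal{A},0)}$ for a symmetric weight $\mathcal{A}$ just above $(\frac{2}{n},\dots,\frac{2}{n})$, while $F_{\omega'}\cong\overline{\mathcal{M}}_{(\mathcal{B},0)}$ for $\mathcal{B}$ near a representative point of $C_{\omega'}$. Choosing a weight vector $\mathcal{C}$ dominating both $\mathcal{A}$ and $\mathcal{B}$ componentwise produces reduction morphisms $\overline{\mathcal{M}}_{(\mathcal{C},0)}\to F_{\omega}$ and $\overline{\mathcal{M}}_{(\mathcal{C},0)}\to F_{\omega'}$; one must then show, using the parity of $n$ via \lemref{odd}, that the symmetric datum is the distinguished wall--generic one in the relevant slice --- equivalently, that the first morphism identifies $F_{\omega}$ with the push-down of $\overline{\mathcal{M}}_{(\mathcal{C},0)}$ that still dominates all the other maximal-chamber models. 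Verifying this domination statement --- turning ``${\bf x}_{0}$ is the deepest point'' into ``$F_{\omega}$ admits a projection onto every $F_{\omega'}$'' --- is the crux; it is a finite combinatorial check on subsets $S\subset\{1,\dots,n\}$ governed by the parity constraint $|S|+|S^{c}|=n$ odd, together with \lemref{odd}, and it is precisely there that the hypothesis that $n$ be odd is essential, since for even $n$ the barycenter sits on a wall and no such distinguished chamber exists.
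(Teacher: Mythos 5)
There is a genuine gap: you correctly identify what ``largest'' must mean (existence of projections onto every other chamber's space of parameters) and correctly single out the barycentric chamber via \lemref{odd}, but the decisive step is exactly the one you defer. You write that the domination statement ``is a finite combinatorial check on subsets $S\subset\{1,\dots,n\}$'' and that it is ``the crux'' --- and then you do not perform it. The paper's proof \emph{is} that check, and it is short: a maximal chamber $C_{\omega'}$ is determined by its position relative to each arrangement hyperplane $x_{i_1}+\dots+x_{i_l}=1$; if $C_{\omega'}$ lies on the side $x_{i_1}+\dots+x_{i_l}>1$, then it lies on the side $x_{j_1}+\dots+x_{j_{n-l}}<1$ of the complementary sum, so the stratum it sees through that wall has space of parameters $(\C P^{1}_{A})^{l-1}$, whereas $C_{\omega}$ sees the stratum with space of parameters $(\C P^{1}_{A})^{n-l-1}$; since $l\le[\frac{n}{2}]$ one has $n-l-1\ge l-1$, and the same comparison propagates to admissible polytopes cut out by several half-spaces. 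This quantitative input --- the explicit identification of $F_{\sigma}$ with a power of $\C P^{1}_{A}$ whose exponent is governed by the size of the defining subset --- is what your argument never touches, and without it ``the barycenter is the deepest point'' remains a heuristic.

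Separately, the mechanism you propose for producing the projections is shakier than you suggest. Hassett reduction morphisms $\overline{\mathcal{M}}_{(\mathcal{C},0)}\to\overline{\mathcal{M}}_{(\mathcal{B},0)}$ require componentwise domination $\mathcal{B}\le\mathcal{C}$, but any two regular values in $\Delta_{n,2}$ have coordinate sum $2$, so neither dominates the other; you are forced to pass to an auxiliary $\mathcal{C}$ with $\sum c_i>2$ dominating both, and then the claim that $\overline{\mathcal{M}}_{(\mathcal{C},0)}\to F_{\omega'}$ factors through $F_{\omega}$ is precisely the assertion to be proved, not a consequence of the setup. The paper sidesteps this entirely by working stratum-by-stratum with the projections $p_{\sigma}:\tilde{F}_{\sigma}\to F_{\sigma}$ from the universal space of parameters, reducing everything to the wall-by-wall size comparison above. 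I would recommend either carrying out that comparison explicitly or, if you want to stay in the Hassett framework, proving the factorization claim rather than asserting it.
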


\begin{proof}  The admissible polytopes in $\Delta _{n,2}$ can be, according to~\cite{BT3},   described by the hyperplane arrangement from Remark~\ref{arrangement}.  More precisely, any admissible polytopes of maximal dimension $(n-1)$ can be obtained as the intersection of admissible family of half spaces $H _{I_1}: \langle v_{I_1}, x\rangle < 1, \ldots , H_{I_l}: \langle v_{I_l},  x\rangle <1$, determined by    hyperplanes from the arrangement given by Remark~\ref{arrangement}. A chamber $\C_{\omega}^{'}$ of maximal dimension is by Remark~\ref{arrangement}  chamber of the given hyperplane arrangement. 
Let for a chamber $\C_{\omega}^{'}$ we have $x_{i_1}+\ldots +x_{i_l}>1$ for some $2\leq l\leq [\frac{n}{2}]$. Then $x_{j_1}+\ldots +x_{j_{n-l}}<1$, were $\{j_1, \ldots , j_{n-l}\} =\{1, \ldots , n\}\setminus \{i_1, \ldots , i_l\}$. The space of parameters of the stratum whose   admissible polytope  is given by this half space is, according to~\cite{BT3},  homeomorphic  to $(\C P^{1}_{A})^{l-1}$. On the other hand the space of parameters of the stratum  whose  admissible polytope is given  by  $x_{i_1}+\ldots +x_{i_l}<1$ is homeomorphic to $(\C P^{1}_{A})^{n-l-1}$.   Since $ l\leq [\frac{n}{2}]$,  the statement follows. In the same way we argue for an admissible polytope given by the intersection of half spaces.
\end{proof}

In an analogous way we prove:

\begin{cor}\label{even}
For even $n$,  the space of parameters of the chamber $C_{\omega _0} :  x_{i_1}+\ldots +x_{i_l}<1$ for $2\leq l\leq \frac{n}{2} -1$ of dimension $n-1$  is the largest space of parameters of a chamber.
\end{cor}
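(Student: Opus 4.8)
The plan is to transcribe the proof of Corollary~\ref{chamberodd} with the bound $[\frac{n}{2}]$ replaced by $\frac{n}{2}-1$. The inputs are as there: by Remark~\ref{arrangement} every maximal chamber $C_{\omega}$ is a chamber of the arrangement $\mathcal{A}$ and every $(n-1)$-dimensional admissible polytope is an intersection of half-spaces $\{x_{i_1}+\ldots+x_{i_m}<1\}$ cut out by hyperplanes of $\mathcal{A}$; by~\cite{BT3} the stratum whose admissible polytope is the single half-space $\{x_{i_1}+\ldots+x_{i_m}<1\}$ has space of parameters $(\C P^{1}_{A})^{n-m-1}$, and $F_{\omega}=\bigcup_{\sigma}F_{\sigma}$ over the strata $\sigma$ with $C_{\omega}$ in the interior of $P_{\sigma}$. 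So it suffices to show that on each decisive hyperplane $C_{\omega_0}$ lies on the side giving the bigger constituent, strictly so for at least one of them against any competitor.

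First I would fix $C_{\omega_0}$ to be a maximal chamber on which $x_{i_1}+\ldots+x_{i_l}<1$ for every subset with $2\le l\le\frac{n}{2}-1$; such a chamber exists, e.g. it contains a small generic perturbation of the barycenter $(\frac{2}{n},\ldots,\frac{2}{n})$ that lowers one coordinate and raises the others. The one real difference with the odd case appears here: one cannot also impose $x_{i_1}+\ldots+x_{i_{n/2}}<1$ for every $\frac{n}{2}$-subset, because adding this to the inequality for the complementary $\frac{n}{2}$-subset contradicts $x_1+\ldots+x_n=2$. Equivalently, within $\{\sum x_i=2\}$ the hyperplane $\sum_{i\in J}x_i=1$ with $|J|=\frac{n}{2}-1$ is the same as $\sum_{i\in J^{c}}x_i=1$ with $|J^{c}|=\frac{n}{2}+1$, so the listed inequalities already determine the side of $C_{\omega_0}$ on every hyperplane of $\mathcal{A}$ except the $\frac{n}{2}$-subset ones.

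Then I would run the comparison verbatim as in Corollary~\ref{chamberodd}. If a maximal chamber $C'_{\omega}$ satisfies $x_{i_1}+\ldots+x_{i_l}>1$ for some $2\le l\le\frac{n}{2}-1$, then there $x_{j_1}+\ldots+x_{j_{n-l}}<1$ for the complementary $(n-l)$-subset; the stratum with polytope $\{x_{j_1}+\ldots+x_{j_{n-l}}<1\}$, a constituent of $F_{\omega}$, has space of parameters $(\C P^{1}_{A})^{l-1}$, while the stratum with polytope $\{x_{i_1}+\ldots+x_{i_l}<1\}$, a constituent of $F_{\omega_0}$, has $(\C P^{1}_{A})^{n-l-1}$, and $l\le\frac{n}{2}-1$ gives $l-1<n-l-1$. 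Doing this on every constraining hyperplane of a polytope that is an intersection of half-spaces yields a projection $F_{\omega_0}\to F_{\omega}$ whenever $C'_{\omega}$ lies outside the region cut out by the listed inequalities; for a chamber inside that region but different from $C_{\omega_0}$ (differing only in signs on the $\frac{n}{2}$-subset hyperplanes) the competing $\frac{n}{2}$-subset constituents have the same size $n-\frac{n}{2}-1=\frac{n}{2}-1$, so $F_{\omega_0}\cong F_{\omega}$. Hence $F_{\omega_0}$ is the largest space of parameters.

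The main obstacle I anticipate is precisely this $\frac{n}{2}$-subset bookkeeping: because $C_{\omega_0}$ is not a single chamber of $\mathcal{A}$ but a union of them, one must confirm that they all carry isomorphic $F_{\omega_0}$ and that the strict comparison never requires $l=\frac{n}{2}$, which is where the identification of $\sum_{i\in J}x_i=1$ with $\sum_{i\in J^{c}}x_i=1$ inside $\{\sum x_i=2\}$ is used; the degenerate case $n=4$, where the family of inequalities is empty, should be recorded separately. Everything else is the routine transcription of the previous argument.
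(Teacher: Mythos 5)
Your proposal is correct and follows the same route the paper takes: the paper proves the even case only by declaring it analogous to Corollary~\ref{chamberodd}, and your transcription of that argument --- comparing, hyperplane by hyperplane, the constituent $(\C P^{1}_{A})^{n-l-1}$ of $F_{\omega_0}$ against the competitor's $(\C P^{1}_{A})^{l-1}$ and using $l\le\frac{n}{2}-1$ to get strict inequality --- is exactly what is intended. Your explicit treatment of the self-complementary $\frac{n}{2}$-subset hyperplanes (on which the two sides contribute constituents of equal size $\frac{n}{2}-1$, so that all arrangement chambers inside the region $x_{i_1}+\ldots+x_{i_l}<1$, $2\le l\le\frac{n}{2}-1$, carry isomorphic spaces of parameters), together with the remark on the degenerate case $n=4$, supplies precisely the one point where the analogy with the odd case is not verbatim and which the paper leaves unstated.
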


The following is obvious:

\begin{rem}\label{Comegaaction}
The chamber  $C_{\omega_0}$ is invariant under the action of the symmetric group $S_n$ on $\R^n$ which permutes the coordinates.
\end{rem}

Let 
\[
\tilde{F}_{\omega_0} = \{ F_{\sigma}, \; \sigma\in \omega_{0}, \sigma \neq \binom{n}{2}\}.
\]
From Remark~\ref{Comegaaction} it follows:

\begin{cor}\label{Somega}
The  $S_n$-action on $C_{\omega_0}$ induces $S_n$-action on the set $\tilde{F}_{\omega_0}$.
\end{cor}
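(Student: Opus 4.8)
The plan is to realize the $S_n$-action geometrically on $G_{n,2}$ (and on $\C P^N$) and then check that it is compatible with the stratification and with the chamber $C_{\omega_0}$. An element $g\in S_n$ acts on $\C^n$ by permuting coordinates, and this induces a diffeomorphism of $G_{n,2}$; under the Pl\"ucker coordinates it permutes the homogeneous coordinates of $\C P^N$ by the induced permutation of the index set $\binom{n}{2}$, which sends a pair $\{i,j\}$ to $\{g(i),g(j)\}$ and which I also denote by $g$. From the definition of a stratum it follows directly that $g$ carries $W_\sigma$ onto $W_{g(\sigma)}$, where $g(\sigma)=\{g(I):I\in\sigma\}$, and that $g$ intertwines the $(\C^\ast)^n$-action with itself through the torus automorphism induced by $g$. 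Hence $g$ descends to a homeomorphism $F_\sigma=W_\sigma/(\C^\ast)^n\to W_{g(\sigma)}/(\C^\ast)^n=F_{g(\sigma)}$.

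Next I would check that the correspondence $\sigma\mapsto P_\sigma$ is $S_n$-equivariant. The permutation $g$ acts on the vertices of $\Delta_{n,2}$ by $\Lambda_I\mapsto\Lambda_{g(I)}$ --- this is exactly the restriction to the vertex set of the coordinate permutation of $\R^n$ --- so it maps the admissible polytope $P_\sigma=\operatorname{conv}\{\Lambda_I:I\in\sigma\}$ onto $P_{g(\sigma)}$, and its interior onto $\stackrel{\circ}{P}_{g(\sigma)}$. Recall that $\sigma\in\omega_0$ means $C_{\omega_0}\subset\stackrel{\circ}{P}_\sigma$. By Remark~\ref{Comegaaction}, $g(C_{\omega_0})=C_{\omega_0}$, so
\[
C_{\omega_0}\subset\stackrel{\circ}{P}_\sigma\ \Longleftrightarrow\ g(C_{\omega_0})\subset\stackrel{\circ}{P}_{g(\sigma)}\ \Longleftrightarrow\ C_{\omega_0}\subset\stackrel{\circ}{P}_{g(\sigma)},
\]
that is $\sigma\in\omega_0\iff g(\sigma)\in\omega_0$. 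Since $g$ fixes the full index set $\binom{n}{2}$, the map $\sigma\mapsto g(\sigma)$ permutes the set $\{\sigma\in\omega_0:\sigma\ne\binom{n}{2}\}$.

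Combining the two steps, $F_\sigma\mapsto F_{g(\sigma)}$ is a well-defined self-bijection of $\tilde F_{\omega_0}$; and because $(gh)(\sigma)=g(h(\sigma))$ on subsets of $\binom{n}{2}$ while the identity of $S_n$ acts trivially, this is an $S_n$-action on $\tilde F_{\omega_0}$, as claimed. No step here is a genuine obstacle; the only point needing a little care is unwinding the definitions to see that the geometric $S_n$-action really does send $W_\sigma$ to $W_{g(\sigma)}$ and descends to the parameter spaces, which is an immediate consequence of the $(\C^\ast)^n$-equivariance of $g$.
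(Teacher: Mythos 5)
Your argument is correct and is exactly the route the paper intends: the corollary is deduced from Remark~\ref{Comegaaction} (the $S_n$-invariance of $C_{\omega_0}$) together with the $S_n$-equivariance of the stratification and of the assignment $\sigma\mapsto P_\sigma$, which you spell out in detail where the paper leaves it implicit. No discrepancy to report.
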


In~\cite{BT2nk} we introduced  the notion of the universal space of parameters $\mathcal{F}$ which is a compactification of the space of parameters of the main stratum. In the case of $G_{n,2}$ we proved~\cite{BT3} that the universal space of parameters $\mathcal{F}_{n}$ is a smooth manifold which  coincides with the Deligne-Mumford compactification of the moduli space of distinct ordered $n$ points in $\C P^1$, which is given by the moduli space  $\overline{\mathcal{M}}_{n,0}$ of  stable pointed genus zero curves. The space $\overline{\mathcal{M}}_{n,0}$ can be described as  $\overline{\mathcal{M}}_{\mathcal{A},0}$, where $\mathcal{A} = (1, \ldots, 1)$.

In addition, in~\cite{BTW} we proved that $\mathcal{F}_{n}$ can be obtained by the wonderful compactification~\cite{LL} of  $\bar{F}_{n}\subset (\C P^{1})^{N}$, $N=\binom{n}{2}-1$. Here $\bar{F}_{n}$ is the closure in $\C P^{N}$  of the space of parameters $F_n$  of the main stratum. More precisely, the main stratum  $W_{n}\subset G_{n,2}$ is given by $W_{n} = \{L\in G_{n,2} \; | \: P^{I}(L)\neq 0, I\in \binom{n}{2}\}$ and $F_{n} =W_{n}/(\C ^{\ast})^n = \{(c_{ij}:c_{ij}^{'})_{3\leq i<j\leq n}\in (\C P^1)^{N}, c_{ij}c_{ik}^{'}c_{jk} = c_{ij}^{'}c_{ik}c_{jk}^{'}, c_{ij}, c_{ij}^{'}\neq 0, c_{ij}\neq c_{ij}^{'}\}$.

In~\cite{BT2} we proved that for any chamber $C_{\omega} =  \cap \stackrel{\circ}{P}_{\sigma} \subset \Delta _{n,2}$ it holds that $\mathcal{F}_{n} = \cup _{\sigma \in \omega} \tilde{F}_{\sigma}$ and this union is disjoint. Here  $\tilde{F}_{\sigma}$ are virtual spaces of parameters and there exist projections $p_{\sigma} : \tilde{F}_{\sigma}\to F_{\sigma}$, which define projection $\mathcal{F}_{n}\to F_{\omega}$. In particular the quotients of $\tilde{F}_{\sigma}$ defined by the projections $p_{\sigma}$ give the outgrows in the compactification $F_{\omega}$ to $\mathcal{F}_{n}$. Denote by $\mathcal{O}_{\omega}$ the set of outgrows in the compactification $F_{\omega}$ to $\mathcal{F}_{n}$ that is
\[
\mathcal{O}_{\omega} = \{ \tilde{F}_{\sigma}/p_{\sigma}, \; \sigma \in \omega\}.
\]

Recall~\cite{BT1},~\cite{BT2} that for the space of parameters $F_n$ of the main stratum it holds $\tilde{F}_{n} = F_{n}$. Then from Corollary~\ref{Somega} we deduce:

\begin{cor}
The set of outgrows $\mathcal{O}_{\omega_0}$  for the chamber $C_{\omega_0}$    is invariant under the action of  $S_n$.
\end{cor}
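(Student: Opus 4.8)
The plan is to prove the statement by transporting the entire compactification picture along the natural $S_n$-action. First I would set up that action carefully: $S_n$ acts on $\{1,\dots,n\}$ and hence on the index set $\binom{n}{2}$ of unordered pairs, and permuting the homogeneous Pl\"ucker coordinates $P^{ij}$ accordingly yields an $S_n$-action on $\C P^{N}$ which preserves $p(G_{n,2})$ and intertwines the canonical $(\C^{\ast})^{n}$-action on $G_{n,2}$ with itself up to the induced $S_n$-action on $T^n$. Consequently, for $g\in S_n$ the stratum $W_{\sigma}\subset G_{n,2}$ is carried homeomorphically onto $W_{g\cdot\sigma}$, its admissible polytope $P_{\sigma}$ onto $P_{g\cdot\sigma}$, and the space of parameters $F_{\sigma}=W_{\sigma}/(\C^{\ast})^{n}$ onto $F_{g\cdot\sigma}$. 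By Remark~\ref{Comegaaction} the chamber $C_{\omega_0}$ is $S_n$-invariant, so $g$ maps $C_{\omega_0}=\bigcap_{\sigma\in\omega_0}\stackrel{\circ}{P}_{\sigma}$ to itself; since $g$ sends $P_{\sigma}$ to $P_{g\cdot\sigma}$, this forces the index set $\omega_0$ to be permuted by $S_n$, i.e. $\sigma\in\omega_0$ if and only if $g\cdot\sigma\in\omega_0$. This is Corollary~\ref{Somega} read on the level of strata rather than on the level of the sets $\tilde F_{\sigma}$.

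Next I would bring in the universal space of parameters: $\mathcal{F}_{n}$ is the Deligne--Mumford compactification $\overline{\mathcal{M}}_{0,n}$ (by~\cite{BT3},~\cite{BTW}), which carries its standard $S_n$-action permuting the marked points, and one checks that this action agrees with the $S_n$-action above on the open dense stratum $F_{n}\subset\mathcal{F}_{n}$, hence on all of $\mathcal{F}_{n}$. Granting this, the disjoint decomposition $\mathcal{F}_{n}=\bigsqcup_{\sigma\in\omega_0}\tilde F_{\sigma}$ and the projections $p_{\sigma}:\tilde F_{\sigma}\to F_{\sigma}$ become $S_n$-equivariant: $g$ maps $\tilde F_{\sigma}$ homeomorphically onto $\tilde F_{g\cdot\sigma}$ and satisfies $p_{g\cdot\sigma}\circ g=g\circ p_{\sigma}$. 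Hence $g$ induces a homeomorphism of the outgrow $\tilde F_{\sigma}/p_{\sigma}$ onto $\tilde F_{g\cdot\sigma}/p_{g\cdot\sigma}$, and since $\sigma\in\omega_0$ implies $g\cdot\sigma\in\omega_0$, every element of $\mathcal{O}_{\omega_0}=\{\tilde F_{\sigma}/p_{\sigma}:\sigma\in\omega_0\}$ is carried by $g$ to another element of $\mathcal{O}_{\omega_0}$; that is, $\mathcal{O}_{\omega_0}$ is $S_n$-invariant. For the main stratum $\sigma=\binom{n}{2}$ one has $\tilde F_{n}=F_{n}$, so $p_{n}$ is the identity, the corresponding outgrow is trivial and manifestly fixed, consistently with the rest of the argument.

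The step I expect to be the main obstacle is the $S_n$-equivariance of the virtual spaces of parameters $\tilde F_{\sigma}$ and of the projections $p_{\sigma}$ inside $\mathcal{F}_{n}$: this does not follow formally from the statements quoted in the text, and would have to be extracted by inspecting their construction in~\cite{BT2},~\cite{BT3}, checking that each ingredient --- the Pl\"ucker coordinates, the stratification, the hyperplane arrangement of Remark~\ref{arrangement}, and the wonderful-type compactification producing $\mathcal{F}_{n}=\overline{\mathcal{M}}_{0,n}$ --- is assembled from $S_n$-equivariant data so that the decomposition and the projections are natural with respect to permuting indices. Once this compatibility is in hand, the rest of the proof is the purely formal bookkeeping described above.
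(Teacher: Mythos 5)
Your proposal is correct and follows essentially the same route as the paper, which simply deduces the corollary from Corollary~\ref{Somega} (the induced $S_n$-action on the set $\tilde{F}_{\omega_0}$) together with the fact that $\tilde{F}_{n}=F_{n}$ for the main stratum; your write-up just makes explicit the equivariance of the strata, the virtual spaces of parameters and the projections $p_{\sigma}$ that the paper leaves implicit. The compatibility issue you flag at the end is real but is not addressed in the paper either, which treats the whole statement as an immediate consequence of the $S_n$-invariance of the chamber $C_{\omega_0}$.
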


\begin{cor}
The manifold $F_{\omega_0}$ can be obtained from $\mathcal{F}_{n}$ by factorizing  the set $\mathcal{O}_{\omega}$ by the  $S_n$-action  followed  by  the projections $p_{\sigma } : \tilde{F}_{\sigma}\to F_{\sigma}$, where  $\sigma =\{\{1,2,3\}, \{1,2, 3, 4\}, \ldots , \{1, 2,\ldots ,[\frac{n}{2}]-1\}$.
\end{cor}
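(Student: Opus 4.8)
The plan is to deduce the statement directly from the disjoint decomposition of the universal space of parameters together with the $S_n$-equivariance established above. First I would recall from~\cite{BT2} that for the chamber $C_{\omega_0}$ one has the disjoint union $\mathcal{F}_{n}=\cup_{\sigma\in\omega_0}\tilde{F}_{\sigma}$ and that the virtual projections $p_{\sigma}:\tilde{F}_{\sigma}\to F_{\sigma}$ assemble into a projection $\pi:\mathcal{F}_{n}\to F_{\omega_0}$ whose restriction to each piece $\tilde{F}_{\sigma}$ is $p_{\sigma}$. For the main stratum $\tilde{F}_{n}=F_{n}$ and $p_{n}=\mathrm{id}$, so $\pi$ is a homeomorphism over $F_{n}\subset F_{\omega_0}$; all the identifications performed by $\pi$ are concentrated on the outgrows, that is on $\mathcal{O}_{\omega_0}=\{\tilde{F}_{\sigma}/p_{\sigma}:\sigma\in\omega_0,\ \sigma\neq\binom{n}{2}\}$. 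Thus $F_{\omega_0}$ is exactly $\mathcal{F}_{n}$ with the points of $\mathcal{O}_{\omega_0}$ glued according to the maps $p_{\sigma}$.

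Next I would bring in the $S_n$-action. By Remark~\ref{Comegaaction} the chamber $C_{\omega_0}$ is $S_n$-invariant, so the $S_n$-action on $\Delta_{n,2}$, transported through the assignment $z_i\leftrightarrow\Lambda_{I}$ and through the identification of $\mathcal{F}_{n}$ with $\overline{\mathcal{M}}_{0,n}$, permutes the strata $W_{\sigma}$ with $C_{\omega_0}\subset\stackrel{\circ}{P}_{\sigma}$, hence permutes the pieces $\tilde{F}_{\sigma}$ of $\mathcal{F}_{n}$, and it intertwines the virtual projections, $g\circ p_{\sigma}=p_{g\sigma}\circ g$ for $g\in S_n$. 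In particular $\pi$ is $S_n$-equivariant, and $\mathcal{O}_{\omega_0}$ is $S_n$-invariant (this is the preceding Corollary, itself a consequence of Corollary~\ref{Somega}). Consequently $\pi$ is completely determined by its restriction to a set of $S_n$-orbit representatives among the non-main pieces $\tilde{F}_{\sigma}$: once $p_{\sigma}$ is prescribed on such representatives, $S_n$-equivariance forces it on all the remaining pieces.

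Finally I would identify the orbit representatives. By Remark~\ref{arrangement} the maximal-dimensional admissible polytopes meeting $C_{\omega_0}$ are intersections of half-spaces of the arrangement $\mathcal{A}$, and two of them lie in the same $S_n$-orbit precisely when their defining index sets have the same cardinalities; hence a set of representatives for the non-main strata occurring in $\omega_0$ is furnished by the subsets $\sigma=\{1,2,3\},\{1,2,3,4\},\ldots,\{1,2,\ldots,[\frac{n}{2}]-1\}$, and for each such $\sigma$ the map $p_{\sigma}:\tilde{F}_{\sigma}\to F_{\sigma}$ is the explicit collapsing described in~\cite{BT3}. Combining the three steps, starting from $\mathcal{F}_{n}$, passing to the $S_n$-orbit representatives inside $\mathcal{O}_{\omega_0}$ and then applying these finitely many projections $p_{\sigma}$ reconstructs $\pi$ and therefore yields $F_{\omega_0}$, as claimed. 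The point requiring the most care is the compatibility of the $S_n$-action on $\overline{\mathcal{M}}_{0,n}$ with the combinatorial stratification of~\cite{BT2}, i.e.\ that $g$ carries $\tilde{F}_{\sigma}$ onto $\tilde{F}_{g\sigma}$ and satisfies $g\circ p_{\sigma}=p_{g\sigma}\circ g$; once this and the identification of the orbit representatives via $\mathcal{A}$ are in place, the corollary follows.
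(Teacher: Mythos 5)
Your argument is correct and follows the same route the paper intends: the paper states this corollary without a separate proof, deducing it from the decomposition $\mathcal{F}_{n}=\cup_{\sigma\in\omega_0}\tilde{F}_{\sigma}$ with the projections $p_{\sigma}$ and from the $S_n$-invariance of $\mathcal{O}_{\omega_0}$ established in the preceding corollaries, which is precisely what you spell out. Your explicit verification of the intertwining $g\circ p_{\sigma}=p_{g\sigma}\circ g$ and of the orbit representatives (index sets of equal cardinality lying in one $S_n$-orbit) fills in details the paper leaves implicit, but introduces nothing beyond its intended reasoning.
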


Further, from Corollary~\ref{odd} and Corollary~\ref{even} we have:
\begin{prop}
The  universal space of parameters $\mathcal{F}_{n} =  \overline{\mathcal{M}}_{n,0}$ is isomorphic to the  space of parameter of a chamber in $\Delta _{n,2}$ if and only if $n=4, 5$.  
\end{prop}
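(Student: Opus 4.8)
The plan is to compare $\mathcal F_n$ with the single largest space of parameters $F_{\omega_0}$ and to analyse the canonical projection between them. By Corollary~\ref{chamberodd} (for odd $n$) and Corollary~\ref{even} (for even $n$), $F_{\omega_0}$ is the largest space of parameters of a chamber, so for every chamber $C_\omega$ of maximal dimension there is a projection $F_{\omega_0}\to F_\omega$; also, as recalled before the Proposition, there is a projection $\pi_\omega\colon\mathcal F_n\to F_\omega$, and since both agree on the dense main-stratum piece $F_n$ it factors as $\mathcal F_n\xrightarrow{\pi} F_{\omega_0}\to F_\omega$. All these spaces are compact connected manifolds of dimension $2(n-3)$ and the maps are continuous. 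Hence, if $\mathcal F_n\cong F_\omega$ for some chamber, then $\pi_\omega$ is a homeomorphism, so $\pi$ is an injective continuous map between compact connected manifolds of the same dimension, hence (invariance of domain) a homeomorphism; i.e. $\mathcal F_n\cong F_{\omega_0}$. Writing $\mathcal F_n=\bigsqcup_{\sigma\in\omega_0}\tilde F_\sigma$ and $F_{\omega_0}=\bigsqcup_{\sigma\in\omega_0}F_\sigma$, the map $\pi$ is the identity on $\tilde F_n=F_n$ and restricts to $p_\sigma\colon\tilde F_\sigma\to F_\sigma$ elsewhere, and by the Corollary immediately preceding the Proposition $F_{\omega_0}$ is obtained from $\mathcal F_n$ by identifying the outgrows of $\mathcal O_{\omega_0}$ under $S_n$ and then applying the $p_\sigma$. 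Therefore $\pi$ is a homeomorphism precisely when $\mathcal O_{\omega_0}=\emptyset$, and it remains to prove that $\mathcal O_{\omega_0}=\emptyset$ if and only if $n=4,5$.

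\textbf{The cases $n=4,5$.} Here $[\tfrac n2]=2$, so by Remark~\ref{arrangement} the arrangement consists only of the hyperplanes $x_i+x_j=1$. For $n=4$ a maximal chamber is contained, besides in $\stackrel{\circ}{\Delta}_{4,2}$, only in the interiors of the admissible polytopes of $W_4$ and of $W_{1i}$, $i=2,3,4$, the latter strata having one-point spaces of parameters; so $\mathcal O_{\omega_0}=\emptyset$ and $\mathcal F_4=\overline{\mathcal M}_{4,0}\cong\C P^1\cong\mu^{-1}(\mathbf x)/T^3=F_{\omega_0}$, using the computation of~\cite{BT1} recalled above. For $n=5$, Lemma~\ref{odd} gives the central maximal chamber $C_{\omega_0}$ cut out by $x_i+x_j<1$ for all $i<j$, of dimension $4$; running through the strata $\sigma$ with $C_{\omega_0}\subset\stackrel{\circ}{P}_\sigma$ one checks, from the combinatorial description of the admissible polytopes attached to the hyperplanes $x_i+x_j=1$, that each such $\sigma$ has $\tilde F_\sigma=F_\sigma$ (their spaces of parameters have complex dimension $\le1$ and admit no outgrows), so $\mathcal O_{\omega_0}=\emptyset$ and $F_{\omega_0}\cong\mathcal F_5=\overline{\mathcal M}_{5,0}$. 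One may also reach the identification via the theorem cited above, writing $F_{\omega_0}\cong\overline{\mathcal M}_{(\mathcal A,0)}$ for the weight vector $\mathcal A$ determined by a point of $C_{\omega_0}$ and invoking Hassett's criterion for $\overline{\mathcal M}_{(\mathcal A,0)}=\overline{\mathcal M}_{(1,\dots,1),0}=\mathcal F_n$.

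\textbf{The case $n\ge6$.} Now $[\tfrac n2]\ge3$, so by Remark~\ref{arrangement} the arrangement contains a hyperplane $x_{i_1}+x_{i_2}+x_{i_3}=1$. The plan is to exhibit a stratum $\sigma\in\omega_0$ whose admissible polytope is of maximal dimension $n-1$, lies on such a hyperplane, and contains $C_{\omega_0}$ in its interior; its space of parameters is then of the form $(\C P^1_A)^{m}$ with $m\ge2$, whose universal space of parameters $\tilde F_\sigma$ is a strict (Deligne--Mumford, resp.\ wonderful) compactification of $F_\sigma$. Hence $\mathcal O_{\omega_0}\neq\emptyset$, $\pi$ is not injective, and by the reduction above $\mathcal F_n\not\cong F_\omega$ for any chamber. (Equivalently, in Hassett's language, for $n\ge6$ the $S_n$-identification of the outgrows attached to these triple-sum hyperplanes, forced in passing from $\mathcal F_n$ to $F_{\omega_0}$, is non-trivial.)

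\textbf{The main obstacle.} The crux is the combinatorial bookkeeping of the outgrows for the largest chamber: deciding exactly which strata $\sigma$ satisfy $C_{\omega_0}\subset\stackrel{\circ}{P}_\sigma$ and, for each, whether $\tilde F_\sigma=F_\sigma$. This is where the dichotomy at $n=6$ is produced — for $n\le5$ only the hyperplanes $x_i+x_j=1$ occur and the relevant strata all have spaces of parameters of complex dimension $\le1$ with no outgrows, whereas for $n\ge6$ the hyperplanes $x_i+x_j+x_k=1$ enter and force a stratum whose space of parameters has complex dimension $\ge2$ and is strictly dominated by its virtual compactification.
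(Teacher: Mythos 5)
Your proposal follows essentially the same route as the paper: reduce to the largest chamber $C_{\omega_0}$ via the surjections $\mathcal F_n \to F_{\omega_0}\to F_\omega$, and for $n\ge 6$ detect the failure through the stratum whose admissible polytope is cut out by $x_i+x_j+x_k<1$, whose virtual space of parameters $\tilde F_\sigma$ maps non-injectively onto $F_\sigma\cong F_{n-4}$. The paper leaves the $n=4,5$ verification and the reduction to the largest chamber more implicit than you do, but the key mechanism is identical.
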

\begin{proof}
We proved in~\cite{BT3} that for any space of $F_{\omega}$ there exists continuous surjective map $p_{\omega} : \mathcal{F}_{n}\to F_{\omega}$,  which projects the  virtual spaces of parameters $\tilde{F}_{\sigma}$ to the spaces of parameters $F_{\sigma}$ for $\sigma \in \omega$.   Therefore,  we consider the largest space of parameters which corresponds to the chamber given by  Corollary~\ref{odd} or Corollary~\ref{even}. For $n\geq 6$, an admissible polytope $P_{\sigma}$ given by $x_i+x_j+x_k<1$ contributes to this chamber, its space of parameters  $F_{\sigma}\cong F_{n-4}\subset  (\C P^{1}_{A})^{n-4}$, while its virtual space  of parameters is $\tilde{F}_{\sigma}\cong \{(c_{ij}:c_{ij}^{'})_{3\leq i<j\leq n}\in  \C P^1\times (\C P^{1}_{A})^{n-4}, c_{ij}c_{ik}^{'}c_{jk} = c_{ij}^{'}c_{ik}c_{jk}^{'}\}$. It implies that the projection $\tilde{F}_{\sigma}\to F_{\sigma}$ is not one-to-one, that is $p_{\omega}$ is not a
homeomorphism.
\end{proof}

\begin{cor}\label{univsimpl}
A  manifold $\mathcal{F}_{n} =  \overline{\mathcal{M}}_{0,n}$   is a  symplectic quotient of $G_{n,2}$ by the canonical $T^{n}$-action if and only if $n=4,5$.
\end{cor}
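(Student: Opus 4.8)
The plan is to reduce the statement to the preceding Proposition by means of the dictionary ``symplectic quotient of $G_{n,2}$ $\longleftrightarrow$ space of parameters $F_\omega$ of a maximal chamber''.

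First I would make this dictionary precise. Since the diagonal circle $S^1=\mathrm{diag}(T^n)$ acts trivially on $G_{n,2}$, the relevant effective action is that of $T^{n-1}=T^n/S^1$, and a symplectic quotient of $G_{n,2}$ by the canonical $T^n$-action is a manifold $\mu^{-1}(\mathbf{x})/T^{n-1}$ for a regular value $\mathbf{x}$ of $\mu$ at which $T^{n-1}$ acts freely, equipped with the reduced symplectic form. By Remark~\ref{regularvalues} such an $\mathbf{x}$ is precisely a point of a chamber $C_\omega$ of maximal dimension $n-1$, and by the theorem on symplectic reduction recalled above, $\mu^{-1}(\mathbf{x})/T^{n-1}$ is then the manifold $F_\omega$ assigned to $C_\omega$. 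Conversely, by the theorem asserting that every $F_\omega$ with $\dim C_\omega=n-1$ is a symplectic reduction of $G_{n,2}$, all of these do occur. Hence, up to the appropriate isomorphism, the symplectic quotients of $G_{n,2}$ by the canonical $T^n$-action are exactly the spaces of parameters $F_\omega$ of the maximal chambers $C_\omega\subset\Delta_{n,2}$. (One may upgrade ``diffeomorphic'' to ``isomorphic as K\"ahler, hence symplectic, manifolds'': the Pl\"ucker embedding is isometric, so $G_{n,2}$ is K\"ahler with Hamiltonian $T^{n-1}$-action and every $F_\omega$ carries the induced K\"ahler reduction.)

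Granting this, the Corollary is a restatement of the Proposition. For $n=4$ and $n=5$ the Proposition produces a maximal chamber $C_\omega$ with $\mathcal{F}_n=\overline{\mathcal{M}}_{0,n}\cong F_\omega$ --- concretely the chamber $x_{i_1}+\cdots+x_{i_l}<1$ of Corollary~\ref{chamberodd} for $n=5$, and any maximal chamber for $n=4$ --- and $F_\omega$ is a symplectic reduction of $G_{n,2}$ by the quoted theorem, so $\mathcal{F}_n$ is a symplectic quotient. For $n\geq 6$ the Proposition says $\mathcal{F}_n$ is isomorphic to the space of parameters of no chamber, hence, by the dictionary above, to no symplectic quotient of $G_{n,2}$.

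The step carrying the real content is of course the ``only if'' half, namely the input Proposition, whose proof must exclude \emph{all} maximal chambers rather than just the largest one $C_{\omega_0}$. The mechanism is that $\mathcal{F}_n$ surjects continuously onto every $F_\omega$, while for $n\geq 6$ a counting argument (each variable lies in $\binom{n-1}{2}$ triples, and $2\binom{n-1}{2}\le\binom{n}{3}$) shows that every maximal chamber contains a stratum with admissible polytope $x_i+x_j+x_k<1$, whose virtual space of parameters $\tilde F_\sigma$ strictly dominates $F_\sigma$; together with the $S_n$-symmetry of the chamber picture (Corollary~\ref{Somega}) this obstructs $\mathcal{F}_n\cong F_\omega$ for every maximal chamber. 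I expect this comparison of $\mathcal{F}_n=\overline{\mathcal{M}}_{0,n}$ with the family $\{F_\omega\}$ --- already carried out in the Proposition --- to be where all the difficulty lies; the passage from the Proposition to the Corollary is the bookkeeping described above.
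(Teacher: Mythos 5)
Your proposal is correct and follows exactly the route the paper intends: the Corollary is deduced immediately from the preceding Proposition together with the theorem identifying the symplectic reductions of $G_{n,2}$ at regular values with the spaces of parameters $F_{\omega}$ of maximal chambers (the paper gives no separate proof of the Corollary, treating it as this direct consequence). Your added remarks on how the Proposition itself excludes $n\geq 6$ are consistent with the paper's argument, which handles the largest chamber $C_{\omega_0}$ and uses the surjections $p_{\omega}:\mathcal{F}_{n}\to F_{\omega}$ to cover the rest.
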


The Losev-Manin spaces $\bar{L}_{0,n}$ can be interpreted~\cite{LM},~\cite{MA},~\cite{BTW}  as the moduli space of weighted pointed stable curves with a weight vector $\mathcal{A} = (1,1, a_3\ldots, a_n)$, $\sum _{i=3}^{n}a_i <1$ and the space $\overline{\mathcal{M}}_{0,n}$   can be obtained by the methods of  wonderful compactification applied to  $\bar{L}_{n,2}$.  We showed in~\cite{BTW}  that $\bar{L}_{0,n}$ is isomorphic to  the space of parameters of a chamber for $G_{n,2}$ if and only if $n=5$.

\begin{cor}
A Losev-Manin space $\bar{L}_{0,n}$  is a symplectic quotient of $G_{n,2}$ by the canonical $T^n$-action if and  only if  $n=5$.
\end{cor}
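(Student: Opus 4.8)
The plan is to observe that the corollary is a formal consequence of two facts already available: (i) a manifold is a symplectic quotient of $G_{n,2}$ by the canonical $T^n$-action exactly when it is isomorphic to the space of parameters $F_{\omega}$ of a chamber $C_{\omega}\subset\Delta_{n,2}$ of maximal dimension $n-1$; and (ii) the result of~\cite{BTW} quoted just above, that $\bar{L}_{0,n}$ is isomorphic to the space of parameters of a chamber for $G_{n,2}$ if and only if $n=5$. Granting (i) and (ii), the statement follows immediately, in the same way that Corollary~\ref{univsimpl} was deduced from the analogous isomorphism criterion for $\overline{\mathcal{M}}_{0,n}$.

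To establish (i) I would argue both directions. If $X$ is a symplectic quotient of $G_{n,2}$ by the canonical $T^n$-action then, by definition, $X\cong\mu^{-1}({\bf x})/T^{n-1}$ for some regular value ${\bf x}$ of $\mu$ on which $T^{n-1}=T^n/\text{diag}(T^n)$ acts freely; by Remark~\ref{regularvalues} such points ${\bf x}$ are precisely the interior points of the chambers $C_{\omega}$ with $\dim C_{\omega}=n-1$, and by the theorem of~\cite{GMP}, \cite{BT2} recalled above the quotient $\mu^{-1}({\bf x})/T^{n-1}$ is, independently of the choice of ${\bf x}\in C_{\omega}$, the space of parameters $F_{\omega}$ of that chamber. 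Conversely, the theorem recalled in this section stating that every such $F_{\omega}$ is a symplectic reduction of $G_{n,2}$ by the canonical $T^n$-action shows that each space of parameters of a maximal chamber does arise as a symplectic quotient. Hence $X$ is a symplectic quotient of $G_{n,2}$ if and only if $X$ is isomorphic to $F_{\omega}$ for some $C_{\omega}$ with $\dim C_{\omega}=n-1$, that is, to the space of parameters of a chamber. Applying this with $X=\bar{L}_{0,n}$ and invoking (ii): for $n=5$ the space $\bar{L}_{0,5}$ is isomorphic to such an $F_{\omega}$, hence to a symplectic quotient of $G_{5,2}$; conversely, if $\bar{L}_{0,n}$ is a symplectic quotient of $G_{n,2}$ then it is isomorphic to the space of parameters of a chamber, so by (ii) $n=5$.

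The step I expect to require the most care is a bookkeeping one: making sure the notion of isomorphism is used compatibly throughout. The identification in~\cite{BTW} of $\bar{L}_{0,n}$ with a space of parameters is at the level of toric varieties, whereas the theorems cited in this section phrase the $C_{\omega}$-independence of $\mu^{-1}({\bf x})/T^{n-1}$ and its symplectic-reduction property via diffeomorphism together with the reduced symplectic form. One should record that under the identification of $F_{\omega}$ with the GIT quotient $\mathcal{G}(\mathcal{L})$, equivalently with $\overline{\mathcal{M}}_{(\mathcal{A},0)}$, the reduced symplectic form corresponds to the K\"ahler form determined by the linearization $\mathcal{L}$, so that a symplectic quotient of $G_{n,2}$, the space of parameters of a chamber, and the model of~\cite{BTW} all describe the same object; this is precisely the content of the symplectic-reduction results cited here, so no further work is needed and the corollary reduces to the one-line deduction above.
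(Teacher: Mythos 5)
Your proposal is correct and follows essentially the same route the paper intends: the corollary is deduced by combining the theorem that the spaces of parameters $F_{\omega}$ of maximal-dimensional chambers are exactly the symplectic reductions of $G_{n,2}$ by the canonical $T^n$-action with the result of~\cite{BTW} that $\bar{L}_{0,n}$ is isomorphic to such a space of parameters if and only if $n=5$. The paper leaves this deduction implicit (the cited sentence immediately precedes the corollary), and your more careful spelling-out of both directions of the equivalence, including the role of Remark~\ref{regularvalues}, is consistent with it.
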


In the case $G_{4,2}$ the symplectic quotient by the canonical $T^4$-action does not depend on a regular value of the moment map. For $n\geq  5$ this is not the case any more.

\begin{itemize}
\item The universal space of parameters $\mathcal{F}_{5} =\overline{\mathcal{M}}_{0, 5}$ is the del Pezzo surface of degree $5$ and it can be obtained by blowing up the surface $\bar{F}_{5} = \{(c_{1}, c_{1}^{'}), (c_{2}:c_{2}^{'}), (c_{3}:c_{3}^{'})\in (\C P^{1})^3 \; |\; c_1c_{2}^{'}c_3 = c_{1}^{'}c_{2}c_{3}^{'}\}$ at the point $((1:1), (1:1), (1:1))$.  By Corollary~\ref{univsimpl}, the manifold $\mathcal{F}_{5}$ is a symplectic reduction.  More precisely, it is a symplectic reduction of a point ${\bf x}$ from the chamber  $C_{\omega _0}$ which is given as the intersection of halfspaces $x_i+x_j<1$, $1\leq i<j\leq 5$, In particular, $\mathcal{F}_{n}= \mu ^{-1}(\frac{2}{5}, \ldots, \frac{2}{5})/T^5$. 
\item  On the other hand,~\cite{BTW},   the manifold $\bar{L}_{0, 5} = \bar{F}_{5}$ is the space of parameters of the chamber $C_{\omega _1}\subset \Delta _{5,2}$ given by the intersection of halfspaces $x_1+x_2>1$, $x_i+x_+j<1$  for $\{i, j\}\neq \{1,2\}$, so $\bar{F}_{5}$ is as well symplectic reduction of $G_{5,2}$ by the canonical $T^5$-action.  
\item The manifolds $\mathcal{F}_{5}$ and $\bar{L}_{0,5}$ are different, more precisely, by~\cite{BT2} it holds $\mathcal{F}_{n} = \text{Bl}_{\text{point}}\bar{L}_{0,5}$

\end{itemize}

\bibliographystyle{amsplain}

 Victor M.~Buchstaber\\
Steklov Mathematical Institute, Russian Academy of Sciences\\ 
Gubkina Street 8, 119991 Moscow, Russia\\
E-mail: buchstab@mi.ras.ru
\\ \\ 

Svjetlana Terzi\'c \\
Faculty of Science and Mathematics, University of Montenegro\\
Dzordza Vasingtona bb, 81000 Podgorica, Montenegro\\
E-mail: sterzic@ucg.ac.me

\end{document}